\documentclass[12pt,centertags,oneside]{amsart}

\usepackage{amsmath,amstext,amsthm,amscd,typearea,hyperref,stmaryrd,mathabx}
\usepackage{amssymb}
\usepackage{a4wide}
\usepackage[mathscr]{eucal}
\usepackage{mathrsfs}
\usepackage{typearea}
\usepackage{charter}
\usepackage{pdfsync}
\usepackage[a4paper,width=16.2cm,top=3cm,bottom=3cm,lines=50]{geometry}

\numberwithin{equation}{section}

\newtheorem{theorem}{Theorem}[section]
\newtheorem{definition}[theorem]{Definition}
\newtheorem{proposition}[theorem]{Proposition}
\newtheorem{corollary}[theorem]{Corollary}
\newtheorem{lemma}[theorem]{Lemma}
\newtheorem{remark}[theorem]{Remark}

\newtheorem{example}[theorem]{Example}

\newtheorem{problem}{Problem}

\newcommand{\cali}[1]{\mathscr{#1}}

\newcommand{\Tan}{\mathop{\mathrm{Tan}}\nolimits}
\newcommand{\Fin}{\mathop{\mathrm{Fin}}\nolimits}

\newcommand{\WFin}{\mathop{\mathrm{WFin}}\nolimits}
\newcommand{\WNeg}{\mathop{\mathrm{WNeg}}\nolimits}
\newcommand{\SNeg}{\mathop{\mathrm{SNeg}}\nolimits}

\newcommand{\Leb}{{\rm Leb}}

\newcommand{\supp}{{\rm supp}}

\newcommand{\ddc}{{dd^c}}
\newcommand{\ddcx}{{dd^c_x}}

\newcommand{\dc}{{d^c}}
\newcommand{\dbar}{{\overline\partial}}
\newcommand{\ddbar}{{\partial\overline\partial}}

\newcommand{\ind}{{\bf 1}}
\newcommand{\id}{{\rm id}}

\newcommand{\BB}{{\mathbf{B}}}

\newcommand{\Cc}{\cali{C}}

\newcommand{\Ec}{\cali{E}}

\newcommand{\Sc}{\cali{S}}

\newcommand{\FS}{{\rm FS}}

\newcommand{\C}{\mathbb{C}}
\newcommand{\D}{\mathbb{D}}
\newcommand{\E}{\mathbb{E}}

\newcommand{\N}{\mathbb{N}}

\newcommand{\R}{\mathbb{R}}
\newcommand{\T}{\mathbb{T}}
\newcommand{\B}{\mathbb{B}}

\newcommand{\U}{\mathbb{U}}
\newcommand{\V}{\mathbb{V}}

\renewcommand{\P}{\mathbb{P}}

\title[]{Siu's analyticity theorem   for  positive pluriharmonic currents}

\author{Tien-Cuong Dinh}
\address{Department of Mathematics, National University 
of Singapore, 10 Lower Kent Ridge Road, Singapore 119076. 
{\tt  http://www.math.nus.edu.sg/$\sim$matdtc} }
\email{matdtc@nus.edu.sg}

\author{Vi{\^e}t-Anh Nguy{\^e}n}
\address{Universit\'e de Lille, 
Laboratoire de math\'ematiques Paul Painlev\'e, 
CNRS U.M.R. 8524,  
59655 Villeneuve d'Ascq Cedex, 
France. }

\address{and Vietnam Institute for Advanced Study in Mathematics (VIASM),  157 Chua Lang Street, Hanoi, Vietnam.
}

\email{Viet-Anh.Nguyen@univ-lille.fr, {\tt   https://pro.univ-lille.fr/viet-anh-nguyen/}}

\date{\today}

\begin{document}


\begin{abstract}
Let $T$ be a positive $\ddc$-closed current of bidimension $(1,1)$ on a projective manifold $X$ of dimension $n.$
We show that for every $c > 0$ the set of points of $X$  where the Lelong
number of $T$ is larger or equal to $c$ is an analytic subset of dimension at most $1$ of $X.$ Moreover, the  following  Siu decomposition holds
$$T=\sum_{i\in I} \lambda_i[V_i] +T_0,$$
where $\{V_i\}_{i\in I}$ is a (possibly empty) finite or countable family of compact analytic curves in $X,$ $\lambda_i\in\R^+,$ and $T_0$ is a positive $\ddc$-closed current
of bidimension $(1,1)$ on $X$ whose Lelong number vanishes outside a finite or countable set.
As  a consequence,  the cohomology class of   every positive $\ddc$-closed current
of bidimension $(1, 1)$ on $X,$ which does not give mass to any proper analytic set,  belongs to
the Poincar\'e dual of the  effective cone of $H^{1,1}(X,\R).$
\end{abstract}

\maketitle
\tableofcontents

\medskip\medskip

\noindent
{\bf MSC 2020:} Primary  32Q15, 32U40, 32U25

 \medskip

\noindent
{\bf Keywords:}  positive $\ddc$-closed  currents,   density of currents, tangent current, Lelong number.

\section{Introduction} \label{s:Intro}

Let $X$ be a  complex manifold of dimension $n$. 
Let $T$ be a positive closed current of bi-dimension $(q,q)$ on $X$ and $\nu(T,x)$ denote the 
Lelong number of $T$ at a point $x\in X$.
A classical theorem by Siu says that the function $x\mapsto \nu(T,x)$ is upper semi-continuous for the analytic Zariski topology on $X$. More precisely, 
the super-level set $E_c:=\{\nu(T,\cdot)\geq c\}$ is a (possibly empty) analytic subset of dimension at most $q$ of $X$ for every constant $c>0$. Furthermore, we have the following Siu's decomposition
$$T=\sum_{i\in I} \lambda_i[V_i] +T_0,$$
where $\{V_i\}_{i\in I}$ is a (possibly empty) finite or countable family of analytic subsets of dimension $q$ on $X,$ $\lambda_i\in\R^+,$ and $T_0$ is a positive $\ddc$-closed current
of bidimension $(q,q)$ on $X$ whose Lelong number vanishes outside a finite or countable union of analytic subsets of $X$ of dimension less than $q$, see \cite{Demailly, Siu} for more details.

In this paper, we consider a larger class of currents: the class of positive $\ddc$-closed currents which appear naturally in the theory of holomorphic foliations and non-K\"ahler geometry. 
By considering the current defined by a non-constant positive pluriharmonic function on a submanifold of dimension $q$ of $X$, we see that Siu's theorem doesn't hold for general $\ddc$-closed currents on arbitrary complex manifolds. Observe that by maximum principle, the  manifold $X$ here is necessarily noncompact.


In \cite{DinhLawrence}, the first author and Lawrence extended Siu's theorem to rectifiable positive $\ddc$-closed currents on any complex manifolds.
In this work, we study the case of compact manifolds, namely, the following long standing problem which is still open.

\begin{problem}
Let $T$ be any positive $\ddc$-closed current on a complex projective, or more generally, a compact K\"ahler manifold $X$ of dimension $n$. Is Siu's theorem true for $T$ ?
\end{problem}

The most important known result in this research direction is a theorem of Vigny which says that one can reduce the problem to the case of currents of bi-degree $(1,1)$ via the Lelong-Skoda-Vigny transform. More precisely, there is a positive $\ddc$-closed current $T'$ of bi-degree $(1,1),$ and hence bi-dimension $(n-1,n-1),$ depending linearly on $T$ whose Lelong number at every point is equal to the one of $T$, see \cite{Vigny}.

The following is our main theorem which solves the above problem for the case of bi-dimension $(1,1)$ currents.

\begin{theorem} \label{t:main_1}
Let $X$ be a complex projective  manifold. 
Let $T$ be a positive $\ddc$-closed current of bidimension $(1,1)$ on $X.$  
Then, for any constant $c > 0$ the set $E_c$  of points of $X$  where the Lelong
number $\nu(T,\cdot)$ of $T$ is larger or equal to $c$ is a (possibly empty) analytic subset of dimension at most $1$ of $X$. 
Moreover, we have the decomposition
$$T=\sum_{i\in I} \lambda_i[V_i] +T_0,$$
where $\{V_i\}_{i\in I}$ is a (possibly empty) finite or countable family of compact analytic curves in $X,$ $\lambda_i\in\R^+,$ and $T_0$ is a positive $\ddc$-closed current
of bidimension $(1,1)$ on $X$ whose Lelong number vanishes outside a finite or countable set.
\end{theorem}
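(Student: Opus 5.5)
The plan is to reduce the statement to a local analysis near points of large Lelong number, using the global compactness of $X$ to rule out the pluriharmonic-function counterexamples, and the theory of tangent currents and densities (Dinh--Sibony) to identify the structure of $T$ at such points.

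\textbf{Step 1: $E_c$ is small.} First I would show that $E_c$ cannot carry an infinite "cloud" of points off a curve: by the mass bound $\|T\|<\infty$ and the standard estimate $\nu(T,x)\le C\|T\|$, any finite set of points $x_1,\dots,x_N\in E_c$ with pairwise disjoint small balls gives $Nc\,r^2\lesssim \|T\|$ in balls of radius $r$. So $E_c$ is a closed set of (Hausdorff) dimension at most $2$ with locally finite $\mathcal H^2$-measure. In bidimension $(1,1)$ this is the right scale: $\ind_{E_c}T$ should be a positive current supported on a set of finite $\mathcal H^2$ measure with density $\ge c$.

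\textbf{Step 2: $\ind_{E_c}T$ is rectifiable and $\ddc$-closed.} Using Federer-type structure (density bounded below on a set of finite $\mathcal H^2$ measure) I would show $\ind_{E}T$, for $E=\bigcup_c E_c$ the set with positive Lelong number, decomposes as $R+$(part on a countable set), with $R$ a rectifiable positive current. The key point is that $\ddc(\ind_E T)$ should vanish; for that I would use that on a projective $X$ we can take a very ample line bundle and hyperplane sections, pulling back via generic linear projections $X\to\P^1$ or slicing by hypersurfaces, so that the restriction/slice of $T$ on a generic hypersurface $H$ is a measure, and Lelong numbers are controlled through these slices (tangent currents along $H$). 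Then \cite{DinhLawrence} (Siu for rectifiable positive $\ddc$-closed currents) gives $R=\sum \lambda_i[V_i]+(\text{harmonic parts})$, where locally $R=h[V]$ with $h\ge 0$ pluriharmonic on the curve $V$.

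\textbf{Step 3: compactness forces constants.} On each irreducible component $V_i$ of the support, $h$ is a positive pluriharmonic (harmonic on the normalization) function on a curve; I must show $V_i$ is compact in $X$ and $h$ is constant, which uses the maximum principle once we know $\overline{V_i}$ is analytic. Here projectivity enters: a current $h[V]$ with $\ddc$-closedness across $\overline V\setminus V$ and bounded mass — I would argue via the Bishop/El Mir–Skoda extension for positive $\ddc$-closed currents (Dinh–Sibony style) that $\overline V$ is an analytic curve, after which $h$ is a positive harmonic function on a compact Riemann surface minus a polar set, hence constant. Then $E_c$ equals the union of those $V_i$ with $\lambda_i\ge c$ plus finitely many points, hence analytic, and $T_0=T-\sum\lambda_i[V_i]$ is positive $\ddc$-closed with Lelong numbers vanishing off a countable set.

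The main obstacle is Step 2: proving that restricting $T$ to the high-density set $E$ preserves $\ddc$-closedness (the analogue of Skoda–El Mir fails in general for $\ddc$-closed currents). I would attack it with tangent currents: at $\mathcal H^2$-a.e. point of $E$ the tangent current of $T$ should be $\nu[L]$ for a complex line $L$, and I would use slicing by generic hyperplane sections of a projective embedding to transfer the problem to a comparison of measures, where the $\ddc$-closed condition becomes a harmonicity statement verifiable via the density theory.
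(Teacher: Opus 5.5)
\textbf{The gap is Step 2, plus an unproved finiteness claim at the end.} Steps 1 and 3 are fine. Step 2, however, is the heart of the matter, and you give no mechanism for it. Since $E=\{\nu(T,\cdot)>0\}$ is not known to be analytic, cut-off results such as Theorem \ref{t:current-cut} are unavailable. Proving that $\ind_E T$ is $\ddc$-closed is therefore a Siu/Skoda--El Mir type statement about as hard as the theorem itself.

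Rectifiability of $\ind_E(T\wedge\omega)$ is the easy part. Even so, it needs Preiss's theorem together with the existence of $\nu(T,x)$ as a limit; a lower density bound on a set of finite $\mathcal H^2$-measure is not enough.

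The two ideas you sketch for $\ddc$-closedness do not close the gap.
\begin{itemize}
\item Tangent-current information at $\mathcal H^2$-almost every point of $E$ cannot, by itself, distinguish $E$ from its Borel subsets $F$. Indeed $T$ and $\ind_F T$ have the same tangent currents at almost every point of $F$. Yet for $T=[V]$, with $V$ a compact curve and $F$ a disc in $V$, the current $\ind_F T$ is not $\ddc$-closed.
\item Slicing by hypersurfaces produces measures, for which $\ddc$-closedness is vacuous. So your ``harmonicity statement'' would have to come from how the slices vary with $H$, and you do not say how.
\end{itemize}

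Your last step is also unsupported. The residual $T_0$ gives no mass to $\{\nu(T_0,\cdot)>0\}$, but a null set need not be countable. The finiteness of $\{\nu(T_0,\cdot)\ge c\}$ is itself a Siu-type assertion that needs proof.

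\textbf{What the paper does instead.} The missing idea is to build new $\ddc$-closed currents from the self-intersection of $T$, rather than restricting $T$.
\begin{itemize}
\item It first reduces to a compact K\"ahler surface. This uses generic central projections $\pi_j:\P^n\to\P^2$ and $(\pi_j)_\bullet$, with the Lelong-number comparison in the Claim, after first reducing $T$ to have no mass on proper analytic subsets of an irreducible $Y$.
\item It then takes a tangent current $\T$ of $T\otimes T$ along the diagonal. By Theorem \ref{t:tangent}, $\T$ is positive, $\ddc$-closed, conic, and of $h$-dimension at most $1$. This is where compactness enters, through the Forn\ae ss--Sibony decomposition and Lemma \ref{L:identity}.
\item If $h=1$, then $T':=(\pi_0)_*\widetilde\T$ is positive $\ddc$-closed with $T'\wedge\omega=\nu(T,\cdot)\,T\wedge\omega$ (Lemma \ref{l:mu-mu'}). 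Iterating $T\mapsto T'$, after normalizing the critical level to $1$, yields a nonzero positive $\ddc$-closed current $T_\infty$. Its trace measure is $T\wedge\omega$ restricted to the closed set $\{\nu(T,\cdot)\ge1\}$, which has finite $\mathcal H^2$-measure. Only at this point is \cite[Theorem 3.1]{DinhLawrence} applied, giving a curve to which $T$ is then restricted via Theorem \ref{t:current-cut}.
\item If $h=0$, then $\T=\int\pi_\infty^*(\widetilde\T_x)\,d\vartheta(x)$ with $\nu(\T,(x,x))=\vartheta(\{x\})$. The inequality $\nu(\T,(x,x,0))\ge\nu(T,x)^2$ of Theorem \ref{T:Lelong} (Sections \ref{s:tangent-blow-up}--\ref{s:lelong}) confines $E_c$ to the finitely many atoms of $\vartheta$ of mass at least $c^2$. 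This is what supplies your ``finitely many points''.
\end{itemize}
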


We call $\sum_{i\in I} \lambda_i[V_i]$ {\it the analytic part} and $T_0$ {\it the non-analytic part} of $T$.
It is worth comparing the decomposition  of Theorem \ref{t:main_1} with the following result of Chiose and Toma.
\begin{theorem}{\rm (Chiose-Toma \cite[Proposition 2.2]{ChioseToma})}
 Let $X$ be a compact complex surface.
Let $T$ be a positive $\ddc$-closed current of bidimension $(1,1)$ on $X.$
Then,   we have the decomposition
$$T=\sum_{i\in I} \lambda_i[V_i] +T_0,$$
where $\{V_i\}_{i\in I}$ is a (possibly empty) finite or countable family of compact analytic curves in $X,$ $\lambda_i\in\R^+,$ and $T_0$ is a positive $\ddc$-closed current of bidimension $(1,1)$ which satisfies the following two conditions:
\begin{enumerate}
\item $T_0$ does not give mass to any compact analytic  curve on $X;$
\item $T_0$ is the weak limit of a sequence of positive  $\ddc$-closed smooth $(1,1)$-forms on $X.$
\end{enumerate}
\end{theorem}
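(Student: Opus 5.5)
The plan is to build the decomposition by peeling off the mass of $T$ carried by compact curves. Property (1) then holds by construction, and property (2) will come from a Hahn--Banach duality argument combined with Demailly's regularization of quasi-plurisubharmonic functions.

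\emph{Step 1: the analytic part.} Let $C\subset X$ be an irreducible compact curve. I would use the result of Alessandrini--Bassanelli on restrictions of positive $\ddc$-closed currents to analytic sets: the trivial extension $\ind_C T$ is again positive and $\ddc$-closed. Since $C$ has dimension equal to the bidimension of $T$, one gets $\ind_C T=h[C]$ with $h\geq 0$ pluriharmonic on $C_{\rm reg}$, i.e.\ harmonic on the normalization $\widetilde C$. As $\widetilde C$ is a compact Riemann surface, $h$ is constant by the maximum principle, so $\ind_C T=\lambda_C[C]$ with $\lambda_C\geq 0$. Fix a Hermitian form $\omega$ on $X$. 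The mass $\int T\wedge\omega$ is finite, so only countably many $C$ have $\lambda_C>0$, and $\sum\lambda_C\int_C\omega<\infty$. Set $T_0:=T-\sum_C\lambda_C[C]=\ind_{X\setminus\bigcup C}T$. This current is positive and $\ddc$-closed, since each $[C]$ is closed and the series converges in mass. By construction it gives no mass to any compact curve.

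\emph{Step 2: reduction of (2) to a dual statement.} Let $\mathcal K$ be the weak closure of the convex cone of smooth positive $\ddc$-closed $(1,1)$-forms. Suppose $T_0\notin\mathcal K$. Hahn--Banach then gives a smooth real $(1,1)$-form $\alpha$ with $\int T_0\wedge\alpha<0$ and $\int\beta\wedge\alpha\geq 0$ for every $\beta\in\mathcal K$. Following Lamari's duality argument, the last condition should imply that the class of $\alpha$ modulo $\ddc$-exact currents contains a positive current. Concretely, there is a quasi-psh function $u$ with $\alpha+\ddc u\geq 0$. The pairing against $\ddc$-closed forms only sees $\alpha$ modulo $\ddc$-exact terms, and a second application of Hahn--Banach on $L^1$ / distributions produces $u$.

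\emph{Step 3: regularization and contradiction.} By Demailly's regularization theorem there are smooth $u_\varepsilon\downarrow u$ with $\alpha+\ddc u_\varepsilon\geq-\lambda_\varepsilon\omega$. Here the continuous functions $\lambda_\varepsilon$ decrease pointwise to the Lelong number $\nu(u,\cdot)$. By Siu's theorem, $\nu(u,\cdot)$ is positive only on a countable union of compact curves and points. Since $T_0$ is $\ddc$-closed, $\int T_0\wedge\ddc u_\varepsilon=0$, hence
$$\int T_0\wedge\alpha=\int T_0\wedge(\alpha+\ddc u_\varepsilon)\geq-\int\lambda_\varepsilon\, T_0\wedge\omega .$$
By monotone convergence, the right-hand side tends to $-\int\nu(u,\cdot)\,T_0\wedge\omega$. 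This vanishes, because the trace measure of $T_0$ charges neither compact curves nor points; points are handled by $\ddc$-closedness, since Lelong numbers of bidimension $(1,1)$ currents give zero point mass. Thus $\int T_0\wedge\alpha\geq 0$, a contradiction, so $T_0\in\mathcal K$.

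\emph{Main obstacle.} The delicate step is the duality in Step 2: identifying the dual cone of $\mathcal K$ with the forms $\alpha$ admitting $u$ such that $\alpha+\ddc u\geq0$. This needs closedness of the relevant cones and the right topology on $\ddc$-exact currents; Gauduchon metrics on the non-Kähler surface should give the necessary compactness. A secondary technical point is the restriction result in Step 1 for $\ddc$-closed (rather than closed) currents.
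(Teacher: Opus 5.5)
The paper gives no proof of this statement. It is quoted from Chiose--Toma only for comparison with Theorem~\ref{t:main_1}, so there is no in-text proof to measure your argument against.

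Judged on its own, your argument is correct. It is the standard duality proof: cut off the curve part, then use Hahn--Banach, Lamari's lemma, Demailly's regularization with Lelong-number control, and Siu's theorem. The step you call the main obstacle is exactly Lamari's lemma (from \emph{Courants k\"ahl\'eriens et surfaces compactes}) for $n=2$. It says that a smooth real $(1,1)$-form $\alpha$ satisfies $\alpha+\ddc u\geq 0$ for some distribution $u$ if and only if $\int\alpha\wedge\beta\geq 0$ for every smooth positive $\ddc$-closed $\beta$. Its proof is the separation argument you anticipate. Compactness comes from the weak closedness of the range of $\ddc$ and from a Gauduchon form $\omega_G$, for which $\int\ddc u\wedge\omega_G=0$.

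Three points deserve a line each.
\begin{itemize}
\item Step~1 needs no K\"ahler hypothesis. The proof of Theorem~\ref{t:current-cut} works on any compact surface, because for $(1,1)$-currents the relevant test form is $\omega^0=1$ and $\langle\ddc T',1\rangle=0$. When you pass to the normalization, you should also say why $h$ has no term $a\log|t|$ at preimages of singular points of $C$: such a term would give $\ddc(h[C])$ a negative Dirac mass there.
\item Your separating form $\alpha$ is not closed, so you should invoke Demailly's regularization in its quasi-psh form. That version only uses $\ddc u\geq-\alpha$ with $\alpha$ continuous. Similarly, Siu's theorem should be applied locally to the psh function $u+\phi$, where $\ddc\phi\geq\alpha$.
\item To get a sequence rather than a net, note that $\int\beta\wedge\omega_G$ converges along the approximating net. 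Hence the net eventually lies in a mass-bounded set, which is metrizable.
\end{itemize}

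For comparison, the paper's own decomposition (Theorem~\ref{t:main_1-bis}) comes from a completely different mechanism. It uses tangent currents to $T\otimes T$ along $\Delta$, the inequality $\nu(\T,(x,x,0))\geq\nu(T,x)^2$, the iteration $T\mapsto T'$, and the Dinh--Lawrence support theorem.
\begin{itemize}
\item The paper's route controls the Lelong numbers of the residual current, which your duality argument says nothing about. However, it is carried out only for K\"ahler surfaces and does not address approximation.
\item Your route works on every compact complex surface and gives approximability by smooth positive $\ddc$-closed forms.
\end{itemize}
In both cases the analytic part is the restriction of $T$ to the curves it charges, so the two decompositions coincide on K\"ahler surfaces, as the paper remarks.
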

   Chiose-Toma's  decomposition  is valid for all compact complex  surfaces. When $X$ is K\"ahler, it coincides with the one in Theorem \ref{t:main_1}.  Note that   Theorem \ref{t:main_1}  provides an additional characterization of $T_0$  in terms of the  Lelong numbers $\nu(T_0,x).$

The following corollary illustrates an application of Theorem \ref{t:main_1} for foliation theory. The case of currents directed by a singular holomorphic foliation by Riemann surfaces has been obtained in \cite{DinhNguyenSibony22}.

\begin{corollary} \label{c:main_1}
Let $X$ be a  projective  manifold. Let $A_k$ with $k\in \N$ be Borel subsets of $X$ of finite $2$-dimensional Hausdorff measure.  
Let $T$ be a positive $\ddc$-closed current of bidimension $(1,1)$ on $X$. Assume that $T $ does not give mass outside the set $\bigcup_{k\in\N} A_k$.
Then 
$$T=\sum_{i\in I} \lambda_i[V_i],$$
where $\{V_i\}_{i\in I}$ is a (possibly empty) finite or countable family of compact analytic curves in $X$ and $\lambda_i\in\R^+$.
\end{corollary}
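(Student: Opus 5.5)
We apply the decomposition of Theorem \ref{t:main_1} to $T$, writing $T=\sum_{i\in I}\lambda_i[V_i]+T_0$. Since every $[V_i]$ and $T_0$ are positive and $\lambda_i>0$, the support hypothesis passes to each piece: $T_0\le T$ as positive currents, so the trace measure $\sigma_{T_0}$ is dominated by $\sigma_T$. Hence $\sigma_{T_0}$ gives no mass outside $\bigcup_k A_k$. The claim is therefore reduced to showing $T_0=0$. We use two facts about $T_0$: its Lelong number vanishes outside a countable set $D$, and by construction it gives no mass to compact curves. Countable sets have zero mass for the trace measure of a positive $\ddc$-closed current of bidimension $(1,1)$, since $\sigma(B(x,r))=O(r^2)$, which forces $\sigma(\{x\})=0$. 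So $\sigma_{T_0}$ is carried by $\bigcup_k (A_k\setminus D)$.

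The key step is a density estimate for $\mathcal H^2$-finite sets. Fix $k$ and the Borel set $A:=A_k\setminus D$. At every $x\in A$ we have $\nu(T_0,x)=0$, which means
\[
\lim_{r\to0}\frac{\sigma_{T_0}(B(x,r))}{r^2}=0 .
\]
The Lelong number exists for positive $\ddc$-closed currents by the standard Lelong–Jensen type formula, which is presumably used in the paper. We then apply the classical comparison lemma between measures and Hausdorff measures: if $\mu$ is a Radon measure with $\limsup_{r\to0}\mu(B(x,r))/r^2\le t$ for all $x\in A$, then $\mu(A)\le C\,t\,\mathcal H^2(A)$. Since $t$ can be taken arbitrarily small and $\mathcal H^2(A)<\infty$, we get $\sigma_{T_0}(A)=0$.

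Summing over $k$ then gives $\sigma_{T_0}(\bigcup_kA_k)=0$. Combined with the hypothesis, this yields $\|T_0\|=0$, so $T_0=0$ and $T=\sum\lambda_i[V_i]$.

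The main obstacle is making the density comparison rigorous in local charts. The Lelong number is defined through coordinate balls, or through a Kähler-form normalization, and the lemma must be applied on a covering by charts with bi-Lipschitz distortion. Distortion affects only constants, so vanishing is preserved. One should also check that the Lelong number of $T_0$ really equals the pointwise limit of $\sigma_{T_0}(B(x,r))/(\pi r^2)$, and not merely a $\liminf$. If only a $\liminf$ were available, I would instead use the monotonicity-type formula for $\ddc$-closed currents, which gives $\sigma(B(x,r))/r^2\le C\,\nu(x)+o(1)$ up to a bounded error term. Since $\limsup$ is what the comparison lemma requires, that bound suffices.
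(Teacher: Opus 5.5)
Your proof is correct, but it takes a genuinely different route from the paper's.

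You use the \emph{conclusion} of Theorem \ref{t:main_1} as a black box and finish with measure theory alone:
\begin{enumerate}
\item $T_0\le T$ forces the trace measure $\sigma_{T_0}$ to live on $\bigcup_k A_k$.
\item Vanishing of the Lelong number on $A_k\setminus D$ means $\sigma_{T_0}$ has zero upper $2$-density there. Your $\liminf$/$\limsup$ worry is unnecessary: by \eqref{e:Jensen} the function $r\mapsto\nu(T_0,x,r)$ is increasing, so the Lelong number is a true limit.
\item The upper-density comparison $\mu(A)\le C\,t\,\mathcal H^2(A)$, letting $t\to 0$, gives $\sigma_{T_0}(A_k\setminus D)=0$.
\item Since $\sigma_{T_0}$ has no atoms, $\sigma_{T_0}(D)=0$.
\end{enumerate}
The property that $T_0$ charges no curve is never used.

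The paper instead goes back into the machinery. It assumes the analytic part of $T$ is trivial. It then uses the density principle in the opposite direction to find a subset of positive $\mathcal H^2$-measure in some $A_k$ on which $\nu(T,\cdot)$ is bounded below. It transports this set to $\P^2$ with the central projections $\pi_j$; these are Lipschitz, so $\mathcal H^2$-finite and $\mathcal H^2$-null sets are preserved. Combined with the Claim from the proof of Theorem \ref{t:main_1}, this gives a set of positive $\mathcal H^2$-measure where $\nu((\pi_j)_\bullet T,\cdot)$ is bounded below. The paper then reruns the tangent-current dichotomy for $(\pi_j)_\bullet T$. The case $h=0$ is excluded because that set of large Lelong numbers is infinite. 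The case $h=1$ produces, via Proposition \ref{p:main_2}, a curve carried by $(\pi_j)_\bullet T$, which the paper then uses to contradict the triviality of the analytic part of $T$.

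What your route buys:
\begin{itemize}
\item It is much shorter.
\item It needs no quantitative lower bound on $\nu(T,\cdot)$ along $A_k$.
\item It shows the corollary is a formal consequence of any Siu-type decomposition whose residual Lelong set is countable. In particular it holds verbatim on every compact K\"ahler surface via Theorem \ref{t:main_1-bis}.
\end{itemize}
The paper's route derives the statement from the intermediate tangent-current results rather than from the final decomposition.
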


In the setting of singular foliations by Riemann surfaces, leaves are often Zariski dense in the phase space. By the last corollary, such leaves cannot support positive $\ddc$-closed currents. In other words, if a leaf supports a positive $\ddc$-closed current, it should be an algebraic leaf.

Let $X$ be a compact K\"ahler manifold of dimension $n.$  For a $\ddc$-closed $(p,q)$-current $T$  on   $X,$ let $\{T\} $
denote its cohomology class in $H^{p,q}(X,\C).$ The
pseudoeffective cone $\mathcal E\subset H^{1,1}(X,\R)$ is the closed convex cone given by the set of pseudoeffective
classes, i.e., the classes that contain a closed positive $(1,1)$-current. We denote by  $\mathcal E_\ddc \subset  H^{1,1} (X, \R)$
the convex cone generated by positive $\ddc$-closed  $(1, 1)$-currents. Clearly, $\mathcal E\subset \mathcal E_\ddc.$
In $H^{n-1,n-1} (X,\R)$ there are two important cones. The ﬁrst cone   is called the {\it movable cone} $\mathcal M,$ which is deﬁned as
the closed convex cone generated by classes of the form $\mu_* (\{\tilde\beta_1\} \smile \ldots \smile\{\tilde \beta_{n-1}\} ), $ where
$\mu : \widetilde X \to X$ is some smooth modiﬁcation and $\{\tilde\beta_i\}$ are K\"ahler classes on $\widetilde X.$
A cohomology class  $\{\alpha\}\in H^{n-1,n-1}(X,\R)$ is called {\it movable}   if   $\{\alpha\}\in \mathcal M.$
The
cohomology class associated to a curve in $X$ will lie in $\mathcal M$ if and only if it moves in an analytic
family which covers $X$ (see \cite{BDPP}); such a curve is called {\it movable.}

Consider the  natural pairing (sometimes called the
Poincar\'e pairing)
between $H^{1,1} (X, \R)$ and $H^{n-1,n-1} (X, \R)$ given by $\{\alpha\}\smile \{ \beta\}:= \int_X \alpha \wedge \beta.$

We say  that a Hermitian metric is {\it balanced} if  $\omega^{n-1}$
is closed where $\omega$ is  its associated $(1,1)$-form. Note that $\omega^{n-1}$ is a stricly positive $(n-1, n-1)$-form. Using basic
linear algebra one can prove that any strictly positive $(n- 1, n - 1)$-form $\Omega$ can be
written in a unique way as $\omega^{n-1}$ for some Hermitian metric. We are in the position to define the second important cone in $H^{n-1,n-1} (X,\R).$ More specifically, the cone of classes of
closed strictly positive $(n-1, n-1)$-forms $\Omega$  is thus called the {\it balanced cone,} denoted
by $\mathcal B.  $   A cohomology class  $\{\alpha\}\in H^{n-1,n-1}(X,\R)$ is called  {\it balanced} if    $\{\alpha\}\in \overline{\mathcal B}.$
We collect here  basic results in this  context.
\begin{theorem}\label{T:FuXiao-WN}
Let $X$ be a compact K\"ahler manifold. Then the following properties hold:
 \begin{enumerate}
  \item {\rm  (Fu-Xiao \cite[Remark 3.4]{FuXiao})} The cones  $\mathcal E_{\ddc}$ and $\overline {\mathcal B}$  are dual by the Poincar\'e pairing.
 \item {\rm  (Fu-Xiao \cite[Theorem A.2]{FuXiao} and  Toma \cite{Toma})} If $\mathcal E$ and $\mathcal M$ are dual, then  $\mathcal E=\mathcal E_{\ddc}$ and  $ \mathcal M=\overline{ \mathcal B}.$
  \item {\rm  (Witt Nystr\"om  \cite[Theorem A and Corollary A]{WN})} If $X$ is moreover projective then    $\mathcal E$ and $ \mathcal M$  are dual by the Poincar\'e pairing, in particular, we have $ \mathcal M=\overline{ \mathcal B}.$
 \end{enumerate}

\end{theorem}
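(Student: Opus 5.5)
For the three items I would give separate arguments. Item (1) is a Hahn--Banach argument. Item (2) is formal duality once one inclusion of cones is known. Item (3) is the deep input, which I would import rather than reprove.

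\emph{Item (1).} First I check that classes in $\overline{\mathcal B}$ pair nonnegatively with $\mathcal E_\ddc$. If $T$ is a positive $\ddc$-closed $(1,1)$-current and $\Omega$ is a closed strictly positive $(n-1,n-1)$-form, then $\int_X T\wedge\Omega\ge0$. This pairing depends only on the classes involved: changing $\Omega$ by $d$-exact terms, which are $\ddc$-exact by the $\ddc$-lemma on the Kähler manifold $X$, does not change it, because $\ddc T=0$. Passing to limits handles the closure. For the converse, let $\{\alpha\}\in H^{n-1,n-1}(X,\R)$ pair nonnegatively with $\mathcal E_\ddc$, and fix $\epsilon>0$. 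Suppose no closed form in $\{\alpha\}+\epsilon\{\omega^{n-1}\}$ is strictly positive. The real closed forms in this class form an affine space $\alpha+\epsilon\omega^{n-1}+\ddc(\text{real }(n-2,n-2)\text{-forms})$, again by the $\ddc$-lemma. This affine space is disjoint from the open convex cone of strictly positive forms. Hahn--Banach then gives a nonzero positive $(1,1)$-current $T$ that vanishes on $\ddc$ of all $(n-2,n-2)$-forms, so $\ddc T=0$. It also satisfies $\langle T,\alpha+\epsilon\omega^{n-1}\rangle\le 0$. Since $\langle T,\alpha\rangle\ge0$ and $\langle T,\omega^{n-1}\rangle>0$, this is a contradiction. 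Letting $\epsilon\to0$ shows $\{\alpha\}\in\overline{\mathcal B}$. Closedness of $\mathcal E_\ddc$ follows from weak compactness of positive $\ddc$-closed currents of bounded mass.

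\emph{Item (2).} Assume $\mathcal E^*=\mathcal M$. Since $\mathcal E\subset\mathcal E_\ddc$, item (1) gives $\overline{\mathcal B}=\mathcal E_\ddc^*\subset\mathcal E^*=\mathcal M$. The key step is the reverse inclusion $\mathcal M\subset\overline{\mathcal B}$. By item (1), it suffices to show that $\int_X T\wedge \mu_*(\tilde\beta_1\wedge\dots\wedge\tilde\beta_{n-1})\ge0$ for every positive $\ddc$-closed $T$ and every smooth modification $\mu:\widetilde X\to X$. I would do this by constructing a pull-back $\mu^*T$ that is a positive $\ddc$-closed current on $\widetilde X$ representing $\mu^*\{T\}$. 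The construction is: write $T=\theta+\partial \bar S+\bar\partial S$ with $\theta$ smooth and closed, pull back the smooth part, and control the correction across the exceptional set. This follows the method of Toma and of Alessandrini--Bassanelli for pluriharmonic currents under modifications. The pairing then becomes $\int_{\widetilde X}\mu^*T\wedge\tilde\beta_1\wedge\dots\wedge\tilde\beta_{n-1}\ge0$. With $\mathcal M=\overline{\mathcal B}$ established, dualizing and using that all the cones are closed gives $\mathcal E_\ddc=\overline{\mathcal B}^*=\mathcal M^*=\mathcal E$.

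\emph{Item (3).} This is Witt Nyström's transcendental extension of BDPP on projective manifolds. I would not reprove it. The approach there is to differentiate volumes: $\frac{d}{dt}\mathrm{vol}(\alpha+t\gamma)|_{t=0}=n\langle\alpha^{n-1}\rangle\cdot\gamma$. This is done via Okounkov bodies, approximating transcendental big classes by rational ones, which is where projectivity is used. One then shows that a class on the boundary of $\mathcal E$ is detected by a movable class of the form $\langle\alpha^{n-1}\rangle$. The consequence $\mathcal M=\overline{\mathcal B}$ then follows from item (2).

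The main obstacle is the pull-back step in item (2). Positivity of $\mu^*T$ near the exceptional locus is not automatic for $\ddc$-closed currents, since they lack local potentials.
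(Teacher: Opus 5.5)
Your proposal is correct. The paper itself gives no proof of this theorem: it quotes the three items from Fu--Xiao, Toma and Witt Nystr\"om. So the only meaningful comparison is with those sources, and your reconstruction follows them.

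\emph{Item (1).} This is the standard Hahn--Banach/$\ddc$-lemma argument, in the style of Michelsohn's characterization of balanced manifolds, and your details are right:
\begin{enumerate}
\item The shift by $\epsilon\{\omega^{n-1}\}$ is what produces the contradiction.
\item Boundedness of the separating functional on the affine space forces $\ddc T=0$.
\item Closedness of $\mathcal E_{\ddc}$ holds because the mass $\{T\}\smile\{\omega^{n-1}\}$ is cohomological. This is what lets you pass from $\overline{\mathcal B}=\mathcal E_{\ddc}^*$ to full duality.
\end{enumerate}

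\emph{Item (2).} Your argument actually proves $\mathcal M\subset\overline{\mathcal B}$ unconditionally, which is Toma's observation. The duality hypothesis is therefore used only for $\overline{\mathcal B}\subset\mathcal M$ and for the final identification $\mathcal E_{\ddc}=\mathcal E$. The single non-formal input is the Alessandrini--Bassanelli theorem: a positive $\ddc$-closed $(1,1)$-current $T$ pulls back under a modification to a positive $\ddc$-closed current lying in the class $\mu^*\{T\}$. Read $\int_X T\wedge\mu_*(\cdots)$ as the cohomological pairing $\{T\}\smile\mu_*\{\cdots\}$, evaluated by the projection formula.

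Be aware that the cheap construction does not suffice. Extending $\mu^*T$ by zero across the exceptional set, as in Proposition \ref{p:mass-pullback}, gives a positive $\ddc$-closed current, but in general in the wrong class. For example, with $T=[H]$ and $H$ passing through the blown-up centre, it loses the exceptional components. The positivity of the correction supported on the exceptional divisor is exactly what you are importing.

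\emph{Item (3).} You import \cite{WN}, just as the paper does. Your summary of its method is only background and nothing in your argument depends on it.
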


In  \cite{Nguyen21,Nguyen25}  the  second author  extended some results  of the theory of tangent currents initiated by the first author and  Sibony in \cite{DinhSibony18,DinhSibony18b} to   positive  $\ddc$-closed currents on any complex manifolds. This,  combined  with our study, also allows us to obtain the following result.

\begin{theorem} \label{t:main_2}
Let $X$ be a compact K\"ahler manifold.
 Let $T$ be a positive $\ddc$-closed current of bidimension $(1,1)$ on $X$ which does not give mass to any proper analytic set. Then the following properties hold:
 \begin{enumerate}
  \item The cohomology class $\{T\}$ of $T$ belongs to the dual of the cone $\mathcal E.$
  \item  If $\dim X=2$, then $\{T\}$ is nef. Moreover, it is  also big unless it is closed.
  \item  If $X$ is a complex projective  manifold and   if $T$ is a positive $\ddc$-closed current  of bidimension $(1,1)$ on $X$ which does not give mass to any complex hypersurface, then  $\{T\}$ is movable.
 \end{enumerate}
\end{theorem}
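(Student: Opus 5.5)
Theorem \ref{t:main_2} has three parts, and the plan treats them in order, reducing each to a positivity statement $\int_X T\wedge\alpha\ge 0$ for suitable closed forms $\alpha$ and then applying the duality results of Theorem \ref{T:FuXiao-WN}.

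\emph{Part (1).} We must show that $\{T\}\smile\{S\}\ge 0$ for every positive closed $(1,1)$-current $S$ on $X$. The tool is Demailly's regularization. Write $S=\theta+\ddc\varphi$ with $\theta$ smooth and closed, and approximate $\varphi$ by quasi-psh functions $\varphi_k\searrow\varphi$ with analytic singularities, so that $\theta+\ddc\varphi_k\ge-\epsilon_k\omega$ with $\epsilon_k\to0$. Since $T$ is $\ddc$-closed, the pairing $\{T\}\smile\{S\}=\int T\wedge\theta$ depends only on the class of $S$. Next we compute
$$\int_X T\wedge(\theta+\ddc\varphi_k)=\int_X T\wedge\theta+\int_X \varphi_k\,\ddc T.$$
To make sense of $T\wedge\ddc\varphi_k$ we use that $\varphi_k$ is smooth outside the proper analytic set $Z_k=\{\varphi_k=-\infty\}$, where $T$ has no mass. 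We would define $T\wedge\ddc\varphi_k$ on $X\setminus Z_k$ and extend by zero; the extension is controlled by the hypothesis that $T$ gives no mass to $Z_k$. Alternatively, we truncate with $\max(\varphi_k,-M)$ and let $M\to\infty$. In either case this gives
$$\{T\}\smile\{S\}\ge\int_{X\setminus Z_k}T\wedge(\theta+\ddc\varphi_k)\ge -\epsilon_k\|T\|.$$
The main obstacle is justifying that the truncation error terms $\int T\wedge\ddc\max(\varphi_k,-M)$ near $Z_k$ do not contribute negatively in the limit. I expect to use that $T\wedge\ddc\psi$ for bounded quasi-psh $\psi$ is well defined (Bedford--Taylor type for $\ddc$-closed $T$, as in \cite{Nguyen21}) and to apply monotone convergence, with the no-mass hypothesis killing the boundary contribution.

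\emph{Part (2).} When $\dim X=2$, $\{T\}$ lies in $H^{1,1}$ and in the dual of $\mathcal E$. For surfaces, nefness is equivalent to nonnegative pairing with all pseudoeffective classes, since on a surface the nef cone is dual to the pseudoeffective cone; so $\{T\}$ is nef. For bigness we use that a nef class is big iff $\{T\}^2>0$. If $\{T\}^2=0$, we expect to show that $T$ must be closed. Write $T=\Theta+\ddc u$ with $\Theta$ a closed form representative via $\partial\dbar$-decomposition. Then $\int T\wedge T$-type quantities involve $\|\partial T\|^2$: for $\ddc$-closed $T$, $\int i\partial T\wedge\dbar T$ measures non-closedness, and $\{T\}^2=\int T\wedge T$ minus a nonnegative term (Hodge index/Chiose--Toma style computation). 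With the regularization of Chiose--Toma, $\{T\}^2=0$ then forces $\partial T=0$.

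\emph{Part (3).} Here $X$ is projective and $T$ gives no mass to hypersurfaces. We rerun Part (1), now only needing that $T$ charges no $Z_k$. On a projective manifold, Demailly's approximation can be chosen with $Z_k$ contained in hypersurfaces; for divisorial singularities, the analytic singularities of the $\varphi_k$ lie in a divisor after blow-up, or equivalently one uses Bergman-kernel approximations whose poles are zero sets of sections, hence hypersurfaces. This gives $\{T\}$ in the dual of $\mathcal E$, and Theorem \ref{T:FuXiao-WN}(3) then yields $\{T\}\in\mathcal E^\vee=\mathcal M$, so $\{T\}$ is movable.
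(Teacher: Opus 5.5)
\textbf{Part (2), bigness: genuine gap.} The bigness step of Part (2) fails as written.

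\begin{itemize}
\item A $\ddc$-closed current that is not closed cannot be written $\Theta+\ddc u$ with $\Theta$ closed, since such a sum is $d$-closed. You need the Forn\ae ss--Sibony decomposition of Proposition~\ref{p:FS}.
\item On a surface, $i\partial T\wedge\dbar T$ has bidegree $(3,3)$ and vanishes identically. Non-closedness is measured by the energy $E(T)=\int_X\dbar S\wedge\partial\overline S\ge 0$.
\item Most importantly, your sign is reversed. For smooth $T$, Lemma~\ref{L:identity} with $\Phi\to[\Delta]$ (or two integrations by parts) gives
\[
\int_X T\wedge T=\{T\}^2-2E(T),\qquad\text{i.e.}\qquad \{T\}^2=\int_X T\wedge T+2E(T).
\]
With your ``$\{T\}^2=\int T\wedge T$ minus a nonnegative term'', the assumption $\{T\}^2=0$ only says that $\int T\wedge T$ equals some nonnegative number. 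That forces nothing.
\item For singular $T$, the term $\int T\wedge T$ must be given a meaning together with its sign.
\end{itemize}

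The paper handles the last point as follows. Theorem~\ref{t:main_1-bis} shows that $\{\nu(T,\cdot)>0\}$ is a countable union of analytic sets of dimension $\le 1$, which $T$ does not charge. Then \cite[Corollary 2.4]{DinhNguyenSibony22} applies, and the nonnegative mass of the tangent measure $\vartheta$ of Theorem~\ref{t:tangent-DNS} plays the role of $\int T\wedge T$.

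Your Chiose--Toma idea can be repaired once the sign is fixed. Since $T$ charges no curve, \cite{ChioseToma} gives smooth positive $\ddc$-closed forms $T_j\to T$ weakly, with $\{T_j\}^2=\int T_j\wedge T_j+2E(T_j)\ge 2E(T_j)$. Hence $\dbar S_j$ is bounded in $L^2$. Moreover $\dbar S$ depends linearly on $T$: it is the $\dbar$-exact solution of $\partial\beta=-\dbar T$. Weak lower semicontinuity of the $L^2$ norm then gives $\{T\}^2\ge 2E(T)$, which is $>0$ unless $T$ is closed. Your nefness argument, via (1) and the duality of the nef and pseudoeffective cones on a compact K\"ahler surface, is fine.

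\textbf{Part (1): correct, by a different route.} The paper takes a tangent current to $T\otimes S$ along $\Delta$. It shows that $h$-dimension $1$ would produce the nonzero measure $\nu(S,\cdot)\,T\wedge\omega$, which is impossible by Siu's theorem and the no-mass hypothesis. When $h=0$ it reads $\{T\}\smile\{S\}$ as the mass of a positive measure.

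To close your argument you need no Bedford--Taylor theory for $\ddc$-closed currents. That theory is not standard, since $\ddc(\psi T)$ contains $d\psi\wedge\dc T$. Also, plain $\max(\varphi_k,-M)$ is not $(\theta+\epsilon_k\omega)$-psh when $\theta\not\ge0$.

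Instead, take $\chi_M$ smooth, convex and increasing, with $0\le\chi_M'\le 1$, $\chi_M(t)=t$ for $t\ge -M+1$, and $\chi_M\equiv -M$ for $t\le -M-1$. Since $\varphi_k$ is smooth off $Z_k$ and tends to $-\infty$ on $Z_k$, $\chi_M(\varphi_k)$ is smooth. So $\int T\wedge\theta=\int T\wedge(\theta+\ddc\chi_M(\varphi_k))$, and
\[
\theta+\ddc\chi_M(\varphi_k)\ \ge\ \big(1-\chi_M'(\varphi_k)\big)\theta-\epsilon_k\omega\ \ge\ -C\,\mathbf{1}_{\{\varphi_k<-M+1\}}\,\omega-\epsilon_k\omega .
\]
As $M\to\infty$ the error tends to $C\,(T\wedge\omega)(Z_k)=0$, and then $k\to\infty$ gives $\{T\}\smile\{S\}\ge 0$.

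Your route is more elementary. The paper's route identifies the exact defect $\nu(S,\cdot)\,T\wedge\omega$ within the tangent-current framework it uses throughout.

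\textbf{Part (3).} This part is fine, but the Bergman-kernel discussion is superfluous. The poles there are common zero sets of sections, which are contained in hypersurfaces but are not hypersurfaces themselves. It suffices that every proper analytic subset of a projective manifold lies in a hypersurface, as the paper observes.
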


\bigskip\noindent
{\bf Outline of the paper.} In order to prove the main results, we reduce the problem to the case of dimension 2 by using holomorphic projections from $X$ onto $\P^2$. This step will be presented in Section \ref{s:proofs}. In the case of dimension 2, our main theorem and corollary hold for any compact K\"ahler surface $X$. The key idea is that if $\nu(T,\cdot)$ is positive on a set of positive dimension, then the intersection of $T$ with itself should have a dimension excess because the expected dimension of the intersection of two $(1,1)$-currents in a complex surface is zero. We use the theory of density for currents to study this property via the tangent currents of $T\otimes T$ along the diagonal $\Delta$ of $X\times X$, see Sections \ref{s:tangent} and \ref{s:lelong}. 
Using these tangent currents, we construct by induction a sequence of positive $\ddc$-closed currents which allows us to extract the analytic part of $T$ and complete the proof of the main results. This will be carried out in Section \ref{s:surfaces}.

\bigskip\noindent
{\bf Main notation.} Let $\D$ and $r\D$ denote respectively the unit disc and the disc of center 0 and radius $r$ in $\C$. Denote by $\B_n$ and $r\B_n$
the unit ball and the ball of center 0 and radius $r$ in $\C^n$. The ball of center $a$ and radius $r$ in $\C^n$ is denoted by $\B_n(a,r)$. For simplicity, we may drop the index $n$ from these notations.

We often use $x,y$ to denote points in $X$ or local coordinates on $X$. 
For local coordinates, we fix a finite atlas of $X$ whose charts are identified to the ball $10\B_n$. Furthermore, we choose this atlas so that
$X$ is covered by open sets which are identified to the balls ${1\over 4}\B_n$ via local coordinates. A neighbourhood of the diagonal $\Delta$ of $X\times X$ is then covered by open sets which are identified to ${1\over 4}\B_n \times {1\over 4}\B_n$.

Recall that $d,$ $\dc$ are real  differential operators on complex manifolds satisfying
$d=\partial+\overline\partial,$  $\dc= {1\over 2\pi i}(\partial -\overline\partial) $ and
$\ddc={i\over \pi} \partial\overline\partial.$  
The notations $\lesssim$ and $\gtrsim$ stand for inequalities up to a positive multiplicative constant. The pairing $\langle \cdot,\cdot\rangle$ often denotes the value of a current on a test form. It is often equal to an integral on the manifold where the current is defined.

\bigskip
\noindent
{\bf Acknowledgments. }  
This work is supported by the grants A-8002488-00-00  and A-8003576-00-00  from
the National University of Singapore (NUS), the Labex CEMPI (ANR-11-LABX-0007-01), the CDP C2EMPI, the project QuaSiDy (ANR-21-CE40-0016),
the France-2030 programme and the R-CDP-24-004-C2EMPI project. 
The paper was prepared during the visits of the authors at the University of Lille, the Vietnam  Institute for Advanced Study in Mathematics (VIASM) and the NUS. They would like to express their gratitude to these organizations for hospitality and support.

\section{Background on positive $\ddc$-closed currents} \label{s:current}

In this section, we will recall some basic properties of positive $\ddc$-closed currents.
We refer the reader to \cite{Demailly, Lelong57, Lelong68, Skoda} for details. 

\medskip\noindent
{$\bullet$ \bf Lelong number of positive $\ddc$-closed currents.} Let $X$ be a complex manifold of dimension $n$, not necessarily compact. Let $x$ be a local coordinate system around a point $a$ of $X$ so that we can identify a neighbourhood of $a$ in $X$ to the ball $10\B_n$ of $\C^n$. 
Consider a current $T$ of bidimension $(p,p)$ and of order 0 on $X$. 
Define for $r>0$ small enough,
\begin{equation}\label{e:nu(T,a,r)} \nu(T,a,r):={1\over\pi^{n-p} r^{2p}}\int_{\B_n(a,r)} T\wedge (\ddc \|x\|^2)^{p}
\end{equation}
and 
\begin{equation}\label{e:Lelong}
\nu(T,a):=\lim_{r\to0+} \nu(T,a,r) 
\end{equation}
provided that the last limit exists.

Assume now that $T$ is positive and $\ddc$-closed. In \cite[Prop.\,1]{Skoda}, using Lelong-Jensen identity, Skoda obtained that  
\begin{equation} \label{e:Jensen}
\nu(T,a,r)-\nu(T,a) = 2^{p}\int_{\B_n(a,r)\setminus\{a\}} T\wedge (\ddc\log\|x\|)^{p}.
\end{equation} 
As a consequence, the function $r\mapsto \nu(T,a,r)$ is increasing and the above limit $\nu(T,a)$ exists and
is  a non-negative finite number which is called the {\it Lelong number} of $T$ at $a$. It is also easy to deduce that the function $a\mapsto \nu(T,a)$ is upper semi-continuous for the usual topology. Therefore, we also have the following lemma, see \cite[Lemma B.1]{DinhNguyenSibony22}.

\begin{lemma} \label{l:lelong}
Let $T$ be a positive $\ddc$-closed current of mass $1$ on $X$.  Then there is a constant $c>0$ such that
$$\nu(T,x,r) \leq c \quad \text{and} \quad \nu(T,x)\leq c \quad \text{for} \quad \|x\|\leq 5 \quad \text{and} \quad r\leq 4.$$
\end{lemma}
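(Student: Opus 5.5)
The idea is to get a uniform bound on the masses of small balls from the global mass bound. This uses the monotonicity of $r\mapsto \nu(T,a,r)$ given by Skoda's formula (\ref{e:Jensen}). After that, the bound on $\nu(T,x)$ is immediate, because $\nu(T,x)=\lim_{r\to 0}\nu(T,x,r)\le \nu(T,x,r)$ for every admissible $r$. So it is enough to bound $\nu(T,x,r)$ for $\|x\|\le 5$ and $r\le 4$, working in a chart identified with $10\B_n$. Here "mass $1$" means the mass is measured against a fixed Hermitian form on $X$ or, locally, against $(\ddc\|x\|^2)^p$ on the chart.

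The first step is to fix a point $a$ with $\|a\|\le 5$ and a radius $r\le 4$. Then $\B_n(a,r)\subset \B_n(a,4)\subset 9\B_n$. Since $r\mapsto\nu(T,a,r)$ is increasing, we would like to write $\nu(T,a,r)\le \nu(T,a,4)$. The catch is that formula (\ref{e:Jensen}) was stated for $r$ "small enough". We therefore need monotonicity on the whole range $r\le 4$, not just near $0$. This holds because (\ref{e:Jensen}) comes from the Lelong–Jensen identity, which is valid as long as $\B_n(a,r)$ stays inside the coordinate domain $10\B_n$. On this range the right-hand side of (\ref{e:Jensen}) is a positive measure integrated over an increasing family of sets, since $T\wedge(\ddc\log\|x-a\|)^p\ge 0$ off $a$. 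Hence monotonicity holds for all $r\le 4$.

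The second step is the crude estimate at the largest radius:
$$\nu(T,a,4)=\frac{1}{\pi^{n-p}4^{2p}}\int_{\B_n(a,4)}T\wedge(\ddc\|x\|^2)^p\le \frac{1}{\pi^{n-p}4^{2p}}\int_{9\B_n}T\wedge(\ddc\|x\|^2)^p.$$
The right-hand side is bounded by a constant times the total mass of $T$, which equals $1$. This uses that on the relatively compact set $9\B_n$ the form $(\ddc\|x\|^2)^p$ is dominated by a constant multiple of $\omega^p$ for the fixed Hermitian form $\omega$. The constant depends only on the chart and not on $a$ or $T$. If the finite atlas has several charts, we take the maximum of these constants, which gives the desired uniform $c$.

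The only point needing care is the one in the first step: checking that Skoda's identity, and hence monotonicity, holds up to radius $4$ around every $a$ with $\|a\|\le 5$. Everything else is a direct comparison of positive measures.
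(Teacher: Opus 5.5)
Your proof is correct and follows essentially the argument the paper relies on (via Lemma B.1 of Dinh--Nguyen--Sibony). Skoda's formula \eqref{e:Jensen} holds as long as $\B_n(x,r)$ stays inside the chart, so $r\mapsto\nu(T,x,r)$ is increasing up to $r=4$, which gives $\nu(T,x)\le\nu(T,x,r)\le\nu(T,x,4)\lesssim\|T\|$ with a constant depending only on the finitely many charts. Your observation that this constant is independent of $T$ matters: Proposition \ref{p:current-dec-Lelong} uses the lemma in exactly that linear form.
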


Recall that by a theorem of Alessandrini-Bassanelli \cite[Theorem II]{AlessandriniBassanelli96}, the Lelong number $\nu(T,a)$ is independent 
of the choice of local coordinates near the point $a$. Therefore, it is well-defined for positive $\ddc$-closed currents on any complex manifold.


\begin{proposition} \label{p:current-dec-Lelong}
Let $X$ be a compact complex manifold and $T$ be a positive $\ddc$-closed $(p,p)$-current on $X$.
Let $(T_i)_{i\in I}$ be a finite or countable family of positive $\ddc$-closed $(p,p)$-currents on $X$ such that $T=\sum_{i\in I} T_i$. Then
for every $c>0$ there are $c'>0$ and a finite subset $I'\subset I$ such that if $a$ is a point such that $\nu(T,a)\geq c$ then $\nu(T_i,a)\geq c'$ for some $i\in I'$.
\end{proposition}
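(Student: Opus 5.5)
Since the Lelong number is additive, $\nu(T,a)=\sum_{i\in I}\nu(T_i,a)$ for every $a$. The plan is to split $I$ into a finite head and an infinite tail, and to show that the tail has uniformly small Lelong numbers because its total mass is small.

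First, additivity. The quantity $\nu(\cdot,a,r)$ in \eqref{e:nu(T,a,r)} is linear in the current. For positive currents the series $\sum_i T_i$ converges in mass, so monotone convergence gives $\nu(T,a,r)=\sum_i\nu(T_i,a,r)$ for each small $r$. Each $T_i$ is positive and $\ddc$-closed, so $r\mapsto\nu(T_i,a,r)$ is increasing by \eqref{e:Jensen}. Monotone convergence as $r\to0^+$, applied to a sum of decreasing nonnegative terms, needs a little care, but the inequality $\nu(T,a)\le\sum_i\nu(T_i,a)$ is what we need. It follows as soon as the tail sum is uniformly controlled, which is the next step.

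Second, a uniform bound for the tail. Fix a Hermitian form $\omega$ on $X$ and put $m_i:=\int_X T_i\wedge\omega^{n-p}$. Then $\sum_i m_i=\|T\|<\infty$. By Lemma~\ref{l:lelong}, applied in the finitely many charts of the fixed atlas and rescaled by linearity, there is a constant $C$ such that every positive $\ddc$-closed current $S$ on $X$ satisfies $\nu(S,x,r)\le C\|S\|$ for all $x\in X$ and $r\le 4$.

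Choose a finite set $I'\subset I$ with $\sum_{i\notin I'}m_i\le c/(2C)$, and set $S:=\sum_{i\notin I'}T_i$. This current is positive and $\ddc$-closed, with $\|S\|\le c/(2C)$. Hence for every $a$ and every small $r$,
$$\sum_{i\notin I'}\nu(T_i,a,r)=\nu(S,a,r)\le c/2,$$
and so $\nu(S,a)\le c/2$.

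Third, conclusion. Write $T=\sum_{i\in I'}T_i+S$, which is a finite sum. Lelong numbers of finitely many positive $\ddc$-closed currents add, since the limits exist termwise. So if $\nu(T,a)\ge c$, then
$$\sum_{i\in I'}\nu(T_i,a)\ge c-\nu(S,a)\ge c/2.$$
Hence some $i\in I'$ satisfies $\nu(T_i,a)\ge c/(2|I'|)$, and we may take $c':=c/(2|I'|)$; if $I'$ is empty, the inequality shows this case cannot occur.

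The only step requiring real care is the uniform constant $C$ in the second step. It depends on compactness of $X$ and on the uniform bound of Lemma~\ref{l:lelong}. That bound holds for currents of mass $1$, so by homogeneity it holds with constant proportional to the mass, and by the finite atlas the constant is uniform over $x\in X$.
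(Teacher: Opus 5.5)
Your proposal is correct and follows the paper's proof. You choose a finite $I'$ so that the tail $\sum_{i\notin I'}T_i$ has small mass, use Lemma~\ref{l:lelong} (rescaled by mass) to bound the tail's Lelong numbers by $c/2$ everywhere, and conclude by finite additivity and pigeonhole. Your first step on countable additivity is not needed, and your constant $c'=c/(2|I'|)$ is the right one; the paper's choice $c':=c/|I'|$ drops the factor $2$.
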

\proof
For a finite subset $I'\subset I$ define $T'=\sum_{i\in I'} T_i$ and $T'':=\sum_{i\in I\setminus I'} T_i$. 
Fix a set $I'$ sufficiently large so that the mass of $T''$ is small enough. By Lemma \ref{l:lelong}, we have $\nu(T'',\cdot)\leq c/2$ everywhere. It follows that $\nu(T',a)\geq c/2$ or equivalently $\sum_{i\in I'} \nu(T_i,a)\geq c/2$. Set $c':=c/|I'|$. It is clear that $\nu(T_i,a)\geq c'$ for some $i\in I'$. 
\endproof

\smallskip\noindent
{$\bullet$ \bf Decomposition theorem for positive $\ddc$ closed currents.} We will need the following classical results.

\begin{theorem} \label{t:current-cut}
Let $X$ be a compact K\"ahler manifold and $Y$ be a proper analytic subset of $X$. Let $T$ be a positive $\ddc$-closed $(p,p)$-current on $X$. Then we can write
$T=T'+T''$ where $T'$ and $T''$ are positive $\ddc$-closed $(p,p)$-currents on $X$ such that $T'$ has no mass on $Y$ and $T''$ is supported by $Y$. We say that $T''$ is the restriction of $T$ to $Y$.
\end{theorem}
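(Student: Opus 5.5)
The natural candidate is $T'':=\ind_Y T$ and $T':=\ind_{X\setminus Y}T$. Here $T$ has measure coefficients, so the restriction of $T$ to a Borel set is a well-defined positive current of order $0$. Positivity of both pieces and $T=T'+T''$ are immediate. The support condition $\supp T''\subset Y$ is also clear, as is the fact that $T'$ has no mass on $Y$. The whole content of the statement is therefore that $T''$ (equivalently $T'$) is again $\ddc$-closed. This is the $\ddc$-analogue of the Skoda--El Mir extension theorem, and it is the step I expect to be the main obstacle.

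First I reduce to the local statement that $T':=\ind_{X\setminus Y}T$ is $\ddc$-closed near every point of $Y$; away from $Y$ it coincides with $T$. The argument is by induction on the dimension of $Y$. Using the stratification $Y\supset Y_{\rm sing}\supset\cdots$, I assume the result holds near $Y_{\rm sing}$ and work near regular points of $Y$. Locally I pick a bounded psh exhaustion function in the spirit of El Mir, for instance $u=\log\bigl(\sum|f_j|^2\bigr)$ for local generators $f_j$ of the ideal of $Y$. Then I take cut-offs $\chi_k=\chi(u/k)$, where $\chi$ is convex increasing, equal to $0$ near $-\infty$ and to $1$ near $0$. The functions $\chi_k$ are plurisubharmonic-like, vanish near $Y$, and increase to $\ind_{X\setminus Y}$. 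The currents $\chi_kT$ converge to $T'$, so it suffices to show that $\ddc(\chi_kT)\to0$. Expanding,
$$\ddc(\chi_kT)=\ddc\chi_k\wedge T+ d\chi_k\wedge \dc T - \dc\chi_k\wedge dT,$$
using $\ddc T=0$. The first term $\ddc\chi_k\wedge T$ has mass bounded uniformly in $k$ by the usual Chern--Levine--Nirenberg/Skoda argument for $\ddc$-closed currents. Its support shrinks to $Y$, and its coefficient is of order $1/k$ on the region $\{-2k<u<-k\}$. The mixed terms involving $dT$ are the genuine difficulty, because $T$ is not closed.

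To handle the mixed terms, I would use the fact that for a positive $\ddc$-closed current the form $\partial T$ is, by the Cauchy--Schwarz inequality, controlled through $i\partial\chi_k\wedge\dbar\chi_k\wedge T$. Concretely, I would pair $\ddc(\chi_kT)$ against a test form $\varphi$ and integrate by parts so that all derivatives fall on $\chi_k$ and $\varphi$. The result is $\langle T,\chi_k\ddc\varphi\rangle+2\langle T,d\chi_k\wedge \dc\varphi\rangle$ up to terms of the form $\langle T,\chi_k \ddc\varphi\rangle$ and $\langle T,\ddc\chi_k\wedge\varphi\rangle$. The first converges to $\langle T,\ind_{X\setminus Y}\ddc\varphi\rangle$. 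The gradient term is estimated by Cauchy--Schwarz, $|\langle T, d\chi_k\wedge\dc\varphi\rangle|^2\lesssim \langle T, i\partial\chi_k\wedge\dbar\chi_k\rangle\cdot \int_{\{\chi_k'\neq0\}}\|T\|$. Here the first factor is bounded, because $i\partial\chi_k\wedge\dbar\chi_k\le \ddc(\chi_k^2)$ and $\chi_k^2$ has the same structure as $\chi_k$. The second factor tends to $0$ because the sets shrink to $Y$ and, by the inductive hypothesis together with local finiteness of mass, $T$ has no mass there beyond $\ind_Y T$, which lies outside the support of $\chi_k'$.

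This yields $\langle T',\ddc\varphi\rangle=\lim\langle T,\ddc\chi_k\wedge\varphi\rangle$. The limiting current $S:=\lim\ddc\chi_k\wedge T$ is a positive-like current of order $0$ supported on $Y$, and it is of bidimension $(p-1,p-1)$ relative to $T$ after wedging with $\varphi$. The remaining point is to show $S=0$. For this I would use the support theorem for normal/flat currents on the analytic set $Y$ together with the sign freedom: replacing $\chi$ by different convex profiles changes $S$ by positive multiples, which forces $S=0$. Alternatively, I would note that the dimension of $Y$ is less than that of $X$ and use a Federer-type argument. If this direct approach stalls, I would instead invoke the known result of Alessandrini--Bassanelli/Dinh--Sibony that $\ind_Y T$ is $\ddc$-closed for positive $\ddc$-closed currents. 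Compactness and the Kähler hypothesis are then used only to guarantee the global finite mass needed in the Cauchy--Schwarz step.
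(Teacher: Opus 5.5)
There is a genuine gap. It sits in your reduction to a local statement: you claim that $\ind_{X\setminus Y}T$ is $\ddc$-closed near each point of $Y$, and that compactness and the K\"ahler condition only serve to give finite mass. That local statement is false, so no argument with local cut-offs $\chi(u/k)$ can establish it. Here is a counterexample. On the bidisc $\{|z_1|<1,\ |z_2|<1\}\subset\C^2$ with $Y=\{z_2=0\}$, take
\[
T:=(1+|z_1|^2)\,[Y]+(1-\log|z_2|)\,\ddc|z_1|^2 .
\]
Both terms are positive. Since $\ddc\log|z_2|=[Y]$ and both $[Y]$ and $\ddc|z_1|^2$ are closed, $\ddc T=\ddc|z_1|^2\wedge[Y]-[Y]\wedge\ddc|z_1|^2=0$. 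The second term has $L^1_{\rm loc}$ coefficients, so $\ind_Y T=(1+|z_1|^2)[Y]$ and $\ddc(\ind_{X\setminus Y}T)=-[Y]\wedge\ddc|z_1|^2\neq 0$.

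This breaks the later steps of your outline as well:
\begin{itemize}
\item In your notation, $S=\lim\ddc\chi_k\wedge T=[Y]\wedge\ddc|z_1|^2\neq0$, and this limit does not depend on the profile $\chi$. So the ``sign freedom'' argument is empty.
\item No support theorem applies, since $S$ has real dimension $2(n-p-1)<2\dim Y$ whenever $T$ charges $Y$.
\item The fallback you quote, that $\ind_Y T$ is $\ddc$-closed, is false in this local form. On a compact K\"ahler manifold it is just the statement to be proved.
\item Independently, your Cauchy--Schwarz step fails as written. For $\chi_k=\chi(u/k)$ with $\chi$ convex increasing, the set $\{\chi_k'\neq 0\}$ has the form $\{-ck<u<0\}$. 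This exhausts a punctured neighbourhood of $Y$ rather than shrinking to $Y$. Also, a convex increasing $\chi$ cannot be constant near $0$.
\end{itemize}

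What holds locally is only a sign. By Sibony's extension theorem, the trivial extension $T'$ of $T|_{X\setminus Y}$, which has finite mass, is positive with $\ddc T'\le 0$. This is the most your computation could give, namely $\ddc T'=-S$, as in the example. The missing idea is a global step that uses compactness and $d\omega=0$ in an essential way. Since $\ddc T'$ is a negative $(p+1,p+1)$-current,
\[
\|\ddc T'\|=-\langle \ddc T',\omega^{n-p-1}\rangle=-\langle T',\ddc(\omega^{n-p-1})\rangle=0 .
\]
Hence $T'$ and $T'':=T-T'$ are positive and $\ddc$-closed; this is exactly the paper's argument. The example above shows that this step cannot be replaced by a purely local one.
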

\proof
Let $n$ be the dimension of $X$ and $\omega$ be a K\"ahler form on $X$.
Let $T'$ be the restriction of $T$ to $X\setminus Y$. Since $T'$ has a finite mass, it can be extended by 0 to a positive current, still denote by $T$, such that $\ddc T'\leq 0$, see \cite{Sibony}. The mass of $\ddc T'$ with respect to this K\"ahler metric is equal to
$$\|\ddc T'\| = -\langle \ddc T',\omega^{n-p-1}\rangle = -\langle T', \ddc(\omega^{n-p-1})\rangle =0$$
because $\omega$ is closed. Hence, $T'$ is $\ddc$-closed. Define $T'':=T-T'$. It is clear that $T''$ is supported by $Y$. It is positive by definition of $T'$ and it is $\ddc$-closed because both $T$ and $T'$ are $\ddc$-closed.
\endproof

\begin{corollary} \label{c:current-dec}
Let $X$ be a compact K\"ahler manifold of dimension $n$ and $T$ be a positive $\ddc$-closed $(p,p)$-current on $X$. Then there is a finite or countable family $(Y_i)_{i\in I}$ of proper irreducible analytic subsets of dimension $\geq n-p$ of $X$, a positive $\ddc$-closed current $T_0$ having no mass on proper analytic subsets of $X$, and a family of non-zero positive $\ddc$-closed currents $(T_i)_{i\in I}$ such that $T=T_0+\sum_{i\in I} T_i$. The current $T_i$ is supported by $Y_i$ and has no mass on any analytic subset of $X$ which is smaller than $Y_i$. Moreover, if $\dim Y_i=n-p$ then $T_i$ is equal to a constant times the current of integration on $Y_i$.
\end{corollary}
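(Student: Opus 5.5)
The plan is to apply Theorem \ref{t:current-cut} repeatedly, through a transfinite-free exhaustion by analytic subsets of decreasing dimension. First I will show that the restriction of $T$ to a proper irreducible analytic set $Y$ either vanishes or contributes a summand of the required kind.

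First, observe that there are at most countably many proper irreducible analytic subsets $Y$ of $X$ such that $\ind_Y T\neq 0$. To see this, fix a K\"ahler form $\omega$. The mass $\langle \ind_Y T,\omega^{n-p}\rangle$ is positive for each such $Y$. Distinct irreducible sets of the same dimension $k$ intersect in a set of dimension $<k$. So, proceeding by descending dimension, we first remove the restriction of $T$ to all sets of larger dimension. Then, for a fixed dimension $k$, the restrictions to distinct $k$-dimensional irreducible sets of the remaining current have disjoint supports up to lower-dimensional sets, on which the remaining current has no mass (handled below). Their masses therefore sum to at most $\|T\|$, so only countably many are non-zero.

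The construction goes by induction on $k=n-1,n-2,\dots,n-p$. Set $S_{n}:=T$. Given $S_{k+1}$, a positive $\ddc$-closed current with no mass on analytic sets of dimension $>k$, let $(Y_i)$ be the countably many irreducible $k$-dimensional sets where $S_{k+1}$ has mass. For each $i$ apply Theorem \ref{t:current-cut} to $S_{k+1}$ and $Y_i$; this gives $T_i:=\ind_{Y_i}S_{k+1}$, which is positive and $\ddc$-closed. It has no mass on smaller analytic subsets of $Y_i$, because such a subset has dimension $<k$; here I would also need to iterate the lower-dimensional stage inside. The cleaner approach is to define $T_i$ as the restriction to $Y_i$ minus the restriction to the singular/lower-dimensional parts, where the latter are exactly the parts removed at later stages.

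Concretely, I would define $S_k:=S_{k+1}-\sum_i \ind_{Y_i}S_{k+1}$. It is positive because the $Y_i$ overlap only in sets of dimension $<k$. To avoid double counting, I replace $\ind_{Y_i}$ by $\ind_{Y_i\setminus \bigcup_{j<i}Y_j}$. Since $\ind_{Y_i\cap Y_j}S_{k+1}$ is supported on an analytic set of dimension $<k$, a preliminary ordering is needed. It is simpler to first extract the lower-dimensional parts: do the induction in increasing dimension instead, $k=n-p,\dots,n-1$. At the stage of dimension $k$ the current has no mass on any analytic set of dimension $<k$, so the restrictions to distinct $Y_i$ are mutually singular. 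The terms are then $\ddc$-closed by Theorem \ref{t:current-cut}, and the remainder is $\ddc$-closed as a convergent countable difference of such terms.

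Sets of dimension $<n-p$ carry no mass of a positive $\ddc$-closed $(p,p)$-current; this is the standard support theorem for bidimension $(n-p,n-p)$ currents, valid for $\ddc$-closed currents as well. Its validity is what allows the induction to start at $k=n-p$. When $\dim Y_i=n-p$, a positive $\ddc$-closed current of bidimension $(n-p,n-p)$ supported on $Y_i$ is, on $\mathrm{reg}\,Y_i$, of the form $h[Y_i]$ with $h$ a positive pluriharmonic function. Since $Y_i$ is compact and irreducible, $h$ is constant; this uses the maximum principle on the normalization, with $h$ extending across the singular set as a bounded-below pluriharmonic function. I expect this last point, together with the book-keeping of mutual singularity in the induction, to be the main technical step.
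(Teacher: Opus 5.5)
Your final scheme (induction in increasing dimension from $n-p$, extracting with Theorem \ref{t:current-cut} the restrictions to the irreducible $k$-dimensional sets charged by the current, which are mutually singular because there is no mass in dimension $<k$, then the maximum principle when $\dim Y_i=n-p$) is essentially the paper's proof; the paper starts at the minimal dimension $q\geq n-p$ charged by $T$ and repeats. Do discard your preliminary countability claim and the descending induction: a current can charge uncountably many irreducible sets (a line in $\P^3$ lies in uncountably many planes), and in the last step, being bounded below only lets the psh function $-h$ extend across the singular locus, not $h$ as a pluriharmonic function, though that is all the maximum principle needs.
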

\proof
We can assume that $T$ is non-zero. The last assertion is clear because in this case, $Y_i$ is given by a positive pluriharmonic function on $Y_i$ and by maximum principle this function should be constant.

Let $q$ be the minimal integer such that $T$ has a positive mass on some analytic subset of dimension $q$ of $X$. 
We necessarily have $q\geq n-p$.
If $q=n$, the corollary is clear. Otherwise, let $(Y_i)_{i\in I_0}$ be the family of irreducible analytic subsets of dimension $q$ of $X$ where $T$ has positive masses. Since $q$ is minimal, if $Y_i$ and $Y_j$ are two different elements of this family, $T$ has no mass on $Y_i\cap Y_j$. 

Denote by $T_i$ the restriction of $T$ to $Y_i$ given by the last theorem. Since $T$ has a finite mass, it is not difficult to check that the family $(Y_i)_{i\in I_0}$ is finite or countable. Moreover, we have the decomposition $T=T'+\sum_{i\in I_0} T_i$ for some positive $\ddc$-closed $(p,p)$-current $T'$ on $X$ giving no mass to analytic subsets of dimension $\geq q+1$ of $X$.

To end the proof, it is enough to use the same argument to get a decomposition of $T'$ for a suitable dimension larger than $q$ and to repeat this step finitely many times until 
we get the case of zero current or the case $q=\dim X$.
\endproof

\smallskip\noindent
{$\bullet$ \bf Image of positive $\ddc$-closed currents by meromorphic maps.} Let $f:X\to X'$ be a meromorphic map between two compact K\"ahler manifolds of dimension $n$ and $n'$ respectively. Using a regularization of currents, we define the operator $f_\bullet$ acting on positive $\ddc$-closed currents on $X$. 

Let $\pi$ and $\pi'$ denote the projections from $X\times X'$ onto its factors $X$ and $X'$. 
Let $\Gamma$ denote the closure of the graph of $f$ in $X\times X$ which is an irreducible analytic subset of dimension $n$ in $X\times X$. Let $I$ be the smallest analytic subset of $X$ such that $\pi$ defines a biholomorphic map from $\Gamma\setminus \pi^{-1}(I)$ to $X\setminus I$. Denote by $\widetilde \pi$ the restriction of $\pi$ to $\Gamma\setminus \pi^{-1}(I)$.

\begin{proposition} \label{p:mass-pullback}
For every positive $\ddc$-closed current $T$ on $X$, the current $\widetilde\pi^*(T)$ in $(X\times X')\setminus \pi^{-1}(I)$ has a finite mass and its extension by $0$ is a positive $\ddc$-closed current on $X\times X'$. Moreover, if $\widetilde T$ denotes the last current, then $\|\widetilde T\|\leq c\|T\|$ for some constant $c>0$ independent of $T$. 
\end{proposition}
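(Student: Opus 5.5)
We want to show that $\widetilde T:=\widetilde\pi^*(T)$, extended by zero, has mass bounded by $c\|T\|$ and is positive and $\ddc$-closed on $X\times X'$. The plan is to first regularize $T$ by smooth positive $\ddc$-closed forms, then obtain a uniform mass bound for their pull-backs by a cohomological computation, and finally pass to the limit.

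\textbf{Step 1: regularization.} Since $X$ is compact K\"ahler, I would use a Demailly-type regularization (or convolution along a family of automorphisms/charts with a partition of unity) to write $T$ as a weak limit of smooth forms $T_k$. These forms should satisfy $T_k\geq -\epsilon_k\omega^{p}$, where $\omega$ is a fixed K\"ahler form and $\epsilon_k\to0$. Moreover $\ddc T_k\to 0$ in a controlled way, with $\|T_k\|\lesssim\|T\|$. A simpler alternative is to avoid regularization and work directly with $T$ restricted to $X\setminus I$. I would prefer this alternative.

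\textbf{Step 2: mass estimate.} The mass of $\widetilde T$ on $(X\times X')\setminus\pi^{-1}(I)$ is computed against $\Omega^{q}$, where $\Omega:=\pi^*\omega+\pi'^*\omega'$ is K\"ahler on $X\times X'$. Expanding $\Omega^q$, the mass becomes a sum of terms of the form
$$\int_{X\setminus I} T\wedge \omega^{j}\wedge (f^*\omega')^{q-j},$$
where $f^*\omega'$ is smooth on $X\setminus I$. The key input is that $f^*\omega'$ extends as a positive closed $(1,1)$-current on $X$ with bounded potentials off $I$. More precisely, I would take $\omega'$ and, where possible, replace $f^*\omega'$ by the smooth closed form $\pi_*(\pi'^*\omega'\wedge[\Gamma])$ plus $\ddc u$, with $u$ quasi-psh and smooth off $I$.

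To bound the integrals I would use an exhaustion $\chi_\epsilon$ of $X\setminus I$ by cut-off functions built from $\log\operatorname{dist}(\cdot,I)$. I would then integrate by parts, using $\ddc T=0$ and the positivity of the terms, so that the $\ddc u$ contributions come with a favourable sign or are controlled by $\|T\|\,\|u\|$.

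This is the main obstacle. The function $u$ is unbounded near $I$, and mixed products of the form $T\wedge \ddc u\wedge\cdots$ must be shown to have finite mass. What I would try first is to replace $u$ by $\max(u,-M)$ and use monotone convergence: the Chern--Levine--Nirenberg-type inequality for $\ddc$-closed currents, namely $\int T\wedge\ddc v\wedge\omega^{\ast}=\int v\,\ddc T\wedge\omega^{\ast}=0$ for bounded quasi-psh $v$, gives bounds independent of $M$. Alternatively, I would reduce to the case where $\Gamma$ is desingularized by blow-ups and argue inductively with the projection formula.

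\textbf{Step 3: extension and closedness.} Once the mass of $\widetilde T$ is finite, I would extend it by zero. By the Sibony-type result invoked in Theorem~\ref{t:current-cut}, this extension is positive with $\ddc\widetilde T\leq 0$ on $X\times X'$. Its mass is then
$$-\langle \ddc\widetilde T,\Omega^{q-1}\rangle=-\langle\widetilde T,\ddc\Omega^{q-1}\rangle=0,$$
exactly as in the proof of Theorem~\ref{t:current-cut}. Since $\ddc\widetilde T\leq 0$ and this mass vanishes, $\widetilde T$ is $\ddc$-closed.

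Linearity of the construction together with the bound from Step 2 gives $\|\widetilde T\|\le c\|T\|$, with $c$ depending only on $f$, $\omega$ and $\omega'$.
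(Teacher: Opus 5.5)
Your Step 3 is fine and matches the paper. The paper takes the sign of $\ddc\widetilde T$ from \cite{AlessandriniBassanelli93} and concludes because this current is $\ddc$-exact. Your formula $\|\widetilde T\|=\sum_j\binom{q}{j}\int_{X\setminus I}T\wedge\omega^j\wedge(f^*\omega')^{q-j}$ is also correct.

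The gap is Step 2. The bound of these integrals by $c\|T\|$ is the entire content of the proposition, and you never establish it. The tools you propose do not work for currents that are only $\ddc$-closed.

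\emph{Integration by parts.} Against a cut-off $\chi_\epsilon$, already for one factor and a closed smooth form $\beta$, you get $\langle T,\chi_\epsilon\,\ddc u\wedge\beta\rangle=-\langle T,(u\,\ddc\chi_\epsilon+d\chi_\epsilon\wedge\dc u+du\wedge\dc\chi_\epsilon)\wedge\beta\rangle$. These terms have no sign and are supported where $u$ and $du$ blow up, so the mass of $T$ does not control them. Take the model case of a point $I$ in a surface with $u\sim\log|y|$: even the uniform Lelong bound of Lemma~\ref{l:lelong} only gives $O(|\log\epsilon|)$ for the first term. A bound ``by $\|T\|\,\|u\|$'' is meaningless since $u$ is unbounded.

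\emph{The truncation $v=\max(u,-M)$.} For this non-smooth $v$ the product $T\wedge\ddc v$ is not defined. Bedford--Taylor's definition $\ddc(vT)$ relies on $dT=0$. For a $\ddc$-closed $T$ and smooth $v$ one has $\ddc(vT)=\ddc v\wedge T+dv\wedge\dc T-\dc v\wedge dT$, and the last two terms neither vanish nor have a sign. So the identity $\int T\wedge\ddc v\wedge\omega^{*}=0$ for bounded quasi-psh $v$ is not an available Chern--Levine--Nirenberg inequality; it is exactly what would have to be proved. For $q-j\geq 2$ you would also need $T\wedge\ddc v$ to be again positive and $\ddc$-closed, which is only clear for smooth $v$.

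\emph{Desingularizing $\Gamma$.} This fallback presupposes pulling back $\ddc$-closed currents by blow-ups, which is a special case of the statement itself.

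What is missing is a way to use $\ddc T=0$ without ever multiplying $T$ by a singular form. The paper does this by regularizing $T$ after all, the route you dropped in Step 1, in the precise form of \cite{DinhSibony04}. If $\|T\|\le1$, there are smooth positive $\ddc$-closed forms $S_k$ with $\|S_k\|\le c'$ converging to a positive $\ddc$-closed current $S\ge T$. What matters is exact positivity and exact $\ddc$-closedness, not $T_k\ge-\epsilon_k\omega^p$ with $\ddc T_k\to0$; the price is that convergence to $T$ is replaced by domination. With this, $\pi^*(S_k)\wedge[\Gamma]$ is a positive $\ddc$-closed current on $X\times X'$ whose mass depends only on its cohomology class. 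That class is bounded because the class of $S_k$ is. Any limit $\widehat S$ then satisfies $\widehat S\ge\widetilde\pi^*(S)\ge\widetilde\pi^*(T)$ outside $\pi^{-1}(I)$, which gives the mass bound, and your Step 3 finishes the proof.

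The same principle would repair your own set-up. Write $f^*\omega'=\theta+\ddc u$ on $X\setminus I$ with $\theta$ smooth and closed on $X$. Suppose that for every compact $K\subset X\setminus I$ you had a \emph{smooth} $v$ on $X$ with $\theta+C\omega+\ddc v\ge0$, $C$ independent of $K$, and $v=u$ near $K$. Then $\int_K T\wedge\omega^j\wedge(f^*\omega')^{q-j}\le\langle T,\omega^j\wedge(\theta+C\omega+\ddc v)^{q-j}\rangle=\langle T,\omega^j\wedge(\theta+C\omega)^{q-j}\rangle\lesssim\|T\|$, by positivity and $\ddc$-closedness. A plain $\max(u,-M)$ does not qualify because it is not smooth.
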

\proof
For simplicity, assume that $\|T\|\leq 1$. By \cite{DinhSibony04}, there is a sequence of smooth positive $\ddc$-closed forms $(S_k)_{k\geq 0}$ on $X$ converging to some positive $\ddc$-closed current $S$ such that $S\geq T$ and $\|S_k\|\leq c'$ for some constant $c'>0$ independent of $T$. Recall that the mass of a positive $\ddc$-closed current on a compact K\"ahler manifold only depends on its cohomology class. We deduce that the cohomology class of $S_k$ is bounded by a constant. It follows that the cohomology class of $\pi^*(S_k)\wedge [\Gamma]$ is also bounded by a constant. Since the current $\pi^*(S_k)\wedge [\Gamma]$ is positive $\ddc$-closed, its mass is bounded by a constant.

By taking a subsequence, we can assume that $\pi^*(S_k)\wedge [\Gamma]$ converges to some positive $\ddc$-closed current $\widehat S$ whose mass is bounded by a constant. 
It is clear that $\widehat S\geq \widetilde\pi^*(S)\geq \widetilde\pi^*(T)$. It follows that $\widetilde\pi^*(T)$ and hence $\widetilde T$ have masses bounded by a constant. It remains to show that $\widetilde T$ is $\ddc$-closed. 

By \cite{AlessandriniBassanelli93}, $\ddc\widetilde T$ is a positive current. On another hand,  its cohomology class vanishes because it is $\ddc$-exact. We deduce that this current vanishes. This ends the proof of the proposition.
\endproof

\begin{definition} \rm
Define $f_\bullet (T):= \pi'_*(\widetilde T)$. By the last proposition, $f_\bullet (T)$ is a positive $\ddc$-closed current with mass bounded by a constant times $\|T\|$. Observe also that $f_\bullet (T) = f_*(T):= \pi'_*(\pi^*(T)\wedge [\Gamma])$, when $T$ is smooth. 
\end{definition}

\medskip\noindent
{$\bullet$ \bf Forn\ae ss-Sibony decomposition and energy.}  Assume now that $X$ is a compact K\"ahler manifold and fix a K\"ahler form $\omega$ on $X$.
Assume also that $p=n-1$, that is, $T$ is of bi-dimension $(n-1,n-1)$ and of bi-degree $(1,1)$. We have the following property.

\begin{proposition}[{\cite[Prop. 2.6, 2.7 and Thm. 2.9]{FornaessSibony05}}] \label{p:FS}
Let $T$ be a positive $\ddc$-closed current of bidegree $(1,1)$ on a compact K\"ahler manifold $X$ as above. Then $T$ can be represented as
\begin{equation} \label{e:decompo_posi_har}
T=\Omega+\partial S+\overline{\partial S} + i\ddbar u
\end{equation}
where $\Omega$ is a smooth real closed $(1,1)$-form, $u$ is a real function of class $L^1$ and $S$ is a current of bi-degree $(0,1)$. Moreover,
$S,\overline S, \partial S, \partial\overline S, \dbar S$ and $\dbar\overline S$ are forms of class $L^2$. 
The $L^2$ forms $\dbar S$ and $\partial\overline S$ are uniquely determined by $T$; they do not depend on the choice of $\Omega,S$ and $u$.
\end{proposition}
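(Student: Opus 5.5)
The plan is to follow the standard Hodge-theoretic route on the compact K\"ahler manifold $X$. First we split off a correction term to make $T$ closed, and then we apply the $\ddc$-lemma to the resulting closed current. The only genuinely non-formal part is the $L^2$ regularity, and that is where positivity of $T$ has to enter.

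\emph{Step 1 (making $T$ closed).} Since $T$ is real and $\partial\dbar T=0$, the $(1,2)$-current $\dbar T$ is $\dbar$-exact and $\partial$-closed. By the $\partial\dbar$-lemma on compact K\"ahler manifolds, which holds for currents by the usual Hodge theory with the Green operator, there is a $(0,1)$-current $S$ with $\dbar T=\dbar\partial S$. We choose $S$ canonically by Green-operator formulas, for instance $S=\dbar^*G\,\partial^*G\,\dbar T$ up to sign conventions. Then $T-\partial S-\overline{\partial S}$ is real, of bidegree $(1,1)$, and $d$-closed: its $\dbar$ vanishes by the choice of $S$, and its $\partial$ vanishes by conjugation.

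\emph{Step 2 (closed part).} For the closed real $(1,1)$-current $T-\partial S-\overline{\partial S}$, we take $\Omega$ to be the harmonic representative of its class. The $\ddc$-lemma for closed currents then gives a real distribution $u$ with $T-\partial S-\overline{\partial S}-\Omega=i\ddbar u$. This yields the representation \eqref{e:decompo_posi_har}. That $u\in L^1$ will follow once the $L^2$ regularity of $S$ and $\partial S$ is known: then $\ddbar u$ equals a measure plus first derivatives of $L^2$ forms, and elliptic regularity for the Laplacian ($\Delta u$ is the trace against $\omega^{n-1}$) gives $u\in L^1$.

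\emph{Step 3 (uniqueness of $\dbar S$).} Suppose $(\Omega_j,S_j,u_j)$, $j=1,2$, are two decompositions, and set $D:=\dbar(S_1-S_2)$, a $(0,2)$-form. Applying $\dbar$ to the difference of the two identities gives $\dbar\partial(S_1-S_2)=0$, so $D$ is $\partial$-closed and $\dbar$-exact. By the $\partial\dbar$-lemma, $D=\partial\dbar R$ for some $R$. But $D$ has bidegree $(0,2)$, so $R$ would have to be of bidegree $(-1,1)$; hence $D=0$. By conjugation, $\partial\overline S$ is also uniquely determined.

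\emph{Step 4 (main obstacle: the $L^2$ estimates).} The Green-operator solution only gains one derivative over $\dbar T$, and $T$ is merely a measure, so a priori $S$ lies only in a negative Sobolev space. To get $L^2$ bounds, I would first regularize: by \cite{DinhSibony04}, approximate $T$ by smooth positive $\ddc$-closed forms $T_k$ with bounded mass, equivalently bounded cohomology class. For smooth data, normalize $S_k$ to be $\dbar^*$-orthogonal to harmonic forms and write $T_k=\Omega_k+\partial S_k+\overline{\partial S_k}+i\ddbar u_k$. Then compute
\[
\int_X i\,\dbar S_k\wedge\overline{\dbar S_k}\wedge\omega^{n-2}
\]
by integration by parts. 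Since $\partial\dbar S_k=-\dbar T_k$, this integral becomes $\int T_k\wedge(\cdot)$ paired with a closed form. Positivity of $T_k$ together with the boundedness of its class then bounds it by a constant times $\|T_k\|$. This is the key energy inequality, and I expect it to be the main difficulty: the correct sign and closedness bookkeeping, together with controlling the harmonic projections, is where the argument can fail. Once $\dbar S_k$ and $\partial\overline S_k$ are uniformly in $L^2$, ellipticity of $\dbar\oplus\dbar^*$ on the orthogonal complement of the harmonic forms gives uniform $L^2$ bounds on $S_k$ and on $\partial S_k$. We then pass to weak limits, using uniqueness from Step 3 to identify the limits, and obtain the $L^2$ statements for $T$.
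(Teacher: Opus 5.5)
\textbf{There is a genuine gap, and it sits in Step 4.} Your Steps 1--3 are correct, but Step 4 carries the whole content of the proposition (the paper itself only quotes Forn\ae ss--Sibony), and it has two concrete defects.

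\textbf{First defect: the energy mechanism is misdescribed.} Integrating by parts gives
\[
E(T_k)=-\int_X T_k\wedge\partial S_k\wedge\omega^{n-2}.
\]
The form $\partial S_k\wedge\omega^{n-2}$ is not closed, since $d\,\partial S_k=\dbar\partial S_k=\dbar T_k$, and it is not a priori bounded. So positivity of $T_k$ against it gives no control.

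The missing idea is to substitute $\partial S_k+\overline{\partial S_k}=T_k-\Omega_k-i\ddbar u_k$. After discarding exact terms and using $\ddc T_k=0$, you obtain the self-intersection identity (the same computation that underlies Lemma \ref{L:identity}):
\[
\int_X T_k\wedge T_k\wedge\omega^{n-2}=\int_X\Omega_k\wedge\Omega_k\wedge\omega^{n-2}-2E(T_k).
\]
The positivity actually used is $T_k\wedge T_k\wedge\omega^{n-2}\geq 0$, which holds pointwise for smooth positive $(1,1)$-forms; it is not positivity of $T_k$ against a closed form. This yields $2E(T_k)\leq\int_X\Omega_k\wedge\Omega_k\wedge\omega^{n-2}\lesssim\|T_k\|^2$, which is quadratic rather than linear in the mass. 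Since $E(T_k)$ is a positive multiple of $\|\dbar S_k\|_{L^2}^2$, this is the bound you need.

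\textbf{Second defect: the approximation you invoke is false.} A positive $\ddc$-closed $(1,1)$-current is in general not a weak limit of smooth positive $\ddc$-closed forms. Take $X$ to be the blow-up of $\P^2$ at a point and $T=[E]$ with $E^2=-1$. Suppose smooth positive $\ddc$-closed $T_k\to T$. The identity above gives $\{\Omega_k\}^2\geq 0$. On the other hand $\{\Omega_k\}\to\{E\}$, because the class of $\Omega_k$ is read off by pairing $T_k$ with closed forms (the terms $\partial S_k$, $\overline{\partial S_k}$, $i\ddbar u_k$ pair to zero). This contradicts $\{E\}^2=-1$.

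What \cite{DinhSibony04} actually provides (compare its use in Proposition \ref{p:mass-pullback}) is
\[
T=\lim_k (T_k^+-T_k^-),
\]
with $T_k^\pm$ smooth positive $\ddc$-closed forms and $\|T_k^\pm\|\lesssim\|T\|$. The repair is as follows:
\begin{enumerate}
\item Apply the energy bound to $T_k^+$ and to $T_k^-$ separately.
\item Use that your Green-operator solution $S(T)=\partial^*\dbar^*G^2\dbar T$ is linear and continuous on currents.
\item Conclude that $S(T)$ is the distributional limit of $S(T_k^+)-S(T_k^-)$, a sequence bounded in $W^{1,2}$.
\end{enumerate}

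\textbf{A smaller point on normalization.} The $W^{1,2}$ bound requires $\dbar^*S=0$ and $S\perp\mathcal{H}^{0,1}$. Your Step 1 formula gives both. Orthogonality to harmonic forms alone, as you phrase it in Step 4, would not suffice: $\dbar$-exact $(0,1)$-forms can be added to $S$ without changing the decomposition.
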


The representation \eqref{e:decompo_posi_har} is not unique but the uniqueness of the $L^2$-forms $\dbar S$ of bidegree $(0,2)$ allows Forn\ae ss and Sibony \cite[p. 968]{FornaessSibony05} to define the {\it energy} $E(T)$ of $T$ as
\begin{equation} \label{e:energy}
E(T):=\int_X\dbar S\wedge \partial\overline S\wedge\omega^{k-2}.
\end{equation}
This is a non-negative number which is independent of the choice of $\Omega, S$ and $u$.
It is not difficult to see that $E(T)=0$ if and only if $\dbar S=0$ and if and only if $T$ is closed, see \cite{FornaessSibony05} for details.

We can apply Forn\ae ss-Sibony's decomposition to study the tensor product of two positive $\ddc$-closed currents.
Let $T_1$ and $T_2$ be  positive $\ddc$-closed $(1,1)$-currents on  $X.$
By \eqref{e:decompo_posi_har}, we can write 
\begin{equation} \label{e:decompo_posi_har_T12}
T_j=\Omega_j+\partial S_j+\overline{\partial S_j} + i\ddbar u_j,
\end{equation}
where $\Omega_j$ is a closed real smooth $(1,1)$-form,  $u_j$ is a real function of class $L^1$ and  $S_j$ is a current of bi-degree $(0,1)$ such that
$S_j,\overline S_j, \partial S_j, \partial\overline S_j, \dbar S_j$, $\dbar\overline S_j$ are forms of class $L^2$. 
Recall from \cite[Lemma 3.1]{DinhNguyenSibony22} following elementary  result.

\begin{lemma}\label{L:identity}
Let  $T_1$ and $T_2$ be as in \eqref{e:decompo_posi_har_T12}.
Then for every closed smooth form $\Phi$ of bi-degree $(2,2)$ on $X\times X,$ we have
\begin{equation*}
\langle T_1\otimes T_2, \Phi \rangle =\langle\Omega_1\otimes \Omega_2, \Phi \rangle- \langle\dbar S_1\otimes \partial \overline{S}_2, \Phi \rangle -\langle\partial \overline{S}_1\otimes \dbar S_2, \Phi \rangle.
\end{equation*}
In particular, if $\Phi$ is  $d$-exact, we have
\begin{equation*}
\langle T_1\otimes T_2, \Phi \rangle = - \langle\dbar S_1\otimes \partial \overline{S}_2, \Phi \rangle -\langle\partial \overline{S}_1\otimes \dbar S_2, \Phi \rangle.
\end{equation*}
\end{lemma}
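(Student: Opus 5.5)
We prove Lemma \ref{L:identity} by expanding $T_1\otimes T_2$ with the decompositions \eqref{e:decompo_posi_har_T12} and discarding every term that vanishes against the closed form $\Phi$. Write $T_j=\Omega_j+R_j$ with $R_j:=\partial S_j+\overline{\partial S_j}+i\ddbar u_j$. Since $i\ddbar u_j=\tfrac{\pi}{1}\ddc u_j$ and $\ddc=d\dc$, this term is $d$-exact, namely $d$ of the $L^1$ current $\pi\dc u_j$. Then
$$T_1\otimes T_2=\Omega_1\otimes\Omega_2+\Omega_1\otimes R_2+R_1\otimes\Omega_2+R_1\otimes R_2.$$
Each term is a current on $X\times X$ of finite order, and pairing with smooth $\Phi$ makes sense because all coefficients are $L^1$ or $L^2$. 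To keep the signs and integrations by parts honest, we may first regularize $S_j,u_j$ by smoothing (for example convolution in charts with a partition of unity). The pairings are continuous in $L^2$ for the terms that involve only $S_j$ and its first derivatives, and in $L^1$ for $u_j$. So it suffices to verify the identity for smooth $S_j,u_j$ and pass to the limit.

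\textbf{Step 1 (terms with a $\ddc u_j$ factor).} If one factor is $\ddc u_j=d(\dc u_j)$ and the other factor $A$ is $d$-closed, then $A\otimes d(\dc u_j)=\pm d(A\otimes \dc u_j)$. Pairing with the closed form $\Phi$ gives $0$ by Stokes. The $\Omega_k$ are closed, and $R_k$ is $d$-closed as a total current because $\partial S_k+\overline{\partial S_k}$ may fail to be closed only through $\dbar\partial S_k$. So we must not split $R_k$ yet. Instead, we observe that $\partial S_j+\overline{\partial S_j}$ equals $d(S_j+\overline S_j)$ minus $(\dbar S_j+\partial\overline S_j)$.

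\textbf{Step 2 (reduction of the $S$ terms).} Set $\sigma_j:=S_j+\overline S_j$, a real $1$-current. Then
$$\partial S_j+\overline{\partial S_j}=d\sigma_j-\dbar S_j-\partial\overline S_j.$$
Hence
$$T_j=\Omega_j-\dbar S_j-\partial\overline S_j+d(\sigma_j+\pi\dc u_j).$$
Bidegree considerations force $\dbar S_j+\partial\overline S_j$ to be $d$-closed: $T_j-\Omega_j$ is $d$-closed only as a total current, but taking the $(0,2)$ and $(2,0)$ components of $dT_j=d\Omega_j=0$ is not quite what we need. Instead we use directly the fact that $T_j$ has bidegree $(1,1)$. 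This forces $\dbar S_j=0$ modulo the $(0,2)$-part of $d(\ldots)$; we will simply avoid needing closedness by keeping the exact piece on one side. Writing $T_j=C_j+dB_j$ with $C_j:=\Omega_j-\dbar S_j-\partial\overline S_j$ and $B_j:=\sigma_j+\pi\dc u_j$, we expand
$$T_1\otimes T_2=T_1\otimes C_2+T_1\otimes dB_2.$$
Since $T_1$ is $d$-closed? It is not; $T_1$ is only $\ddc$-closed. This is the main obstacle: the exact pieces can only be discarded against closed partners.

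\textbf{Step 3 (the main obstacle, handled by bidegree).} We use that $\Phi$ is of bidegree $(2,2)$ on $X\times X$. Its components on $X\times X$ have bidegrees $(a,b)\otimes(a',b')$ with $a+a'=b+b'=2$. We then pair the fully expanded product $(\Omega_1-\dbar S_1-\partial\overline S_1+dB_1)\otimes(\Omega_2-\dbar S_2-\partial\overline S_2+dB_2)$ with $\Phi$ and track the types.
\begin{itemize}
\item The cross terms $\Omega_1\otimes\dbar S_2$ and similar ones have total bidegree $(1,3)$ or $(3,1)$, so they pair to zero with a $(2,2)$ form.
\item The terms $\dbar S_1\otimes\dbar S_2$ are $(0,4)$ and also vanish.
\item The surviving mixed terms are exactly $\dbar S_1\otimes\partial\overline S_2$ and $\partial\overline S_1\otimes\dbar S_2$, each with a minus sign squared giving $+$.
\end{itemize}
Note that this yields $+$, whereas the lemma states $-$; we expect the sign to come out of moving the odd-degree exact pieces, and we will recheck it carefully.

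For the terms containing $dB_j$, we use the fact that the total current on each factor is $d$-closed in the relevant sense after pairing with the $d$-closed $\Phi$. Integrating by parts moves $d$ onto the other factor, producing $dC_k=-d(\dbar S_k+\partial\overline S_k)$. Since $T_k=C_k+dB_k$ has pure type $(1,1)$, this equals $\pm$ the pure-type components $\partial\dbar S_k$ and $\dbar\partial\overline S_k$. Paired with $B_j$, these give bidegree-mismatched or cancelling contributions. We expect exactly these terms to flip the sign of the $S$ contributions and yield the stated minus sign. This bookkeeping is where the work lies.

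Finally, if $\Phi=d\Psi$, we have $\langle\Omega_1\otimes\Omega_2,d\Psi\rangle=0$ by Stokes, because $\Omega_1\otimes\Omega_2$ is smooth and closed. This gives the second formula.
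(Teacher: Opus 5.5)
Your decomposition is the right one. With $C_j:=\Omega_j-\dbar S_j-\partial\overline S_j$ and $B_j:=S_j+\overline S_j+\pi\dc u_j$ you have $T_j=C_j+dB_j$. Your bidegree count $\langle C_1\otimes C_2,\Phi\rangle=\langle\Omega_1\otimes\Omega_2,\Phi\rangle+Q$, with $Q:=\langle\dbar S_1\otimes\partial\overline S_2+\partial\overline S_1\otimes\dbar S_2,\Phi\rangle$, is also correct. But the proof stops exactly where the content of the lemma lies, so as written it does not establish the lemma. You never compute the mixed terms $\langle C_1\otimes dB_2,\Phi\rangle$ and $\langle dB_1\otimes C_2,\Phi\rangle$. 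Your guess that they give ``bidegree-mismatched or cancelling contributions'' is false: if it were true, you would have proved the identity with $+Q$, i.e.\ with the wrong sign. The underlying error is already in Steps 1--2, where you assert that $R_k$, or $\dbar S_k+\partial\overline S_k$, is $d$-closed. In fact $dC_k=dT_k=-\ddbar S_k-\dbar\partial\overline S_k$, which vanishes only when $T_k$ is closed. This non-closedness is precisely what makes the mixed terms non-zero.

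The missing computation is short once you use that $\Phi$, being closed of pure bidegree $(2,2)$, satisfies $\partial\Phi=\dbar\Phi=0$ separately. This lets you move $\partial$ and $\dbar$ individually, not only $d$, across the pairing.
First, $dB_1\otimes dB_2=d(B_1\otimes dB_2)$ pairs to zero.
Next, $\langle dB_1\otimes C_2,\Phi\rangle=\langle B_1\otimes dC_2,\Phi\rangle=-\langle B_1\otimes(\ddbar S_2+\dbar\partial\overline S_2),\Phi\rangle$. By bidegree, only $\overline S_1\otimes\ddbar S_2$ and $S_1\otimes\dbar\partial\overline S_2$ survive, together with the analogous terms having $\partial u_1$, $\dbar u_1$ in place of $\overline S_1$, $S_1$.
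The $u_1$-terms vanish after moving $\partial$ (resp.\ $\dbar$) off $u_1$, because $\partial\ddbar S_2=0$ and $\dbar\dbar\partial\overline S_2=0$.
Moving $\partial$ back onto $\overline S_1$ and $\dbar$ onto $S_1$ gives $\langle\overline S_1\otimes\ddbar S_2,\Phi\rangle=\langle\partial\overline S_1\otimes\dbar S_2,\Phi\rangle$ and $\langle S_1\otimes\dbar\partial\overline S_2,\Phi\rangle=\langle\dbar S_1\otimes\partial\overline S_2,\Phi\rangle$. Hence $\langle dB_1\otimes C_2,\Phi\rangle=-Q$.
Symmetrically, $\langle C_1\otimes dB_2,\Phi\rangle=-\langle dC_1\otimes B_2,\Phi\rangle=-Q$.
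The total is $\langle\Omega_1\otimes\Omega_2,\Phi\rangle+Q-Q-Q$, which is the stated identity. Your regularization step and your treatment of the $d$-exact case are fine.
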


\section{Tangent currents for products of $\ddc$-closed currents}   \label{s:tangent}

In this section, we assume that $X$ is a compact K\"ahler surface and we fix a K\"ahler form $\omega$ on $X$. Consider two positive $\ddc$-closed $(1,1)$-currents $T_1$ and $T_2$ on $X$. Our aim is to study the intersection between these currents, possibly with a dimension excess. A particular case has been treated in \cite{DinhNguyenSibony22}. 

We will follow the same approach which consists to study the tensor product $T_1\otimes T_2$, which is a $(2,2)$-current on $X\times X$, along the diagonal $\Delta$ of $X\times X$. The details are given for the reader's convenience, see also  \cite{DinhNguyenSibony22}. Later, we will use the case where both $T_1$ and $T_2$ are equal to the current $T$ in our main theorem.

\medskip\noindent
{$\bullet$ \bf Existence of tangent currents.}
The tangent bundles of $X\times X$ and $\Delta$ are denoted by $\Tan(X\times X)$ and $\Tan(\Delta)$. The normal vector bundle of $\Delta$ in $X\times X$ is denoted by $\E:=\Tan(X\times X)|_\Delta/\Tan(\Delta)$, where $\Delta$ is identified to the zero section of $\E$. Denote by $\pi:\E\to\Delta$ the canonical projection where we sometimes identify $\Delta$ with $X$. 
The fiberwise multiplication by $\lambda\in\C^*$ on $\E$ is denoted by $A_\lambda$. 

We will study the density of $T_1\otimes T_2$ near the diagonal $\Delta$ of $X\times X$ via a notion of {\it tangent cone} to $T_1\otimes T_2$ along $\Delta$. We need the following notion.

\begin{definition}[see also \eqref{e:local_tau}, \eqref{e:local_dtau}, \eqref{e:local_dtau-1}]   \label{D:admissible-maps} 
\rm A {\it smooth admissible map} is a smooth bijective map $\tau$ from a
neighbourhood of $\Delta$ in $X\times X$ to a neighbourhood of $\Delta$ in $\E$ such that
\begin{enumerate}
\item The restriction of $\tau$ to $\Delta$ is the identity map on $\Delta$; in particular,
the restriction of the differential $d\tau$ to $\Delta$ induces a map from
$\Tan(X\times X ) |_\Delta$ to $\Tan(\E)|_\Delta$; since $\Delta$ is pointwise fixed by $\tau$, the differential $d\tau$ also induces two endomorphisms of $\Tan(\Delta )$ and $\E$  respectively;
\item The differential $d\tau (x,x),$ at each point $(x,x) \in \Delta,$ is a $\C$-linear map from the tangent space to $X\times X$ at $(x,x)$ to 
the tangent space to $\E$ at $(x,x)$;
\item  The endomorphism of $\E$, induced by the restriction of $d\tau$ to $\Delta$, is the identity map.
\end{enumerate}
\end{definition}

Note that such maps exist and the dependence of $d\tau (x,x)$ in $(x,x) \in  \Delta$ is in general not holomorphic, see also \cite[Lem.\,4.2]{DinhSibony18b}.
 
Let $\tau$ be any smooth admissible map as
above. Define
\begin{equation*}
(T_1\otimes T_2)_\lambda  := (A_\lambda )_* \tau_*  (T_1\otimes T_2 ).
 \end{equation*}
This is a current of degree $4$ on some some open subset of
$\E$ containing $\Delta$. This open set increases to $\E$ when $|\lambda|$ increases to infinity. 
Observe that in general $(T_1\otimes T_2)_\lambda$ is not a $(2, 2)$-current and 
it is not $\ddc$-closed.

{\it The $h$-dimension} of   a  positive  current $\T$ on $\E$  is, by definition,  the  smallest  integer $k\in\N$ such that  $\T\wedge \pi^*(\omega^k)\not=0.$
Here, $\omega$ is regarded as the K\"ahler form on $\Delta$ via the
canonical biholomorphic map between $X$ and $\Delta$. The choice of $\omega$ here is not important.

The main result of this  section is the  following  theorem. The proof of this result will be given later in this section.

\begin{theorem} \label{t:tangent} 
Let $T_1$ and $T_2$ be  two positive $\ddc$-closed $(1,1)$-currents on a compact K\"ahler surface $X$ as above. Then, with the above notations, we have  the following properties.
\begin{enumerate}
\item The mass of $(T_1\otimes T_2)_\lambda$ on any given compact subset of $\E$ is bounded uniformly on $\lambda$ for $|\lambda|$ large enough. 
\item If $\T$ is a cluster value of $(T_1\otimes T_2)_\lambda $ when $\lambda\to\infty ,$ then it
is a  positive $\ddc$-closed $(2, 2)$-current on $\E.$  Moreover, it is  conic in the sense that  $(A_\lambda)_* \T=\T$ for $\lambda\in\C^*.$ 
\item If $(\lambda_n )$ is a
sequence tending to infinity such that $(T_1\otimes T_2)_{\lambda_n}$ converges to some current $\T,$ then $\T$ may depend on $(\lambda_n )$
but it does not depend on the choice of the map $\tau .$
\item If $\T$ is as above, then the $h$-dimension of $\T$ is at most equal to $1.$  
\end{enumerate}
\end{theorem}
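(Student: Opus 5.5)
We follow the strategy of Dinh--Sibony for tangent currents of positive closed currents \cite{DinhSibony18,DinhSibony18b} and its extension to positive $\ddc$-closed currents \cite{Nguyen21,Nguyen25, DinhNguyenSibony22}. We work in local coordinates near $\Delta$: a chart $\frac14\B\times\frac14\B$ with coordinates $(x,y)$, and we use the holomorphic change $(x,y)\mapsto(x,y-x)=:(z,w)$. In these coordinates $\Delta=\{w=0\}$, and $\E$ is locally trivialised as $\frac14\B\times\C^2$. We first treat the \emph{holomorphic local admissible map} $\tau_0(z,w)=(z,w)$. There $(T_1\otimes T_2)_\lambda=(A_\lambda)_*(T_1\otimes T_2)$ is a positive $\ddc$-closed $(2,2)$-current, because $A_\lambda$ is holomorphic.

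\textbf{Mass bound (1).} Write the mass on $\{|z|<r_0,\ \|w\|<R\}$ as the pairing with $(\ddc\|z\|^2+\ddc\|w\|^2)^2$. The terms split by bidegree in $w$.
\begin{itemize}
\item The $(\ddc\|w\|^2)^2$-term gives
$$\int_{\|w\|<R/|\lambda|} T_1\otimes T_2\wedge |\lambda|^4(\ddc\|w\|^2)^2,$$
which is the Lelong-type mass of $T_1\otimes T_2$ along $\Delta$ at scale $R/|\lambda|$, normalised by codimension $2$. We would bound it by a Lelong--Jensen formula for the positive $\ddc$-closed current $T_1\otimes T_2$ with the plurisubharmonic weight $\log\|w\|$, as in \eqref{e:Jensen}. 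Its monotonicity in the radius, together with the uniform bound of Lemma \ref{l:lelong} applied on slices $z=\mathrm{const}$, gives boundedness.
\item The mixed term $\ddc\|z\|^2\wedge|\lambda|^2\ddc\|w\|^2$ and the $(\ddc\|z\|^2)^2$-term involve lower powers of $|\lambda|$. We would control them by integrating by parts using $\ddc$-closedness, following \cite[Sect.~3]{DinhNguyenSibony22}. Where a global cohomological input is needed (non-closed currents have no local Lelong--Jensen in the $z$ directions), we use Lemma \ref{L:identity}. Pairing with $d$-exact forms of the shape $\ddc(\chi\,\|w\|^2\ldots)$ expresses these masses through the $L^2$ forms $\dbar S_j,\partial\overline S_j$ of Proposition \ref{p:FS}. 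Their contribution on a shrinking tube $\{\|w\|<R/|\lambda|\}$ stays bounded, since an $L^2$ integrand tensored over a set of measure $O(|\lambda|^{-4})$ in $w$, rescaled by $|\lambda|^2$, tends to $0$.
\end{itemize}
We expect this mixed-term estimate to be the main obstacle, since the $\ddc$-closed setting gives no local monotonicity in the tangential variables.

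\textbf{Positivity, $\ddc$-closedness and conicity (2).} Positivity and bidegree $(2,2)$ pass to weak limits for the holomorphic choice $\tau_0$. $\ddc$-closedness also passes to weak limits. Conicity follows because $(A_\mu)_*(T_1\otimes T_2)_\lambda=(T_1\otimes T_2)_{\mu\lambda}$, and $\mu\lambda_n$ is again admissible along the cluster sequence. Strictly, this requires the limit to be independent of the rescaling sequence up to the $\C^*$-action, and we would use the monotonicity from the Lelong--Jensen step to justify this.

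\textbf{Independence of $\tau$ (3).} Compare a general smooth admissible $\tau$ with the local holomorphic $\tau_0$. By properties (2)--(3) of Definition \ref{D:admissible-maps}, $A_\lambda\circ\tau\circ\tau_0^{-1}\circ A_\lambda^{-1}$ converges to the identity in $\Cc^1$ on compact subsets of $\E$ as $\lambda\to\infty$. Combined with the uniform mass bounds of (1), this shows the two families have the same cluster values. This is the standard argument of \cite[Lem.~4.2, Prop.~4.6]{DinhSibony18b}, and it also shows the cluster value is a genuine $(2,2)$-current for every $\tau$.

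\textbf{$h$-dimension (4).} Since $\T$ has bidimension $(2,2)$ on the $4$-dimensional $\E$, its $h$-dimension is at most $2$. We need $\T\wedge\pi^*(\omega^2)=0$. This pairing is the limit of the pairing of $T_1\otimes T_2$ with $(\ddc\|z\|^2)^2$ on the tube $\{\|w\|<R/|\lambda|\}$. Locally this is
$$\int_{|z|<r_0}\nu\bigl(T_1\otimes T_2\wedge\pi_z^*\ldots,\,R/|\lambda|\bigr),$$
which carries no positive power of $|\lambda|$. It is therefore the mass of $T_1\otimes T_2$ on a tube of $\Delta$ shrinking to $\Delta$. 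Since $T_1\otimes T_2$ is a tensor product of currents on a surface, it gives no mass to the $2$-dimensional set $\Delta$: by Fubini, $\Delta$ meets each slice $\{x\}\times X$ in a single point, and $T_2$ has no mass on points. Hence this mass tends to $0$, so $\T\wedge\pi^*(\omega^2)=0$ and the $h$-dimension is at most $1$.
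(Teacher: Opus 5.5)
\textbf{There is a genuine gap in (1), in the term you treat as easy.}

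For the term of top degree in the normal variable (your $(\ddc\|w\|^2)^2$-term, i.e.\ the case $k=4$ of Proposition \ref{p:T_1T_2-test}, where the paper's normal variable is $z=x-y$), you need
$\langle T_1\otimes T_2, f\,(i\,dw\wedge d\overline w)^2\rangle=O(r^4)$ for $f$ supported in the tube of radius $r$ about $\Delta$.

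Lelong--Jensen monotonicity does not give this. Formula \eqref{e:Jensen} is for a point, whose sublevel sets are compact balls. Along the positive-dimensional $\Delta$ you must cut off tangentially. Integrating by parts against such a cutoff $\chi$ then produces terms $d\chi\wedge d^c(T_1\otimes T_2)$ and $d^c\chi\wedge d(T_1\otimes T_2)$. These have neither sign nor mass control for a current that is only $\ddc$-closed.

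Lemma \ref{l:lelong} does not rescue you either. It gives $\sigma_{T_1}(\B(y,r))=O(r^2)$, with $\sigma_{T_j}:=T_j\wedge\omega$. Hence you only get $\iint_{\|x-y\|<r}\sigma_{T_1}(x)\,\sigma_{T_2}(y)=O(r^2)$, which after multiplication by $|\lambda|^4\simeq r^{-4}$ grows like $|\lambda|^2$. This crude bound is sharp for $T_1=T_2=[C]$ with $C$ a curve. So the missing factor $r^2$ can only come from the degeneracy of $(i\,dw\wedge d\overline w)^2$ on $T_1\otimes T_2$ (for a line the pairing vanishes), and no pointwise positivity argument sees that.

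The paper gets this factor globally, through two steps:
\begin{enumerate}
\item It dominates $r^{-2}\,i\,dz\wedge d\overline z$ on the tube by $c\sum_l e^{-2l}R_{m+l}$, with the closed positive forms $R_m$ of Lemma \ref{l:R-test}.
\item It bounds $\langle T_1\otimes T_2,R_m\wedge R_l\rangle$ uniformly via Lemma \ref{L:identity}. The $\Omega_1\otimes\Omega_2$ part is cohomological. The $\dbar S_1\otimes\partial\overline S_2$ parts are controlled by the $L^2$ regularity of Proposition \ref{p:FS} and Young's inequality.
\end{enumerate}
This global step, which is where compactness and the K\"ahler hypothesis enter, is the missing idea.

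\textbf{You also have the roles reversed.} The mixed term is the easy one. It is bounded by $r^2\int\nu(T_1,y,r)\,T_2\wedge\omega(y)$, so Lemma \ref{l:lelong} gives exactly the needed order there (Case 3a). Your integration by parts for the mixed term is not available as stated anyway, because Lemma \ref{L:identity} applies only to closed test forms, and the localized forms are not closed. Your conicity argument rests on the same unavailable monotonicity; the paper instead defers conicity to Dinh--Sibony.

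\textbf{What does work.} Once the local mass bound for the holomorphic identification $\tau_0$ is in hand, the rest of your plan is sound and somewhat lighter than the paper's:
\begin{enumerate}
\item $A_\lambda\circ\tau\circ\tau_0^{-1}\circ A_\lambda^{-1}\to\id$ in $\Cc^1$ on compacta, by \eqref{e:local_tau} and conditions (2)--(3) of Definition \ref{D:admissible-maps}. This gives (3) and transfers the mass bound to any $\tau$ without the fine and strongly negligible bookkeeping.
\item Positivity, bidegree and $\ddc$-closedness then pass directly to the limit.
\item In (4) the pairing is in fact identically zero for bidegree reasons, since $T_1(x)\wedge(\ddc\|x\|^2)^2=0$ on a surface; this is the case $k=0$ of Proposition \ref{p:T_1T_2-test}.
\end{enumerate}
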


Note that in general $\T$ is not unique as this is already the case for positive
closed currents, see \cite{DinhSibony18b} for details. 

\begin{definition} \rm  
Any current $\T$ obtained as in Theorem \ref{t:tangent} is called a {\it tangent
current} to $T_1\otimes T_2$ along the diagonal $\Delta .$
\end{definition}
 
Recall  the  following related result from \cite[Thm 2.2]{DinhNguyenSibony22} that will be used later.

\begin{theorem}\label{t:tangent-DNS}  
Under the  assumption of  Theorem \ref{t:tangent},
suppose in addition that $T_1$ has no mass on the set $\{\nu(T_2,\cdot)>0\}$ and $T_2$ has no mass on the set $\{\nu(T_1,\cdot)>0\}$.
Then we have $\T = \pi^* (\vartheta)$ for some
positive measure $\vartheta$ on $\Delta.$ 
\end{theorem}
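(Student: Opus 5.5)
The plan is to use Theorem \ref{t:tangent} to reduce the claim to a vanishing statement, namely that $\T$ has $h$-dimension $0$, i.e. $\T\wedge\pi^*(\omega)=0$. I would then prove this vanishing with a local Lelong-number computation. By Theorem \ref{t:tangent}, $\T$ is a positive $\ddc$-closed conic $(2,2)$-current on the rank $2$ bundle $\E$ over the surface $\Delta$, and its $h$-dimension is at most $1$.

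\emph{Step 1: the $h$-dimension $0$ case gives $\pi^*(\vartheta)$.} Suppose $\T\wedge\pi^*(\omega)=0$. Then $\T$ is a positive current of bidimension $(2,2)$ that is purely vertical. In a local trivialization $\E\simeq U\times\C^2$ it can be written as $\T=h\,\pi^*(\vartheta)\wedge(\text{fiber volume form})$, where $\vartheta$ is a positive measure on $U$ and $h\geq 0$ is a density defined on almost every fiber. The $\ddc$-closedness of $\T$ then implies that $h$ is pluriharmonic on almost every fiber $\C^2$, with respect to $\vartheta$. Conicity, i.e. $(A_\lambda)_*\T=\T$, makes $h$ homogeneous of degree $0$ under $\lambda$ in these fibers. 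A nonnegative pluriharmonic function on $\C^2$ is constant, so we can absorb $h$ into $\vartheta$ and obtain $\T=\pi^*(\vartheta)$. Making this rigorous requires slicing or disintegrating $\T$ along $\pi$, but I expect that to be routine.

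\emph{Step 2: the horizontal mass of $\T$ via Lelong numbers.} Work in local charts identified with $\frac14\B\times\frac14\B$ near $\Delta$, with coordinates $(x,y)$ and $z=y-x$, and use the admissible map $\tau(x,y)=(x,y-x)$. Take a test form $\pi^*(\omega\wedge\chi)\wedge\ddc\|z\|^2$ on $\E$, cut off by $\{\|z\|<1\}$, where $\chi$ is a smooth cutoff on $\Delta$. Pulling it back by $A_\lambda\circ\tau$ with $r=|\lambda|^{-1}$, we need to control
$$\langle (T_1\otimes T_2)_\lambda,\pi^*(\omega\chi)\wedge \ddc\|z\|^2\,\ind_{\|z\|<1}\rangle
\approx \int \chi(x)\, T_1(x)\wedge\omega(x)\cdot r^{-2}\!\int_{\|y-x\|<r} T_2(y)\wedge\ddc\|y\|^2 ,$$
together with the symmetric terms in which $\omega$ falls on the $y$-variable. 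The inner integral equals $\nu(T_2,x,r)$ up to harmless errors of order $r$ coming from the non-holomorphic part of $\tau$. Hence this quantity is essentially $\int\chi(x)\,\nu(T_2,x,r)\,T_1\wedge\omega$. The other horizontal components are controlled the same way, giving $\int\chi(y)\,\nu(T_1,y,r)\,T_2\wedge\omega$ plus mixed terms, which I would bound by Cauchy--Schwarz against these two.

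\emph{Step 3: passing to the limit.} By Skoda's formula \eqref{e:Jensen}, $\nu(T_2,x,r)$ decreases to $\nu(T_2,x)$ as $r\to0$. By Lemma \ref{l:lelong} it is uniformly bounded. Dominated convergence then gives the limit $\int\chi\,\nu(T_2,\cdot)\,T_1\wedge\omega$. This vanishes because $T_1$ has no mass on $\{\nu(T_2,\cdot)>0\}$, and symmetrically for the term with the roles of $T_1$ and $T_2$ exchanged. Hence $\T\wedge\pi^*(\omega)=0$, and Step 1 concludes.

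The main obstacle is Step 2. Since $\tau$ is only smooth, one must show that the error terms from replacing $\tau$ by the linear model, and the non-$(2,2)$ components of $(T_1\otimes T_2)_\lambda$, vanish in the limit. For this I would follow the estimates underlying Theorem \ref{t:tangent}(3), where the tangent current is shown to be independent of $\tau$.
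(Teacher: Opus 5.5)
Your proof is correct and is essentially the argument behind the cited result \cite[Thm 2.2]{DinhNguyenSibony22}, which the paper recalls rather than reproves. The horizontal trace of $\T$ is controlled exactly as in Case 3a of Proposition \ref{p:T_1T_2-test}. Its limit is $\int\nu(T_1,\cdot)\,T_2\wedge\omega$ in the paper's coordinates, or $\int\nu(T_2,\cdot)\,T_1\wedge\omega$ in yours, and it vanishes by hypothesis. A purely vertical positive $\ddc$-closed current on $\E$ is then $\pi^*(\vartheta)$, because a positive pluriharmonic distribution on each fibre $\C^2$ is a constant multiple of Lebesgue measure. Your formula $h\,\pi^*(\vartheta)\wedge(\text{fiber volume form})$ has the wrong degree, but this is what you mean.

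The obstacle you anticipate in Step 2 is already settled by Proposition \ref{p:local_comput}. Moreover, since $\T\wedge\pi^*(\omega^2)=0$ and $\T$ is positive, the single trace $\langle\T,\pi^*(\chi\omega)\wedge\ddc\|z\|^2\rangle$ decides whether $\T\wedge\pi^*(\omega)=0$. So the ``symmetric'' and ``mixed'' terms are unnecessary, and with them one of the two no-mass hypotheses.
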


\medskip\noindent
{$\bullet$ \bf Some test forms and mass estimates.} 
We use here the notation from the end of the Introduction.
On a chart $10\B\times 10\B$ of $X\times X$, we use two local coordinate systems: the first system is the standard one $(x,y)=(x_1,x_2,y_1,y_2)$ and the second system is $(z,w):=(x-y,y)$. The diagonal $\Delta$ is given by the equation $x=y$ or the equation $z=0$. 
Over $\Delta\cap (5\B\times 5\B)$, with the coordinates $(z,w)$,  the normal vector bundle $\E$  of $\Delta$ in $X\times X$  is identified to $\C^2\times 5\B$, $\pi$ is the projection $(z,w)\mapsto w$ and $A_\lambda$ 
is equal to the map $a_\lambda(z,w):=(\lambda z,w)$. 

The main result of this subsection is the following proposition which is a version of  \cite[Lem 3.8]{DinhNguyenSibony22}.

\begin{proposition}\label{p:T_1T_2-test} 
Let $T_1$ and $T_2$ be two positive $\ddc$-closed  $(1,1)$-currents of mass $1$ on $X.$  There is a constant $c > 0$ independent of $T_1,T_2$ such that 
the following property holds for $0 < r \leq 1$. 
Let $\gamma$ be any wedge-product of four $1$-forms among $dz_1, dz_2, dw_1,dw_2$ or their  complex conjugates, and
$k$ be its total degree in $dz_1,dz_2, d\overline{z}_1, d\overline z_2$. Then,
for any continuous function $f(z,w)$ with compact support in
$(r\B)\times \B$,  we have
$\big\langle T_1\otimes T_2, f \gamma\big\rangle=0$ when $k=0,1$ and 
$\big|\big\langle T_1\otimes T_2, f \gamma\big\rangle\big| \leq 
cr^k\|f\|_\infty$ when $k=2,3,4$. 
\end{proposition}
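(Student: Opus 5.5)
The plan is to exploit the product structure of $T_1\otimes T_2$ and the fact that in the $(x,y)$ coordinates only components of bidegree $(1,1)$ in $x$ and $(1,1)$ in $y$ can pair nontrivially with it. First I will substitute $dw_j=dy_j$ and $dz_j=dx_j-dy_j$ (and conjugates) into $\gamma$ and expand it as a sum of monomials in $dx,d\overline x,dy,d\overline y$. Since $T_1\otimes T_2$ is a $(2,2)$-current on the $4$-dimensional manifold $X\times X$, and is a tensor product of $(1,1)$-currents on each factor, $\langle T_1\otimes T_2, f\,\delta\rangle=0$ for any monomial $\delta$ that is not of the type $dx_i\wedge d\overline x_j\wedge dy_k\wedge d\overline y_l$. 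Each such surviving monomial has degree exactly $2$ in $dx,d\overline x$. These can only come from the $dz$-factors of $\gamma$, because the $dw$-factors contribute only $dy$'s. Hence if $k\le 1$ every monomial has $x$-degree at most $1$, so the pairing vanishes; this settles $k=0,1$.

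For $k\ge 2$, positivity gives the crude bound $|\langle T_1\otimes T_2,f\delta\rangle|\lesssim \|f\|_\infty\,\|T_1\otimes T_2\|_{\{|x-y|<r,\ |y|<1\}}$. By Fubini this mass equals $\int_{|y|<1}\|T_1\|_{\B(y,r)}\,dT_2(y)$. By Lemma \ref{l:lelong} we have $\|T_1\|_{\B(y,r)}\lesssim r^2\nu(T_1,y,r)\le c\,r^2$ uniformly in $y$, and $T_2$ has mass $1$. This gives the estimate $c r^2\|f\|_\infty$, which is exactly what is needed for $k=2$.

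For $k=3,4$ I must gain an extra factor $r$ or $r^2$ from the remaining $dz$-factors. My plan is to apply the Cauchy--Schwarz inequality for the positive $(2,2)$-current $T_1\otimes T_2$ to split $f\gamma$ into a part carrying the $dz$-factors and a part carrying the $dw$-factors. Roughly, I would bound $|\langle T_1\otimes T_2,f\gamma\rangle|$ by
$$\Big(\int_{|z|<r}|f|\, T_1\otimes T_2\wedge \alpha\Big)^{1/2}\Big(\int_{|z|<r}|f|\, T_1\otimes T_2\wedge\beta\Big)^{1/2},$$
where $\alpha$ collects the $dz_i\wedge d\overline z_j$ directions and $\beta$ is a positive form of Kähler type. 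The key input is the pointwise inequality $i\,dz_i\wedge d\overline z_i\le |z|^2\,\pi\,\ddc\log\|z\|+i\partial\|z\|\wedge\dbar\|z\|$-type bounds. Combined with $|z|\le r$ on the support and Skoda's formula \eqref{e:Jensen}, this should give $\int_{\B(y,r)}T_1\wedge(\ddc\log\|x-y\|)\lesssim 1$ uniformly in $y$. The effect is that each pair of $dz$-directions is converted into a weight $|z|^2\lesssim r^2$ times a quantity controlled by the Lelong-type integral, instead of contributing only the scale $r^2$ from the ball volume. I expect this to produce $r^{k}$ after combining with the $r^2$-bound of the previous step, performed symmetrically in the roles of $x$ and $y$. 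The symmetry is needed because $dz=dx-dy$ may also put $dz$-type directions on the $T_2$ factor.

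The main obstacle is this last bookkeeping for $k=3,4$: making the Cauchy--Schwarz splitting compatible with the fact that the $dz$-directions are distributed between the two factors. If the direct approach fails, I would instead use the change of variables $a_\lambda$ with $|\lambda|=1/r$. The bound would then follow from a uniform mass bound on the dilated currents $(T_1\otimes T_2)_\lambda$ on $\B\times\B$, where the $dz$-components of degree $k$ pick up exactly the factor $r^k$ under $(a_\lambda)^*$. The needed uniform mass bound would again come from Lemma \ref{l:lelong} and \eqref{e:Jensen}.
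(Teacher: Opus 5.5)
Your treatment of $k=0,1$ and of $k=2$ is correct and matches the paper's Cases 1 and 3. For $k=0,1$ you count $dx$-degrees after substituting $dz=dx-dy$, $dw=dy$. For $k=2$ you use Fubini together with $\nu(T_1,y,r)\le c$ from Lemma \ref{l:lelong}.

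The gap is at $k=4$, and therefore also at $k=3$. The paper gets $k=3$ by exactly your Cauchy--Schwarz splitting: it bounds by the square root of the $k=4$ quantity times the $k=2$ quantity. That only gives $r^3$ if the $k=4$ bound is $O(r^4)$. Your pointwise bound cannot deliver $O(r^4)$. Set $\beta_z:=i\partial\dbar\|z\|^2$ and $\theta:=i\partial\|z\|^2\wedge\dbar\|z\|^2/\|z\|^2$. Then $\beta_z=\|z\|^2\,i\partial\dbar\log\|z\|^2+\theta$. Off $z=0$ in $\C^2$ we have $(i\partial\dbar\log\|z\|^2)^2=0$ and $\theta^2=0$, hence $\beta_z^2=2\|z\|^2\,i\partial\dbar\log\|z\|^2\wedge\theta$. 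So the full $dz$-form carries only one factor $\|z\|^2\le r^2$. Skoda's formula bounds the $\ddc\log\|x-y\|$ part against $T_1$ by $O(1)$. The radial form $\theta$, however, has size one, and pairing it with $T_2$ gains nothing. The best this route yields is $O(r^2)$, i.e. the $k=2$ bound again.

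This is not a bookkeeping issue. Up to constants, the $k=4$ estimate says $\int (T_1*\rho_r)\wedge T_2=O(1)$ uniformly in $r$, where $\rho_r$ is an approximate identity at scale $r$. That is an intersection bound for the pair, and single-current local estimates cannot give it. For example, take $T=\phi\,\beta_x$ with $\phi$ a sum of $\delta^{-2}$ bumps of height $\delta^{-2}$, radius $\delta$ and spacing $\delta^{1/2}$. This form has mass $\simeq1$, uniformly bounded $\nu(T,a,r)$ and uniformly bounded $\int T\wedge\ddc\log\|x-a\|$. Yet $\langle T\otimes T,\mathbf{1}_{\|z\|<r}\beta_z^2\rangle\simeq r^4\delta^{-2}$ for $r\ll\delta$. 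Only $\ddc$-closedness, used globally, rules this out.

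The missing ingredient is Lemma \ref{l:R-test}. It provides smooth positive closed forms $R_m$ on $X\times X$ such that $ir^{-2}(dz_1\wedge d\overline z_1+dz_2\wedge d\overline z_2)\le c\sum_l e^{-2l}R_{m+l}$ on $\{0<\|z\|<r\}$, and $\langle T_1\otimes T_2,R_m\wedge R_l\rangle\le c$. The second bound comes from the Forn\ae ss--Sibony decomposition through Lemma \ref{L:identity}. That lemma reduces the pairing to a cohomological term plus terms in $\dbar S_1\otimes\partial\overline S_2$, which are controlled by $L^2$ norms. Combined with the fact that $T_1\otimes T_2$ has no mass on $\Delta$, this gives $k=4$.

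Your fallback is circular. Uniform boundedness of the mass of $(a_\lambda)_*(T_1\otimes T_2)$ against $\beta_z^2$ on $\B\times\B$ is literally the $k=4$ statement with $r=|\lambda|^{-1}$. The paper in fact derives Theorem \ref{t:tangent}(1) from this proposition. Lemma \ref{l:lelong} and \eqref{e:Jensen} only bound that mass by $O(|\lambda|^2)$.
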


In order to prove this result, we need some special test forms introduced in \cite{DinhNguyenSibony22}. More precisely, we have the following result.

\begin{lemma}[{\cite[Lem 3.5, 3.6, 3.7]{DinhNguyenSibony22}}] \label{l:R-test}
There are a constant $c>0$ and a sequence of smooth positive closed $(1,1)$-forms $(R_m)_{m\geq 0}$ on $X$ such that 
\begin{enumerate}
\item The mass of $R_m$ is bounded by $c$;
\item For each $0<r\leq 1,$ if  $m$ is the integer such that $e^{-m-1}<r\leq e^{-m}$, then 
$$ir^{-2}(dz_1\wedge d\overline{z}_1+dz_2\wedge d\overline{z}_2)\leq  c\sum_{l=0}^\infty e^{-2l} R_{m+l}\quad\text{on}\quad \big\{0<\|z\|<r,\ \|w\|< 2\big\};$$
\item  $\big\langle T_1\otimes T_2  , R_m\wedge   R_l \big \rangle \leq  c\quad\text{for all}\quad m, l\geq 0$.
\end{enumerate}
\end{lemma}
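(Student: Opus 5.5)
The lemma is quoted from \cite{DinhNguyenSibony22}. The plan below is how I would reprove it, regarding each $R_m$ as a smooth positive closed $(1,1)$-form on $X\times X$; this is the only reading under which the pairing in (3) and the comparison with $dz_j\wedge d\overline z_j$ in (2) make sense.

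\emph{Construction.} Fix a smooth function $\rho\geq 0$ on $X\times X$ which equals $\|z\|^2=\|x-y\|^2$ in each chart of the fixed atlas near $\Delta$, up to gluing by a partition of unity, and which is bounded below away from $\Delta$. Set
\[
\varphi_m:=\log\big(\rho+e^{-2m}\big),\qquad R_m:=A\,\widetilde\omega+\ddc\varphi_m ,
\]
where $\widetilde\omega$ is a Kähler form on $X\times X$ and $A>0$ is a constant independent of $m$. Gluing errors and the non-flatness of $\rho$ produce only terms in $\ddc\varphi_m$ bounded below by $-C\widetilde\omega$ uniformly in $m$, so a large $A$ makes $R_m$ positive. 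The class of $R_m$ equals $A\{\widetilde\omega\}$, so property (1) follows. For (2), a direct computation in the flat model gives
\[
\ddc\log\big(\|z\|^2+\epsilon^2\big)\;\geq\;\frac{\epsilon^2\, i\sum_j dz_j\wedge d\overline z_j}{\pi\,(\|z\|^2+\epsilon^2)^2},
\]
which is $\gtrsim \epsilon^{-2}\, i\sum_j dz_j\wedge d\overline z_j$ on $\{\|z\|<\epsilon\}$. Taking $\epsilon=e^{-m}\simeq r$, the single term $l=0$ already dominates the left-hand side of (2), and the remaining terms of the series are positive.

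\emph{Property (3).} The form $R_m\wedge R_l$ is closed, and Lemma \ref{L:identity} gives
\[
\langle T_1\otimes T_2,R_m\wedge R_l\rangle=\langle\Omega_1\otimes\Omega_2,R_m\wedge R_l\rangle-\langle\dbar S_1\otimes\partial\overline S_2,R_m\wedge R_l\rangle-\langle\partial\overline S_1\otimes\dbar S_2,R_m\wedge R_l\rangle .
\]
The first term is cohomological, hence bounded. For the other two, write $R_m\wedge R_l=A^2\widetilde\omega^2+\ddc\Psi_{m,l}$ with
\[
\Psi_{m,l}:=A\,\varphi_m\widetilde\omega+\varphi_m\,\ddc\varphi_l .
\]
The $\widetilde\omega^2$ part is bounded by the $L^2$ norms of the $S_j$. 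In the $\ddc\Psi_{m,l}$ part, I would move $\partial$ and $\dbar$ onto the $L^2$ factors $\dbar S_1(x)$ and $\partial\overline S_2(y)$. The resulting derivatives are $\partial\dbar S_1$ and $\dbar\partial\overline S_2$, which are controlled by the $\ddc$-closedness of $T_j$ through \eqref{e:decompo_posi_har_T12}. What is left is an integral of an $L^2\otimes L^2$ density against forms such as $d\varphi_m\wedge d^c\varphi_l$.

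\emph{Main obstacle.} These last forms are not bounded in $L^2$ uniformly in $m,l$ once one naively uses $\ddc\varphi_l\sim e^{2l}$ on the $e^{-l}$-tube. To overcome this I would exploit the bidegree structure. Only the component of $R_m\wedge R_l$ of type $(2,0)$ in $x$ and $(0,2)$ in $y$, or the reverse, pairs with $\dbar S_1\otimes\partial\overline S_2$. Along $\Delta$ the forms $dz,d\overline z$ mix $x$ and $y$, so this component carries at most the factor $\log$-singularity $|d\varphi|^2\sim 1/(\|z\|^2+e^{-2m})$ in each variable separately. Integrating first in $y$ with $x$ frozen gives $\int_{\|z\|<1}\frac{dV(z)}{\|z\|^2}<\infty$ in complex dimension $2$, which yields a uniform Schur-type bound for the kernel. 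Cauchy--Schwarz against $\|\dbar S_1\|_{L^2}\|\partial\overline S_2\|_{L^2}$ then finishes the argument. If this kernel bound fails, I would instead mimic \cite{DinhNguyenSibony22} and replace $\varphi_m$ by an average of pullbacks of a fixed $\log$-potential under a family of automorphism-like local translations, so that the estimate reduces to a Fubini argument.
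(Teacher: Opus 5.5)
Your construction $R_m=A\widetilde\omega+\ddc\log(\rho+e^{-2m})$ on $X\times X$ differs from the one behind the cited lemma, whose analogue in this paper is $R^0_m=\widehat\Pi_*(\ddc[\chi(\widehat\phi^0+m)]+A\omega_{\widehat\E})$. It is nonetheless a workable alternative, and it does give (1) and (2). Positivity needs a real check. Take $\rho=\sum_k\chi_k\|z^{(k)}\|^2$. Its leading term in $z$ is then a positive Hermitian form, which gives $\ddc\log\rho\geq -C\widetilde\omega$ off $\Delta$. From this, $\ddc\log(\rho+\epsilon^2)\geq \epsilon^2\ddc\rho/(\rho+\epsilon^2)^2-C\widetilde\omega$ uniformly in $\epsilon$.

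Your proof of (3), however, has a genuine gap, and both steps you propose fail.
\begin{itemize}
\item The integration by parts is not available. Moving $\partial$ and $\dbar$ from $\ddc\Psi_{m,l}$ onto $\dbar S_1\otimes\partial\overline S_2$ produces $\partial\dbar S_1=-\dbar T_1$ and $\dbar\partial\overline S_2=-\partial T_2$. These are first derivatives of the currents $T_j$, not $L^2$ forms. The $\ddc$-closedness of $T_j$ gives no control on them, since it holds automatically for any current written as in \eqref{e:decompo_posi_har_T12}.
\item The bidegree claim is false. Because $dz_j=dx_j-dy_j$ and $dw_j=dy_j$, the component of type $(2,0)$ in $x$ and $(0,2)$ in $y$ of $f\,dz_1\wedge d\overline z_1\wedge dz_2\wedge d\overline z_2$ is $\pm f\,dx_1\wedge d\overline y_1\wedge dx_2\wedge d\overline y_2$. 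So the kernel that pairs with $\dbar S_1\otimes\partial\overline S_2$ is the full mixed Monge--Amp\`ere density of $\varphi_m,\varphi_l$. For $l=m$ in the flat model it is a constant times $e^{-2m}/(\|z\|^2+e^{-2m})^3$. This is of size $e^{4m}$ on $\{\|z\|<e^{-m}\}$ and is not dominated by $\|z\|^{-2}$.
\end{itemize}
Your fallback via averaged translates is too vague to fill this gap.

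The missing idea is that you need neither integration by parts nor a pointwise $\|z\|^{-2}$ bound; Schur's test only requires uniform $L^1$ bounds. After Lemma \ref{L:identity}, dominate $R_m\wedge R_l$ by $K_{m,l}\,\widetilde\omega^2$, where $\sup_x\int K_{m,l}(x,y)\,dy$ and $\sup_y\int K_{m,l}(x,y)\,dx$ are bounded independently of $m,l$. Then apply Lemma \ref{L:Young} with $\delta=0$. This is exactly how Lemma \ref{L:R_m-R_n} proceeds.

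For your potentials, set $\epsilon=e^{-m}$ and $\eta=e^{-l}$. The complex Hessian of $\log(\|z\|^2+\epsilon^2)$ has radial eigenvalue $a_\epsilon=\epsilon^2/(\|z\|^2+\epsilon^2)^2$ and angular eigenvalue $b_\epsilon=1/(\|z\|^2+\epsilon^2)$, with eigendirections independent of $\epsilon$. The angular direction is one-dimensional in $\C^2$. Hence, up to gluing errors which one checks are $O(\|z\|^{-3})$, you get
\[
K_{m,l}\lesssim 1+b_\epsilon+b_\eta+a_\epsilon b_\eta+a_\eta b_\epsilon .
\]
Each term has $z$-integral over the unit ball bounded independently of $\epsilon,\eta$, by scale invariance.

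This structure is essential. The crude trace bound $b_\epsilon b_\eta$ diverges like $\log(1/\max(\epsilon,\eta))$. In the paper's argument the same role is played by the identity $\partial\widehat\phi^0\wedge\partial\widehat\phi^0=0$, together with the localisation of $\chi''$ to the annulus $W_m$ of bounded logarithmic width.
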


\proof[Proof of Proposition \ref{p:T_1T_2-test}] 
We follow \cite[Lem 3.8]{DinhNguyenSibony22}. 
For a bi-degree reason, the pairing in the proposition vanishes unless $\gamma$ is of bi-degree $(2,2)$. 
By writing $f$ as a suitable linear combination of non-negative functions, we can assume for simplicity that $f$ is a non-negative real-valued function bounded by 1.
We distinguishes 4 cases according to the value of $k$.

\medskip\noindent
{\bf Case 1.} Assume that $k=0, 1$.
Then $\gamma$ contains at least three factors which are among
$dw_1$, $d\overline w_1$,  $dw_2$ and $d\overline w_2$.
Recall that $(z,w)=(x-y,y)$. We see that $T_1\otimes T_2 \wedge f\gamma$ contains at least 5 factors which are among $dw_1$, $d\overline w_1$,  $dw_2$ and $d\overline w_2$. We deduce that the last product vanishes.

\medskip\noindent
{\bf Case 2.}  Assume that $k=4$ and hence $\gamma=\pm dz_1 \wedge d\overline z_1 \wedge dz_2 \wedge d\overline z_2$.
Let  $m$ be the integer such that $e^{-m-1}<r\leq e^{-m}.$
So $f idz_1 \wedge d\overline z_1 \wedge idz_2 \wedge d\overline z_2$  is a positive form bounded by
 $e^2 r^4 (ir^{-2}(dz_1\wedge d\overline{z}_1+dz_2\wedge d\overline{z}_2))^2$. Observe that positive $\ddc$-closed $(1,1)$-currents on $X$ have no mass on finite sets. Therefore,
by applying Fubini's theorem, it is not difficult to obtain that  $T_1\otimes T_2$ has no mass on $\Delta$.  Therefore, by
Lemma \ref{l:R-test},
$$\big| \big\langle T_1\otimes T_2, fdz_1 \wedge d\overline z_1 \wedge dz_2 \wedge d\overline z_2 \big\rangle\big|  \lesssim r^4\sum_{l,l'=0}^\infty e^{-2l-2l'}  \big\langle T_1\otimes T_2, R_{m+l}\wedge R_{m+l'} \big\rangle.$$ 
The last sum is bounded according to the same lemma and ends the proof for Case 2.

\medskip\noindent
{\bf Case 3a.}  Assume that $k=2$ and the bi-degree of $\gamma$ in $dz_1,dz_2,d\overline z_1, d\overline z_2$ is $(1,1)$. It follows that 
  the bi-degree of $\gamma$ in $dw_1,dw_2,d\overline w_1, d\overline w_2$ is also $(1,1)$. Observe that $d z_j\wedge d\overline z_k$ is a linear combination of the positive forms
$$idz_j\wedge d\overline z_j, \quad id(z_j\pm z_k) \wedge d\overline {(z_j\pm z_k)} \quad \text{and} \quad 
id(z_j\pm iz_k) \wedge d\overline {(z_j\pm iz_k)}.$$
All these forms are bounded by a constant times the K\"ahler form $\ddc(\|z\|^2)$. A similar property holds for the variables $w_1$ and $w_2$. Therefore, $\gamma$ is bounded by a constant times $\ddc(\|z\|^2)\wedge \omega(w)$. 
Recall that $\omega$ is a K\"ahler form on $X$ and $(z,w)=(x-y,y)$.

We have
\begin{eqnarray*}
\big|\big\langle T_1\otimes T_2, f \gamma \big\rangle\big|
&\lesssim & r^2 \int_{\|y\|<1}  \Big(r^{-2}\int_{x\in \B(y,r)}  T_1 (x) \wedge \ddcx {\|x-y\|^2} \Big) T_2(y)\wedge \omega(y) \\
 &\simeq &   r^2 \int_{\|y\|<1}\nu(T_1,y,r)T_2(y)\wedge \omega(y).
\end{eqnarray*}
Applying Lemma \ref{l:lelong} and Lebesgue's dominated convergence theorem to the expression in the last line, we see that
it converges to 
$$\int_{\|y\| <1}\nu(T_1,y)T_2(y)\wedge \omega(y)$$ 
when $r$ tends $0.$
The last integral is  finite.  This ends the proof of  Case 3a.

\medskip\noindent
{\bf Case 3b.}  Assume that $k=2$ and the bi-degree of $\gamma$ in $dz_1,dz_2,d\overline z_1, d\overline z_2$ is $(2,0)$.
It follows that $\gamma=\pm dz_1\wedge dz_2\wedge d\overline w_1\wedge d\overline w_2$.
The current $T_1\otimes T_2 \wedge f\gamma$ contains at least 3 factors which are equal to $d\overline w_1$ or $d\overline w_2$. It should vanish.

\medskip\noindent
{\bf Case 3c.}  Assume that $k=2$ and the bi-degree of $\gamma$ in $dz_1,dz_2,d\overline z_1, d\overline z_2$ is $(0,2)$.
This case can be treated in the same way as Case 3b.

\medskip\noindent
{\bf Case 4a.} Assume that $k=3$ and the bi-degree of $\gamma$ in $dz_1,dz_2,d\overline z_1, d\overline z_2$ is $(2,1)$. 
For simplicity, assume that $\gamma=dz_1\wedge d\overline z_1 \wedge dz_2 \wedge d\overline w_1$. Let $\chi$ be a smooth function with compact
support in $\{\|w\| < 2, \|z\| < r\}$ such that $0\leq \chi\leq 1$ and $\chi = 1$ in a
neighbourhood of the support of $f$. 
By Cauchy-Schwarz inequality, 
$|\langle T_1\otimes T_2, f \gamma \rangle |$ is bounded from above by
$$ \big| \big\langle T_1\otimes T_2, \chi^2dz_1 \wedge d \overline z_1 \wedge dz_2 \wedge d\overline z_2 \big\rangle \big|^{1/2}
\big| \big\langle T_1\otimes T_2, f^2 dz_1 \wedge d \overline z_1 \wedge dw_1 \wedge d\overline w_1 \big\rangle \big|^{1/2}.$$
So this case is a consequence of Cases 2 and 3a.

\medskip\noindent
{\bf Case 4b.} Assume that $k=3$ and the bi-degree of $\gamma$ in $dz_1,dz_2,d\overline z_1, d\overline z_2$ is $(1,2)$. This case is obtained as in Case 5a.
\endproof

\medskip\noindent
{\bf $\bullet$ Tangent currents in the local setting.} 
We will describe the local setting where Proposition \ref{p:local_comput} below will explain how to compute tangent currents using local coordinates.
We continue to use the notations introduced earlier. In particular,
over $\Delta\cap (5\B\times 5\B)$, with the coordinates $(z,w)$, $\E$ is identified with $\C^2\times 5\B$, $\pi$ is the projection $(z,w)\mapsto w$ and $A_\lambda$ 
is equal to the map $a_\lambda(z,w):=(\lambda z,w)$. 
We have the following result.

\begin{proposition} \label{p:local_comput} 
The mass of $(T_1\otimes T_2)_\lambda$ on any given compact subset of $\E$ is bounded uniformly on $\lambda$ with $|\lambda|\geq 1$.  Moreover, if $(\lambda_n )$ is a
sequence tending to infinity such that $(T_1\otimes T_2)_{\lambda_n}$ converges to a current $\T$,
then in the above local coordinates $(z,w)$, we have
$$\T = \lim\limits_{n\to\infty} (a_{\lambda_n })_* (T_1\otimes T_2) \quad\text{on}\quad \C^2  \times \B.$$
In particular, $\T$ does not depend on the choice of $\tau$ and $\T$ is  a positive  $(2,2)$-current.
\end{proposition}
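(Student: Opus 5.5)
Fix a smooth admissible map $\tau$ and work over a chart $5\B\times 5\B$, with coordinates $(z,w)=(x-y,y)$ as above. In these coordinates $\E$ is $\C^2\times 5\B$ and $A_\lambda=a_\lambda$. The plan has three steps. First, compare $\tau$ with the coordinate identification $\tau_0(x,y):=(z,w)$. Second, get the mass bound from Proposition \ref{p:T_1T_2-test}. Third, show that $(a_\lambda)_*\tau_*(T_1\otimes T_2)$ and $(a_\lambda)_*(\tau_0)_*(T_1\otimes T_2)$ have the same limit as $\lambda\to\infty$.

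\textbf{Structure of $\tau$.} By Definition \ref{D:admissible-maps}, in local coordinates $\tau(z,w)=(z',w')$ with
$$z'=z+O(\|z\|^2),\qquad w'=w+L_w(z)+O(\|z\|^2),$$
where $L_w$ is $\C$-linear in $z$ and depends smoothly (not holomorphically) on $w$. The property that $d\tau$ induces the identity on $\E$ gives the form of $z'$. The $\C$-linearity of $d\tau$ on $\Delta$ gives the linear term $L_w$.

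\textbf{Mass bound.} Take a test form $\phi=f\gamma$ with $f$ supported in $\{\|z\|<R\}\times\B$. Then
$$\langle (a_\lambda)_*(T_1\otimes T_2),\phi\rangle=\langle T_1\otimes T_2,a_\lambda^*\phi\rangle .$$
Here $a_\lambda^*\phi=\lambda^{k_1}\bar\lambda^{k_2}\,f(\lambda z,w)\,\gamma$ with $k_1+k_2=k$, and $f(\lambda z,w)$ is supported in $\{\|z\|<R/|\lambda|\}$. Proposition \ref{p:T_1T_2-test} with $r=R/|\lambda|$ bounds this by $c\,|\lambda|^k(R/|\lambda|)^k\|f\|_\infty=cR^k\|f\|_\infty$, uniformly for $|\lambda|\ge 1$. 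The same computation with $a_\lambda^*\tau^*\phi$ in place of $a_\lambda^*\phi$ gives the bound for $(T_1\otimes T_2)_\lambda$. The only extra input is the Taylor structure of $\tau$: the pulled-back coefficients stay bounded after multiplying by $|\lambda|^k$, because each $dz'$ is $dz$ plus terms $O(\|z\|)$ in $dz$ and $dw$. Summing over charts gives part (1) on compact sets.

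\textbf{Independence of $\tau$.} This is the main obstacle. I would write
$$a_\lambda^{-1}\circ\tau\circ\tau_0^{-1}\circ a_\lambda=:\sigma_\lambda,$$
so that $\sigma_\lambda(z,w)=\big(z+O(\|z\|^2/|\lambda|),\,w+L_w(z/\lambda)+O(\|z\|^2/|\lambda|^2)\big)$. Then $\sigma_\lambda\to\id$ in $C^1$ on compact sets. Pulling a fixed test form $\phi$ back by $\sigma_\lambda$ gives $\phi+\epsilon_\lambda$. The error $\epsilon_\lambda$ is a sum of coefficients that are $o(1)$, each attached to a monomial $\gamma$ of $z$-degree $k$. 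After rescaling, such a term is paired with $(a_\lambda)_*(T_1\otimes T_2)$, whose mass against that monomial is bounded by the previous step. The dangerous terms come from $dw'=dw+dL_w(z/\lambda)+\cdots$. These contain $d z/\lambda$ and $(z/\lambda)\,dw$, which lower the $z$-degree by swapping $dw$ for $dz$, or carry an extra factor $z/\lambda$. Either way the cost is one factor $|\lambda|^{-1}$. The pairing with $T_1\otimes T_2$ supported in $\|z\|<R/|\lambda|$ gains at most $|\lambda|^{\pm1}$ between consecutive $k$-levels in Proposition \ref{p:T_1T_2-test}. So I must check degree by degree that no term blows up.

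The delicate case is a term of $z$-degree $k=2$ or $3$ arising from $k=1$ or $2$. Terms of degree $k\le1$ vanish by Case 1 of Proposition \ref{p:T_1T_2-test}, which is exactly what saves the argument. Once every error term is $o(1)$, the two rescaled currents converge to the same $\T$. This gives the displayed formula and independence of $\tau$.

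\textbf{Positivity and bidegree.} Each $(a_{\lambda_n})_*(T_1\otimes T_2)$ is a positive $(2,2)$-current, since $a_\lambda$ is holomorphic in the coordinates $(z,w)$. Weak limits preserve positivity and bidegree, so $\T$ is a positive $(2,2)$-current.
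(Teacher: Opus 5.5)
Your proposal is correct and is essentially the paper's argument written after rescaling: the paper's strongly negligible family $\tau^*A_\lambda^*\Phi-A_\lambda^*\Phi$ is exactly $a_\lambda^*(\sigma_\lambda^*\Phi-\Phi)$, and Lemma \ref{l:negligible_test}, via Proposition \ref{p:T_1T_2-test}, is just the uniform local mass bound for $(a_\lambda)_*(T_1\otimes T_2)$ that you use. Two small corrections: the conjugate must be $\sigma_\lambda=a_\lambda\circ\tau\circ\tau_0^{-1}\circ a_\lambda^{-1}$ and the pairing is with $\tau^*a_\lambda^*\phi$ (your explicit formula for $\sigma_\lambda$ is the one for this order), and once you have $\sigma_\lambda\to\mathrm{id}$ in $C^1$ on compacts together with the mass bound, $\langle (a_\lambda)_*(T_1\otimes T_2),\sigma_\lambda^*\phi-\phi\rangle\to 0$ follows directly, so the degree-by-degree check is unnecessary and the vanishing for $k\le 1$ is not what saves the argument.
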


Note that the last assertion in the proposition is a consequence of the second one because the identity in the proposition doesn't involve the map $\tau$ and the current in its RHS is positive. For the proof of this proposition, we need the following notions and results.

\begin{definition}\label{d:negligible}\rm  
Let $(\alpha_\lambda)$ be a family of differential $p$-forms on $X\times X$ or on $\E$, depending
on $\lambda \in \C$ with $|\lambda|$ larger than a fixed constant. We say that this family is {\it fine} and we write 
$\alpha_\lambda\in\Fin(\lambda)$
(resp. {\it strongly  negligible} and we write $\alpha_\lambda\in\SNeg(\lambda)$)
if the support $\supp(\alpha_\lambda )$ of $\alpha_\lambda$ tends to $\Delta$ as $\lambda \to \infty$  and if Properties (1)\,(2) (resp. (1)\,(2)\,(3)) below hold for  all local coordinate systems $(z,w)$ we consider.
\begin{enumerate}
\item $\supp(\alpha_\lambda) \cap (\B\times\B)$ is contained in $(A|\lambda|^{-1}\B)\times\B$ for some
constant $A > 0$ independent of $\lambda;$
\item  The sup-norm of the coefficient of $\gamma$ in $\alpha_\lambda$ is bounded  by $O(\lambda^k)$, 
where $\gamma$ is a wedge-product of $1$-forms among $dz_1, dz_2, dw_1,dw_2$ or their  complex conjugates, and
$k$  is the total degree of $dz_1,dz_2, d\overline{z}_1, d\overline z_2$ in  $ \gamma$, see also Lemma \ref{p:T_1T_2-test}.
\item (only for strongly negligible families) The  sup-norm of the coefficient of $\gamma$ is $o(\lambda^k),$   where $k$ is  defined as  above.  
\end{enumerate}
\end{definition}

Note that Property (1) above is often easy to check. Properties (2) and (3) are often easier to obtain 
when we use the coordinates $(\lambda z , w)$ instead of $(z, w )$. The key point in the proof of strong negligibility is to understand the leading coefficients of the terms of maximal degree in $dz_1,dz_2,d\overline z_1, d\overline z_2$.

The notion of fine families of forms, as well as the so-called {\it negligible  families of forms} were already introduced in  \cite[Def 3.10]{DinhNguyenSibony22} in order to prove  Theorem \ref{t:tangent-DNS}  mentioned above. Here, we also need strongly negligible families in our study of tangent currents, especially when the non-holomorphic map $\tau$ involves in the computation. We have the
following lemma.

\begin{lemma}\label{l:negligible_test}
Let $(\alpha_\lambda)$ be a strongly negligible family of smooth $4$-forms in $X\times X .$ Let $T_1$ and $T_2$ 
be two positive $\ddc$-closed  $(1,1)$-currents on $X.$  Then 
$$\langle T_1\otimes T_2, \alpha_\lambda \rangle \to 0 \quad \text{as} \quad \lambda\to \infty.$$
\end{lemma}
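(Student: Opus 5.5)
The plan is to reduce to local charts and then pair each monomial coefficient of $\alpha_\lambda$ against the bounds of Proposition \ref{p:T_1T_2-test}.

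\textbf{Localization.} Since $\supp(\alpha_\lambda)$ tends to $\Delta$, fix a finite partition of unity $(\chi_j)$ on a neighbourhood of $\Delta$ subordinate to the charts identified with ${1\over 4}\B\times{1\over 4}\B$. For $|\lambda|$ large, $\alpha_\lambda=\sum_j\chi_j\alpha_\lambda$. Each $\chi_j\alpha_\lambda$ is again strongly negligible, since multiplying by a fixed smooth function does not change the coefficient bounds. So it suffices to treat a single chart. We may also assume $\|T_1\|=\|T_2\|=1$ by homogeneity.

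\textbf{Local estimate.} In the coordinates $(z,w)$, write
$$\chi_j\alpha_\lambda=\sum_\gamma f_{\gamma,\lambda}\,\gamma,$$
where $\gamma$ runs over wedge products of four $1$-forms among $dz_1,dz_2,dw_1,dw_2$ and their conjugates. The coefficient $f_{\gamma,\lambda}$ is smooth, and by Property (1) of Definition \ref{d:negligible} it has compact support in $(r\B)\times\B$ with $r:=A|\lambda|^{-1}$. By Property (3), $\|f_{\gamma,\lambda}\|_\infty=\epsilon_\gamma(\lambda)|\lambda|^k$ with $\epsilon_\gamma(\lambda)\to0$, where $k$ is the $z$-degree of $\gamma$. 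Now apply Proposition \ref{p:T_1T_2-test} with this $r$ (which is $\leq1$ for $|\lambda|$ large):
\begin{itemize}
\item if $k=0,1$, the pairing $\langle T_1\otimes T_2,f_{\gamma,\lambda}\gamma\rangle$ vanishes;
\item if $k=2,3,4$, it is bounded by $c\,r^k\|f_{\gamma,\lambda}\|_\infty = cA^k\epsilon_\gamma(\lambda)\to0$.
\end{itemize}
There are finitely many $\gamma$ and finitely many charts, so summing gives $\langle T_1\otimes T_2,\alpha_\lambda\rangle\to0$.

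\textbf{Expected obstacle.} The only delicate point is the bookkeeping. The coefficient bounds in Definition \ref{d:negligible} must be read in the same $(z,w)$ chart in which Proposition \ref{p:T_1T_2-test} is applied, and the support condition must place the coefficients in $(r\B)\times\B$ with $r\sim|\lambda|^{-1}$. This is exactly why the definition insists that (1)--(3) hold for all the local coordinate systems considered. The vanishing for $k=0,1$ matters too: without strong negligibility those terms could have size $O(1)$ or $O(\lambda)$, and the pairing would not be controlled. Beyond this, nothing new is needed, because the $o(\lambda^k)$ gain exactly compensates the $r^k=O(|\lambda|^{-k})$ bound.
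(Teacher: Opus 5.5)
Your proposal is correct and follows the paper's proof. Like the paper, you localize with a partition of unity to a chart in the $(z,w)$ coordinates and then apply Proposition \ref{p:T_1T_2-test} with $r:=A|\lambda|^{-1}$, monomial by monomial: the $o(|\lambda|^k)$ coefficient bounds against the factor $r^k$ give $o(1)$ when $k=2,3,4$, and the terms with $k=0,1$ vanish identically. Your extra bookkeeping (normalizing to mass $1$, requiring $|\lambda|$ large so that $r\le 1$) only makes explicit what the paper leaves implicit.
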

\proof 
Using a partition of unity reduces the problem to the local setting with the coordinates $(z,w)$ as above.
So we can assume that the forms $\alpha_\lambda $ have supports in $({1\over 2} \B)\times ({1\over 2}\B)$.
Lemma \ref{p:T_1T_2-test}, applied to $r := A|\lambda|^{-1}$ with $A $ from Definition \ref{d:negligible}, gives the result.
\endproof

We need a description of $\tau$ in local coordinates $(z ,w)$ in $\B\times\B$. Consider the Taylor
expansion of order $2$ of $\tau$ in $z,\overline z$ with functions in $w $ as coefficients. Since
$\tau$  is smooth admissible, when $z$ tends to 0, we can write this map and its differential as
\begin{equation}\label{e:local_tau}
\tau (z  , w ) =\big(z+O(\|z\|^2),  w + a(w)z + O(\|z\|^2 )\big),
\end{equation}
and
\begin{equation}\label{e:local_dtau}
d\tau (z , w ) = \big(dz+O^*(\|z\|^2),  dw + O(1)dz + O (\|z\| )\big ) ,
\end{equation}
where $a(w )$ is a $2\times 2$ matrix whose entries are smooth functions in $w$ and $O^*(\|z\|^k)$
is any smooth 1-form
that can be written as
$$O^* (\|z\|^k ) = O(\|z\|^{k-1} )dz + O(\|z\|^{k-1} )d\overline{z} + O(\|z\|^k ).$$
We also have
\begin{equation}\label{e:local_dtau-1}
d\tau^{-1}(z, w ) = \big(dz+O^*(\|z\|^2),  dw + O(1)dz+  O(\|z\| )\big )  .
\end{equation}

\begin{lemma} \label{l:negligible_forms} 
If $(\alpha_\lambda )$ is a fine (resp. strongly negligible) family of $4$-forms on $\E ,$ then $(\tau^* (\alpha_\lambda ))$
is a fine (resp. strongly  negligible) family of $4$-forms on $X\times X$. Moreover, the following general rules of computation hold
$$\Fin(\lambda)\wedge\Fin(\lambda)\subset\Fin(\lambda), \quad \Fin(\lambda)\wedge\SNeg(\lambda)\subset\SNeg(\lambda) \quad \text{and} \quad \lambda^{-1}\Fin(\lambda)\subset\SNeg(\lambda).$$
\end{lemma}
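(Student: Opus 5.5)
The plan is a direct computation in the local coordinates $(z,w)$, using the Taylor expansions \eqref{e:local_tau}, \eqref{e:local_dtau} and \eqref{e:local_dtau-1}, together with a bookkeeping of the degrees in $dz,d\overline z$ as in Definition \ref{d:negligible}.

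\emph{Step 1: the three computation rules.} For a wedge product $\alpha\wedge\beta$, the coefficient of a monomial $\gamma$ of $z$-degree $k$ is a sum of products of coefficients of monomials $\gamma',\gamma''$ with $z$-degrees $k',k''$ and $k'+k''=k$.
\begin{itemize}
\item If both families are fine, each product is $O(\lambda^{k'})O(\lambda^{k''})=O(\lambda^k)$.
\item If one family is strongly negligible, one factor is $o$ and the product is $o(\lambda^k)$.
\item The support condition for the wedge product follows from the intersection of the supports.
\item For $\lambda^{-1}\Fin(\lambda)$, the coefficients are $O(\lambda^{k-1})=o(\lambda^k)$, and the support is unchanged.
\end{itemize}

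\emph{Step 2: support of $\tau^*\alpha_\lambda$.} By \eqref{e:local_tau}, $\tau$ maps $\{\|z\|\le A|\lambda|^{-1}\}$ to a set with $\|z\|\lesssim A|\lambda|^{-1}$, and the same holds for $\tau^{-1}$. So the support of $\tau^*\alpha_\lambda$ lies in $(A'|\lambda|^{-1}\B)\times\B$ (after possibly shrinking the charts), and it tends to $\Delta$.

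\emph{Step 3: coefficients of $\tau^*\alpha_\lambda$.} Write $\alpha_\lambda=\sum_\gamma c_\gamma\,\gamma$ with $|c_\gamma|=O(\lambda^{k(\gamma)})$, or $o(\lambda^{k(\gamma)})$ in the strongly negligible case. Then
$$\tau^*\alpha_\lambda=\sum_\gamma (c_\gamma\circ\tau)\,\tau^*\gamma,$$
and $\tau^*\gamma$ is the wedge product of the pullbacks of $dz_j,d\overline z_j,dw_j,d\overline w_j$. By \eqref{e:local_dtau}:
\begin{itemize}
\item $\tau^*dz_j=dz_j+O(\|z\|)dz+O(\|z\|)d\overline z+O(\|z\|^2)dw$;
\item $\tau^*dw_j=dw_j+O(1)dz+O(1)d\overline z+O(\|z\|)dw$, where I absorb the $O(\|z\|)$ term as $O(\|z\|)$ times $dz,d\overline z,dw,d\overline w$.
\end{itemize}
On the support we have $\|z\|=O(\lambda^{-1})$. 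The key observation is that each factor, when expanded, either keeps its $z$-degree or changes it in a controlled way:
\begin{itemize}
\item A $dz$-factor stays of $z$-degree one with coefficient $1+O(\lambda^{-1})$, or drops to a $dw$ with coefficient $O(\lambda^{-2})$. Dropping one unit of $z$-degree is allowed if the coefficient gains $O(\lambda^{-1})$, which it does.
\item A $dw$-factor stays $dw$ with coefficient $O(1)$, or becomes a $dz$ with coefficient $O(1)$, raising the $z$-degree by one.
\end{itemize}
Raising the $z$-degree by one means the coefficient is allowed to be $\lambda$ times larger. Hence each term of $\tau^*\gamma$ has $z$-degree $k'$ with coefficient bounded by $O(\lambda^{k'-k(\gamma)})$ whenever $k'\le k(\gamma)$, and by $O(1)$ when $k'\ge k(\gamma)$. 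In both cases, multiplying by $|c_\gamma|=O(\lambda^{k(\gamma)})$ gives $O(\lambda^{k'})$. In the strongly negligible case the same computation gives $o(\lambda^{k'})$, since the smallness of $c_\gamma$ persists.

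Since $\tau$ is smooth but not holomorphic, the forms $d\overline z$ also appear, but the rules above do not distinguish $dz$ from $d\overline z$, so this is harmless.

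The main obstacle is Step 3, specifically checking that no term lowers the $z$-degree without a compensating factor $\lambda^{-1}$. This relies on admissibility: the $dw$-component of $\tau^*dz$ is $O(\|z\|^2)$, because $\tau$ fixes $\Delta$ and induces the identity on $\E$. I would verify this carefully from \eqref{e:local_tau}; in fact even an $O(\|z\|)$ bound suffices, since it gives a factor $O(\lambda^{-1})$.
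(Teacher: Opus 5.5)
Your proposal is correct and is essentially the paper's argument. The paper only states that the lemma is a direct consequence of the local description \eqref{e:local_dtau}, \eqref{e:local_dtau-1} of $d\tau$ and Definition \ref{d:negligible}, and your degree bookkeeping (each lost unit of $z$-degree comes with a factor $O(\|z\|)=O(|\lambda|^{-1})$, while each gained unit is absorbed because the allowed bound grows by a factor $\lambda$) is exactly the computation left implicit there.
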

\proof It is a direct consequence of the above description of $d\tau$ and Definition \ref{d:negligible}.
\endproof

The following lemma suggests that the non-holomorphicity of $\tau$ doesn't affect the computation of tangent currents. 

\begin{lemma}\label{l:key-technique}  
Let $\varphi$ be a smooth function with compact support in $\B\times\B.$  Then we have the following properties.
\begin{enumerate}
\item The families of functions and forms $\varphi\circ a_\lambda,$ $\partial( \varphi\circ a_\lambda),$ $\dbar( \varphi\circ a_\lambda)$ are  fine. 
\item The family of functions $ (\varphi\circ a_\lambda\circ \tau )- ( \varphi\circ a_\lambda)$ is strongly  negligible. 
\item The three families of $1$-forms
 $\partial (\varphi\circ a_\lambda\circ \tau )- \partial( \varphi\circ a_\lambda)$,  
 $ \tau^*\big(\partial (\varphi\circ a_\lambda )\big)- \partial( \varphi\circ a_\lambda),$ $\partial (\varphi\circ a_\lambda\circ \tau )-  \tau^*\big(\partial (\varphi\circ a_\lambda )\big),$
and  the corresponding three families of $1$-forms which are obtained from the previous ones by replacing $\partial$ with $\dbar$, 
are all strongly  negligible.
\end{enumerate}
\end{lemma}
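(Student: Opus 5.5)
The plan is a direct verification from the local descriptions \eqref{e:local_tau}--\eqref{e:local_dtau} and Definition \ref{d:negligible}, working throughout in the coordinates $(z,w)$ on $\B\times\B$. We will often pass to the rescaled variable $Z:=\lambda z$, in which both the supports and the coefficient bounds become transparent.

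For (1), set $\psi_\lambda(z,w):=\varphi(\lambda z,w)$. Since $\varphi$ has compact support in $\B\times\B$, the support of $\psi_\lambda$ lies in $(|\lambda|^{-1}\B)\times\B$, which gives Property (1) with $A=1$. The function $\psi_\lambda$ itself is bounded by $\|\varphi\|_\infty=O(\lambda^0)$. Next,
$$\partial\psi_\lambda=\lambda(\partial_{z}\varphi)(\lambda z,w)\,dz+(\partial_w\varphi)(\lambda z,w)\,dw.$$
The coefficient of $dz_j$ is $O(\lambda)$, matching $k=1$, and the coefficient of $dw_j$ is $O(1)$, matching $k=0$. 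The same computation handles $\dbar$. This proves fineness.

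For (2), write $\tau(z,w)=(z+O(\|z\|^2),\,w+a(w)z+O(\|z\|^2))$, so that $a_\lambda\circ\tau(z,w)=\big(\lambda z+\lambda O(\|z\|^2),\,w+a(w)z+O(\|z\|^2)\big)$. On the region $\|z\|\lesssim|\lambda|^{-1}$ the two arguments of $\varphi$ differ from $(\lambda z,w)$ by
$$\lambda O(\|z\|^2)=O(|\lambda|^{-1})\quad\text{and}\quad O(\|z\|)=O(|\lambda|^{-1}).$$
The mean value theorem then gives $|\varphi\circ a_\lambda\circ\tau-\varphi\circ a_\lambda|=O(|\lambda|^{-1})=o(1)$. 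The support condition is where the expansion is used. If $\varphi(a_\lambda\tau(z,w))\neq0$, then $\|\lambda\tau_1(z,w)\|<1$, and since $\tau_1=z+O(\|z\|^2)$ we get $\|z\|\le 2|\lambda|^{-1}$ once $z$ is small. Points with $z$ not small lie outside a neighbourhood of $\Delta$ on which $\tau$ is defined and close to the identity, and these are excluded for $|\lambda|$ large. So both terms are supported in $(A|\lambda|^{-1}\B)\times\B$.

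For (3), the identity $\partial(\varphi\circ a_\lambda\circ\tau)-\partial(\varphi\circ a_\lambda)=\partial\big(\varphi\circ a_\lambda\circ\tau-\varphi\circ a_\lambda\big)$ shows that the third family is the difference of the first two. It is therefore enough to treat the first two; the $\dbar$ versions are identical.

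For the second family, $\tau^*(\partial\psi_\lambda)$ has $dz$-coefficient $\lambda(\partial_z\varphi)(a_\lambda\tau)$. By \eqref{e:local_dtau}, pulling back $dz$ gives $dz+O^*(\|z\|^2)$, and pulling back $dw$ gives $dw+O(1)dz+O(\|z\|)$.
\begin{itemize}
\item In the $dz$-coefficient, replacing $(\partial_z\varphi)(a_\lambda\tau)$ by $(\partial_z\varphi)(a_\lambda)$ costs $\lambda\cdot O(|\lambda|^{-1})=O(1)=o(\lambda)$, by the same argument as in (2).
\item The error $O^*(\|z\|^2)=O(\|z\|)dz+O(\|z\|)d\overline z+O(\|z\|^2)$ multiplied by $\lambda$ contributes $O(1)$ to the $dz$ and $d\overline z$ coefficients, which is $o(\lambda)$. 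It contributes $O(|\lambda|^{-1})$ to the $dw$ coefficients, which is $o(1)$.
\end{itemize}

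The delicate point, and the main obstacle, is the term $(\partial_w\varphi)(a_\lambda\tau)\cdot O(1)\,dz$ coming from $\tau^*dw$. This term is only $O(1)$ in front of $dz$, and we need $o(\lambda)$; since $O(1)=o(\lambda)$, it is harmless. The remaining $dw$-part differs by $O(|\lambda|^{-1})+O(\|z\|)=o(1)$. Hence every coefficient of degree $k$ in $dz,d\overline z$ is $o(\lambda^k)$, and the second family is strongly negligible.

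The first family, $\partial(\psi_\lambda\circ\tau)$, is computed by the chain rule. Its $dz$-coefficients are $\lambda\,\partial_z\varphi(a_\lambda\tau)\,(1+O(\|z\|))$ plus $O(1)$, and its $dw$-coefficients are $\partial_w\varphi(a_\lambda\tau)+O(\|z\|)$. Compared with $\partial\psi_\lambda$, this gives the same $o(\lambda^k)$ bounds. We must still check that the non-holomorphic part of $\tau$ does not create an uncontrolled $d\overline z$ term; such a term is $O(\lambda\|z\|)=O(1)=o(\lambda)$ by \eqref{e:local_dtau}. This completes the verification for all three families.
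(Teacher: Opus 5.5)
Your proposal is correct and follows the paper's own route. You localize to $\|z\|\lesssim|\lambda|^{-1}$, get Part (2) from the mean-value bound $\|a_\lambda\circ\tau-a_\lambda\|\lesssim|\lambda|\|z\|^2+\|z\|\lesssim|\lambda|^{-1}$, and get Part (3) by expanding $d\tau$ via \eqref{e:local_dtau}. Your coefficient-by-coefficient bookkeeping, together with the observation that the third family is the difference of the first two, just spells out what the paper compresses into an appeal to Lemma \ref{l:negligible_forms}. One small point of framing: $\partial(\varphi\circ a_\lambda\circ\tau)$ is a $(1,0)$-form, so it has no $d\overline z$ term; the non-holomorphic part of $\tau$ instead enters as an $O(\lambda\|z\|)=O(1)$ contribution to the $dz$-coefficients coming from $\dbar(\varphi\circ a_\lambda)$, and your ``plus $O(1)$'' already covers this.
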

\proof  
For $(z,w)$ in the supports of the considered functions and forms, we have $\|z\|\lesssim |\lambda|^{-1}$.
A straightforward calculation using Definition \ref{d:negligible} and \eqref{e:local_tau}, \eqref{e:local_dtau}, \eqref{e:local_dtau-1} gives Part (1).
For the family of functions in Part (2), it is enough to observe, using \eqref{e:local_tau},  that 
\begin{equation*}
\Big |\varphi( a_\lambda(\tau(z,w)))  - \varphi(a_\lambda(z,w))\Big |\lesssim \|a_\lambda(\tau(z,w))- a_\lambda(z,w)\| \lesssim |\lambda|\|z\|^2+\|z\|\lesssim |\lambda|^{-1}.
\end{equation*}
Finally, using similar estimates and \eqref{e:local_dtau}, \eqref{e:local_dtau-1}, together with Lemma \ref{l:negligible_forms}, we can check that the six families of $1$-forms in Part (3) are strongly negligible.
\endproof

By Lemmas \ref{l:negligible_test}, \ref{l:negligible_forms} and  \ref{l:key-technique}, in many local computations, we may basically replace $\tau$ by the identity map which is holomorphic. 

\medskip\noindent
{\bf $\bullet$ End of the proofs of Proposition \ref{p:local_comput} and Theorem \ref{t:tangent}.}  We first finish the proof of Proposition \ref{p:local_comput} above and then give the proof of Theorem \ref{t:tangent}. 

\proof[Proof of Proposition \ref{p:local_comput}]
Recall that the last assertion of this proposition is a consequence of the second one.
We prove now the first assertion. Let $\Phi$  be a continuous $4$-form with support in a fixed compact
subset of $\E$ with $\|\Phi\|_\infty \leq 1$. It is enough to show that $\limsup_{\lambda\to\infty} |\langle (T_1\otimes T_2)_\lambda , \Phi\rangle|$ is bounded above by a constant which
does not depend on $\Phi$.

Observe that if $(\chi_ k )$ is a finite partition of unity for $\Delta$, then $(\chi_k \circ \pi )$ is a finite
partition of unity for $\E$. Using such a partition, we can reduce the problem
to the local setting with the coordinates $(z,w)$ as above where
$\Phi$ and $\phi$ have supports in $(r_0\B)\times ({1\over 2}\B)$ for some constant $r_0>0$. 
Define $\Phi_\lambda:= \tau^* A^*_\lambda (\Phi)$, $\Psi_\lambda := A^*_\lambda (\Phi)$ and $\alpha_\lambda:=\Phi_\lambda-\Psi_\lambda$. 
Recall that $A_\lambda(z,w)=a_\lambda(z,w)=(\lambda z,w)$. 
Assume  without loss of generality that 
$$ \Phi= f(z,w) dz_I \wedge d\overline z_J\wedge  dw_K \wedge d\overline w_L$$ 
for some $I,J,K,L\subset \{1,2\}$ with $|I|+|J|+|K|+|L|=4$ and some continuous function $f$ with $|f|\leq 1$. Indeed, a general test 4-form is a linear combination of such forms. 
So, we have
$$\Psi_\lambda=  \lambda^{|I|}\bar\lambda^{|J|} f (\lambda z , w )dz_I \wedge d\overline z_J\wedge  dw_K \wedge d\overline w_L.$$
By Lemmas  \ref{l:negligible_forms} and \ref{l:key-technique}, the family 
$\alpha_\lambda$ is strongly negligible. Moreover, we have 
$$\langle (T_1\otimes T_2)_\lambda , \Phi\rangle - \langle T_1\otimes T_2 , \Psi_\lambda \rangle
=\langle T_1\otimes T_2 , \Phi_\lambda \rangle -  \langle T_1\otimes T_2 , \Psi_\lambda \rangle
=  \langle T_1\otimes T_2 , \alpha_\lambda \rangle.$$
Lemma \ref{l:negligible_test} implies that the last expression tends to 0 as $\lambda$ tends to infinity and we get
\begin{equation} \label{e:lim-alpha}
\lim_{\lambda\to\infty} \langle (T_1\otimes T_2)_\lambda , \Phi\rangle - \langle T_1\otimes T_2 , \Psi_\lambda \rangle =0.
\end{equation}
The first assertion of the proposition is now a consequence of Lemma \ref{p:T_1T_2-test} applied to the second term in the LHS of \eqref{e:lim-alpha}.

We prove now the second assertion of the proposition. Consider a sequence 
$(\lambda_n )$ and a limit $\T$ as in the statement. Using the above discussion, we have
$$\langle \T,\Phi\rangle =\lim_{n\to\infty} \langle (T_1\otimes T_2)_{\lambda_n} , \Phi\rangle = \lim_{n\to\infty} \langle T_1\otimes T_2 , \Psi_{\lambda_n}\rangle
= \lim_{n\to\infty} \langle (a_{\lambda_n})_*(T_1\otimes T_2) , \Phi\rangle.$$
Since the property holds for all $\Phi$ as above, the result follows.
\endproof

\proof[Proof of Theorem \ref{t:tangent}] Parts (1) and (3) are already obtained in Proposition \ref{p:local_comput}. 
Consider Part (2). The fact that $\T$ is a positive $(2,2)$-current is also a consequence of Proposition \ref{p:local_comput}. 
We show that $\T$ is $\ddc$-closed. Let $\Phi = \ddc \phi$ for some smooth $(1,1)$-form $\phi$ with compact support in $\E$. 
We have $\langle (T_1\otimes T_2)_\lambda ,\Phi\rangle \to 0$ as $\lambda\to\infty$. As in the proof of Lemma \ref{l:key-technique}, we can assume that $\phi$ is compactly supported by $(r_0\B)\times ({1\over 2}\B)$. Then, we can just follow the proof of \cite[Prop 3.14(4)]{DinhNguyenSibony22}.
The  fact that $\T$ is  conic can be done as in \cite[Lemma 2.14]{DinhSibony18}.

It remains to prove Part (4). Let $h$ be any smooth function with compact support in $\E$. Define $\Phi=h\pi^*(\omega^2)$. We need to show that 
$\langle\T,\Phi\rangle=0$. As in the proof of Proposition \ref{p:local_comput}, 
we can assume that $h$ and $\Phi$ are supported by $(r_0\B)\times ({1\over 2}\B)$. We have
$$\langle\T,\Phi\rangle = \lim_{n\to\infty}  \langle (a_{\lambda_n})_*(T_1\otimes T_2),\Phi\rangle= \lim_{n\to\infty}  \langle T_1\otimes T_2, (a_{\lambda_n})^*(\Phi)\rangle.$$
By the case $k=0$ in Proposition \ref{p:T_1T_2-test}, the last pairing vanishes. This ends the proof of the theorem.
\endproof
  


\section{Existence of  tangent  currents on  blowing-up at a diagonal point}
\label{s:tangent-blow-up}

Recall that $\Delta$ is  the diagonal of $X\times X.$ Fix an  arbitrary point $x_0\in X,$ so  $(x_0,x_0)\in\Delta.$ Let $(x_0,x_0,0)\in\E$ be the point $0_\E(x_0,x_0),$  where $0_\E$ is  the  zero section of $\E\to \Delta.
$
Consider the blow-up $\Pi^0:  \widehat\E^0\to\E$  of $\E$ at  $(x_0,x_0,0).$ Let $\widehat \Delta^0:=(\Pi^0)^{-1}(x_0,x_0,0)$ be the exceptional hypersurface of this blow up.
For a current $S$ on $\E,$ denote by  $\Pi^\bullet S$ be
the current which is  the trivial extension of $\widetilde S$ through $\widehat \Delta^0,$ where   $\widetilde S$ is
the current $(\Pi^0)^*S$ on
$\widehat \E^0\setminus \widehat \Delta^0.$

Let $\tau$ be any smooth admissible map as
above. Define
\begin{equation*}
\widehat T_\lambda:=  \Pi^\bullet (T_\lambda),\quad \text{where}\quad    T_\lambda:=  (A_\lambda )_* \tau_*  (T_1\otimes T_2 ).
 \end{equation*}
This is a current of degree $4$ on some some open subset of
$\widehat\E^0$ containing $\widehat\Delta^0$. This open set increases to $\widehat\E$ when $|\lambda|$ increases to infinity.
Observe that in general $T_\lambda$ is not a $(2, 2)$-current and
it is not $\ddc$-closed.

The main result of this  section is the  following  theorem which  improves  somehow  Theorem  \ref{t:tangent}. The proof of this result will be given later in this section.

\begin{theorem} \label{t:tangent-blow-up}
Let $T_1$ and $T_2$ be  two positive $\ddc$-closed $(1,1)$-currents on a compact K\"ahler surface $X$ as above. Then, with the above notations, we have  the following properties.
\begin{enumerate}
\item The mass of $\widehat T_\lambda$ on any given compact subset of $\widehat\E^0$ is bounded uniformly on $\lambda$ for $|\lambda|$ large enough.
\item If $\widehat \T$ is a cluster value of $\widehat \T_\lambda $ when $\lambda\to\infty ,$ then it
is a  positive $\ddc$-closed $(2, 2)$-current on $\widehat\E^0.$
\item If $(\lambda_n )$ is a
sequence tending to infinity such that $\widehat T_{\lambda_n}$ converges to some current $\widehat\T,$ then $\widehat\T$ may depend on $(\lambda_n )$
but it does not depend on the choice of the map $\tau .$
\end{enumerate}
\end{theorem}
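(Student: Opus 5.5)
The plan is to reduce everything to the local computation of Proposition \ref{p:local_comput} and Theorem \ref{t:tangent}, working on the blow-up in local coordinates. Near $(x_0,x_0,0)$ we use the chart $(z,w)$ with $x_0$ corresponding to $w=0$. Then $\E\simeq\C^2\times 5\B$ and $\Pi^0$ is the blow-up of $\C^4$ at the origin. Away from $\widehat\Delta^0$ the map $\Pi^0$ is biholomorphic, so on compact subsets of $\widehat\E^0\setminus\widehat\Delta^0$ all three assertions follow directly from Theorem \ref{t:tangent}. The point is therefore to control $\widehat T_\lambda$ near $\widehat\Delta^0$, uniformly in $\lambda$.

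For Part (1), it suffices to bound $\langle \widehat T_\lambda, \widehat\omega^2\rangle$ on a neighbourhood of $\widehat\Delta^0$, where $\widehat\omega$ is a Kähler form on $\widehat\E^0$. We choose $\widehat\omega = (\Pi^0)^*\omega_\E + \ddc \log\|(z,w)\|$ plus a cut-off correction. The first term gives masses bounded by Theorem \ref{t:tangent}(1). The terms involving $\ddc\log\|(z,w)\|$ are pulled back by $\tau$ and $A_\lambda$. Using Lemma \ref{l:key-technique} to replace $\tau$ by the identity up to strongly negligible errors, they become pairings of $T_1\otimes T_2$ with forms like $\ddc\log\|(\lambda z,w)\|\wedge\beta$ and $(\ddc\log\|(\lambda z,w)\|)^2$ on $\|(\lambda z,w)\|<r_0$.

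We will bound these via a Lelong–Jensen type argument, as in \eqref{e:Jensen}, applied to the current $(a_\lambda)_*(T_1\otimes T_2)$. The argument decomposes dyadically in $\|(\lambda z,w)\|$ and estimates each shell using Proposition \ref{p:T_1T_2-test}. On the shell $\|(\lambda z,w)\|\sim s$, the form $(\ddc\log)^2$ has coefficients of size $s^{-4}$ in the rescaled variables. The relevant region is $\|z\|\lesssim s/|\lambda|$, $\|w\|\lesssim s$, so Proposition \ref{p:T_1T_2-test} gives contributions of size about $s^{-4}\cdot s^{k}\cdot(\text{$w$-volume factor})$. We then use the bounds $\nu(T_i,\cdot,r)\le c$ from Lemma \ref{l:lelong} together with Case 3a, localized to $\|y\|<s$.

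The main obstacle is that in Case 3a the $w$-localization yields only $\int_{\|y\|<s} T_2\wedge\omega$. This is $O(s^2)$ because $\nu(T_2,x_0,s)$ is bounded. That gives summability in $s$ only after the $s^{-2}$ factor from one $\ddc\log$ is absorbed, which it is, since $\ddc\log$ is homogeneous of degree $0$ in mass. I expect this to be the delicate bookkeeping: one must show that $\int_{\|(\lambda z,w)\|<r_0} (a_\lambda)_*(T_1\otimes T_2)\wedge(\ddc\log\|\cdot\|)^j\wedge\omega_\E^{2-j}$ is uniformly bounded. I would try to get this from the monotonicity of Lelong–Jensen, using that $\widetilde T_\lambda$ is nearly $\ddc$-closed up to negligible terms, which yields the boundedness of a "Lelong number at $(x_0,x_0,0)$" of the approximate tangent currents.

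Part (3) then follows exactly as in Proposition \ref{p:local_comput}. Test forms on $\widehat\E^0$ are pushed down to forms on $\E$ whose pairings differ from the $\tau=\mathrm{id}$ ones by strongly negligible families, using the mass bound from Part (1) to handle the neighbourhood of $\widehat\Delta^0$.

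For Part (2), positivity follows from the local formula with $\tau=\mathrm{id}$. For $\ddc$-closedness, outside $\widehat\Delta^0$ it comes from Theorem \ref{t:tangent}(2). Near $\widehat\Delta^0$ we argue as in Proposition \ref{p:mass-pullback}. The limit has locally finite mass by Part (1), and it is $\ddc$-closed off the hypersurface $\widehat\Delta^0$. Its $\ddc$ is therefore a current supported on $\widehat\Delta^0$, and we show it is zero. For this we check that $\langle\widehat\T,\ddc\phi\rangle=\lim\langle \widehat T_{\lambda_n},\ddc\phi\rangle$, and that the error terms, which are $\lambda$-rescaled $\ddc$ of $\tau$-corrections, are strongly negligible by Lemma \ref{l:negligible_test}.
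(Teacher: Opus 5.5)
There is a genuine gap, and it is in the estimate that carries the theorem: the uniform bound of Part (1) near $\widehat\Delta^0$. Away from $\widehat\Delta^0$ your reduction to Theorem \ref{t:tangent} is fine, but neither mechanism you propose works near it.

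\emph{Shell estimates diverge.} Write $g=\ddc\log(\|z\|^2+\|w\|^2)$. On $\{\|(\lambda z,w)\|\sim s\}$ you bound $A_\lambda^*g^2\lesssim s^{-4}(|\lambda|^2 i dz\wedge d\bar z+i dw\wedge d\bar w)^2$. Combined with Proposition \ref{p:T_1T_2-test} at $r=s/|\lambda|$, this gives a contribution $O(1)$ per dyadic shell. The same holds even if you use the $w$-localized Case 3a with $\nu(T_2,x_0,s)\lesssim 1$. So the sum over shells diverges logarithmically. The loss is intrinsic to coefficient-wise bounds. Already for $T_1=T_2=[L]$, with $L$ a line through $x_0$, the majorant has mass $\simeq 1$ on every shell of $L\times L$, while the restriction of $g^2$ to $L\times L$ vanishes off the origin.

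\emph{Lelong--Jensen does not apply.} The needed cancellation would have to come from a Lelong--Jensen identity, and that needs $\ddc$-closedness. $T_1\otimes T_2$ is not $\ddc$-closed: the mixed terms $\partial T_1\otimes\dbar T_2$ and $\dbar T_1\otimes\partial T_2$ survive. By Lemma \ref{L:identity} they are carried by the $L^2$ forms $\dbar S_1\otimes\partial\overline S_2$ and $\partial\overline S_1\otimes\dbar S_2$. Nothing in Section \ref{s:tangent} makes this defect negligible against potentials singular at $(x_0,x_0,0)$.

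\emph{What is missing.} Test only against \emph{closed} singular forms, so that Lemma \ref{L:identity} reduces each pairing to a cohomological term plus $L^2$ terms controlled by Young's inequality.
The forms $\Pi_*\widehat\omega$ and $\Pi^0_*\widehat\omega^0$ are handled in Lemma \ref{L:T-widehat-omega}.
The forms $R^0_m=\widehat\Pi_*(\ddc\chi(\widehat\phi^0+m)+A\omega_{\widehat\E})$ live on the double blow-up (along $\Delta$, then along $V_0$). They dominate $e^{2m}\|w\|^{-2}i dz\wedge d\bar z$ on $\{\|z\|/\|w\|\sim e^{-m}\}$, and $\langle T_1\otimes T_2,R^0_m\wedge R^0_n\rangle$ is bounded uniformly (Lemmas \ref{L:R_m_annulus}, \ref{L:R_m-R_n}).
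Lemma \ref{l:dilates-g} bounds $A_\lambda^*g$ by $g$ plus an $i dz\wedge d\bar z$ term whose weight decays geometrically away from $\|z\|/\|w\|\sim|\lambda|^{-1}$. Summing against $R^0_{m+n}$ with weights $e^{-2|n|}$ then gives Lemma \ref{L:T_1T_2-testforms}.

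The same issue undermines Parts (2) and (3).

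\emph{The $\tau$-corrections are not strongly negligible.} Lemmas \ref{l:key-technique} and \ref{l:negligible_test} concern smooth compactly supported functions on $\E$ and families with $O(\lambda^k)$ coefficients. The forms $(\Pi^0)_*\widehat\Phi$ and $(\Pi^0)_*\hat\phi$ are singular at $(x_0,x_0,0)$, with coefficients like those of $g$ or $g^2$. The paper therefore needs the weakly fine/weakly negligible classes of Definition \ref{D:negligible_forms-bis}, whose test lemma rests on Lemma \ref{L:T_1T_2-testforms}. A uniform mass bound cannot replace this comparison, since it does not make the contribution of a neighbourhood of $\widehat\Delta^0$ small.

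\emph{$\ddc$-closedness near $\widehat\Delta^0$.} The argument of Proposition \ref{p:mass-pullback} does not transfer: $\widehat\E^0$ is not compact, and $\widehat\T$ is a limit rather than a trivial extension. In your direct check you only treat $\tau$-corrections. The main term $\langle T_1\otimes T_2,\ddc\psi_\lambda\rangle$ is not zero, because $T_1\otimes T_2$ is not $\ddc$-closed. The paper shows it tends to $0$ in two steps. First, Lemma \ref{L:identity} rewrites it as an $L^2$ pairing. Second, any weak limit of $\ddc\psi_\lambda$ is a $d$-exact normal current supported on $\Delta$, hence zero, and Lemma \ref{l:kernel-Delta} concludes.
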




This  section is  organized as  follows. In the  first subsection,
we obtain some estimates which are important in our study.


 \subsection{Some test forms and mass estimates}\label{SS:Mass}

In this subsection, we will construct some special test forms and also give some estimates for positive $\ddc$-closed currents and  their tensor products.


Let $\overline\E:=\P(\E\oplus\C)$ be the projectivized vector bundle associated to $\E.$
Consider the blow-up $\Pi_\Delta:\ \widehat\E_\Delta\to\overline\E$  of $\overline \E$ along  $\Delta$ with the  exceptional hypersurface $\Ec_\Delta.$
Let $V_0:=\Pi_\Delta^{-1}(\{ (x_0,x_0)\}).$ This is a  compact nonsingular curve of $\Ec_\Delta.$
Consider the blow-up $\Pi_{V_0}:\ \widehat \E\to  \widehat\E_\Delta$  of $\widehat\E_\Delta$ along  $V_0.$
Let   $\Ec^0_\Delta$ be the strict transform of   $\Ec_\Delta$  by  $ \Pi_{V_0}.$

Since   $\Ec^0_\Delta$ is  a  hypersurface  in $\widehat \E,$  it defines a holomorphic line  bundle on  $\widehat \E.$
Therefore, there is  a quasi-psh function $\widehat\phi^0\leq -1$ on $\widehat  \E$ such that
$\ddc \widehat\phi^0-[\Ec^0_\Delta]$ is a smooth $(1,1)$-form on $\widehat \E.$

Consider $\widehat\Pi:=\Pi_\Delta\circ \Pi_{V_0}:\ \widehat \E\to \overline\E.$
Recall that we only work with a fixed finite atlas of $X$ as mentioned at the end of the Introduction.
Consider a chart $2\B\times 2\B$ in coordinates $(z,w)$ and cover $\Pi^{-1}(2\B\times 2\B)$ with two
charts denoted by $ \U_{\Delta, 1}$ and $\U_{\Delta,2}$. The first one $\U_{\Delta,1}$ is given  with local  coordinates
$$(v,w)=(v_1,v_2,w_1,w_2) \quad \mbox{with} \quad  \|w\|<2 \quad \mbox{and} \quad |v_1|<2,|v_2|<2$$
such that
$$\Pi_\Delta(v,w)=(v_1,v_1v_2,w_1,w_2)=(z_1,z_2,w_1,w_2).$$

\noindent
{\bf Note. } The second chart $\U_{\Delta,2}$ is defined exactly in the same way, except that the map $\Pi_\Delta$ is given there by
$$\Pi_\Delta(v,w)=(v_1v_2,v_2,w_1,w_2)=(z_1,z_2,w_1,w_2).$$
When we work with local coordinates near $\widehat\Delta$, we will only consider the chart $\U_{\Delta,1}$ where
$\Ec_\Delta=\{ v_1=0 \}.$
 The case of $\U_{\Delta,2}$ where
$\Ec_\Delta=\{ v_2=0 \}$ can be treated in the same way.

Observe  that
$V_0=\Pi_\Delta^{-1}(\{(0,0\})=\{ (0,v_2,0,0)\}.$
Consider a chart $2\B\times 2\B$ in coordinates $(v,w),$  and cover $\Pi^{-1}_{V_0}(2\B\times 2\B)$ with two
charts denoted by $\U_{V_0,1}$  and  $\U_{V_0,2}.$ The first one $\U_{\V_0,1}$ is given  with local  coordinates
$$(u,\xi)=(u_1,u_2,\xi_1,\xi_2) \quad \mbox{with} \quad  \|\xi\|<2 \quad \mbox{and} \quad |u_1|<2,|u_2|<2$$
such that $\Ec^0_\Delta=\{ (0,u_2,\xi_1,\xi_2)\}$ and that
$$\Pi_{V_0}(u,\xi)=(\xi_1u_1,u_2, \xi_1,\xi_1\xi_2)=(v_1,v_2,w_1,w_2)=(v,w).$$
So  we get that
\begin{equation}\label{e:widehat-Pi}
\widehat\Pi(u,\xi)=(\Pi_\Delta\circ \Pi_{V_0})(u,\xi)=\big(\xi_1u_1,\xi_1u_1u_2, \xi_1,\xi_1\xi_2\big)=(z_1,z_2,w_1,w_2)=(z,w).
\end{equation}



\medskip

The function $\widehat\phi^0$ is defined globally on $\widehat\E$. Its
singularities along $\Ec^0_\Delta$ will play an important role in our study.
Using local coordinates, we have the following lemma, see also \cite{DinhSibony04} for some related properties.

\begin{lemma} \label{L:phi^0-vs-u}
\begin{enumerate}
\item There is a constant $c_1>0$ such that
we have the following estimates on   $\U_{V_0,1}:$  $\big | |u_1|-{\|z\|\over \|w\|}\big|\leq c_1.$
\item There is a constant $c_1>0$ such that
we have the following estimates on   $\U_{V_0,1} \setminus\Ec^0_\Delta$
$$i\partial\widehat\phi^0\wedge \dbar\widehat\phi^0\leq c_1(|u_1|^{-2}\ddc\|u\|^2+\omega_{\widehat\E}).$$
\end{enumerate}
\end{lemma}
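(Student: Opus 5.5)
The plan is to work entirely in the chart $\U_{V_0,1}$ with coordinates $(u,\xi)$, using the explicit formula \eqref{e:widehat-Pi} for $\widehat\Pi$ and the defining property of $\widehat\phi^0$. Throughout I assume, as the paper does for its finite atlas, that this chart is relatively compact in a slightly larger chart of the same type (say with $|u_j|,\|\xi\|<3$). Then smooth functions defined on the larger chart have bounded derivatives on $\U_{V_0,1}$, and all constants below are uniform.

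\medskip\noindent
\emph{Part (1).} By \eqref{e:widehat-Pi} we have
$$\|z\|=|\xi_1|\,|u_1|\sqrt{1+|u_2|^2}\quad\text{and}\quad \|w\|=|\xi_1|\sqrt{1+|\xi_2|^2}.$$
Hence, off $\{\xi_1=0\}$,
$${\|z\|\over\|w\|}=|u_1|\,{\sqrt{1+|u_2|^2}\over\sqrt{1+|\xi_2|^2}}.$$
Since $|u_1|,|u_2|,|\xi_2|<2$, this quotient lies in $[0,2\sqrt5]$, and so does $|u_1|$. The difference is therefore bounded by a constant $c_1$. There is no real content here beyond boundedness of the chart. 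The sharper statement that the ratio $\|z\|/(|u_1|\|w\|)$ is bounded above and below by positive constants is the information actually used later.

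\medskip\noindent
\emph{Part (2).} In this chart $\Ec^0_\Delta=\{u_1=0\}$ is a smooth hypersurface, so $[\Ec^0_\Delta]=\ddc\log|u_1|$ there. By construction $\ddc\widehat\phi^0-[\Ec^0_\Delta]$ is smooth. Thus $h:=\widehat\phi^0-\log|u_1|$ satisfies $\ddc h=\eta$ for a smooth form $\eta$ on the larger chart. Writing $\eta=\ddc g$ locally with $g$ smooth (local $\ddc$-lemma on a ball), we get that $h-g$ is pluriharmonic, hence smooth. So $h$ is smooth and $\partial h$ is bounded on $\U_{V_0,1}$.

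Then
$$\partial\widehat\phi^0={du_1\over 2u_1}+\partial h,$$
and by the elementary inequality $i(a+b)\wedge\overline{(a+b)}\le 2\,i a\wedge\bar a+2\,ib\wedge\bar b$ for $(1,0)$-forms we obtain
$$i\partial\widehat\phi^0\wedge\dbar\widehat\phi^0\le {i\,du_1\wedge d\bar u_1\over 2|u_1|^2}+2\,i\partial h\wedge\dbar h.$$
The first term is bounded by a constant times $|u_1|^{-2}\ddc\|u\|^2$, because $\ddc\|u\|^2={i\over\pi}\sum_j du_j\wedge d\bar u_j$. The second term is bounded by a constant times $\omega_{\widehat\E}$, since $\partial h$ is bounded and $\omega_{\widehat\E}$ is a Hermitian form on the compact manifold $\widehat\E$. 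Together these give the claim.

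\medskip\noindent
\emph{Main obstacle.} The only point needing care is the smoothness of $h=\widehat\phi^0-\log|u_1|$, with bounds uniform over the finite atlas. This follows from the regularity of solutions of $\ddc h=\eta$ with $\eta$ smooth, combined with relative compactness of the charts.
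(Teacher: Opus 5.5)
Your proposal is correct and follows essentially the same route as the paper: part (1) is read off from \eqref{e:widehat-Pi}, and for part (2) you write $\widehat\phi^0=\log|u_1|+h$ with $h$ smooth, expand $\partial\widehat\phi^0={du_1\over 2u_1}+\partial h$, and apply Cauchy--Schwarz. The differences are cosmetic. You justify the smoothness of $h$ (local $\ddc$-lemma plus regularity of pluriharmonic functions), which the paper only asserts. You also keep all bounded terms together in $\partial h$, whereas the paper groups the $du_2$-component with $du_1$.
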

\proof
The  first assertion follows  from  \eqref{e:widehat-Pi}.

To prove the  second assertion, define $\widehat\psi:= \widehat\phi^0-\log |u_1|$.
Since $\ddc\widehat\phi^0-[\Ec^0_\Delta]$ is smooth and $\Ec^0_\Delta$ is given by the equation $u_1=0$, we deduce that $\ddc\widehat\psi$ is smooth  on $\U_{V_0,1}$. It follows that
$ \widehat\psi$ is  a smooth function on $\U_{V_0,1}$. Therefore, there are bounded functions $\widehat h,\widehat g_1$ and $\widehat g_2$ on $\U_{V_0,1}$ such that
$$\partial \widehat\phi^0 = {1\over 2  u_1} du_1 + \widehat h du_2 + \widehat g_1 d\xi_1 + \widehat g_2 d\xi_2.$$
Finally, by Cauchy-Schwarz inequality, we can bound $i\partial\widehat\phi^0\wedge\dbar\widehat\phi^0$ by
$$2 |u_1|^{-2} (du_1 + \widehat h du_2) \wedge \overline{( du_1 + \widehat h du_2)} + 2 (\widehat g_1 d\xi_1 + \widehat g_2 d\xi_2) \wedge \overline{(\widehat g_1 d\xi_1 + \widehat g_2 d\xi_2)},$$
and the desired inequality follows easily.
\endproof

We will now construct a family of test forms $R^0_m$ and prove some estimates.
In the chart $\U_{V_0,1},$   the hypersurface $\Ec^0_\Delta$ is equal to $\{u_1=0\}$ and we have
$\ddc \log |u_1|=[\Ec^0_\Delta]$.
Moreover, since $\ddc\widehat\phi^0-[\Ec^0_\Delta]$ is a  smooth form, the  function
$\widehat\phi^0-\log|u_1|$ is also smooth. We deduce from \eqref{e:widehat-Pi} that
$\widehat\phi^0\circ\widehat\Pi^{-1}(z,w)-\log{\|z\|\over \|w\|}$ is  bounded in $2\B\times 2\B.$
Choose a constant $M\gg 1$ large enough   such that $|\widehat\phi^0\circ\widehat\Pi^{-1}(z,w)-\log{\|z\|\over \|w\|}|\leq M$ on each chart $2\B\times 2\B$ of $X\times X$.

 Let $\chi:\ \R\to\R$ be an increasing convex  smooth function  such that
 $\chi(t)=0$ for $t\leq -3M,$ $\chi(t)=t$ for $t\geq 3M,$ ${1\over 10M}\leq  \chi'(t)\leq 1,$ and $\chi''(t)\in \big[ {1\over 8M},{1\over 4M}\big]$  for  $t\in [-2M,2M].$
Fix also a constant $A\gg 1$ large enough.  Define for  $m\in\N$
$$R^0_m:= \widehat\Pi_*\big( \ddc[\chi (\widehat\phi^0+m)]+A\omega_{\widehat\E}\big).$$
 This is  clearly a positive  closed  $(1,1)$-current on $\overline\E$ smooth outside $\Delta\subset \overline\E.  $  We first  show that it is  positive and has bounded  mass.
 A direct   computation gives
 \begin{equation}\label{e:R_m}
 R^0_m=\widehat\Pi_*\big(\chi'(\widehat\phi^0+m)\ddc\widehat\phi^0\big)+{1\over \pi} \widehat\Pi_*(\chi''(\widehat\phi^0+m) i\partial \widehat\phi^0\wedge \dbar\widehat\phi^0)+A \widehat\Pi_*(\omega_{\widehat\E}).
 \end{equation}
 The second term is positive. The
current $\chi'(\widehat\phi^0+m)\ddc\widehat\phi^0$ in $\widehat\E$  in the first term is bounded below by
 $-c_1 \omega_{\widehat\E}.$ We then deduce that $R^0_m$ is positive since $A$ is chosen large enough. Furthermore, since $R^0_m$ is cohomologous to $A\widehat\Pi_*(\omega_{\widehat\E}) ,$ its mass is equal to the mass
of $A \widehat\Pi_*(\widehat\omega)$  and hence is bounded independently of $m$.

\smallskip

We have the following lemmas. The goal is to understand the mass repartition of $T_1\otimes T_2$ near $\Delta$ and to prove the basic estimates given in Lemma \ref{L:T_1T_2-testforms} below.

\begin{lemma}\label{L:R_m_annulus}
There is a constant $c_3>0$ such that the following properties hold.
\begin{enumerate}
\item For every integer $m\geq 0$, we have
 $$e^{2m}\|w\|^{-2}(idz_1\wedge d\overline{z}_1+idz_2\wedge d\overline{z}_2)\leq  c_3R^0_m
 \quad \text{on}\quad  \big\{e^{-m-1}\leq  {\|z\|\over \|w\|}\leq  e^{-m},\ \|w\|< 2 \big\}. $$
 \item For each $0<r\leq 1,$ if  $m$ is the integer such that $e^{-m-1}<r\leq e^{-m}$, then
 $$ir^{-2}\|w\|^{-2}(dz_1\wedge d\overline{z}_1+dz_2\wedge d\overline{z}_2)\leq  c_3\sum_{n=0}^\infty e^{-2n} R^0_{m+n}\quad\text{on}\quad \big\{0<{\|z\|\over \|w\|}<r,\ \|w\|< 2\big\}.$$
\end{enumerate}
 \end{lemma}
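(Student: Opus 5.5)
We work in the chart $\U_{V_0,1}$ with coordinates $(u,\xi)$, where by \eqref{e:widehat-Pi} we have $z_1=\xi_1u_1$, $z_2=\xi_1u_1u_2$, $w_1=\xi_1$, $w_2=\xi_1\xi_2$; the chart $\U_{V_0,2}$ and the charts lying over $\U_{\Delta,2}$ are handled identically. Part (2) will be deduced from Part (1) by the same dyadic summation as in Lemma \ref{l:R-test}(2). So the work is in Part (1).

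For Part (1), fix $m$. On the region $\{e^{-m-1}\le \|z\|/\|w\|\le e^{-m}\}$, the estimate $|\widehat\phi^0\circ\widehat\Pi^{-1}-\log(\|z\|/\|w\|)|\le M$ gives $\widehat\phi^0+m\in[-1-M,M]\subset[-2M,2M]$. There $\chi''(\widehat\phi^0+m)\ge 1/(8M)$, and $\chi'\ge 1/(10M)$. In \eqref{e:R_m}, the first term plus $A\widehat\Pi_*\omega_{\widehat\E}$ is positive, because $\ddc\widehat\phi^0\ge -c_1\omega_{\widehat\E}$ away from $\Ec^0_\Delta$ and $A$ is large. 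Hence on this region
$$R^0_m\gtrsim \widehat\Pi_*\big(i\partial\widehat\phi^0\wedge\dbar\widehat\phi^0\big)\quad\text{and}\quad R^0_m\gtrsim \widehat\Pi_*(\omega_{\widehat\E}).$$
We then need the lower bound
$$i\partial\widehat\phi^0\wedge\dbar\widehat\phi^0+\omega_{\widehat\E}\gtrsim e^{2m}\|w\|^{-2}\,\widehat\Pi^*(idz_1\wedge d\overline z_1+idz_2\wedge d\overline z_2)$$
on the region, where $|u_1|\simeq e^{-m}$ by Lemma \ref{L:phi^0-vs-u}(1), up to bounded error. For $m$ small this is trivial, since $\|z\|\lesssim\|w\|$ there and finitely many $m$ can be absorbed into the constant. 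So take $m$ large, with $|u_1|\simeq e^{-m}$ small.

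Compute $dz_1=\xi_1du_1+u_1d\xi_1$ and $dz_2=\xi_1u_2du_1+\xi_1u_1du_2+u_1u_2d\xi_1$, with $\|w\|\simeq|\xi_1|$. Then $\|w\|^{-2}\,i\,dz\wedge d\overline z$ is bounded by a constant times
$$i\,du_1\wedge d\overline{u}_1+|u_1|^2\big(i\,du_2\wedge d\overline{u}_2+|\xi_1|^{-2}i\,d\xi_1\wedge d\overline\xi_1\big).$$
Multiplying by $e^{2m}\simeq|u_1|^{-2}$, we must dominate $|u_1|^{-2}i\,du_1\wedge d\overline u_1$, $i\,du_2\wedge d\overline u_2$ and $|\xi_1|^{-2}i\,d\xi_1\wedge d\overline\xi_1$.

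The second of these is controlled by $\omega_{\widehat\E}$. The first should come from $i\partial\widehat\phi^0\wedge\dbar\widehat\phi^0$. From the proof of Lemma \ref{L:phi^0-vs-u} we have $\partial\widehat\phi^0=\tfrac{1}{2u_1}du_1+\widehat h\,du_2+\widehat g\,d\xi$, so $i\partial\widehat\phi^0\wedge\dbar\widehat\phi^0+C\omega_{\widehat\E}\gtrsim|u_1|^{-2}i\,du_1\wedge d\overline u_1$, using Cauchy--Schwarz in the reverse direction.

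The main obstacle is the factor $|\xi_1|^{-2}$, which is unbounded as $w\to0$. This is exactly where the second blow-up along $V_0$ enters. I would first check whether the $\|w\|^{-2}$ weighting is compensated: $R^0_m$ is a push-forward, so working on $\widehat\E$ means comparing pulled-back forms, and $\widehat\Pi^*(i\,dz\wedge d\overline z)$ contains the term $|u_1|^2\,i\,d\xi_1\wedge d\overline\xi_1$ without the $|\xi_1|^{-2}$. Redoing the estimate with $\|w\|^{-2}$ replaced by $|\xi_1|^{-2}$ only on the $du$ terms, and noting that the $d\xi_1$ coefficient is $|u_1|^2(1+|u_2|^2)\le C e^{-2m}$, shows that the precise bound needed is $e^{2m}|u_1|^2\|w\|^{-2}\cdot\|w\|^{2}\,i\,d\xi_1\wedge d\overline\xi_1\lesssim\omega_{\widehat\E}$. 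If the bookkeeping instead forces $|\xi_1|^{-2}$, I would use the rescaling invariance in $w$ coming from the conic structure, namely the factor $\chi'\ddc\widehat\phi^0$ near $\Ec^0_\Delta$. Part (2) then follows by summing Part (1) over $n\ge0$ on the annuli $e^{-m-n-1}\le\|z\|/\|w\|\le e^{-m-n}$, using $r^{-2}\le e^{2m+2}$ and $e^{2m}\le e^{2(m+n)}e^{-2n}$.
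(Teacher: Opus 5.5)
Up to the decisive step your outline matches the paper's. On the annulus $|\widehat\phi^0+m|\le 2M$, so $\widehat\Pi^*R^0_m\gtrsim e^{2m}\,i\,du_1\wedge d\overline u_1+\omega_{\widehat\E}$. The paper gets the $du_1$-part from $\chi''\,i\partial\widehat\phi^0\wedge\dbar\widehat\phi^0$ by splitting off the smooth function $\widehat\psi=\widehat\phi^0-\log|u_1|$, which is your reverse Cauchy--Schwarz. One then expands $\widehat\Pi^*(i\,dz\wedge d\overline z)$ in $(u,\xi)$, and Part (2) is the same summation over annuli. A minor point: the comparison $|u_1|\simeq e^{-m}$ must come from \eqref{e:widehat-Pi}, which gives $\|z\|/\|w\|=|u_1|(1+|u_2|^2)^{1/2}(1+|\xi_2|^2)^{-1/2}$. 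The additive bound in Lemma \ref{L:phi^0-vs-u}(1) says nothing useful here.

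The step you flag as the main obstacle is a genuine gap. After multiplying by $e^{2m}\|w\|^{-2}$, the $d\xi_1$-component is $e^{2m}\|w\|^{-2}|u_1|^2(1+|u_2|^2)\,i\,d\xi_1\wedge d\overline\xi_1\simeq|\xi_1|^{-2}\,i\,d\xi_1\wedge d\overline\xi_1$, and neither of your remedies controls it. The first one inserts a factor $\|w\|^{2}$ that is not on the left-hand side. The second cannot work: on $\{u_1\neq0\}$ one has $\ddc\widehat\phi^0=\ddc\widehat\psi$, a smooth bounded form, and $\partial\widehat\phi^0(\partial/\partial\xi_1)=\partial\widehat\psi/\partial\xi_1$ because $\log|u_1|$ does not depend on $\xi_1$. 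Together with the smoothness of $\omega_{\widehat\E}$ across $\{\xi_1=0\}$, this makes every term of $\widehat\Pi^*R^0_m$ $O(1)$ in the $\xi_1$-direction. For the same reason the case of small $m$ is not trivial, since $\|w\|^{-2}$ is unbounded on every annulus.

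In fact the gap cannot be closed, because (1) with the weight $\|w\|^{-2}$ fails near $w=0$. Fix $m$ and let $a=e^{-m-1/2}$. Take the points $(z,w)=t(a,0,1,0)$, $0<t<1$, which lie in the annulus, and the radial vector $v=(a,0,1,0)$, which is $\partial/\partial\xi_1$ at $(u,\xi)=(a,0,t,0)$ in $\U_{V_0,1}$. On $v$ the left-hand side of (1) equals $e^{2m}t^{-2}a^2=e^{-1}t^{-2}$. By the previous remark, the value of $R^0_m$ on $v$ stays bounded as $t\to0$. Choosing $a=r/2$ gives the same contradiction for (2).

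What the paper's argument really proves (see its last display) is the bound for $\widehat\Pi^*\big(ie^{2m}(dz_1\wedge d\overline z_1+dz_2\wedge d\overline z_2)\big)$, that is, without $\|w\|^{-2}$. There the $d\xi_1$-coefficient is $e^{2m}|u_1|^2(1+|u_2|^2)=O(1)$ and is absorbed by $\omega_{\widehat\E}$. The cross terms are $e^{m}\,du_1\wedge O(1)$ and are handled by Cauchy--Schwarz. If one insists on the weight, the radial direction has to be removed: what $R^0_m$ dominates is $e^{2m}\,i\,du_1\wedge d\overline u_1$, with $du_1=w_1^{-1}(dz_1-u_1\,dw_1)$. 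So your proposal can be completed only for one of these corrected inequalities, not for the statement as written.
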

\proof
(1) In the  considered domain,  we have $|\hat \phi^0+m|\leq 2M$. Therefore, $\chi'(\widehat\phi^0+m)\geq {1\over 10M}$ and
$\chi''(\widehat\phi^0+m)\in  \big[{1\over 8M}, {1\over 4M}\big].$
Define  $\widehat\psi:=\widehat\phi^0-\log |u_1|$. So $ \widehat\psi$ is  a smooth function on $\U_{V_0,1}$ because $\ddc\widehat\psi$ is smooth.
Observe that $|u_1|\leq {\|z\|\over \|w\|}$ and hence $|u_1|^{-1}\geq e^m$
on  the region $\widehat\Pi^{-1}\big\{e^{-m-1}\leq {\|z\|\over \|w\|}\leq e^{-m},\ \|w\|\leq 2 \big\}$. We then obtain on the same region that the form $i\partial  \widehat\phi^0\wedge \dbar \widehat\phi^0$ is  equal to
\begin{eqnarray*}
\lefteqn{i\partial ( \widehat\psi+ \log |u_1|)\wedge \overline\partial ( \widehat\psi+\log |u_1|)}\\
 &=& i\partial\Big[{M+1\over M} \widehat\psi+{M\over M+1}\log{|u_1|} \Big]\wedge\dbar \Big[ {M+1\over M} \widehat\psi+{M\over M+1}\log{|u_1|} \Big]  \\
 && -{2M+1\over M^2}i \partial \widehat\psi\wedge\dbar \widehat\psi+{2M+1\over (M+1)^2}i\partial \log{|u_1|}\wedge\dbar \log{|u_1|}\\
 &\geq& -{3\over M}i \partial \widehat\psi\wedge\dbar \widehat\psi+{1\over 4M}e^{2m} idu_1\wedge d\overline u_1 \mbox{ since the first term in the last sum is  positive}.
\end{eqnarray*}
Observe that the first term in the last line is bigger than $-\epsilon\omega_{\widehat \E}$ for some small constant $\epsilon>0$ because $M$ is big. Since $\ddc\widehat\phi^0-[\Ec^0_\Delta]$ is  smooth,  we also have
 $\ddc\widehat\phi^0\geq -c_1  \omega_{\widehat \E} $. Therefore,
for $A\gg 1$, using \eqref{e:R_m}, we have
$$\widehat\Pi^*(R^0_m)\geq  {1\over 200M^2} \big(e^{2m} idu_1\wedge d\overline u_1+\omega_{\widehat \E}\big).$$

Recall that $e^m|u_1|\leq 1$ on  $\big\{e^{-m-1}\leq  {\|z\|\over \|w\|}\leq e^{-m},\ \|w\|\leq 2 \big\}$. So using \eqref{e:widehat-Pi}, we have that $z_1=\xi_1 u_1$ and $z_2=\xi_1u_1u_2,$  $w_1=\xi_1,$ $w_2=\xi_1\xi_2.$ Therefore,
we can find a bounded function $\theta_0$ and bounded forms $\theta_j$  on the region $\widehat\Pi^{-1}\big\{e^{-m-1}\leq {\|z\|\over \|w\|} \leq e^{-m},\ \|w\|\leq 2 \big\}$ such that
$$
\widehat\Pi^*\big(ie^{2m}(dz_1\wedge d\overline z_1+dz_2\wedge d\overline z_2)  \big)=e^{2m}\theta_0idu_1\wedge d\overline u_1 +e^mdu_1\wedge\theta_1+ e^m d\overline u_1\wedge\theta_2 +\theta_3 .
$$
By Cauchy-Schwarz inequality, the last sum is bounded above by $e^{2m}\theta_0'idu_1\wedge d\overline u_1+\theta_3'$ for some  bounded function $\theta_0'$ and bounded form $\theta_3'$. This, combined with
the previous estimate for $\widehat\Pi^* (R^0_m)$,
implies the inequality in the assertion (1) of the lemma for a suitable constant $c_3$.

\smallskip

(2) Observe   that $r^{-2}\leq e^{2m+2}.$ Applying the first assertion for $m+n$ instead of $m$ yields the desired estimate for a suitable constant $c_3$.
 \endproof

Denote by $\pi_j:X\times X\to X$ the projection onto the $j$-th factor and we use the K\"ahler form $
\widetilde\omega:=\pi_1^*(\omega)+\pi_2^*(\omega)$ on $X\times X$.
 Let $\Pi:\ \widehat{X\times X}\to X\times X$ be the blow-up of  $X\times X$ along the diagonal $\Delta.$
By Blanchard's theorem  \cite{Blanchard},  $\widehat{X\times X}$ can be endowed with a K\"ahler form
$\widehat \omega.$   The current $\Pi_*(\widehat \omega)$ is positive closed and has positive Lelong numbers along $\Delta$ and is smooth  outside $\Delta.$ Multiplying $\widehat \omega$ by a positive constant allows us to  assume that  the Lelong number of $\Pi_*(\widehat \omega)$ along  $\Delta$ is equal to 1.
So we have
\begin{equation} \label{e:omega-hat}
\Pi^*(\Pi_*(\widehat \omega))=\widehat \omega+[\widehat\Delta].
\end{equation}


Let $\Pi^0:\ (\widehat{X\times X})^0 \to X\times X$ be the blow-up of  $X\times X$ at the point  $(x_0,x_0)\in\Delta.$
By Blanchard's theorem  \cite{Blanchard},  $(\widehat{X\times X})^0$ can be endowed with a K\"ahler form
$\widehat \omega^0.$ The forms $\Pi^0_*(\widehat\omega^0)$, $\Pi^0_*((\widehat\omega^0)^2)$ are defined globally on $X\times X$.
The forms $\Pi_*(\widehat\omega)$, $\Pi_*(\widehat\omega^2)$ are defined globally on $X\times X$.  Their
singularities along $\Delta$ will play an important role in our study. Let $\omega_{\widehat\E}$ be  a  K\"ahler form on $\widehat\E.$  Using local coordinates, we have the following lemma, see also \cite{DinhSibony04} for some related properties.

 \begin{lemma}\label{L:widehat-omega}
 There is a constant $c_4>0$  such that
  $$    \widehat\Pi_*(\omega_{\widehat\E})\leq c_4( \widetilde \omega+ \Pi_* \widehat\omega+ \Pi^0_*\widehat\omega^0 ). $$
 \end{lemma}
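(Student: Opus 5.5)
The inequality is local and concerns smooth forms away from the exceptional loci, so I will argue chart by chart. Away from $\widehat\Pi^{-1}(\Delta)$ the map $\widehat\Pi$ is a biholomorphism onto its image in $\overline\E$. Here I identify a neighbourhood of the zero section of $\overline\E$ with a neighbourhood of $\Delta$ in $X\times X$ via the coordinates $(z,w)$, as the paper does. Hence $\widehat\Pi_*(\omega_{\widehat\E})$ is smooth there. On the compact part of $\overline\E$ away from a neighbourhood of $\Delta$, and in particular near the divisor at infinity, it is bounded by a constant times a Kähler form; in the region identified with $X\times X$ this is a constant times $\widetilde\omega$. So the whole estimate reduces to a neighbourhood of $\Delta$, covered by finitely many charts $2\B\times2\B$ in the coordinates $(z,w)$. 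Since $\omega_{\widehat\E}$ is comparable to any fixed smooth positive form on each chart of $\widehat\E$, I may replace it by the flat form of the coordinate charts $(u,\xi)$ of $\U_{V_0,j}$, and the analogous charts covering the rest of $\widehat\E$.

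Next I compute the pushforward in coordinates and compare it with explicit local models of the three terms on the right. The model for $\Pi_*\widehat\omega$ near $\Delta$ is $\ddc\log\|z\|$ plus a smooth form, i.e. it dominates $c\,\|z\|^{-2} i(dz_1\wedge d\bar z_1+dz_2\wedge d\bar z_2)$ modulo the projective-direction degeneracy, more precisely it dominates $i\partial\bar\partial$ of $\log\|z\|$ together with $\widetilde\omega$. Similarly $\Pi^0_*\widehat\omega^0$ near $(x_0,x_0)$ is comparable to $\ddc\log\|(z,w)\|+\widetilde\omega$, where $(x_0,x_0)$ corresponds to $(z,w)=(0,0)$. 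On $\U_{V_0,1}$ I use \eqref{e:widehat-Pi}: $z_1=\xi_1u_1$, $z_2=\xi_1u_1u_2$, $w_1=\xi_1$, $w_2=\xi_1\xi_2$. Inverting on the dense open set gives $\xi_1=w_1$, $\xi_2=w_2/w_1$, $u_1=z_1/w_1$, $u_2=z_2/z_1$. Then I express $i\,du_j\wedge d\bar u_j$ and $i\,d\xi_j\wedge d\bar\xi_j$ in terms of $dz,dw$. For instance $d\xi_2$ is bounded by the Fubini–Study type form $\ddc\log\|w\|$ plus smooth terms, which in turn is bounded by $\Pi^0_*\widehat\omega^0$ restricted in $w$-directions. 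Likewise $du_2$ is bounded by $\ddc\log\|z\|$, i.e. by $\Pi_*\widehat\omega$. For $du_1=d(z_1/w_1)$, Cauchy–Schwarz gives
$$i\,du_1\wedge d\bar u_1\lesssim |w_1|^{-2}i\,dz_1\wedge d\bar z_1+|z_1|^2|w_1|^{-4}i\,dw_1\wedge d\bar w_1.$$
On the chart we have $|u_1|=|z_1/w_1|<2$, so the second term is $\lesssim |w|^{-2}i\,dw\wedge d\bar w$, controlled by $\Pi^0_*\widehat\omega^0$. The first term is $\lesssim \|(z,w)\|^{-2}\,i\,dz\wedge d\bar z$, again controlled by $\ddc\|(z,w)\|^2/\|(z,w)\|^2$-type bounds coming from $\Pi^0_*\widehat\omega^0$, using $|w_1|\gtrsim\|(z,w)\|$ on the chart. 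The other charts are handled by permuting indices.

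Finally, I pass from the pointwise inequality on the upstairs side to the inequality of currents downstairs. Both sides are smooth outside $\Delta$ and $\widehat\Pi_*$ is a pushforward by a bimeromorphic map. Moreover $\widehat\Pi_*(\omega_{\widehat\E})$ has no mass on $\Delta$, since $\Delta$ has codimension $2$ and the current is the pushforward of a smooth form. So the inequality on $\overline\E\setminus\Delta$ extends, because the right-hand side is a positive current. The main obstacle is the middle step. I must justify precisely that $\Pi_*\widehat\omega$ and $\Pi^0_*\widehat\omega^0$ dominate the local models $\ddc\log\|z\|+\widetilde\omega$ and $\ddc\log\|(z,w)\|+\widetilde\omega$, uniformly on the charts, and that the Cauchy–Schwarz bounds of the mixed terms stay inside the region where $|u_1|,|u_2|,|\xi_2|$ are bounded. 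I would first try to prove these dominations by writing $\widehat\omega\ge c\,\Pi^*\widetilde\omega+c\,\ddc\log\|z\|$ lifted, which holds since the right side is a smooth semipositive form on the blow-up.
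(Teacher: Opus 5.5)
Your route is the paper's: invert \eqref{e:widehat-Pi} on $\U_{V_0,1}$ as $u_1=z_1/w_1$, $u_2=z_2/z_1$, $\xi_1=w_1$, $\xi_2=w_2/w_1$, then bound the pieces of $\widehat\Pi_*(\omega_{\widehat\E})$ by $\widetilde\omega$, $\ddc\log\|z\|^2$ and $\ddc\log(\|z\|^2+\|w\|^2)$. The gap is in the $du_1$ estimate, which is the only delicate point.

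\textbf{The $du_1$ step fails.} After Cauchy--Schwarz you claim that $|w_1|^{-2}i\,dz_1\wedge d\bar z_1$ and $|z_1|^2|w_1|^{-4}i\,dw_1\wedge d\bar w_1$ (equivalently $\|(z,w)\|^{-2}i\,dz\wedge d\bar z$ and $\|w\|^{-2}i\,dw\wedge d\bar w$) are dominated by $\Pi^0_*\widehat\omega^0$. This is false. Evaluate both sides on the Euler vector $v=(z,w)$ at points with $z_1=w_1$, $z_2=w_2=0$; these lie in the chart ($u_1=1$, $u_2=\xi_2=0$). Each of your two terms equals $1$ there. The right-hand side is $O(\|(z,w)\|^2)$ on $v$: $\widetilde\omega$ is smooth, and the Euler field lifts to both blow-ups as a holomorphic field of length $O(\|(z,w)\|)$ (it is $t\partial_t+w\partial_w$ in the standard charts). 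So $\Pi^0_*\widehat\omega^0\approx\ddc\log\|(z,w)\|^2+\widetilde\omega$ degenerates in the radial direction and does not dominate $\ddc\|(z,w)\|^2/\|(z,w)\|^2$. By contrast, $i\,du_1\wedge d\bar u_1$ vanishes on $v$, since $u_1$ is homogeneous of degree $0$. Cauchy--Schwarz throws away exactly this cancellation.

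\textbf{How to repair it.} Keep $du_1=w_1^{-2}(w_1\,dz_1-z_1\,dw_1)$ intact. For $p=(z,w)\in\C^4$, use the Lagrange-type identity
\[
\ddc\log\|p\|^2={i\over\pi}\,\|p\|^{-4}\sum_{j<k}(p_j\,dp_k-p_k\,dp_j)\wedge\overline{(p_j\,dp_k-p_k\,dp_j)} .
\]
It gives $i\,du_1\wedge d\bar u_1\le\pi\|p\|^4|w_1|^{-4}\,\ddc\log\|p\|^2\lesssim\ddc\log\|p\|^2$ on the image of the chart, because there $|z_1|,|z_2|,|w_2|\lesssim|w_1|$. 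The same identity handles $\xi_2=w_2/w_1$. In $\C^2_z$ it handles $u_2=z_2/z_1$ against $\ddc\log\|z\|^2$. Then use $\Pi^0_*\widehat\omega^0\gtrsim\ddc\log\|p\|^2$ and $\Pi_*\widehat\omega\gtrsim\ddc\log\|z\|^2$ off the centers. These hold, as you suggest, because the lifts of $\ddc\log\|p\|^2$ and $\ddc\log\|z\|^2$ are pullbacks of Fubini--Study forms, hence smooth semipositive forms on the respective blow-ups. This degree-$0$ homogeneity is what the paper's one-line comparison uses implicitly. Your reduction to charts and the extension of the inequality across $\Delta$ are fine.
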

 \proof
By   \eqref{e:widehat-Pi} we get that
$$
u_1={z_1\over w_1},\quad u_2={z_2\over z_1},\quad \xi_1=w_1,\quad\xi_2={w_2\over w_1}.
$$
Using this,  we  see that
\begin{eqnarray*}
\widehat\Pi_*(\omega_{\widehat\E})&=&\widehat\Pi_*\big(\ddc (\|u_1\|^2+\|u_2\|^2+\|\xi_1\|^2+\|\xi_2\|^2)\big)\\
&=& \ddc\big( \|{z_1\over w_1}\|^2\big)+  \ddc\big( \|{z_2\over z_1}\|^2 \big)+ \ddc\big( \|w_1\|^2\big)+  \ddc\big(\|{w_2\over w_1}\|^2 \big)\\
&\lesssim&  \ddc(\|z\|^2+\|w\|^2)+ \ddc \log(\|z\|^2)+\ddc\log (\|z\|^2+\|w\|^2)\\
&\approx & \widetilde \omega+ \Pi_* \widehat\omega+ \Pi^0_*\widehat\omega^0.
 \end{eqnarray*}
 \endproof

 \begin{lemma}\label{L:T-widehat-omega}
   Let $T_1$ and $T_2$ be two positive $\ddc$-closed  $(1,1)$-currents  of mass $1$ on $X.$ Then  there is a constant $c_5>0$, independent of $T_1$ and $T_2$, such that
  $$ \big\langle T_1\otimes T_2  , ( \widetilde \omega+ \Pi_* \widehat\omega+ \Pi^0_*\widehat\omega^0 )^2 \big \rangle\leq  c_5.$$
 \end{lemma}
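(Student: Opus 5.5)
Expanding the square $(\widetilde\omega+\Pi_*\widehat\omega+\Pi^0_*\widehat\omega^0)^2$ gives finitely many products of two of the three forms. Each is a positive closed $(2,2)$-current on $X\times X$, smooth outside $\Delta$. It therefore suffices to bound $\langle T_1\otimes T_2,\alpha\wedge\beta\rangle$ for each pair $\alpha,\beta\in\{\widetilde\omega,\Pi_*\widehat\omega,\Pi^0_*\widehat\omega^0\}$. Products involving $\Pi_*\widehat\omega$ should be understood through the pushforwards $\Pi_*(\widehat\omega^2)$ and the like. The guiding principle is Lemma \ref{L:identity}: for closed test forms, the pairing with $T_1\otimes T_2$ depends only on cohomological data and on the $L^2$ forms $\dbar S_j$, $\partial\overline S_j$, whose norms are controlled by the mass. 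The test forms here are singular, so I would approximate them by smooth closed forms with uniformly bounded cohomology and use positivity to pass to the limit.

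\textbf{Terms involving $\widetilde\omega$.} The term $\widetilde\omega^2$ is smooth and closed, so the pairing is bounded by the masses. For $\widetilde\omega\wedge\Pi_*\widehat\omega$ and $\widetilde\omega\wedge\Pi^0_*\widehat\omega^0$, I would use the fact that $\Pi_*\widehat\omega$ is locally comparable to $\ddc(\|z\|^2+\|w\|^2)+\ddc\log\|z\|^2$, a form with a logarithmic pole along $\Delta$. Regularize $\log\|z\|^2$ by $\chi(\log\|z\|^2+m)$, or equivalently use the forms $R_m$ of Lemma \ref{l:R-test}. This yields smooth positive closed forms $\Theta_m\to\Pi_*\widehat\omega$ in the same cohomology class. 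Lemma \ref{L:identity} applied to $\widetilde\omega\wedge\Theta_m$ gives bounds uniform in $m$, since $\|\dbar S_j\|_{L^2}$ is bounded by a constant times $\|T_j\|$ (Proposition \ref{p:FS}) and $\Theta_m$ has uniformly bounded sup-norm away from $\Delta$. The part near $\Delta$ is the delicate one, so instead I would argue directly. Writing $\widetilde\omega\wedge\Pi_*\widehat\omega$ near $\Delta$ in the coordinates $(z,w)$, the worst terms are $\|z\|^{-2}\,idz_j\wedge d\overline z_k\wedge\omega(w)$. Their pairing with $T_1\otimes T_2$ is bounded by $\int\!\int_{\|x-y\|<1}\|x-y\|^{-2}T_1(x)\wedge\ddc_x\|x-y\|^2\,T_2(y)\wedge\omega(y)$. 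By the Jensen formula \eqref{e:Jensen} and the dyadic decomposition, this is controlled by $\int_0^1 \nu(T_1,y,r)\,dr/r$ averaged against $T_2$. That is not obviously finite, so I would instead use \eqref{e:Jensen} in the form $\int_{\B(y,1)\setminus\{y\}}T_1\wedge\ddc\log\|x-y\|\le \nu(T_1,y,1)\le c$, which is uniform by Lemma \ref{l:lelong}. Since $\ddc\log\|z\|$ is exactly the singular part, this bounds the term.

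\textbf{Terms like $(\Pi_*\widehat\omega)^2$.} Here the point-blow-up term carries an additional $\ddc\log(\|z\|^2+\|w\|^2)$, a pole at the single point $(x_0,x_0)$. I would dominate these by $\langle T_1\otimes T_2,R_m\wedge R_l\rangle$-type quantities, using Lemma \ref{l:R-test}(3) for the diagonal blow-up. For the point blow-up I would use the analogous forms $R^0_m$ together with Lemma \ref{L:R_m_annulus}. The pairing is then written as a dyadic sum over shells $e^{-m-1}\le\|z\|/\|w\|\le e^{-m}$ and $e^{-l-1}\le\|(z,w)\|\le e^{-l}$.

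\textbf{Main obstacle.} I expect the main obstacle to be the mixed term $\Pi_*\widehat\omega\wedge\Pi^0_*\widehat\omega^0$ and the square $(\Pi^0_*\widehat\omega^0)^2$. For these I would use the same Skoda argument: $T_1\otimes T_2$ is $\ddc$-closed on $X\times X$ and is of bidimension $(2,2)$. The quantity $\int T_1\otimes T_2\wedge(\ddc\log\|(x,y)-(x_0,x_0)\|)^2$ is the difference $\nu(T_1\otimes T_2,(x_0,x_0),1)-\nu(T_1\otimes T_2,(x_0,x_0))$ by \eqref{e:Jensen}, so it is bounded by a constant depending only on the masses, via Lemma \ref{l:lelong} applied on $X\times X$. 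The mixed term is then handled by Cauchy--Schwarz between the two squares, after writing everything as positive forms dominated by $(\ddc\log)$ terms plus smooth pieces. The dependence on $T_1,T_2$ enters only through their masses, which gives the constant $c_5$.
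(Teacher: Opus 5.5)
\paragraph{The gap: $T_1\otimes T_2$ is not $\ddc$-closed.}
Your single-factor terms can be made to work: $\widetilde\omega^2$, $\widetilde\omega\wedge\Pi_*\widehat\omega$, $\widetilde\omega\wedge\Pi^0_*\widehat\omega^0$ and $(\Pi_*\widehat\omega)^2$. Off $\Delta$ one has locally $\Pi_*\widehat\omega\lesssim\ddc\log\|x-y\|^2+\widetilde\omega$, and $(\ddc\log\|x-y\|^2)^2=0$. So everything reduces to quantities like $\int_{\B(y,r)\setminus\{y\}}T_1\wedge\ddc\log\|x-y\|^2\le\nu(T_1,y,r)$, or integrations by parts against one $\ddc$-closed $T_j$. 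Drop the $R_m\wedge R_l$ shell sums, which carry no decaying weights and would diverge as written.

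The gap is in the genuinely joint terms $(\Pi^0_*\widehat\omega^0)^2$ and $\Pi_*\widehat\omega\wedge\Pi^0_*\widehat\omega^0$. There you apply \eqref{e:Jensen} and Lemma \ref{l:lelong} to $T_1\otimes T_2$ on $X\times X$, claiming it is $\ddc$-closed. It is not, unless one factor is closed. Take $\phi=\phi_1(x)\wedge\phi_2(y)$ with $\phi_1$ of type $(1,0)$ and $\phi_2$ of type $(0,1)$. The only component of $\ddbar\phi$ of type $(1,1)$ in $x$ and $(1,1)$ in $y$ is $\dbar\phi_1\wedge\partial\phi_2$. Hence $\langle T_1\otimes T_2,\ddc\phi\rangle$ is a nonzero multiple of $\langle T_1,\dbar\phi_1\rangle\langle T_2,\partial\phi_2\rangle$, which is nonzero for suitable $\phi_j$ as soon as neither $T_j$ is closed. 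This is exactly why $\ddc$-closedness of tangent currents in Theorem \ref{t:tangent} requires proof.

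Positivity alone cannot replace Lelong--Jensen either. With $\rho=\|x-x_0\|^2+\|y-x_0\|^2$, the domination $(\ddc\log\rho)^2\le\rho^{-2}(\ddc\rho)^2$ gives a logarithmically divergent integral whenever $\nu(T_1,x_0)\nu(T_2,x_0)>0$. So some cancellation must be exploited.

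\paragraph{What the paper does instead.}
The missing idea is to apply Lemma \ref{L:identity} directly to the closed form $(\widetilde\omega+\Pi_*\widehat\omega+\Pi^0_*\widehat\omega^0)^2$. The $\Omega_1\otimes\Omega_2$ term is cohomological. The correction terms pair the $L^2$ forms $\dbar S_1$, $\partial\overline S_2$ (and conjugates) only with the components of the test form of type $(2,0)$ in one variable and $(0,2)$ in the other. These components are much less singular: for the model $\ddc\log\rho$, that component of $(\ddc\log\rho)^2$ is a multiple of $(\partial_x\rho\wedge\dbar_y\rho)^2=0$. The paper controls them by Young's inequality with $\delta=0$ (Lemma \ref{L:Young}, Examples \ref{Ex:kernel_1} and \ref{Ex:kernel_2}).

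\paragraph{The Cauchy--Schwarz fallback is invalid.}
Your reduction of the mixed term to the two squares fails. For a positive $(2,2)$-current $R$ and positive $(1,1)$-forms $\alpha,\beta$, one does not have $\langle R,\alpha\wedge\beta\rangle\le\langle R,\alpha^2\rangle^{1/2}\langle R,\beta^2\rangle^{1/2}$. Take $R=[\{z_3=z_4=0\}]$ in $\C^4$, $\alpha=idz_1\wedge d\overline z_1$ and $\beta=idz_2\wedge d\overline z_2$. Then $\alpha^2=\beta^2=0$, while $R\wedge\alpha\wedge\beta\neq0$.
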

\proof
 Since  $\widehat\Pi_*(\omega_{\widehat\E})^2$ is a closed form of bi-degree $(2,2)$ on $X\times X$ smooth out of $\Delta,$ it follows from Lemma \ref {L:identity} that
$\big\langle T_1\otimes T_2, ( \widetilde \omega+ \Pi_* \widehat\omega+ \Pi^0_*\widehat\omega^0 )^2\big\rangle$ is equal to
$$\big\langle\Omega_1\otimes \Omega_2, ( \widetilde \omega+ \Pi_* \widehat\omega+ \Pi^0_*\widehat\omega^0 )^2 \big\rangle- \big\langle\dbar S_1\otimes \partial \overline{S}_2, ( \widetilde \omega+ \Pi_* \widehat\omega+ \Pi^0_*\widehat\omega^0 )^2\big\rangle - \big \langle\partial \overline{S}_1\otimes \dbar S_2, ( \widetilde \omega+ \Pi_* \widehat\omega+ \Pi^0_*\widehat\omega^0 )^2 \big\rangle.$$
Denote the three terms in the last sum by $I_1,I_2$ and $I_3$ respectively. We will show that they are bounded.

Since $\Omega_j$ is cohomologous to $T_j$ which is of mass 1, the cohomology class of $\Omega_j$ is bounded.  Therefore, the integral $I_1$, which depends only on the cohomology classes of $\Omega_j,  ( \widetilde \omega+ \Pi_* \widehat\omega+ \Pi^0_*\widehat\omega^0 )$ is clearly bounded.

In order to show  that  the sequences  $I_2$ and $I_3$  are bounded, we only need to prove that for every $L^2$ functions $f_1,f_2$ on $X$ and
a bounded smooth $(2,2)$-form $\alpha$ on $X\times X$ :
\begin{equation}\label{e:widehat-omega-f_1f_2-L2}
\big|\langle (f_1\otimes  f_2)\alpha, ( \widetilde \omega+ \Pi_* \widehat\omega+ \Pi^0_*\widehat\omega^0 )^2 \rangle \big|\leq  c\|f_1\|_{L^2}\|f_2\|_{L^2}\quad\mbox{for a constant $c.$}
\end{equation}
Consider the  integral operator $P$ acting on forms on $\B\times\B$  with a suitable kernel $K(x,y)$ obtained from  the coefficients of the product of $\alpha$ with the  last sum.
Here,  we invoke Examples \ref{Ex:kernel_1} and \ref{Ex:kernel_2}    from Appendix \ref{a:Young}.
Applying Lemma \ref{L:Young} to $K$ for $\delta=0,$  we get $\|P(f_2)\|_{L^2}\lesssim \|f_2\|_{L^2},$
which implies
 \eqref{e:widehat-omega-f_1f_2-L2}.
\endproof
 \begin{lemma}\label{L:R_m-R_n}
 Let $T_1$ and $T_2$ be two positive $\ddc$-closed  $(1,1)$-currents  of mass $1$ on $X.$ Then  there is a constant $c_5>0$, independent of $T_1$ and $T_2$, such that
  $$ \big\langle T_1\otimes T_2  , R^0_m\wedge   R^0_n \big \rangle\leq  c_5\quad\text{for all}\quad m, n\geq 1.$$
 \end{lemma}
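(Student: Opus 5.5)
The plan is to bound $R^0_m$ from above, uniformly in $m$, by a positive closed current whose pairing with $T_1\otimes T_2$ is already controlled, and then to conclude from Lemma \ref{L:T-widehat-omega}.

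\emph{Step 1: a pointwise bound on $R^0_m$.} By \eqref{e:R_m},
$$R^0_m=\widehat\Pi_*\big(\chi'(\widehat\phi^0+m)\ddc\widehat\phi^0\big)+\tfrac1\pi\widehat\Pi_*\big(\chi''(\widehat\phi^0+m)\,i\partial\widehat\phi^0\wedge\dbar\widehat\phi^0\big)+A\widehat\Pi_*(\omega_{\widehat\E}).$$
Outside $\Delta$ the current $[\Ec^0_\Delta]$ gives no contribution, and $0\le\chi'\le1$. Since $\ddc\widehat\phi^0-[\Ec^0_\Delta]$ is smooth, the first term is bounded by a constant times $\widehat\Pi_*(\omega_{\widehat\E})$. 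The factor $\chi''(\widehat\phi^0+m)$ is supported where $|\widehat\phi^0+m|\le 3M$. On that region $|u_1|\simeq e^{-m}$, by Lemma \ref{L:phi^0-vs-u} and the bound relating $\widehat\phi^0$ to $\log(\|z\|/\|w\|)$. Lemma \ref{L:phi^0-vs-u}(2) then bounds the second term by $e^{2m}\,\ddc\|u\|^2$ on a shell, plus $\omega_{\widehat\E}$.

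This shell term is the one that is not obviously uniformly bounded. I would exploit that it is itself $\ddc$ of something bounded. Write the second term plus part of the first as $\ddc[\chi(\widehat\phi^0+m)]$. Then
$$R^0_m=\widehat\Pi_*(A\omega_{\widehat\E})+\ddc\,\widehat\Pi_*\big(\chi(\widehat\phi^0+m)\big),$$
so $R^0_m$ differs from the fixed form $A\widehat\Pi_*(\omega_{\widehat\E})$ by $\ddc g_m$, where $g_m:=\chi(\widehat\phi^0+m)\circ\widehat\Pi^{-1}$ is a function on $\overline\E$ which is bounded and vanishes near $\Delta$.

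\emph{Step 2: reduce the pairing.} Expand
$$R^0_m\wedge R^0_n=(A\widehat\Pi_*\omega_{\widehat\E}+\ddc g_m)\wedge(A\widehat\Pi_*\omega_{\widehat\E}+\ddc g_n),$$
viewed on $X\times X$ near $\Delta$ via $\tau$ and the local identification in coordinates $(z,w)$. The pure term is controlled by Lemma \ref{L:widehat-omega} and Lemma \ref{L:T-widehat-omega}: it is at most $c_4^2c_5$. For the other terms, $T_1\otimes T_2$ is $\ddc$-closed, so integration by parts moves $\ddc$ onto the current. Doing it carefully (using positivity and the representation in Lemma \ref{L:identity}) would show these terms equal expressions in $\dbar S_j,\partial\overline S_j$ paired with $g_m$-type kernels, with no large constants. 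More simply, one may apply Lemma \ref{L:identity} directly: $R^0_m\wedge R^0_n$ is a closed $(2,2)$-form smooth off $\Delta$, and it is cohomologous to a bounded class. So the pairing equals $I_1-I_2-I_3$ exactly as in the proof of Lemma \ref{L:T-widehat-omega}. Here $I_1$ is bounded by cohomology, and $I_2,I_3$ reduce to the $L^2$ estimate \eqref{e:widehat-omega-f_1f_2-L2} with kernel given by the coefficients of $R^0_m\wedge R^0_n$.

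\emph{Step 3: the main obstacle.} The hard step is proving the Young/Schur-type bound of Lemma \ref{L:Young} for these kernels uniformly in $m,n$. The coefficients of $R^0_m$ are of size $e^{2m}$ on the shell $\|z\|/\|w\|\sim e^{-m}$ and $O(\|z\|^{-2}+\|z,w\|^{-2})$ elsewhere. I would check that the $L^1$ norm in $x$ (for fixed $y$), and in $y$ (for fixed $x$), of the product kernel is bounded independently of $m,n$. The shell has volume $\sim e^{-4m}\|w\|^4$ in $z$, so $e^{2m}\cdot e^{2n}$ type terms against $|K|$ should integrate to $O(1)$ after the $w$-integration. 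The $\|w\|^{-2}$ factors require the $\log$-type integrability at $w=0$. This parallels Examples \ref{Ex:kernel_1} and \ref{Ex:kernel_2}, and this is where I would spend the effort.
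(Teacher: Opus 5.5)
Your final route (Lemma~\ref{L:identity}, $I_1$ bounded by cohomology, and $I_2,I_3$ reduced to an $L^2$ estimate via Young's inequality, uniformly in $m,n$) is the paper's route. However, the step that carries the content is only announced, and the bound you propose to feed into it does not deliver it.

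In Step~1 you control the shell term by Lemma~\ref{L:phi^0-vs-u}(2), i.e.\ by $e^{2m}\ddc\|u\|^2$ on $\widehat\Pi^{-1}(W_m)$. This bound has full rank and is very lossy in the $u_2$-direction. Since $u_2=z_2/z_1$, the piece $e^{2m}\,idu_2\wedge d\overline u_2$ has $dz$-coefficients of order $e^{2m}\|z\|^{-2}\sim e^{4m}\|w\|^{-2}$ on the shell. That is a factor $e^{2m}$ larger than the size you use in Step~3.

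You must control the shell$\times$shell part of $R^0_m\wedge R^0_n$ whenever $|m-n|\le 8M$. Squaring your bound for this produces $e^{4m}\,idu_1\wedge d\overline u_1\wedge idu_2\wedge d\overline u_2$. Its $dz$-volume coefficient is $\sim e^{4m}\|w\|^{-2}\|z\|^{-2}\sim\|w\|^2\|z\|^{-6}$. Its integral over $\{\|z\|\sim e^{-m}\|w\|\}$ at fixed $w$ is $\sim e^{2m}$, so \eqref{e:Young} fails to hold uniformly.

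The missing idea is the one the paper uses: $i\partial\widehat\phi^0\wedge\dbar\widehat\phi^0$ has rank one, so its square vanishes because $\partial\widehat\phi^0\wedge\partial\widehat\phi^0=0$. Write $R^0_m\lesssim\widetilde\omega+\Pi_*\widehat\omega+\Pi^0_*\widehat\omega^0+\ind_{W_m}\,i\partial\widehat\phi^0\wedge\dbar\widehat\phi^0$, and likewise for $R^0_n$. Then the product has no shell$\times$shell term at all, and the three remaining pieces are handled as follows:
\begin{enumerate}
\item The crude bound of Lemma~\ref{L:phi^0-vs-u} is needed only in the shell$\times$regular terms.
\item Those terms live on the thin shells $W_m$ or $W_n$ and give kernels integrable there uniformly in $m,n$ (Example~\ref{Ex:kernel_3}).
\item The regular$\times$regular term is exactly \eqref{e:widehat-omega-f_1f_2-L2}.
\end{enumerate}
Alternatively, you could expand $\partial\widehat\phi^0=du_1/(2u_1)+\widehat h\,du_2+\cdots$ in $(z,w)$. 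This gives $|\partial\widehat\phi^0|\lesssim\|z\|^{-1}$ and would justify your $e^{2m}$ heuristic, but Step~1 as written does not provide it.

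There are three smaller problems.
\begin{enumerate}
\item The regular$\times$regular part cannot be handled by coefficient sizes, as Step~3 suggests. Two coefficients of size $\|z\|^{-2}$ give $\|z\|^{-4}$, which is not integrable near $z=0$ in $\C^2$. You must quote \eqref{e:widehat-omega-f_1f_2-L2}, which uses the structure of these forms (e.g.\ $(\ddc\log\|z\|^2)^2=0$ off $\Delta$).
\item Young's inequality needs bounds that are uniform in the frozen variable. So no ``$w$-integration'' is available to absorb leftover powers of $\|w\|$.
\item Your first route fails as stated. The function $g_m=\chi(\widehat\phi^0+m)\circ\widehat\Pi^{-1}$ equals $\widehat\phi^0+m$ wherever $\widehat\phi^0\ge 3M-m$, for instance near $(x_0,x_0)$ in the region $\|z\|\gtrsim\|w\|$. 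So it neither vanishes near $\Delta$ nor is bounded uniformly in $m$, and integrating $\ddc g_m$ by parts gives no uniform control.
\end{enumerate}
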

 \proof
 Since  $R^0_m\wedge R^0_n$ is a closed form of bi-degree $(2,2)$ on $X\times X$ smooth out of $\Delta,$ it follows from Lemma \ref {L:identity} that
$\big\langle T_1\otimes T_2, R^0_m\wedge R^0_n \big\rangle$ is equal to
$$\big\langle\Omega_1\otimes \Omega_2, R^0_m\wedge R^0_n \big\rangle- \big\langle\dbar S_1\otimes \partial \overline{S}_2, R^0_m\wedge R^0_n \big\rangle - \big \langle\partial \overline{S}_1\otimes \dbar S_2, R^0_m\wedge R^0_n \big\rangle.$$
Denote the three terms in the last sum by $I_1,I_2$ and $I_3$ respectively. We will show that they are bounded independently of $T_1,T_2,m$ and $n$.

Since $\Omega_j$ is cohomologous to $T_j$ which is of mass 1, the cohomology class of $\Omega_j$ is bounded. The forms $R^0_m$ and $R^0_n$ are both cohomologous to $A\widehat\Pi_*(\omega_{\widehat\E})$. Therefore, the integral $I_1$, which depends only on the cohomology classes of $\Omega_j,R^0_n$ and $R^0_m$, is clearly bounded.

In order to show  that  the sequences  $I_2$ and $I_3$  are bounded, we only need to prove that for every $L^2$ functions $f_1,f_2$ on $X$ and
a bounded smooth $(2,2)$-form $\alpha$ on $X\times X$ :
\begin{equation}\label{e:f_1f_2-L2}
\big|\langle (f_1\otimes  f_2)\alpha, R^0_m\wedge R^0_n\rangle \big|\leq  c\|f_1\|_{L^2}\|f_2\|_{L^2}\quad\mbox{for a constant $c$ independent of $m,n.$}
\end{equation}
We only need to consider the case where either $n$ or $m$ is big.
Assume for simplicity that  $m$ is larger than a fixed constant large enough.
So $R^0_m\wedge R^0_n$ has support near the diagonal $\Delta$. Therefore,
using a partition of unity, we can assume that both $f_1$ and $f_2$ have support in the same chart $\B$ as above. Since we can write $f_1,f_2$ as linear combinations of non-negative functions with bounded $L^2$ norm, we can assume that both $f_1$ and $f_2$ are non-negative. Moreover, since $\alpha$ can be written as a combination of bounded smooth positive $(2,2)$-forms, we can also assume that $\alpha$ is positive.

Observe that the factor in front of $i\partial\widehat\phi^0\wedge\dbar\widehat\phi^0$ in \eqref{e:R_m} vanishes outside  the region $W_m:=\{e^{-m-4M}\leq {\|z\|\over \|w\|}\leq e^{-m+4M}\}$.
Using  \eqref{e:R_m}  and Lemma \ref{L:widehat-omega}, we obtain
$$R^0_m\lesssim \widetilde\omega+\Pi_*(\widehat\omega)+ \Pi^0_*(\widehat\omega^0) + \ind_{W_m} i\partial\widehat\phi^0\wedge \dbar\widehat\phi^0 \quad \text{and similarly} \quad
R^0_n\lesssim \widetilde\omega+\Pi_*(\widehat\omega)+ \Pi^0_*(\widehat\omega^0)+ \ind_{W_n}  i\partial\widehat\phi^0\wedge \dbar\widehat\phi^0 .$$
Using these inequalities, Lemma \ref{L:phi^0-vs-u}
and the identity
 $\partial\widehat\phi^0\wedge\partial\widehat\phi^0=0$,  we obtain
\begin{eqnarray*}
R^0_m\wedge R^0_n &\lesssim& (\widetilde\omega+\Pi_*(\widehat\omega)+\Pi^0_*(\widehat\omega^0))^2 +  \ind_{W_m} (i\partial\widehat\phi^0\wedge \dbar\widehat\phi^0)\wedge (\widetilde\omega+\Pi_*(\widehat\omega)+\Pi^0_*(\widehat\omega^0)) \\
&+&  \ind_{W_n} (i\partial\widehat\phi^0\wedge \dbar\widehat\phi^0)\wedge (\widetilde\omega+\Pi_*(\widehat\omega)+ \Pi^0_*(\widehat\omega^0))  \\
& \lesssim& (\widetilde\omega+\Pi_*(\widehat\omega)+\Pi^0_*(\widehat\omega^0))^2+ (\|w\|^2\|z\|^{-2} + \ind_{W_m}\|w\|^4\|z\|^{-4} + \ind_{W_n}\|w\|^4\|z\|^{-4}) \widetilde\omega^2.
\end{eqnarray*}
Consider the  integral operator $P$ acting on forms on $\B\times\B$  with a suitable kernel $K(x,y)$ obtained from  the coefficients of the product of $\alpha$ with the  last sum.
Here,  we invoke Example  \ref{Ex:kernel_3}  from Appendix \ref{a:Young}  by taking into account  that $\|z\|=\|x-y\|$ and setting $r:=e^{-m+4M}$ or $r:=e^{-n+4M}$.
Applying Lemma \ref{L:Young} to $K$ for $\delta=0,$  we get $\|P(f_2)\|_{L^2}\lesssim \|f_2\|_{L^2}.$
Hence,
\begin{equation*}
\langle (f_1\otimes  f_2)\alpha, R_m\wedge R_n\rangle \lesssim \langle f_1, P(f_2)\rangle \lesssim \|f_1\|_{L^2}\|f_2\|_{L^2} .
\end{equation*}
 This, combined  with Lemma \ref{L:T-widehat-omega}, completes the proof of \eqref{e:f_1f_2-L2}.
\endproof

Recall  the blow-up  $\Pi^0:\  \widehat\E^0\to\E$ at $(x_0,x_0,0).$
Fix a chart $\widehat\U^0$ around  an arbitrary  point in the  exceptional  hypersurface $\widehat V$ of the blow-up and let $\U^0:=(\Pi^0)(\widehat \U^0).$
We  consider the following local expression.  Let $\zeta=(\zeta_1,\zeta_2,\zeta_3, \zeta_4)$ be the coordinates of $\widehat\U^0.$
\begin{equation}\label{e:Pi^0}\Pi^0(\zeta)=\Pi^0(\zeta_1,\zeta_2,\zeta_3,\zeta_4)=(\zeta_1,\zeta_1\zeta_2,\zeta_1\zeta_3,\zeta_1\zeta_4)=(z_1,z_2,w_1,w_2)=(z,w).
\end{equation}
Consider the $(1,1)$-positive closed current  $g$ on  $\U^0$ and the  $(1,1)$-positive closed form  $\hat g$ on  $\widehat\U^0$ defined by
\begin{equation}\label{e:form-g}
 g(z,w):= \ddc\log(\|z\|^2+\|w\|^2)\qquad \text{and}\qquad \hat g:=(\Pi^0)^*g\quad \text{on}\quad\widehat \U^0\setminus \widehat V,
\end{equation}
and $\widehat g$ extends trivially through $\widehat V.$
Note that $(\Pi^0)_*\hat g=g.$

\begin{lemma}
 \label{l:dilates-g}
 There is  a constant $c>0$ such that  for every $\lambda\i
 n\C$ with $|\lambda|\geq 1$ and for  $|z|\leq |\lambda|^{-1},$
 it holds that
 $$(A_\lambda)^* g(z,w)\leq  c \big(g(z,w)  + \min\big( {\|w\|^{2}\over  |\lambda|^2\|z\|^2}, {  |\lambda|^2\|z\|^2\over \|w\|^{2}}\big) \;\cdot{i dz\wedge d\bar z\over \|z\|^2}\big).$$
 \end{lemma}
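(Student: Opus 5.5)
We prove the inequality by a direct computation in the coordinates $(z,w)$ of $\U^0$. Since $A_\lambda(z,w)=(\lambda z,w)$, we have
$$(A_\lambda)^*g=\ddc\log\big(|\lambda|^2\|z\|^2+\|w\|^2\big).$$
Both $g$ and $(A_\lambda)^*g$ are pull-backs of the Fubini--Study form of $\P^3$ under the maps $\xi=(z,w)$ and $\xi=(\lambda z,w)$. So we use the Lagrange identity: for $\xi\in\C^4\setminus\{0\}$,
$$\ddc\log\|\xi\|^2={1\over \pi\|\xi\|^4}\sum_{j<k} i\,(\xi_jd\xi_k-\xi_kd\xi_j)\wedge\overline{(\xi_jd\xi_k-\xi_kd\xi_j)}.$$
We write $\xi=(\xi_1,\xi_2,\xi_3,\xi_4)=(\lambda z_1,\lambda z_2,w_1,w_2)$ and set $\varphi:=|\lambda|^2\|z\|^2+\|w\|^2$. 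The six pairs $j<k$ split into three types: the $zz$-pair, the $ww$-pair and the four mixed $zw$-pairs. We compare each type with the corresponding terms of $g$ (the case $\lambda=1$) and with $\|z\|^{-2}i\,dz\wedge d\bar z$.

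\emph{The $ww$-pair.} It is $\pi^{-1}\varphi^{-2}\, i(w_1dw_2-w_2dw_1)\wedge\overline{(\cdots)}$. Since $|\lambda|\geq1$, we have $\varphi\geq\|z\|^2+\|w\|^2$. Hence this term is dominated by the corresponding $ww$-term of $g$, and so by $g$, because every term in the Lagrange sum is positive.

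\emph{The mixed pairs.} For $\xi_j=\lambda z_a$ and $\xi_k=w_b$, we expand $\lambda z_a\,dw_b-w_b\,\lambda dz_a$ and apply Cauchy--Schwarz to separate the two pieces. The piece $|\lambda|^2|z_a|^2\,i\,dw_b\wedge d\bar w_b/\varphi^2$ is bounded by $\|z\|^{-2}$ times a form in $dw$ only, with a coefficient that vanishes when tested against $T_1\otimes T_2$ by Proposition \ref{p:T_1T_2-test}. A cleaner route is to write it as the projective part of $g$ plus an error. The piece $|\lambda|^2|w_b|^2\,i\,dz_a\wedge d\bar z_a/\varphi^2$ is the important one. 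With $t:=|\lambda|\|z\|/\|w\|$ it equals
$${t^2\over(1+t^2)^2}\cdot{i\,dz_a\wedge d\bar z_a\over\|z\|^2}\leq \min(t^2,t^{-2})\,{i\,dz\wedge d\bar z\over\|z\|^2},$$
which is exactly the second term in the statement, since $\min(t^2,t^{-2})=\min\big({\|w\|^2\over|\lambda|^2\|z\|^2},{|\lambda|^2\|z\|^2\over\|w\|^2}\big)$.

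\emph{The $zz$-pair.} It is $|\lambda|^4\varphi^{-2}\,i(z_1dz_2-z_2dz_1)\wedge\overline{(\cdots)}$. Its coefficient is ${t^4\over(1+t^2)^2}\|z\|^{-4}$ times a form bounded by $\|z\|^2\, i\,dz\wedge d\bar z$.

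\begin{itemize}
\item If $t\leq1$, this is at most $t^4\|z\|^{-2}i\,dz\wedge d\bar z\leq\min(t^2,t^{-2})\|z\|^{-2}i\,dz\wedge d\bar z$.
\item If $t\geq1$ and $\|w\|\lesssim\|z\|$, the $zz$-term of $g$ has coefficient $\|z\|^4/(\|z\|^2+\|w\|^2)^2\cdot\|z\|^{-4}\gtrsim\|z\|^{-4}$. It therefore dominates the $zz$-term of $(A_\lambda)^*g$ up to a constant.
\end{itemize}

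The main obstacle is the remaining regime $\|z\|\ll\|w\|\ll|\lambda|\|z\|$. There the $zz$-term of $(A_\lambda)^*g$ is essentially $\ddc\log\|z\|^2$, which is neither dominated by $g$ nor by $t^{-2}\|z\|^{-2}i\,dz\wedge d\bar z$. I would first try to use the hypothesis $\|z\|\leq|\lambda|^{-1}$. It gives $t\leq\|w\|^{-1}$, so this regime forces $\|w\|$ small. In that region I would compare with $g$ in the blown-up coordinates $\zeta$ of \eqref{e:Pi^0}, where $\hat g$ is smooth and positive. If this fails, the fallback is to replace the claimed bound in this regime by $\|z\|^{-2}i\,dz\wedge d\bar z$ restricted to $\{t\geq1\}$. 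That bound is still integrable against $T_1\otimes T_2$ by Lemma \ref{L:R_m_annulus}(2) and Lemma \ref{L:R_m-R_n}, which should suffice for the later applications.

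Finally, summing the three types and using that all summands are positive gives the inequality with a constant $c$ independent of $\lambda$ and of the chart.
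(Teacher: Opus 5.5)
\textbf{Verdict: there is a genuine gap.} Your proposal does not prove the inequality. Moreover, the two places where it stalls cannot be repaired, because the statement is false exactly there.

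\emph{The mixed $dw$-piece.} For $|\lambda|^2|z_a|^2\,i\,dw_b\wedge d\bar w_b/\varphi^2$ you give no bound by the right-hand side. Your remark that this piece vanishes when tested against $T_1\otimes T_2$ is not a pointwise inequality between $(1,1)$-forms. It is not even true for the $4$-forms that occur afterwards: wedging this piece with a $dz\wedge d\bar z$-term gives $k=2$ in Proposition \ref{p:T_1T_2-test}, where nothing vanishes.

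\emph{The $zz$-pair.} You correctly isolate the regime $\|z\|\ll\|w\|\ll|\lambda|\|z\|$. However, neither the hypothesis $\|z\|\le|\lambda|^{-1}$ nor the coordinates $\zeta$ can help. The forms $g$, $(A_\lambda)^*g$, $\|z\|^{-2}i\,dz\wedge d\bar z$ and the ratio $t=|\lambda|\|z\|/\|w\|$ are all invariant under $(z,w)\mapsto(sz,sw)$, $s\in\C^*$. So any violating configuration can be rescaled into $\{\|z\|\le|\lambda|^{-1}\}$, arbitrarily close to $0$. Pulling back by $\Pi^0$ does not change a pointwise inequality off the exceptional divisor.

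\textbf{Counterexamples.} Evaluate both sides as Hermitian forms on a vector $v$.

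(i) On the plane $z_2=w_2=0$, put $u=z_1/w_1$. Then $g=\frac{i}{\pi}\frac{du\wedge d\bar u}{(1+|u|^2)^2}$ and $(A_\lambda)^*g=\frac{i}{\pi}\frac{|\lambda|^2\,du\wedge d\bar u}{(1+|\lambda u|^2)^2}$. At a point with $|\lambda u|=\frac12$, take $v=\partial/\partial w_1$. Since $dz(v)=0$, the right-hand side reduces to $c\,g(v,\bar v)$. But $(A_\lambda)^*g(v,\bar v)/g(v,\bar v)=|\lambda|^2(1+|u|^2)^2/(1+|\lambda u|^2)^2\ge\frac{16}{25}|\lambda|^2$.

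(ii) At $z=(\epsilon,0)$, $w=(\delta,0)$, take $v=\partial/\partial z_2$. Your Lagrange formula gives $\pi\epsilon^2\,(A_\lambda)^*g(v,\bar v)=\frac{t^2}{1+t^2}$. On the other hand, $\pi\epsilon^2$ times the right-hand side is $\lesssim\frac{\epsilon^2}{\epsilon^2+\delta^2}+\min(t^2,t^{-2})$. For $\epsilon=|\lambda|^{-1}$ and $\delta=|\lambda|^{-1/2}$, the first quantity is $\ge\frac12$ and the second is $O(|\lambda|^{-1})$.

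\textbf{The paper's proof.} Its two-case argument breaks at the same two spots. In its first case, the bound on $\ddc|\lambda z/w_1|^2$ drops the term $|\lambda|^2|z|^2|dw_1|^2/|w_1|^4$. At $w_2=0$ this is $|\lambda|^2$ times what the claimed right-hand side controls in the $\partial/\partial w_1$ direction. In its second case, $\ddc|z_2/z_1|^2$ is not controlled by the claimed right-hand side when $\|z\|\ll\|w\|\ll|\lambda|\|z\|$; this is exactly (ii). On a chart where $z_1$ dominates ($\|w\|\lesssim|z_1|$) one simply has $(A_\lambda)^*g\lesssim g$. But the lemma is applied on $\Sc_\lambda$, where $\|z\|\ll\|w\|$.

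\textbf{What is actually true.} Your Lagrange bookkeeping does yield a correct bound, using that the $zz$-pair equals exactly $\frac{t^4}{(1+t^2)^2}\ddc\log\|z\|^2$:
\[
(A_\lambda)^*g\lesssim g+\min(t^2,t^{-2})\Big(\frac{i\,dz\wedge d\bar z}{\|z\|^2}+\frac{i\,dw\wedge d\bar w}{\|w\|^2}\Big)+\mathbf 1_{\{t\ge 1\}}\,\ddc\log\|z\|^2 .
\]
Examples (i) and (ii) show that neither added term can be dropped. So your fallback is the right instinct, but it proves a different statement. Lemma \ref{L:T_1T_2-testforms}, whose Case 3 rests on the lemma as stated, would have to be re-derived from this corrected bound, including the new $dw$-term.
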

 \proof We  divide the proof into two cases.

\noindent {\bf Case } ${\|z\|\over  \|w\|}\leq |\lambda|^{-1}:$
  Assume without loss of generality that $|w_1|\geq |w_2|.$ So $|\lambda|\|z\|\lesssim |w_1|$ and  we have
  \begin{eqnarray*}
  \ddc\log  (|\lambda|^2\|z\|^2+\|w\|^2)&\approx &\ddc (\|{\lambda z\over  w_1}\|^2) + \ddc ( \|{w_2\over w_1}\|^2)\\
  &\lesssim & \big(\ddc (\|{ z\over  w_1}\|^2) + \ddc ( \|{w_2\over w_1}\|^2)\big)  + |\lambda|^2 \|w\|^{-2}i dz\wedge d\bar z.
  \end{eqnarray*}
Hence,
  $(A_\lambda)^* g(z,w)\leq  c (g(z,w)  +  |\lambda|^2\|w\|^{-2} i dz\wedge d\bar z).$

\noindent {\bf Case }  ${\|z\|\over  \|w\|}\geq |\lambda|^{-1}$ and $|z|\leq |\lambda|^{-1}:$
Assume without loss of generality that $|z_1|\geq |z_2|.$ So $\|w\|\lesssim |\lambda z_1|$ and  we have
  \begin{eqnarray*}
  \ddc\log  (|\lambda|^2\|z\|^2+\|w\|^2)&\approx &  \ddc ( \|{z_2\over z_1}\|^2)+\ddc ({ \|w\|^2\over |\lambda z_1|^2})  \\
  &\lesssim & \big(\ddc \log (\|z \|^2+\|w\|^2)\big)  + {\|w\|^{2}\over  |\lambda|^2|z_1|^4}\;\cdot i dz\wedge d\bar z.
  \end{eqnarray*}
  For the last inequality we  consider two subcases $\|z\|\lesssim \|w\|$ and   $\|w\|\lesssim \|z\|.$
Hence,
   $(A_\lambda)^* g(z,w)\leq  c \big(g(z,w)  +  {\|w\|^{2}\over  |\lambda|^2\|z\|^4}\;\cdot i dz\wedge d\bar z \big).$

Combining the  two cases the lemma follows.
\endproof

\begin{lemma}\label{L:T_1T_2-testforms}
Let $T_1$ and $T_2$ be two positive $\ddc$-closed  $(1,1)$-currents of mass $1$ on $X.$   Then there is a constant $c_6 > 0$, independent of $T_1,T_2$  such that    the following
estimate holds. For any continuous 4-form  $\hat f$ with compact support in
$\widehat\U^0$ and  any $\lambda\in\C$ with $|\lambda|\geq 1,$  we have
$$\big|\big\langle T_1\otimes T_2, (A_\lambda)^*\big((\Pi^0)_*\hat f \big)\rangle\big| \leq  \|\hat f\|_\infty .$$
\end{lemma}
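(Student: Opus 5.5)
The bound in Lemma \ref{L:T_1T_2-testforms} should presumably read $c_6\|\hat f\|_\infty$. To prove it, I would dominate the pushed-forward test form $(\Pi^0)_*\hat f$ by positive closed-type forms, pull these back by $A_\lambda$, and then control the pairing with $T_1\otimes T_2$ using Proposition \ref{p:T_1T_2-test} and Lemmas \ref{L:R_m_annulus}, \ref{L:T-widehat-omega}, \ref{L:R_m-R_n}.

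\textbf{Reductions.} Writing $\hat f$ as a combination of non-negative functions times wedge products of $d\zeta_j,d\bar\zeta_j$, and using Cauchy--Schwarz as in Case 4a of Proposition \ref{p:T_1T_2-test}, I reduce to $\hat f$ positive. Then $0\le \hat f\lesssim \|\hat f\|_\infty(\hat g+\omega_{\widehat\U^0})^2$ on the compact support. Pushing forward by $\Pi^0$ gives
$$(\Pi^0)_*\hat f\lesssim \|\hat f\|_\infty\,(g+\beta)^2,$$
with $g$ as in \eqref{e:form-g} and $\beta=\ddc(\|z\|^2+\|w\|^2)$. Since $A_\lambda^*$ preserves positivity, it suffices to bound
$$\big\langle T_1\otimes T_2,\,(A_\lambda^*g+A_\lambda^*\beta)^2\big\rangle$$
uniformly in $|\lambda|\ge1$, restricted to the region $\{\|\lambda z\|\lesssim 1,\ \|w\|\lesssim1\}$ where the pulled-back support lies.

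\textbf{Pointwise domination.} On this region $A_\lambda^*\beta=|\lambda|^2 i\,dz\wedge d\bar z+i\,dw\wedge d\bar w$. Lemma \ref{l:dilates-g} gives
$$A_\lambda^*g\lesssim g+\min\Big(\frac{\|w\|^2}{|\lambda|^2\|z\|^2},\frac{|\lambda|^2\|z\|^2}{\|w\|^2}\Big)\frac{i\,dz\wedge d\bar z}{\|z\|^2}.$$

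\textbf{Bounding the terms of the square.}
\begin{itemize}
\item \emph{Terms with only $g$ and $\widetilde\omega$.} Here $g\approx \Pi^0_*\widehat\omega^0$ near the point, so these are handled by Lemma \ref{L:T-widehat-omega}.
\item \emph{Terms built from $|\lambda|^2 i\,dz\wedge d\bar z$ with support in $\|z\|\lesssim|\lambda|^{-1}$.} These are controlled by Proposition \ref{p:T_1T_2-test} with $r=|\lambda|^{-1}$: a $k=4$ term contributes $\lesssim |\lambda|^4 r^4$, and mixed terms are bounded via Cases 3a and 4a.
\item \emph{The min-weighted term.} Split dyadically in $\|z\|/\|w\|$ around $|\lambda|^{-1}$. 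On the shell $\|z\|/\|w\|\approx e^{-m}$ with $e^{-m}\approx e^{-j}|\lambda|^{-1}$, the coefficient is $\approx e^{-2j}$ times $e^{2m}\|w\|^{-2}\,i\,dz\wedge d\bar z$. By Lemma \ref{L:R_m_annulus}(1) the latter is $\lesssim R^0_m$. Summing the geometric series in $j$, on both sides of the threshold, dominates this term by $\sum_l e^{-2|l|}R^0_{m_\lambda+l}$ plus $\widetilde\omega$-type terms. Squaring and applying Lemma \ref{L:R_m-R_n}, whose bound is uniform in $m,n$, gives a convergent double sum $\sum e^{-2|l|-2|l'|}c_5$.
\end{itemize}

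\textbf{Main obstacle.} The cross terms such as $|\lambda|^2 i\,dz\wedge d\bar z\wedge R^0_m$, and products with $g$, are not directly covered by the available estimates. I would first try to absorb $|\lambda|^2 i\,dz\wedge d\bar z\,\mathbf 1_{\|z\|\lesssim|\lambda|^{-1}}$ itself into $\sum_l e^{-2l}R_{m+l}$ via Lemma \ref{l:R-test}(2) and the analogous Lemma \ref{L:R_m_annulus}(2), using $\|w\|\lesssim 1$. This would make everything a sum of products of the closed test forms $R_m,R^0_m,\Pi^0_*\widehat\omega^0,\widetilde\omega$.

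For pairings of $T_1\otimes T_2$ with mixed products like $R_m\wedge R^0_n$, I would rerun the proof of Lemma \ref{L:R_m-R_n}. Lemma \ref{L:identity} reduces the bound to a cohomological term plus $L^2$ kernel estimates via Lemma \ref{L:Young}, with kernels $\|z\|^{-2}$, $\|w\|^2\|z\|^{-4}\mathbf 1_{W}$ and so on. Verifying these kernel bounds uniformly is the step I expect to need the most care.
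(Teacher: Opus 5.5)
\textbf{Verdict: the proposal has genuine gaps.} Your outline follows the paper's argument. You dominate $(\Pi^0)_*\hat f$ by $\|\hat f\|_\infty(g+\beta)^2$ and apply Lemma~\ref{l:dilates-g}. The pieces $L:=|\lambda|^2 i\,dz\wedge d\bar z$ (supported in $\|z\|\lesssim|\lambda|^{-1}$) go to Proposition~\ref{p:T_1T_2-test}, and $g^2$, $g\wedge\widetilde\omega$ go to Lemma~\ref{L:T-widehat-omega}. The square of the $\min$-term $M$ is handled on the shells $\{\|z\|/\|w\|\approx e^{-q}\}$ with Lemmas~\ref{L:R_m_annulus} and~\ref{L:R_m-R_n}. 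You are also right that the bound should read $c_6\|\hat f\|_\infty$.

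\textbf{First gap: the far side of your dyadic split.} On the side $\|z\|\gtrsim\|w\|$ the remainder is not $\widetilde\omega$-type. Lemma~\ref{L:R_m_annulus} only covers $q\ge 0$, and there $M=\|w\|^2|\lambda|^{-2}\|z\|^{-4}\,i\,dz\wedge d\bar z$, which blows up at $(x_0,x_0)$. No domination of $M$ there by closed test forms with uniformly bounded pairings is possible. Take $T_1,T_2$ to be the currents of two lines of $\P^2$ crossing transversally at $x_0$. Near $(x_0,x_0)$, $T_1\otimes T_2$ is integration on a $2$-plane contained in $\{\|w\|\le\|z\|\}$. The integrals of $M^2$ and of $g\wedge M$ over this plane diverge logarithmically at the origin, whereas the restriction of $(A_\lambda^*g)^2$ to it vanishes off the origin. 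The paper's Case~3 has the same hole, since it only uses $0\le q\le m$.

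The remedy is to bypass Lemma~\ref{l:dilates-g} on that cone. $A_\lambda^*g$ is the pull-back of $\omega_\FS$ by $[\lambda z:w]$. In the affine coordinates $z_2/z_1$, $w/(\lambda z_1)$ (when $|z_1|\ge|z_2|$) one sees $A_\lambda^*g\lesssim g$ on $\{\|w\|\le\|z\|\}$, uniformly in $|\lambda|\ge 1$. So $M$ is only needed where $\|z\|\le\|w\|$, i.e.\ for $q\ge 0$, and there your shell argument works.

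\textbf{Second gap: the cross terms.} This is where the proof actually has to be done, and your proposal stops there. You also cannot import the paper's treatment of them.
\begin{itemize}
\item The paper's Case~2 uses $|\langle T_1\otimes T_2,A_\lambda^*(\gamma\wedge g)\rangle|^2\le\langle T_1\otimes T_2,A_\lambda^*(g^2)\rangle\,\langle T_1\otimes T_2,A_\lambda^*(\gamma\wedge\bar\gamma)\rangle$. This is not a valid Cauchy--Schwarz inequality for positive $(2,2)$-currents. For $\gamma=i\,dz_1\wedge d\bar z_1$ one has $\gamma\wedge\bar\gamma=0$, while for $T_1=T_2$ the current of a line through $x_0$ the left-hand side is positive.
\item The paper's Case~3 simply drops the term $g\wedge M$.
\end{itemize}

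Several of these terms are in fact cheap:
\begin{itemize}
\item $L\wedge M\le\frac12(L^2+M^2)$ pointwise, because both are non-negative multiples of $i\,dz\wedge d\bar z$.
\item The maximal ideal of $(x_0,x_0)$ becomes principal after the two blow-ups, so $\widehat\Pi^*g$ is smooth. By \eqref{e:R_m}, $\widehat\Pi^*R^0_q\ge(A-c_1)\omega_{\widehat\E}$. Hence $g+\widetilde\omega\lesssim R^0_q$ uniformly in $q$, and $g\wedge M$, $\widetilde\omega\wedge M$ reduce to Lemma~\ref{L:R_m-R_n}.
\end{itemize}

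The term $g\wedge L$ genuinely needs an extra estimate. Your suggested mixed bound $\langle T_1\otimes T_2,R_j\wedge\Pi^0_*\widehat\omega^0\rangle\le C$, via Lemmas~\ref{L:identity} and~\ref{L:Young}, is plausible but unproved. A more direct route:
\begin{itemize}
\item On $\{\|w\|\le\|z\|\lesssim|\lambda|^{-1}\}$, use $g\le(\|z\|^2+\|w\|^2)^{-1}\beta$ together with $\langle T_1\otimes T_2,\beta^2\ind_{\B(s)}\rangle\lesssim s^4$ on dyadic shells.
\item On $\{\|z\|\le\|w\|\}$, use $g\lesssim\ddc_w\log\|w\|^2+\|w\|^{-2}i\,dz\wedge d\bar z+\|z\|^2\|w\|^{-4}\widetilde\omega$. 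The first piece is controlled by Skoda's formula \eqref{e:Jensen} applied to $T_2$ alone, with the $x$-integral bounded by Lemma~\ref{l:lelong}. The other two pieces are controlled by dyadic versions of Cases~2 and~3a of Proposition~\ref{p:T_1T_2-test}.
\end{itemize}
Without some such argument for the cross terms, and the fix on the cone $\{\|w\|\le\|z\|\}$, the proposal is not yet a proof.
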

\proof
Consider the $4$-form  $f:=(\Pi^0)_*\hat f$ on   $\U^0 .$
Let $\gamma$ (resp. $\gamma'$) be any wedge-product of two $1$-forms among $dz_1, dz_2, dw_1,dw_2$ or their  complex conjugates, and
$k$  (resp. $k'$)  be its total degree in $dz_1,dz_2, d\overline{z}_1, d\overline z_2$.

  By Cauchy-Schwarz inequality, we are reduced  to the following  3 cases.

\medskip\noindent
{\bf Case 1.} $f=\gamma\wedge \gamma'.$
In this  case  the result  follows from Proposition \ref{p:T_1T_2-test}.

\medskip\noindent
{\bf Case 2.} $f=\gamma\wedge g.$ By Cauchy-Schwarz inequality,
 $$\big| \big\langle T_1\otimes T_2, A_\lambda^* f \big\rangle\big|^2  \leq \big| \big\langle T_1\otimes T_2,  A_\lambda^*(g^2) \big\rangle\big|
\big| \big\langle T_1\otimes T_2, A_\lambda^*( \gamma\wedge\overline\gamma) \big\rangle\big|.
$$
The right-hand side is bounded by combining  Case 1 above and  Case  3 below.

\medskip\noindent
{\bf Case 3.} $f=g^2.$

Let  $m$ be the integer such that $e^{-m-1}<|\lambda|^{-1}\leq e^{-m}.$

On $\Sc_\lambda:=\big\{(z,w)\in\D^4:\ 0<{\|z\|\over \|w\|}< |\lambda|^{-1},\ \|w\|< 2\big\},$ we  have
by Lemma \ref{l:dilates-g} that
$$
 (A_\lambda)^* f(z,w)\leq  c (g^2(z,w)  +  |\lambda|^4\|w\|^{-4} (i dz\wedge d\bar z)^2.
$$
Therefore, we get
 $$\big| \big\langle T_1\otimes T_2, (A_\lambda)^* (f(z,w)\ind_{\Sc_\lambda} ) \big\rangle\big|  \lesssim
 \big| \big\langle T_1\otimes T_2, g^2 \big\rangle\big| + \big| \big\langle T_1\otimes T_2,   |\lambda|^4\|w\|^{-4} (i dz\wedge d\bar z)^2 \ind_{\Sc_\lambda}  \big\rangle\big|.$$
The  first term on the right-hand side is  bounded  by Lemma \ref{L:T-widehat-omega}
as $g\leq c_4( \widetilde \omega+ \Pi^0_*\widehat\omega^0 ). $

Since $T_1\otimes T_2$ has no mass
on $\Delta ,$ it follows from
Lemma \ref{L:R_m_annulus} (2) that
  $$\big| \big\langle T_1\otimes T_2,   |\lambda|^4\|w\|^{-4} (i dz\wedge d\bar z)^2 \ind_{\Sc_\lambda}             \big\rangle\big|
  \lesssim e^4 |\lambda|^{-4}\sum_{n,n'=0}^\infty e^{-2n-2n'}  \big\langle T_1\otimes T_2, R^0_{m+n}\wedge R^0_{m+n'} \big\rangle.$$

 On the other hand,  on $\Sc'_\lambda:=\big\{(z,w)\in\D^4:\ {\|z\|\over \|w\|}> |\lambda|^{-1},\ \|w\|< 2\big\},$ we  have
by Lemma \ref{l:dilates-g} that
$$
 (A_\lambda)^* f(z,w)\leq  c (g^2(z,w)  +  |\lambda|^2\|w\|^2\|z\|^{-4} (i dz\wedge d\bar z)^2.
$$
Therefore, we get
 $$\big| \big\langle T_1\otimes T_2, (A_\lambda)^* (f(z,w)\ind_{\Sc'_\lambda} ) \big\rangle\big|  \lesssim
 \big| \big\langle T_1\otimes T_2, g^2 \big\rangle\big| + \big| \big\langle T_1\otimes T_2,   |\lambda|^4\|w\|^{4} \|z\|^{-8}(i dz\wedge d\bar z)^2 \ind_{\Sc'_\lambda}  \big\rangle\big|.$$
The  first term on the right-hand side is  bounded  by Lemma \ref{L:T-widehat-omega}
as $g\leq c_4( \widetilde \omega+ \Pi^0_*\widehat\omega^0 ). $

 Lemma \ref{L:R_m_annulus} (1) implies that
 for every integer $q\geq 0$, we have
 $$\|z\|^{-2}(idz_1\wedge d\overline{z}_1+idz_2\wedge d\overline{z}_2)\leq  c_3R^0_q
 \quad \text{on}\quad  \big\{e^{-q-1}\leq  {\|z\|\over \|w\|}\leq  e^{-q},\ \|w\|< 2 \big\}. $$
  Applying this inequality for $m+n$ instead of $q$ for $-m\leq n\leq 0,$ we deduce that
 $$  \big| \big\langle T_1\otimes T_2,   |\lambda|^4\|w\|^{4} \|z\|^{-8}(i dz\wedge d\bar z)^2 \ind_{\Sc'_\lambda}  \big\rangle\big|
  \lesssim e^4 |\lambda|^{-4}\sum_{n,n'=-m}^0 e^{-2|n|-2|n'|}  \big\langle T_1\otimes T_2, R^0_{m+n}\wedge R^0_{m+n'} \big\rangle.
  $$
Since $\ind_{\Sc_\lambda}+\ind_{\Sc'_\lambda} =  \ind_{\Sc_\lambda\cup\Sc'}=\ind_{(z,w)\in\D^4} ,$ we have shown that
$$\big| \big\langle T_1\otimes T_2, (A_\lambda)^* (f(z,w)  ) \big\rangle\big| \lesssim c+e^4 |\lambda|^{-4}\sum_{n,n'=-m}^\infty e^{-2|n|-2|n'|}  \big\langle T_1\otimes T_2, R^0_{m+n}\wedge R^0_{m+n'} \big\rangle,$$
for a constant $c>0.$
The last sum is bounded according to Lemma \ref{L:R_m-R_n}.
This proves
the lemma.
\endproof

\subsection{Proof of Theorem \ref{t:tangent-blow-up}}
We continue to use the notations introduced earlier. In particular,
over $\Delta\cap (5\B\times 5\B)$, with the coordinates $(z,w)$, $\E$ is identified with $\C^2\times 5\B$, $\pi$ is the projection $(z,w)\mapsto w$ and $A_\lambda$
is equal to the map $a_\lambda(z,w):=(\lambda z,w)$.
We have the following result.

\begin{proposition} \label{p:local_comput-0}
\begin{enumerate}
 \item
The mass of $\widehat T_\lambda$ on any given compact subset of $\widehat \E^0$ is bounded uniformly on $\lambda$ with $|\lambda|\geq 1$.  Moreover, if $(\lambda_n )$ is a
sequence tending to infinity such that $\widehat T_{\lambda_n}$ converges to a current $\widehat \T$,
then in the above local coordinates $(z,w)$, we have
$$\widehat \T = \lim\limits_{n\to\infty} (\Pi^\bullet)[(a_{\lambda_n })_* (T_1\otimes T_2)] \quad\text{on}\quad (\Pi^0)^{-1}(\C^2  \times \B).$$
In particular, $\widehat\T$ does not depend on the choice of $\tau$ and $\widehat\T$ is  a positive  $(2,2)$-current.
\item  $\widehat\T$ is $\ddc$-closed.
\end{enumerate}
\end{proposition}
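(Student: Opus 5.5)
The plan is to follow the proof of Proposition \ref{p:local_comput} closely, with the test forms pulled back by the blow-up, and Lemma \ref{L:T_1T_2-testforms} taking the place of Proposition \ref{p:T_1T_2-test}.

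\emph{Step 1: mass bound.} Fix a continuous $4$-form $\hat\Phi$ with compact support in $\widehat\E^0$ and $\|\hat\Phi\|_\infty\le 1$. Away from $\widehat\Delta^0$, the map $\Pi^0$ is a biholomorphism, so the bound there follows from Proposition \ref{p:local_comput}. Using a partition of unity, we may therefore assume that $\hat\Phi$ is supported in a chart $\widehat\U^0$ as in \eqref{e:Pi^0}. Then
$$\langle \widehat T_\lambda,\hat\Phi\rangle=\langle T_1\otimes T_2,\tau^*A_\lambda^*\big((\Pi^0)_*\hat\Phi\big)\rangle,$$
because $\widehat T_\lambda$ is the trivial extension and $T_\lambda$ puts no mass on the point $(x_0,x_0,0)$. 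The last fact holds since $T_1\otimes T_2$ has no mass on $\Delta$. We split
$$\tau^*A_\lambda^*(\Pi^0)_*\hat\Phi=A_\lambda^*(\Pi^0)_*\hat\Phi+\alpha_\lambda .$$
Lemma \ref{L:T_1T_2-testforms} bounds the main term uniformly in $\lambda$. For $\alpha_\lambda$, the form $(\Pi^0)_*\hat\Phi$ is no longer bounded; its coefficients are dominated by the singular forms $g$ and $g^2$ from \eqref{e:form-g}. We therefore estimate $\alpha_\lambda$ by reusing the Cauchy--Schwarz reduction of Lemma \ref{L:T_1T_2-testforms} (Cases 1--3), applied now to the forms $\tau^*A_\lambda^*g$. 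Using \eqref{e:local_tau} and \eqref{e:local_dtau}, we get $\tau^*A_\lambda^* g\lesssim A_\lambda^*g+$ (a fine family) on $\{\|z\|\lesssim|\lambda|^{-1}\}$. This gives a uniform bound. Since $\hat\Phi$ was arbitrary, part (1) of Theorem \ref{t:tangent-blow-up} follows.

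\emph{Step 2: independence of $\tau$ and the local formula.} This is the main obstacle. We must show that $\langle T_1\otimes T_2,\alpha_\lambda\rangle\to 0$, that is, that the error is strongly negligible even though the test forms are singular at the blown-up point. The plan is to approximate. Let $\hat\Phi_\epsilon:=\chi_\epsilon\hat\Phi$, where $\chi_\epsilon$ is a cutoff vanishing in an $\epsilon$-neighbourhood of $\widehat\Delta^0$, so that $(\Pi^0)_*\hat\Phi_\epsilon$ is smooth and supported away from $(x_0,x_0,0)$.
\begin{itemize}
\item For $\hat\Phi_\epsilon$, Lemmas \ref{l:negligible_forms}, \ref{l:key-technique} and \ref{l:negligible_test} show that the corresponding error tends to $0$.
\item For the remainder $\hat\Phi-\hat\Phi_\epsilon$, supported near $\widehat\Delta^0$, we need a bound that is small uniformly in $\lambda$. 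We will obtain it from the proof of Lemma \ref{L:T_1T_2-testforms}: the contribution of $g^2$ localized in the region $\{\|z\|^2+\|w\|^2<\epsilon\}$ is controlled by the tails of the sums $\sum e^{-2|n|-2|n'|}\langle T_1\otimes T_2,R^0_{m+n}\wedge R^0_{m+n'}\rangle$ together with $\langle T_1\otimes T_2,g^2\ind_{\epsilon}\rangle$. The latter tends to $0$ because $T_1\otimes T_2$ has no mass on $\{(x_0,x_0)\}$. The remainder is only continuous, not smooth; we handle this by dominating it by $\|\hat\Phi\|_\infty$ times positive forms, exactly as in the proof of Lemma \ref{L:T_1T_2-testforms}.
\end{itemize}
Combining the two pieces gives $\widehat\T=\lim(\Pi^\bullet)[(a_{\lambda_n})_*(T_1\otimes T_2)]$. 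The right-hand side does not involve $\tau$ and is a limit of positive $(2,2)$-currents, since $a_\lambda$ is holomorphic. Hence $\widehat\T$ is positive, of bidegree $(2,2)$, and independent of $\tau$.

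\emph{Step 3: $\ddc$-closedness.} We use the local formula from Step 2. Off $\widehat\Delta^0$, the limit equals $(\Pi^0)^*\T$, where $\T$ is a tangent current as in Theorem \ref{t:tangent}; since $\T$ is $\ddc$-closed, so is $\widehat\T$ there. Thus $\widehat\T$ is a positive current on $\widehat\E^0$ that is $\ddc$-closed outside the compact hypersurface $\widehat\Delta^0\cong\P^3$.

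It remains to rule out a contribution of $\ddc\widehat\T$ supported on $\widehat\Delta^0$. Positive $(2,2)$-currents are of dimension $2$, while $\widehat\Delta^0$ has dimension $3$, so a support argument alone is not enough. We plan instead to test against $\ddc(\theta\hat\phi)$ for smooth $\hat\phi$ and cutoffs $\theta$ near $\widehat\Delta^0$. Each such pairing is the limit of pairings of $T_1\otimes T_2$ with $A_\lambda^*(\Pi^0)_*\ddc(\theta\hat\phi)$, and these vanish in the limit by the argument of \cite[Prop 3.14(4)]{DinhNguyenSibony22}. To pass to the limit we need uniform convergence, which is supplied by the mass bounds of Step 1. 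If this direct approach fails, the fallback is to use that a positive current which is $\ddc$-closed off $\widehat\Delta^0$ has $\ddc\widehat\T$ of order zero there; we then exploit the conic structure to show that the mass of $\widehat\T$ near $\widehat\Delta^0$ vanishes to high enough order, so that $\ddc\widehat\T=0$.
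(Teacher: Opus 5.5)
\paragraph{Step 2 does not work.} The gap is in Step 2, and it also affects Step 1 and the fallback in Step 3. Your cutoff argument needs the contribution of $\hat\Phi-\hat\Phi_\epsilon$ to be small uniformly in $\lambda$:
$$\lim_{\epsilon\to0}\limsup_{\lambda\to\infty}\big|\big\langle T_1\otimes T_2,A_\lambda^*(\Pi^0)_*(\hat\Phi-\hat\Phi_\epsilon)\big\rangle\big|=0,$$
and likewise with $\tau^*$ inserted. In effect this says that cluster values $\widehat\T$ put no mass on $\widehat\Delta^0$. That is false in general, and capturing such mass is exactly what the blow-up is for (compare the proof of Theorem \ref{T:Lelong}).

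A counterexample: take $X=\P^2$, $T_1=[C_1]$, $T_2=[C_2]$, with $C_1$ a line and $C_2$ a conic tangent to it at $x_0$. Locally $C_1=\{x_2=0\}$ and $C_2=\{x_2=x_1^2\}$.
\begin{itemize}
\item In the coordinates $(z,w)$ one finds $(a_\lambda)_*[C_1\times C_2]=[S_\lambda]$, where $S_\lambda=\{z_2=-\lambda w_1^2,\ w_2=w_1^2\}$ is a smooth surface through $0$, and $[S_\lambda]\to2[\{w=0\}]$.
\item By \eqref{e:Jensen}, $\int_{S_\lambda\cap\B(r)\setminus\{0\}}g^2=\nu([S_\lambda],0,r)-\nu([S_\lambda],0)$. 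This tends to $2-1=1$ for every small $r>0$, while $g^2$ vanishes on $\{w=0\}\setminus\{0\}$.
\item Hence, for $\hat\Phi=\chi\,(\Pi^0)^*(g^2)$ with $\chi\equiv1$ near $\widehat\Delta^0$, you get $\langle\widehat T_\lambda,\hat\Phi-\hat\Phi_\epsilon\rangle\to1$ for every $\epsilon$.
\end{itemize}
So $\widehat\T\wedge(\Pi^0)^*(g^2)$ has mass $\nu(\T,(x_0,x_0,0))-\nu(T_1,x_0)\nu(T_2,x_0)=1$ on $\widehat\Delta^0$.

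Your justification conflates two quantities. The unrescaled $\langle T_1\otimes T_2,g^2\ind_\epsilon\rangle$ is indeed small; the rescaled $\langle T_1\otimes T_2,A_\lambda^*(g^2\ind_\epsilon)\rangle$ is not. Also, the $R^0$-sum in the proof of Lemma \ref{L:T_1T_2-testforms} is not a tail. Its dominant terms $n,n'\approx0$ live on $\{\|z\|/\|w\|\approx|\lambda|^{-1}\}$, which reaches $(x_0,x_0)$, and localizing in $w$ does not make them small. For the same reason, your Step 3 fallback (mass of $\widehat\T$ vanishing to high order at $\widehat\Delta^0$) cannot work.

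\paragraph{What is missing.} You need to compare $\tau^*A_\lambda^*\Phi$ with $A_\lambda^*\Phi$ directly for the singular forms $\Phi=(\Pi^0)_*\hat\Phi$, rather than bounding the two pieces separately near $\widehat\Delta^0$ (absolute-value domination destroys the cancellation). The paper does this as follows:
\begin{itemize}
\item By \eqref{e:two-points-close-in-blow-up}, $\|a_\lambda\tau(z,w)-a_\lambda(z,w)\|\lesssim|\lambda|^{-1}\|a_\lambda(z,w)\|$ on the relevant region. So $a_\lambda\circ\tau$ and $a_\lambda$ are $O(|\lambda|^{-1})$-close in the blow-up.
\item Consequently the difference is weakly negligible in the sense of Definition \ref{D:negligible_forms-bis}. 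It is built from factors dominated by $g$, $g^2$ and fine factors, with an extra $o(1)$ on the leading coefficients (Lemma \ref{L:key-technique-bis}, Proposition \ref{P:tangent_limits}(1)).
\item Such families pair to $0$ with $T_1\otimes T_2$ by Lemma \ref{L:negligible_test_forms-bis}, which rests on Lemma \ref{L:T_1T_2-testforms}.
\end{itemize}

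This also corrects Step 1. The difference $\tau^*A_\lambda^*g-A_\lambda^*g$ is not a fine family, because its coefficients blow up at $(x_0,x_0)$. It is $O(|\lambda|^{-1})$ relative to $g$-type forms, which is exactly the weakly negligible structure.

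Your main plan for Step 3 has two further problems. It presupposes the $\tau$-removal of Step 2. It also cites \cite[Prop 3.14(4)]{DinhNguyenSibony22}, which concerns smooth (fine) test forms, whereas here the test forms are singular at $(x_0,x_0)$; and mass bounds give compactness, not vanishing. The paper instead writes $\hat\phi=\hat u\ddc\hat v$ and removes $\tau$ by weak negligibility. It then uses Lemma \ref{L:identity} to reduce to pairings of $\ddc\psi_\lambda$ with the $L^2$ forms $\dbar S_j$, $\partial\overline S_j$. It concludes from $\ddc\psi_\lambda\to0$ weakly together with the uniform $L^2$ bounds of Lemma \ref{l:kernel-Delta}.
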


Note that the last assertion in
affirmation  (1) of the proposition is a consequence of the second one because the identity in the proposition doesn't involve the map $\tau$.
For the proof of this proposition, we need some notions and results.
 See  also Definitionn 3.10 in \cite{DinhNguyenSibony22} and Definition 7.2 for admissible estimates in  \cite{Nguyen21}.

\begin{definition}\label{D:negligible_forms-bis}\rm
Let $(\alpha_\lambda)$ be a family of differential $p$-forms on $X\times X$ or on $\E$, depending
on $\lambda \in \C$ with $|\lambda|$ larger than a fixed constant. We say that this family is {\it weakly fine} and we write
$\alpha_\lambda\in\WFin(\lambda)$
(resp. {\it weakly negligible} and we write $\alpha_\lambda\in\WNeg(\lambda)$)
if there is a $N\in\N$ such that each $\alpha_\lambda$ is the sum of at most $N$ forms of the form
$ \beta_\lambda\wedge \gamma_\lambda$
such that
the support $\supp(\beta_\lambda )$ of $\beta_\lambda$ and the support $\supp(\gamma_\lambda )$ of $\gamma_\lambda$ tend to $\Delta$ as $\lambda \to \infty$  and if Properties (1)\,(2)\, (3) (resp. (1)\,(2)\,(3)\,(4)) below hold for  all local coordinate systems $(z,w)$ we consider.

\begin{enumerate}
\item $\supp(\beta_\lambda) \cap (\B\times\B)$ and  $\supp(\gamma_\lambda) \cap (\B\times\B)$  are  contained in $(A|\lambda|^{-1}\B)\times\B$ for some
constant $A > 0$ independent of $\lambda;$
\item  The sup-norm of the coefficient of $\gamma$ in  $\gamma_\lambda$  is bounded  by $O(\lambda^k)$,
where $\gamma$ is a wedge-product of $1$-forms among $dz_1, dz_2, dw_1,dw_2$ or their  complex conjugates, and
$k$  is the total degree of $dz_1,dz_2, d\overline{z}_1, d\overline z_2$ in  $ \gamma$, see also Lemma \ref{L:T_1T_2-testforms-bis}.
Moreover, each $\gamma_\lambda$ contains at most  $N$ such forms $\gamma.$
\item  $\beta_\lambda$  is the sum of at most $N$ forms  $\beta,$ each of  which satisfies exactly one  of the following  conditions
\begin{enumerate}
\item $\beta$  is a $(1,0)$-form (resp.  a $(0,1)$-form) and  $i\beta\wedge\overline\beta\leq  g$  (resp.  $i\overline{\beta}\wedge\beta\leq  g$);
\item $\beta$ is a real $(1,1)$-form and  $-g\leq \beta\leq g;$
\item $\beta$  is a $(2,0)$-form (resp.  a $(0,2)$-form) and  $i\beta\wedge\overline\beta\leq  g^2$  (resp.  $i\overline{\beta}\wedge\beta\leq  g^2$).
\end{enumerate}

\item (only for weakly negligible families) The  sup-norm of the coefficient of $\gamma$ is $o(\lambda^k)$ when $\gamma$ is of maximal degree in
$dz_1,dz_2,d\overline z_1, d\overline z_2.$
\end{enumerate}
\end{definition}

Negligible families will be used in our study of tangent currents. They enter into the picture in order to handle non-holomorphic changes of variables, i.e. the use of the map $\tau$.
The following lemma will be used in order to establish properties of tangent currents.

\begin{lemma}\label{L:negligible_test_forms-bis}
Let $(\alpha_\lambda)$ be a weakly negligible family of smooth $4$-forms in $X\times X .$ Let $T_1$ and $T_2$ be
as in Lemma  \ref{L:T_1T_2-testforms}. Then we have
$$\langle T_1\otimes T_2, \alpha_\lambda \rangle \to 0 \quad \text{as} \quad \lambda\to \infty.$$
\end{lemma}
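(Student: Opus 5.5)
This is the analogue of Lemma~\ref{l:negligible_test}, with Proposition~\ref{p:T_1T_2-test} replaced by the finer estimate of Lemma~\ref{L:T_1T_2-testforms}. First I reduce to a local problem. Using a partition of unity on $\Delta$ pulled back by $\pi$, I may assume every $\alpha_\lambda$ is supported in one chart $(\frac12\B)\times(\frac12\B)$, with coordinates $(z,w)$. By linearity, and because the number $N$ of summands is uniform in $\lambda$, it is enough to treat one term $\alpha_\lambda=\beta_\lambda\wedge\gamma_\lambda$. Then I expand $\gamma_\lambda$ into at most $N$ monomials $h_\lambda\,\gamma$, where $\gamma$ is a wedge product of $dz_j,dw_j$ and their conjugates. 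The target is
$$\big|\langle T_1\otimes T_2,\beta\wedge h_\lambda\gamma\rangle\big|\lesssim \epsilon_\lambda,\qquad \epsilon_\lambda:=\sup|h_\lambda|\,|\lambda|^{-k},$$
where $k$ is the $z$-degree of $\gamma$. Condition~(4) gives $\epsilon_\lambda\to0$ for the monomials of maximal $z$-degree. I treat the remaining monomials separately below.

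For the main estimate I use Cauchy--Schwarz for the positive current $T_1\otimes T_2$, in the same way as Case~2 of the proof of Lemma~\ref{L:T_1T_2-testforms}. Each type (a), (b), (c) of $\beta$ is dominated by $g$ or $g^2$. So $|\langle T_1\otimes T_2,\beta\wedge h_\lambda\gamma\rangle|$ is bounded by products of square roots of pairings of $T_1\otimes T_2$ with $\chi g^2$, $\chi g\wedge|h_\lambda|\,\gamma'\wedge\overline{\gamma'}$, and $|h_\lambda|^2\gamma''\wedge\overline{\gamma''}$. Here $\chi$ is a cutoff with support in $\{\|z\|\le A|\lambda|^{-1}\}$, and $\gamma',\gamma''$ are sub-monomials of $\gamma$.

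The terms without $g$ are controlled by Proposition~\ref{p:T_1T_2-test} with $r=A|\lambda|^{-1}$, which gives the factor $|\lambda|^{-k}$. The pairings involving $g$ or $g^2$ on the shrinking set $\{\|z\|\lesssim|\lambda|^{-1}\}$ are handled as in Case~3 of Lemma~\ref{L:T_1T_2-testforms}. I pull back by $A_\lambda$, that is, I write everything as $(A_\lambda)^*$ of a fixed form $(\Pi^0)_*\hat f$, and then Lemmas~\ref{l:dilates-g}, \ref{L:R_m_annulus} and \ref{L:R_m-R_n} give bounds uniform in $\lambda$. Multiplying these bounds yields a total bound $O(\epsilon_\lambda)$.

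The main obstacle is the monomials that are not of maximal $z$-degree, where condition~(4) gives no smallness. For these I will argue by bidegree, as in Cases~1 and~3b of Proposition~\ref{p:T_1T_2-test}. The total form is a $4$-form, and $T_1\otimes T_2$ kills every form with too many $dw,d\overline w$ factors, since $T_1\otimes T_2$ has bidegree $(1,1)$ in each factor. So if the $z$-degree of $\gamma$ is not maximal, the $w$-degree coming from $\gamma$ together with the part of $\beta$ transverse to $z$ should force the pairing to vanish. I would also try to fall back on Cauchy--Schwarz, pairing such a term against a maximal-degree one, so that the small factor $o(1)$ is inherited. If that fails, I would show that the extra $w$-directions give an additional factor $|\lambda|^{-1}$ through the $\|w\|$-weighted estimate of Lemma~\ref{L:R_m_annulus}. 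Together these give $\langle T_1\otimes T_2,\alpha_\lambda\rangle\to0$.
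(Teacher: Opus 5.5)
Your reduction and main estimate are essentially what the paper's one-line proof amounts to. You localize, split into at most $N$ products $\beta\wedge h_\lambda\gamma$, and bound each one by $O(\epsilon_\lambda)$ using Cauchy--Schwarz, Proposition~\ref{p:T_1T_2-test} and the uniform bounds behind Lemma~\ref{L:T_1T_2-testforms}. That gives decay only for monomials with $\epsilon_\lambda\to0$. The gap is exactly the one you flag: the monomials of non-maximal $z$-degree are never disposed of, and none of your three fallbacks can work.

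\begin{itemize}
\item The bidegree argument fails. If $\beta$ contributes only a $dw_1$ and $\gamma=dz_1\wedge d\overline z_1\wedge d\overline w_1$ (so $z$-degree $2<3$), then $\beta\wedge\gamma$ has $z$-bidegree $(1,1)$ and $w$-bidegree $(1,1)$. This is the configuration of Case~3a of Proposition~\ref{p:T_1T_2-test}, which does not vanish and whose rescaled limit involves Lelong numbers.
\item Cauchy--Schwarz only involves the monomial being estimated. It cannot inherit smallness from the maximal-degree monomials of $\gamma_\lambda$, and there may be none.
\item There is also no hidden factor $|\lambda|^{-1}$ to be found.
\end{itemize}

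In fact, with Definition~\ref{D:negligible_forms-bis} read literally, such terms cannot be controlled at all. Take $X=\P^2$ and $T_1=T_2$ the normalized current of a line $L=\{x_2=\eta\}$ with $\eta>0$ small. Let $\chi$ be a compactly supported cut-off in $z$ equal to $1$ near $0$, and $\psi\geq0$ small and supported near $w=(0,\eta)$. Set
\[
\beta_\lambda:=\chi(\lambda z)\psi(w)\,(w_2\,dw_1-w_1\,dw_2),\qquad \gamma_\lambda:=|\lambda|^2\chi(\lambda z)\psi(w)\,dz_1\wedge d\overline z_1\wedge d\overline w_1 .
\]
Write $\rho=\|z\|^2+\|w\|^2$. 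The form $g$ equals $(\pi\rho)^{-1}$ times the Euclidean form on the orthogonal complement of the radial vector $(z,w)$, and $\beta_\lambda$ kills that vector. Hence $i\beta_\lambda\wedge\overline\beta_\lambda\leq g$. The form $\gamma_\lambda$ has a single monomial, of $z$-degree $2<3$, with coefficient $O(|\lambda|^2)$.

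So conditions (1)--(4) all hold, and (4) holds vacuously. If ``maximal'' is read as maximal among the monomials present, add $|\lambda|^2\chi(\lambda z)\psi(w)\,dz_1\wedge d\overline z_1\wedge d\overline z_2$. Its coefficient is $o(|\lambda|^3)$, and its wedge with $\beta_\lambda$ restricts to $0$ on $L\times L$.

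On $L\times L=\{x_2=y_2=\eta\}$ we have $dw_2=dz_2=0$ and $w_2=\eta$. The substitution $v=\lambda(x_1-y_1)$ then shows that $\langle T_1\otimes T_2,\beta_\lambda\wedge\gamma_\lambda\rangle$ is a fixed non-zero multiple of $\eta\int|\psi(\cdot,\eta)|^2\int|\chi(\cdot,0)|^2$, independent of $\lambda$. Conceptually, $\beta_\lambda\wedge\gamma_\lambda=a_\lambda^*\Phi$ for a fixed form $\Phi$. Its pairing therefore computes the tangent current, i.e.\ the Lelong numbers of $T_1$ along $L$ (cf.\ Lemma~\ref{l:mu-mu'}), rather than tending to zero.

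The missing ingredient is a stronger hypothesis: the coefficients must be $o(|\lambda|^k)$ for every monomial, as in the definition of $\SNeg(\lambda)$. The families built in Lemma~\ref{L:key-technique-bis} and Proposition~\ref{P:tangent_limits} do satisfy this, since each coefficient there carries a factor $O(|\lambda|^{-1})$ or $O(\|z\|)$. It is also what the paper's appeal to Lemma~\ref{L:T_1T_2-testforms} tacitly needs: a bound by the rescaled sup-norm yields decay only when the whole rescaled form tends to $0$. Under that hypothesis every $\epsilon_\lambda\to0$, and your main estimate finishes the proof.
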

\proof We can use a partition of unity in order to work in local coordinates $(z,w)$ as above.
So we can assume that the forms $\alpha_\lambda $ have supports in $({1\over 2} \B)\times ({1\over 2}\B)$.
Lemma \ref{L:T_1T_2-testforms}, applied to $r := A|\lambda|^{-1}$ with $A $ from Definition \ref{D:negligible_forms-bis}, gives the result.
\endproof

To  study tangent  currents, we need a description of $\tau$ in local coordinates $(z ,w)$ in $\U:=\B\times\B$.  Set $\widehat \U:=(\Pi^0)^{-1}(\U).$

\begin{lemma} \label{L:negligible_forms-bis}
If $(\alpha_\lambda )$ is a weakly fine (resp. weakly negligible) family of $4$-forms on $\E ,$ then $(\tau^* (\alpha_\lambda ))$
is also a weakly fine (resp. weakly negligible) family of $4$-forms on $X\times X .$
\end{lemma}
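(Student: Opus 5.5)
The plan is to mirror the proof of Lemma \ref{l:negligible_forms}, treating the two kinds of factors in Definition \ref{D:negligible_forms-bis} separately. Take $\alpha_\lambda=\sum_{j\le N}\beta_{\lambda,j}\wedge\gamma_{\lambda,j}$ as in that definition. Then $\tau^*\alpha_\lambda=\sum_j\tau^*\beta_{\lambda,j}\wedge\tau^*\gamma_{\lambda,j}$, so it suffices to treat a single product $\beta_\lambda\wedge\gamma_\lambda$. I will show that $\tau^*\gamma_\lambda$ satisfies Properties (1), (2) and, in the negligible case, (4). I will also show that $\tau^*\beta_\lambda$ is again a bounded sum of forms of types (a), (b), (c), up to error terms that can be absorbed into the $\gamma$-factor. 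Property (1) is immediate. By \eqref{e:local_tau}, $\tau(z,w)=(z+O(\|z\|^2),w+O(\|z\|))$, so $\tau^{-1}$ maps $(A|\lambda|^{-1}\B)\times\B$ into $(A'|\lambda|^{-1}\B)\times\B'$, and the supports still tend to $\Delta$. After shrinking charts and using a partition of unity, this stays within one chart.

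\textbf{The factor $\gamma_\lambda$.} This part is the same computation as in Lemma \ref{l:negligible_forms}. By \eqref{e:local_dtau}, $\tau^*dz=dz+O^*(\|z\|^2)$ and $\tau^*dw=dw+O(1)dz+O(\|z\|)$. Each $dz$-factor therefore pulls back to $dz$ plus terms of size $O(\|z\|)=O(|\lambda|^{-1})$ in $dz$ and $O(\|z\|^2)$ in $dw$. Each $dw$-factor pulls back to $dw$, $O(1)dz$ and $O(|\lambda|^{-1})\,dw$. The $O(1)dz$ terms raise the $z$-degree $k$ by one while the coefficient bound $O(\lambda^{k})$ is unchanged, so these terms are even $O(\lambda^{k+1})/\lambda$. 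The coefficients are composed with $\tau$, which preserves sup norms. Hence $\tau^*\gamma_\lambda$ still has coefficients $O(\lambda^{k})$ for each monomial of $z$-degree $k$, i.e. Property (2) holds.

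For Property (4), a monomial of maximal $z$-degree in $\tau^*\gamma_\lambda$ arises in two ways. It may come from the maximal-degree part of $\gamma_\lambda$, whose coefficient is $o(\lambda^{k})$. Or it may come from a lower-degree part, with coefficient $O(\lambda^{k-1})$, whose $dw$-factors have been converted into $O(1)dz$; this again gives $o(\lambda^{k})$.

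\textbf{The factor $\beta_\lambda$.} This is where the $g$-controls matter, and it is the step I expect to be the main obstacle. The difficulty is that $g=\ddc\log(\|z\|^2+\|w\|^2)$ is not $\tau$-invariant. I would first prove the comparison $\tau^*g\le C(g+\widetilde\omega)$ near $(x_0,x_0)$, together with $\tau^*\widetilde\omega\lesssim\widetilde\omega$. The comparison for $g$ follows because $\|\tau(z,w)\|^2=(\|z\|^2+\|w\|^2)(1+O(\|z\|+\|w\|))$, the $O(\cdot)$ term being smooth. So $\tau^*g$ equals $\ddc\log(\|z\|^2+\|w\|^2)$ plus $\ddc$ of a function $h$ with $h=O(\rho)$, $dh=O(1)$ and $\ddc h=O(\rho^{-1})$, where $\rho=\|(z,w)\|$. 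Since $g\gtrsim\rho^{-2}\,\ddc\rho^2$ only in the transverse directions, I would bound $|\ddc h|\lesssim\rho^{-1}\widetilde\omega$. Then I would write $\rho^{-1}\le\epsilon\rho^{-2}+\epsilon^{-1}$ and use that $g+\widetilde\omega\gtrsim\rho^{-2}\widetilde\omega$ up to the radial direction. The radial direction is harmless because $h$ comes from the smooth ratio. If this comparison resists direct proof, I would instead lift to the blow-up $\widehat\E^0$, where $\widehat g$ is smooth and $\tau$ lifts to a $C^1$ map near $\widehat V$. This makes $\widehat\tau^*\widehat g\lesssim\omega_{\widehat\E^0}$ clear, and pushing forward gives the needed bound, using $(\Pi^0)_*\omega_{\widehat\E^0}\lesssim g+\widetilde\omega$ as in Lemma \ref{L:widehat-omega}.

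Given the comparison, each type is preserved. For (a), $i\tau^*\beta\wedge\overline{\tau^*\beta}=\tau^*(i\beta\wedge\bar\beta)\le\tau^*g\lesssim g+\widetilde\omega$. Here I must account for $\tau^*$ of a $(1,0)$-form no longer being of type $(1,0)$. Its $(0,1)$ part carries a factor $O(\|z\|)$ from $\bar\partial$ of the nonholomorphic part of $\tau$ and can be split off. The $\widetilde\omega$ part of the bound, and these split-off pieces, are smooth bounded forms. I move them into $\gamma_\lambda$, where they are $O(1)$ coefficients on $dz$ and $dw$ and so are consistent with Property (2), at the cost of enlarging $N$. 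Types (b) and (c) follow the same way, with $\tau^*(g^2)\lesssim(g+\widetilde\omega)^2$. After absorbing constants into $N$ and into the choice of $g$, this gives the lemma.
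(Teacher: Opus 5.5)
Your handling of Property (1) and of the factors $\gamma_\lambda$ is exactly the paper's argument. The paper's whole proof is the remark that the lemma follows directly from the local description of $\tau$ and $d\tau$, i.e.\ the same bookkeeping as in Lemma~\ref{l:negligible_forms}. For the $g$-controlled factors $\beta_\lambda$ you rightly go beyond the paper, since $g$ is not $\tau$-invariant and $\tau^*$ destroys bidegrees. However, your primary derivation of $\tau^*g\lesssim g+\widetilde\omega$ is not valid, for two reasons.

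\begin{itemize}
\item Because $\tau$ is not holomorphic, $\tau^*(\ddc u)\neq\ddc(u\circ\tau)$. So $\tau^*g$ is not $\ddc\log\|\tau\|^2$. It is not even of bidegree $(1,1)$, so the inequality can only be meant componentwise.
\item The expansion $\|\tau(p)\|^2=\|p\|^2(1+O(\rho))$ is false in general. The differential of $\tau$ at the origin is $L(z,w)=(z,w+a(0)z)$, which is not unitary. So your $h$ equals $\log(\|Lp\|^2/\|p\|^2)+O(\rho)$, with $dh=O(\rho^{-1})$ and $\ddc h=O(\rho^{-2})$. That term is harmless only because $\ddc\log\|Lp\|^2$ is comparable to $g$, not because $h$ is small.
\end{itemize}

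Your fallback is the correct argument and should be the main one, after one correction. The lift $\widehat\tau$ of $\tau$ to the blow-up is in general only Lipschitz, not $C^1$: a term $\bar z_1^2$ in the $O(\|z\|^2)$ part of $\tau$ produces $\bar\zeta_1^2/\zeta_1$ in the chart. Its differential is nevertheless bounded off $\widehat V$, which suffices because $T_1\otimes T_2$ has no mass on $\Delta$.

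With this, $\widehat\tau^*d\zeta_j$ is a combination, with bounded coefficients, of the $d\zeta_k$ and $d\bar\zeta_k$. Here $d\zeta_1=dz_1$ is bounded, and $i\,d\zeta_k\wedge d\bar\zeta_k\lesssim\widehat g$ for $k\geq 2$. Expanding $\beta$ in this frame splits $\tau^*\beta$ into three kinds of pieces:
\begin{itemize}
\item forms of types (a), (b), (c);
\item products of a type (a) form with a bounded $1$-form;
\item bounded forms.
\end{itemize}
This replaces your step ``split off the $\widetilde\omega$ part of the bound'', which does not follow from an inequality $\leq g+\widetilde\omega$ without such an argument.

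Absorbing the bounded pieces into $\gamma_\lambda$ then preserves Properties (2) and (4) of Definition~\ref{D:negligible_forms-bis}, as you say. This requires allowing a trivial $\beta$-factor, which the definition implicitly needs anyway (for instance for the weakly fine functions in Lemma~\ref{L:key-technique-bis}).
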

\proof
This is a direct consequence of the local description of $\tau,$ $ d\tau$ given in \eqref{e:local_tau}-\eqref{e:local_dtau}\eqref{e:local_dtau-1}.
\endproof

Recall that $\tau$ is not holomorphic in general but it is close to a holomorphic map near the diagonal $\Delta$. The following lemma suggests that the non-holomorphicity of $\tau$ doesn't affect the computation of tangent currents.

\begin{lemma}\label{L:key-technique-bis} \begin{enumerate}
\item Let $\hat\varphi$ be a $\Cc^1$-smooth function or a $\Cc^1$-smooth $1$-form with compact support in $(\Pi^0)^{-1}(\B\times\B)$  and  set $\varphi:=(\Pi^0)_*\hat\varphi.$
Then the family  $\varphi\circ a_\lambda$ is weakly fine and the family
 $(\varphi\circ a_\lambda\circ \tau )- ( \varphi\circ a_\lambda)$
 is weakly negligible, see Definition \ref{D:negligible_forms-bis}.
\item Let $\hat\varphi$ be a $\Cc^3$-smooth function with compact support in $(\Pi^0)^{-1}(\B\times\B)$  and  set $\varphi:=(\Pi^0)_*\hat\varphi.$
Then the family  $\ddc( \varphi\circ a_\lambda)$ is weakly fine and the families
 $\ddc (\varphi\circ a_\lambda\circ \tau )- \ddc( \varphi\circ a_\lambda)$,
 $ \tau^*\big(\ddc (\varphi\circ a_\lambda )\big)- \ddc( \varphi\circ a_\lambda)$
 and
 $\ddc (\varphi\circ a_\lambda\circ \tau )-\tau^*\big(\ddc (\varphi\circ a_\lambda )\big)$
 are weakly negligible, see Definition \ref{D:negligible_forms-bis}.
 \end{enumerate}
\end{lemma}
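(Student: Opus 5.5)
The plan is to work in the chart $\widehat\U^0$ with coordinates $\zeta$ from \eqref{e:Pi^0}, where $z=(\zeta_1,\zeta_1\zeta_2)$ and $w=(\zeta_1\zeta_3,\zeta_1\zeta_4)$. The goal in each case is to split the pulled-back form into pieces $\beta_\lambda\wedge\gamma_\lambda$ as in Definition \ref{D:negligible_forms-bis}. Any singular factor coming from $\Pi^0$, such as $d\zeta_j$ or $\zeta_1^{-1}dz$, is put into $\beta$ and controlled by $g$. The genuinely smooth factors in $dz,dw$, with their $\lambda$-scaling, are put into $\gamma$.

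\emph{Part (1): weak fineness of $\varphi\circ a_\lambda$.} The key observation is that $\zeta_j$ for $j\ge2$ are bounded ratios, so the differentials satisfy
\[
i\,d\zeta_j\wedge d\overline\zeta_j\lesssim \ddc\log(\|z\|^2+\|w\|^2)=g \quad (j\ge 2),
\qquad i\,d\zeta_1\wedge d\overline\zeta_1\lesssim \widetilde\omega+g .
\]
For a function, $\varphi\circ a_\lambda$ is bounded, and its support lies in $\{\|\lambda z\|\lesssim1\}$, which gives support property (1). For a $1$-form $\hat\varphi=\sum h_j\,d\zeta_j$, the pushforward $\varphi$ is a combination of $d\zeta_j$ with bounded coefficients. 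Pulling back by $a_\lambda$, we write $a_\lambda^*d\zeta_j$ in terms of the ratios of $(\lambda z,w)$. Since $a_\lambda^*g$ is controlled by Lemma \ref{l:dilates-g}, each term splits as $\beta\wedge\gamma$: here $\beta$ is a $(1,0)$ or $(0,1)$ form with $i\beta\wedge\overline\beta\lesssim g$, and $\gamma$ is a function or a form with coefficients $O(\lambda^k)$. The extra term in Lemma \ref{l:dilates-g} is absorbed by keeping the factor $\lambda\,dz$ inside $\gamma$, which is allowed because its coefficient is $O(\lambda)$ in degree $k=1$.

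\emph{Part (1): weak negligibility of $\varphi\circ a_\lambda\circ\tau-\varphi\circ a_\lambda$.} By \eqref{e:local_tau}, the map $a_\lambda\circ\tau$ differs from $a_\lambda$ by $(\lambda O(\|z\|^2),\,a(w)z+O(\|z\|^2))$. On the support, this is $O(|\lambda|^{-1})$ in the first component and $O(|\lambda|^{-1})$ in the second. The main obstacle is that $\hat\varphi$, not $\varphi$, is $\Cc^1$, so a mean value estimate must be done in $\zeta$-coordinates. A displacement of size $\epsilon$ in $(\lambda z,w)$ near the origin moves the ratios $\zeta_j$ by about $\epsilon/\|(\lambda z,w)\|$, which is not small near the blown-up point.

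To get around this, I would split the difference as a $\beta\wedge\gamma$ sum.
\begin{itemize}
\item The difference of the $d\zeta_j$ factors is again dominated by $g$ after pulling back. This uses $\tau^*g\lesssim g+\widetilde\omega$ near $\Delta$, which I expect to follow from \eqref{e:local_dtau}.
\item The coefficient differences are $o(1)$ except on the region $\|(\lambda z,w)\|\lesssim|\lambda|^{-1}$. That region shrinks, and there the top-degree coefficient is still $o(\lambda^k)$ after the factor $\|(\lambda z,w)\|$ is absorbed into $\beta$ via $g$.
\end{itemize}
Only the maximal $dz$-degree coefficients need to be $o(\lambda^k)$. Those come from $\lambda^k$ times differences of $\Cc^0$ quantities, so uniform continuity of $\hat\varphi$ and its first derivatives gives the $o(1)$ factor.

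\emph{Part (2).} The computation is the same. $\ddc(\varphi\circ a_\lambda)=a_\lambda^*\ddc\varphi$, and $\ddc\varphi=(\Pi^0)_*\ddc\hat\varphi$ is bounded by a multiple of $g+\widetilde\omega$ in the sense of real $(1,1)$-forms, with $\hat\varphi\in\Cc^3$ so that $\ddc\hat\varphi$ is $\Cc^1$. This gives case (3b) of Definition \ref{D:negligible_forms-bis} and hence weak fineness.

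For the three differences, I would write $\ddc(\varphi\circ a_\lambda\circ\tau)=(a_\lambda\circ\tau)^*\ddc\varphi$ plus a correction. The correction comes from the non-holomorphicity of $\tau$ and involves the $\partial,\dbar$ mixing of $d\tau$ in \eqref{e:local_dtau}, which is $O(\|z\|)$ and hence $O(|\lambda|^{-1})$. It is therefore $\lambda^{-1}$ times weakly fine terms of the type $\beta\wedge\gamma$, where $\beta$ is a first derivative of $\hat\varphi$ bounded by $g^{1/2}$ as in case (3a). Such terms are weakly negligible. The remaining differences are treated exactly as in Part (1), applied to the $\Cc^1$ forms $\partial\hat\varphi$, $\dbar\hat\varphi$ and $\ddc\hat\varphi$.
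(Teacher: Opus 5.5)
\textbf{Gap in Part (1).} Your handling of the non-holomorphic part of $d\tau$ is fine, since it has $O(\|z\|)=O(|\lambda|^{-1})$ coefficients. The gap is at the central step: comparing $\hat\varphi$ (and its derivatives) at the two points $(\Pi^0)^{-1}(a_\lambda(\tau(z,w)))$ and $(\Pi^0)^{-1}(a_\lambda(z,w))$.

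You bound the displacement only by $O(|\lambda|^{-1})$. From this you conclude that the induced displacement in $\zeta$-coordinates, of order $|\lambda|^{-1}/\|(\lambda z,w)\|$, is not small near the exceptional divisor, and you try to patch the bad region $\{\|(\lambda z,w)\|\lesssim|\lambda|^{-1}\}$. The patch does not work, for two reasons.
\begin{itemize}
\item Definition \ref{D:negligible_forms-bis} imposes uniform sup-norm bounds, so the fact that this region shrinks is irrelevant.
\item When $\hat\varphi$ is a function there is no factor $\beta$ into which a weight $\|(\lambda z,w)\|^{-1}$ could be absorbed. You need $\sup|\varphi\circ a_\lambda\circ\tau-\varphi\circ a_\lambda|=o(1)$, but your argument gives only $O(1)$ on that region.
\end{itemize}
The coefficient differences in the $1$-form case and in Part (2) are exactly such scalar differences, for $\hat\varphi$ and for its first and second derivatives. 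So the gap propagates to every assertion.

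\textbf{The missing observation.} This is the paper's key estimate \eqref{e:two-points-close-in-blow-up}: the displacement vanishes on $\Delta$. By \eqref{e:local_tau},
$a_\lambda(\tau(z,w))-a_\lambda(z,w)=\big(\lambda O(\|z\|^2),\,a(w)z+O(\|z\|^2)\big)=O(|\lambda|\|z\|^2+\|z\|)$,
while $\|a_\lambda(z,w)\|\geq|\lambda|\|z\|$. Hence the relative displacement is $\lesssim\|z\|+|\lambda|^{-1}\lesssim|\lambda|^{-1}$ uniformly, including at the blown-up point. Your own bound ``$O(|\lambda|^{-1})$ in each component'' threw away precisely the factor $\|z\|$ that compensates the blow-up.

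With this estimate, the $\Cc^1$ (resp. $\Cc^3$) regularity of $\hat\varphi$ on the blow-up gives coefficient differences of size $O(|\lambda|^{-1})$. The rule $\lambda^{-1}\WFin(\lambda)=\WNeg(\lambda)$ then finishes the proof, with no case distinction needed.

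\textbf{Secondary issue in your fineness argument.} You absorb the extra term of Lemma \ref{l:dilates-g} into $\gamma$ as $\lambda\,dz$ with $O(\lambda)$ coefficient, but this is not possible. When $|\lambda|\|z\|\leq\|w\|$ that term has coefficient $|\lambda|^2\|w\|^{-2}$, which reaches $\|z\|^{-2}\gg|\lambda|^2$ near $\|w\|=|\lambda|\|z\|$. Controlling this term is exactly what the $R^0_m$ annulus estimates in the proof of Lemma \ref{L:T_1T_2-testforms} are built for. You should therefore keep $a_\lambda^*\big((\Pi^0)_*d\zeta_j\big)$ as a pulled-back $g$-bounded factor, rather than try to convert it into the unscaled $g$ plus $O(\lambda^k)$ terms.
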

\proof
Observe that Property (1) in Definition \ref{D:negligible_forms-bis} is satisfied for all these families of forms.
In particular, on the supports of the above forms we have $\|z\|\lesssim |\lambda|^{-1}$.
In order to check Properties (2), (3) and (4) of this definition, we use the following computational rules
\begin{equation}\label{e:rules-WFin-WNeg} \begin{split}\WFin(\lambda)\wedge\WFin(\lambda)&=\WFin(\lambda), \quad \WFin(\lambda)\wedge\WNeg(\lambda)=\WNeg(\lambda) \\  \lambda^{-1}\WFin(\lambda)&=\WNeg(\lambda).
\end{split}\end{equation}
When expanding the forms in the lemma using the coordinates $(z,w)$, the definition of $a_\lambda$ and \eqref{e:local_tau}, \eqref{e:local_dtau}, \eqref{e:local_dtau-1}, we only have fine families of forms and for the non-leading terms, an extra factor $O(\lambda^{-1})$ or $O(\|z\|)$ gives us negligible forms.

To  prove  assertion (1)  when $\hat\varphi$ is a $\Cc^1$-smooth function  with compact support in $(\Pi^0)^{-1}(\B\times\B),$
it suffices to  observe that
\begin{equation}\label{e:two-points-close-in-blow-up}\begin{split}
& \Big | \hat\varphi((\Pi^0)^{-1}(a_\lambda(\tau(\Pi^0(\zeta)))))  -
\hat\varphi( (\Pi^0)^{-1}(a_\lambda(\Pi^0(\zeta))))\Big | \ \lesssim \ { \|a_\lambda(\tau(z,w))- a_\lambda(z,w)\|\over \| a_\lambda(z,w)\| }\\
&\qquad \qquad = {\|\lambda (z+O(\|z\|^2)) -\lambda z+O(\|z\|)\|\over \|(\lambda (z+O(\|z\|^2),w)\| } \lesssim |\lambda|^{-1}.
\end{split}
\end{equation}
To  prove  assertion (1)  when $\hat\varphi$ is a $\Cc^2$-smooth $1$-form  with compact support in $(\Pi^0)^{-1}(\B\times\B),$
it suffices  to consider the case where $\hat\varphi=\hat\phi d\zeta_j,$  where $\hat\phi$ is a $\Cc^2$-smooth function  with compact support in $(\Pi^0)^{-1}(\B\times\B),$ and  $\zeta=(\zeta_1,\zeta_2,\zeta_3, \zeta_4)$ are the coordinates of $\widehat\U^0
,$ and  $1\leq j\leq 4.$
The case where $\hat\varphi=\hat\phi d\overline\zeta_j$ can ve treated  similarly.
Consider the function  $\phi:= (\Pi^0)_*\hat\phi.$ We   use  the  local expression of $\Pi^0$ given in
\eqref{e:Pi^0}. Consider two cases.

\noindent{\bf Case 1: $j=1.$}
We   use  the  local expression.
By \eqref{e:Pi^0}, $\zeta_1=z_1.$ We have
 \begin{eqnarray*} (\varphi\circ a_\lambda\circ \tau )- ( \varphi\circ a_\lambda)&=&
 (\phi\circ a_\lambda\circ \tau )\tau^*dz_1 - ( \phi\circ a_\lambda)dz_1\\
  &=&(\phi\circ a_\lambda\circ \tau )[\tau^*(a_\lambda^*(dz_1)) -a_\lambda^*(dz_1)]+ [(\phi\circ a_\lambda\circ \tau )- ( \phi\circ a_\lambda)]a_\lambda^*(dz_1).
 \end{eqnarray*}
Using the previous assertion,
we see that the family $(\phi\circ a_\lambda\circ \tau )$ is  wealy fine and  $[(\phi\circ a_\lambda\circ \tau )- ( \phi\circ a_\lambda)]=O(\lambda^{-1}).$ The last estimate  implies that  the second family in the last equation line is weakly negligible.
Since $ \tau^*(a_\lambda^*(dz_1)) -a_\lambda^*(dz_1)$ is  strongly negligible, the first  family in the last equation line is also   weakly negligible.
So the family $(\varphi\circ a_\lambda\circ \tau )- ( \varphi\circ a_\lambda)$ is  weakly negligible.

\noindent{\bf Case 2: $2\leq j\leq 4.$}  Assume without loss of generality that $j=2.$

We also have
 \begin{eqnarray*}&& (\varphi\circ a_\lambda\circ \tau )- ( \varphi\circ a_\lambda)=
 (\phi\circ a_\lambda\circ \tau )\tau^*(a_\lambda^*(dz_1)) - ( \phi\circ a_\lambda)a_\lambda^*(dz_1)\\
  &=&(\phi\circ a_\lambda\circ \tau )[\tau^*(a_\lambda^*(\Pi^0_*(d\zeta_2))) - a_\lambda^*(\Pi^0_*(d\zeta_2) ) ]+ [(\phi\circ a_\lambda\circ \tau )- ( \phi\circ a_\lambda)]a_\lambda^*(\Pi^0_*(d\zeta_2)).
 \end{eqnarray*}
Since  we have  as in Case 1 $[(\phi\circ a_\lambda\circ \tau )- ( \phi\circ a_\lambda)]=O(\lambda^{-1}),$ the second family in the last line is  weakly negligible. Consider the smooth function $\hat\psi$ defined on $\widehat\U$ by $\hat\psi(\zeta):=\zeta_2.$  Write
$$(\Pi^0)^*\big[\tau^*(a_\lambda^*(\Pi^0_*(d\zeta_2))) - a_\lambda^*(\Pi^0_*(d\zeta_2) ) \big]=
d[\hat\psi((\Pi^0)^{-1}(a_\lambda(\tau(\Pi^0(\zeta)))))]  -
d[\hat\psi((\Pi^0)^{-1}(a_\lambda(\Pi^0(\zeta))))].
$$
Arguing  as in  \eqref{e:two-points-close-in-blow-up}, we see that the last expression is  a $1$-form  with $O(\lambda^{-1})$-coeffients. Since  $id\zeta_1\wedge d\overline\zeta_1=dz_1\wedge d\overline z_1$ and  by \eqref{e:form-g} we know that $id\zeta_j\wedge d\overline \zeta_j\lesssim \hat g(\zeta)$ for $2\leq j\leq 4,$ we infer that  $\tau^*(a_\lambda^*(\Pi^0_*(d\zeta_2))) - a_\lambda^*(\Pi^0_*(d\zeta_2) )$ is  weakly  negligible.

We leave the details to the reader and only highlight some points in the computation.

For simplicity, write $\zeta:=(\zeta_1,\zeta_2,\zeta_3,\zeta_4)$ and $(z_1,z_2,w_1,w_2):=\Pi^0(\zeta)$ and $s=(s_1,s_2,s_3,s_4):=a_\lambda(\tau(z,w))$ and  $\hat s=(\hat s_1,\hat s_2,\hat s_3,\hat s_4):=a_\lambda(\tau(\Pi^0(\zeta)   ))$. Recall that $\ddc={i\over\pi}\ddbar$ and we have

\begin{equation*}
\begin{split}
 \ddbar(\varphi\circ a_\lambda \circ \tau\circ \Pi^0) &=\sum_{m,n=1}^4  {\partial^2\hat\varphi\over \partial \zeta_m\partial\zeta_n}(\hat s)\partial \hat s_m\wedge
 \overline\partial \hat s_n+ \sum_{m,n=1}^4  {\partial^2\hat\varphi\over \partial \overline\zeta_m\partial\overline\zeta_n}(\hat s)
 \partial \overline{\hat s_m}\wedge \overline \partial \overline {\hat s_n}  \\
 &+\sum_{m,n=1}^4  {\partial^2\hat\varphi\over \partial\overline \zeta_m\partial\zeta_n}(\hat s)\partial \overline{\hat s_m}\wedge
 \overline{\partial} \hat s_n +\sum_{m,n=1}^4  {\partial^2\hat\varphi\over \partial \zeta_m\partial\overline\zeta_n}(\hat s)\partial
 \hat s_m\wedge
 \overline\partial \overline {\hat s_n}\\
 &+ \sum_{m=1}^4 {\partial\hat\varphi\over \partial \zeta_m}(\hat s)\ddbar \hat s_m
 +\sum_{m=1}^4 {\partial\hat\varphi\over \partial{\overline \zeta}_m}(\hat s)\ddbar\overline {\hat s}_m .
 \end{split}
 \end{equation*}
In the same way, we can expand $\ddc ( \varphi\circ a_\lambda\circ \Pi^0)$ and $ \tau^*\big(\ddc (\varphi\circ a_\lambda\circ \Pi^0 )\big)$. It is easy to compare them with $\ddc (\varphi\circ a_\lambda \circ \tau\circ \Pi^0)$. For example, using \eqref{e:local_dtau}, we easily see that $\partial s_1- \partial (\lambda z_1)$ is negligible where
$s_1$ and $\lambda z_1$ are seen as the first coordinate of $a_\lambda(\tau(z,w))$ and $a_\lambda(z,w)$ respectively.
So the role of $\tau$ is negligible here.

Another point involved in the computation is the comparison between the coefficients of the above forms. For example, using \eqref{e:local_tau} and the assumption that $\hat\varphi$ is $\Cc^3$-smooth, we can observe  as in  \eqref{e:two-points-close-in-blow-up} that
\begin{multline*}
 \Big | {\partial^2\hat\varphi\over \partial \zeta_m\partial\overline\zeta_n}((\Pi^0)^{-1}(a_\lambda(\tau(\Pi^0(\zeta)))))  -
{\partial^2\hat\varphi\over \partial \zeta_m\partial\overline\zeta_n}( (\Pi^0)^{-1}(a_\lambda(\Pi^0(\zeta)))\Big | \ \lesssim \ { \|a_\lambda(\tau(z,w))- a_\lambda(z,w)\|\over \| a_\lambda(z,w)\| }\\
= {\|\lambda (z+O(\|z\|^2)) -\lambda z+O(\|z\|)\|\over \|(\lambda (z+O(\|z\|^2),w)\| } \lesssim |\lambda|^{-1}.
\end{multline*}
Here again, we see that the role of $\tau$ is negligible. The lemma is then obtained by a direct computation.
\endproof

The  following proposition establishes some properties of tangent currents.

\begin{proposition} \label{P:tangent_limits}
Let $\widehat{\Phi}$  be a continuous $4$-form with support in a fixed compact
subset of $\widehat{\E}^0$ and  set  $\Phi:=(\Pi^0)_*\widehat\Phi.$ 
Then, we have the following
properties.
\begin{enumerate}

\item
The family  $A^*_\lambda(\Phi)-\tau^*A^*_\lambda(\Phi)$ is weakly  negligible.

\item If $\|\widehat\Phi\|_\infty \leq 1,$ then $\limsup_{\lambda\to\infty} |\langle \widehat T_\lambda , \Phi\rangle|$ is bounded above by a constant which
does not depend on $\Phi.$

\item If   $\widehat\Phi$ is a  positive $(2, 2)$-form smooth  with compact support in   $\widehat{\E}^0,$ then any
limit value of $\langle \widehat T_\lambda , \Phi\rangle,$ when $\lambda \to \infty,$ is non-negative.
\item If $\widehat\Phi = \ddc \hat\phi$ for some smooth $(1,1)$-form $\hat\phi$ with compact support in $\widehat{\E}^0,$
then we have $\langle T_\lambda ,\Phi\rangle \to 0$ as $\lambda\to\infty.$
\end{enumerate}
\end{proposition}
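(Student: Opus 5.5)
We prove the four properties in order, following the pattern of the proof of Proposition \ref{p:local_comput} and of Theorem \ref{t:tangent}, now working with the weak classes $\WFin(\lambda)$ and $\WNeg(\lambda)$ in Definition \ref{D:negligible_forms-bis}.

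First we localize. Take a finite partition of unity $(\chi_k)$ on $\Delta$ and lift it to $(\chi_k\circ\pi\circ\Pi^0)$. This reduces every statement to a form $\widehat\Phi$ supported in one chart $\widehat\U^0$, or in a chart away from the exceptional divisor $\widehat V$. Away from $\widehat V$, the map $\Pi^0$ is biholomorphic, so that case is covered by Proposition \ref{p:local_comput} and Lemmas \ref{l:negligible_test}--\ref{l:key-technique}. Near $\widehat V$, we write $\widehat\Phi$ as a finite combination of terms $\hat f\,d\zeta_I\wedge d\overline\zeta_J$ with $\hat f$ continuous. After regularization we may take $\hat f$ to be $\Cc^1$; this is harmless because of the uniform bound proved in part (2).

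Part (1). The form $\Phi=(\Pi^0)_*\widehat\Phi$ is a wedge product of at most four factors of the type $(\Pi^0)_*(\hat\phi\,d\zeta_j)$, together with a coefficient function. By Lemma \ref{L:key-technique-bis}(1), each factor is weakly fine, and its $\tau$-perturbation minus itself is weakly negligible. Expanding $\tau^*A_\lambda^*\Phi-A_\lambda^*\Phi$ as a telescoping sum, and applying the rules \eqref{e:rules-WFin-WNeg} together with Lemma \ref{L:negligible_forms-bis}, gives that $A^*_\lambda(\Phi)-\tau^*A^*_\lambda(\Phi)$ is weakly negligible.

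Part (2). By definition of $\Pi^\bullet$, we have $\langle\widehat T_\lambda,\widehat\Phi\rangle=\langle T_1\otimes T_2,\tau^*A_\lambda^*\Phi\rangle$. Part (1) and Lemma \ref{L:negligible_test_forms-bis} show that this differs from $\langle T_1\otimes T_2,A_\lambda^*\Phi\rangle$ by $o(1)$. That term is bounded by $c\|\widehat\Phi\|_\infty$ by Lemma \ref{L:T_1T_2-testforms}.

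Part (3). We use the same replacement of $\tau$ by the identity. We then have $A_\lambda^*\Phi=A_\lambda^*(\Pi^0)_*\widehat\Phi$, which is a positive $(2,2)$-form off $A_\lambda^{-1}(\Pi^0(\widehat V))$, because $a_\lambda$ and $\Pi^0$ are holomorphic. The current $T_1\otimes T_2$ gives no mass to the point $(x_0,x_0)$, and hence none to the singular locus of $A_\lambda^*\Phi$. Therefore the pairing is non-negative, and any limit value is non-negative.

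Part (4). Set $\Phi=\ddc\phi$ with $\phi=(\Pi^0)_*\hat\phi$. By Lemma \ref{L:key-technique-bis}(2), applied coefficientwise to $\hat\phi$, the family $\tau^*A_\lambda^*\ddc\phi-\ddc(A_\lambda^*\phi)$ is weakly negligible. It therefore suffices to show that $\langle T_1\otimes T_2,\ddc(A_\lambda^*\phi)\rangle\to0$. Since $T_1\otimes T_2$ is $\ddc$-closed on $X\times X$, this pairing equals $0$, provided we may integrate by parts across $(x_0,x_0)$, where $A_\lambda^*\phi$ is only $L^\infty$-type with $g$-controlled derivatives.

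The main obstacle is exactly this justification. We plan to cut off with $\chi(\|(z,w)\|/\epsilon)$ and control the error terms $d\chi\wedge d^c(A_\lambda^*\phi)$ and $\ddc\chi\wedge A_\lambda^*\phi$ through the Cauchy--Schwarz estimates of Lemma \ref{L:T_1T_2-testforms}, using $\epsilon^{-2}i\,dz\wedge d\bar z\lesssim g$ on the shell. We expect these errors to tend to $0$ as $\epsilon\to0$ by the mass bounds of Lemma \ref{L:T-widehat-omega}.
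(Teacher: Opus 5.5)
\paragraph{Parts (1)--(3).} These match the paper's argument. You telescope with \eqref{e:rules-WFin-WNeg} and Lemma \ref{L:key-technique-bis}(1), then replace $\tau$ by the identity and use Lemma \ref{L:T_1T_2-testforms} for the bound, and positivity for (3).

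\paragraph{Part (4): the gap.} Part (4) rests on a false statement: $T_1\otimes T_2$ is \emph{not} $\ddc$-closed on $X\times X$. From $\partial\dbar T_j=0$ one only gets
\[
\partial\dbar(T_1\otimes T_2)=\partial T_1\otimes\dbar T_2-\dbar T_1\otimes\partial T_2 .
\]
The two terms have different bidegrees in $x$ and in $y$, namely $(2,1)\otimes(1,2)$ and $(1,2)\otimes(2,1)$, so they cannot cancel. The expression therefore vanishes only when $T_1$ or $T_2$ is closed, which excludes the case $T_1=T_2=T$ you ultimately need. For instance, take $\psi=\chi_1(x)\wedge\chi_2(y)$ with $\chi_1$ of bidegree $(1,0)$ and $\chi_2$ of bidegree $(0,1)$. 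Then $\langle T_1\otimes T_2,\ddc\psi\rangle$ equals, up to a nonzero constant, $\langle\dbar T_1,\chi_1\rangle\langle\partial T_2,\chi_2\rangle$, which is nonzero in general.

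So the difficulty is not integrating by parts across $(x_0,x_0)$. The pairing $\langle T_1\otimes T_2,\ddc(A_\lambda^*\phi)\rangle$ is nonzero for each fixed $\lambda$, even for test forms supported away from that point, and no cut-off will make it vanish. The $\ddc$-closedness of the limit is a genuinely asymptotic statement.

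\paragraph{The missing ingredient.} What you need is the Forn\ae ss--Sibony decomposition. The paper first writes $\hat\phi$ as a combination of forms $\hat u\ddc\hat v$. It then replaces $\tau^*(A_\lambda^*\ddc\phi)$ by $\ddc\psi_\lambda$, with $\psi_\lambda=(u\circ a_\lambda\circ\tau)\ddc(v\circ a_\lambda\circ\tau)$, up to a weakly negligible error; your $\ddc(A_\lambda^*\phi)$ would do the same job here. Since $\ddc\psi_\lambda$ is $d$-exact, Lemma \ref{L:identity} gives
\[
\langle T_1\otimes T_2,\ddc\psi_\lambda\rangle=-\langle\dbar S_1\otimes\partial\overline S_2,\ddc\psi_\lambda\rangle-\langle\partial\overline S_1\otimes\dbar S_2,\ddc\psi_\lambda\rangle ,
\]
where $\dbar S_j$ and $\partial\overline S_j$ are $L^2$ forms by Proposition \ref{p:FS}. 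These are exactly the cross terms above, now in a form one can control.

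The paper then views $\ddc\psi_\lambda$ as a kernel concentrated in an $O(|\lambda|^{-1})$-neighbourhood of $\Delta$. Young's inequality and Lemma \ref{l:kernel-Delta} reduce the claim to showing $\ddc\psi_\lambda\to 0$ weakly. This holds because any limit is a $d$-exact normal $4$-current supported on $\Delta$, hence a $d$-exact $0$-current on $\Delta$, hence zero.

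Your cut-off near $(x_0,x_0)$ may help justify the kernel estimates at the singular point of $(\Pi^0)_*\hat\phi$. It cannot replace this step.
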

\proof
We continue to use the local coordinates $(z,w)$ as above. Observe that if $(\chi_ k )$ is a finite partition of unity for $\Delta$, then $(\chi_k \circ \pi )$ is a finite
partition of unity for $\E$. Using such a partition, we can reduce the problem
to the case where $\Phi$ and $\phi$  have supports in $(r_0\B)\times ({1\over 2}\B)$ for some constant $r_0>0$.

\smallskip

(1) Using \eqref{e:rules-WFin-WNeg}
and applying Lemma \ref{L:key-technique-bis} (1),
assertion (1) follows.

\smallskip

(2) By assertion (1)  we may assume without loss of generality that $\tau=\id.$  By Lemma \ref{L:T_1T_2-testforms-bis}, we have
$$ |\langle \widehat T_\lambda , \Phi\rangle|=\big|\big\langle T_1\otimes T_2, (A_\lambda)^*\big((\Pi^0)_*\widehat  \Phi \big)\rangle\big| \leq  \|\widehat \Phi\|_\infty .$$
This proves assertion (2).

\smallskip

(3) By assertion (1)  we may assume without loss of generality that $\tau=\id.$  We have that
$$ \langle \widehat T_\lambda , \Phi\rangle=\big\langle T_1\otimes T_2, (A_\lambda)^*\big((\Pi^0)_*\widehat  \Phi \big)\rangle \geq  0 ,$$
since $T_1,$ $T_2$ and  $\widehat \Phi$  are positive. This proves assertion (3).

\smallskip

(4)
Using local  coordinates, we can write $\hat\phi$ as a finite combination of forms of type $\hat u\ddc \hat v,$  where $\hat u$ and $\hat v$ are  smooth functions  supported by $(\Pi^0)^{-1}(r_0\B)\times ({1\over 2}\B)$.  Set $\phi= (\Pi^0)_*\hat\phi,$
$u= (\Pi^0)_*\hat u,$ $v= (\Pi^0)_*\hat v.$  For simplicity, we can assume that  $\hat\phi=\hat u\ddc \hat v$.
So $\phi=u\ddc v$.
Define
$$\phi_\lambda := a^*_\lambda(\phi) = (u \circ a_\lambda)\ddc (v\circ a_\lambda) \quad \text{and} \quad
\psi_\lambda := (u\circ a_\lambda \circ\tau) \ddc  (v\circ a_\lambda\circ\tau).$$
Write $\tau = (\tau_1, \tau_2 )$ in the natural way with $\tau_1,\tau_2$ having values in $\C^2$.
We have
$$u\circ a_\lambda = u(\lambda z, w) \quad \text{and} \quad u\circ a_\lambda \circ\tau = u(\lambda \tau_1(z,w), \tau_2(z,w)).$$
Similar identities hold for $v$ instead of $u$.

Now, observe that $\tau^* (\ddc \phi_\lambda ) -\ddc \psi_\lambda$ is equal to
\begin{eqnarray*}
\lefteqn{\tau^* \ddc (u\circ a_\lambda)\wedge \tau^* \ddc (v\circ a_\lambda)- \ddc (u\circ a_\lambda \circ\tau)\wedge
\ddc (v\circ a_\lambda \circ\tau) }\\
&=& \big[\tau^*\ddc (u\circ a_\lambda)-\ddc(u\circ a_\lambda\circ \tau)\big] \wedge \big[\tau^*\ddc(v\circ a_\lambda)\big]\\
&& + \big[\ddc(u\circ a_\lambda\circ \tau)\big] \wedge\big[\tau^*\ddc (v\circ a_\lambda)-\ddc(v\circ a_\lambda\circ \tau)\big].
\end{eqnarray*}
Using Lemma \ref{L:key-technique-bis}, Definition \ref{D:negligible_forms-bis} and the rules of computations given in the proof of Lemma \ref{L:key-technique-bis}, we
can check that both terms in the last sum belong to negligible families of $4$-forms.

It follows from  Lemma \ref{L:negligible_test_forms-bis} that
$$\big\langle (T_1\otimes T_2)_\lambda , \ddc \phi \big\rangle = \big\langle T_1\otimes T_2, \tau^* (\ddc \phi_\lambda) \big\rangle = \big\langle T_1\otimes T_2, \ddc \psi_\lambda \big\rangle+ o(1) \quad \text{as} \quad\lambda \to\infty.$$
It remains to show that $ \big\langle T_1\otimes T_2, \ddc \psi_\lambda \big\rangle$ tends to 0.
Using Lemma \ref{L:identity}, we have
$$ \big\langle T_1\otimes T_2, \ddc \psi_\lambda \big\rangle = - \langle\dbar S_1\otimes \partial \overline{S}_2, \ddc\psi_\lambda\rangle -
 \langle\partial \overline{S}_1\otimes \dbar S_2, \ddc \psi_\lambda \rangle.$$
By Lemmas \ref{L:negligible_forms-bis} and \ref{L:key-technique-bis}, the family $(\ddc\psi_\lambda)$ is fine. Therefore, by Lemma \ref{l:kernel-Delta} and Proposition \ref{p:FS}, it is enough to show that
$\ddc\psi_\lambda$ tends to 0 weakly.

Since the family $(\ddc\psi_\lambda)$ is fine, the mass of $\ddc\psi_\lambda$ is bounded. So, when $\lambda$ tends to infinity, this sequence accumulates to $4$-currents of finite mass supported by $\Delta$. Moreover, since $\ddc\psi_\lambda$ is $d$-exact, any limit $R$ of
$\ddc\psi_\lambda$ is a $d$-exact 4-current. In particular, $R$ is a normal 4-current supported by $\Delta$. Thus,
we can identify it to a 0-current on $\Delta$, according to the classical support theorem, see \cite{Federer}. Finally, since the only $d$-exact 0-current on $\Delta$ is zero, we get $R=0$.
The result follows.
\endproof

\proof[End of the proof of Proposition \ref{p:local_comput-0}]
The second assertion
in Proposition \ref{P:tangent_limits} implies that
the mass of $\widehat T_\lambda$ on any given compact subset of $\E$ is bounded uniformly on $\lambda$ with $\lambda$ large enough.

Consider any sequence $(\lambda_n )$ of complex numbers tending to infinity. After extracting a subsequence,
we can assume that $\widehat T_{\lambda_n}$
converges to a $4$-current $\T$ of locally finite mass in $\E$.
 Let $\widehat{\Phi}_0$ be the  component of bidegree $(2,2)$ of $\widehat{\Phi}$ and set  $\Phi_0:=(\Pi^0)_*\widehat\Phi_0,$
The first assertion
in Proposition \ref{P:tangent_limits} shows that in the above local coordinates $(z,w)$,
 $$\langle \T,\widehat\Phi\rangle =\lim_{n\to\infty} \langle  (a_{\lambda_n}^*)(T_1\otimes T_2), \Phi\rangle
 =\lim_{n\to\infty} \langle  (a_{\lambda_n}^*)(T_1\otimes T_2), \Phi_0\rangle=
 \langle \T,\widehat\Phi_0\rangle,$$
 where the second equality holds because $(a_{\lambda_n}^*)(T_1\otimes T_2)$ is  a of bidegree $(2,2).$
Hence, $\T$ is a current of bi-degree $(2, 2)$.

The third assertion of Proposition \ref{P:tangent_limits} implies that $\T$ is positive. Finally, the fourth assertion in that proposition is
equivalent to saying that $\T$ is $\ddc$-closed.
\endproof

\proof[End of the proof of Theorem \ref{t:tangent-blow-up}]
Using   Proposition \ref{p:local_comput-0} instead of  Proposition \ref{p:local_comput},  we  argue as in the proof of Theorem \ref{t:tangent}.
\endproof

\section{Existence of  tangent  currents  at a point}
\label{s:tangent-0}

Fix  a  point $x_0\in X$ and  denote  by  $\E_0$  the normal bundle of $X\times X$  at the single point $(x_0,x_0).$
So $\E_0$ can be identified with $\C^4.$
  For $\lambda\in\C^*$ let $A^0_\lambda$  denote the dilation  by $\lambda$  on $\E_0,$ that is,  $A_\lambda (y):=\lambda y$ for $y\in \E_0.$ Here is the first main result of the section.

 \begin{theorem}\label{t:tangent-0} Let $T_1$ and $T_2$ be  two positive $\ddc$-closed $(1,1)$-currents on a compact K\"ahler surface $X.$
 \begin{enumerate} \item The family
of currents $T^0_\lambda:= (A^0_\lambda)_*  (T_1\otimes T_2 )$ is relatively compact and any limit current, for
$\lambda\to\infty,$ is  a positive $\ddc$-closed $(2, 2)$-current on $\E_0$ whose trivial extension is a
positive $\ddc$-closed $(2, 2)$-current on $\overline \E_0.$   Such a limit current $S$ is  called a {\rm tangent current to $T_1\otimes T_2$ at $(x_0,x_0).$}
\item If $S$ is a tangent current to $T_1\otimes T_2$ at  $(x_0,x_0)$, then it is
conic, i.e., invariant under $(A^0_\lambda)_* .$

\end{enumerate}
 \end{theorem}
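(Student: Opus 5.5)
The plan is to work in a single chart $10\B\times 10\B$ around $(x_0,x_0)$ with the coordinates $(z,w)=(x-y,y)$ translated so that $(x_0,x_0)$ is the origin. In these coordinates $\E_0\simeq\C^4$ and $A^0_\lambda(z,w)=(\lambda z,\lambda w)$. Since the dilation is linear and holomorphic, no admissible map $\tau$ is needed. Hence $T^0_\lambda$ is automatically a positive $(2,2)$-current on $|\lambda|\cdot 10\B^4$, and it is $\ddc$-closed there. Positivity, bidegree and $\ddc$-closedness pass to weak limits on $\C^4$.

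So the whole content of (1) is a uniform mass bound on compact sets, plus control near infinity needed for the trivial extension to $\overline\E_0=\P^4$. Let $g_0:=\ddc\log(\|z\|^2+\|w\|^2)$ and let $\beta:=\ddc(\|z\|^2+\|w\|^2)$. I will bound the mass of $T^0_\lambda$ on $\{\|(z,w)\|<R\}$ by
\[
\langle T^0_\lambda, \ind_{R\B}\beta^2\rangle = |\lambda|^4\langle T_1\otimes T_2, \ind_{R|\lambda|^{-1}\B}\beta^2\rangle .
\]
This is the mass of $T_1\otimes T_2$ in a small ball of radius $r=R/|\lambda|$ centred at a point of $\Delta$, divided by $r^4$, i.e.\ a Lelong-type density of $T_1\otimes T_2$ at $(x_0,x_0)$. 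On $\{\|(z,w)\|<r\}$ we have $r^{-4}\beta^2\lesssim (r^{-2}\beta)^2$. Using the blow-up $\Pi^0$ of Section \ref{s:tangent-blow-up} and the exhaustion construction of Lemma \ref{L:R_m_annulus}, I will write this as a sum over dyadic annuli $e^{-m-1}\le \|(z,w)\|\le e^{-m}$, on which $e^{2m}\beta\lesssim$ a smooth positive closed form $\Pi^0_*(\ddc\chi(\log\|(z,w)\|+m)+A\widehat\omega^0)$. These are analogues of $R^0_m$ built from the point blow-up $(\widehat{X\times X})^0$, and they have bounded mass and bounded cohomology class.

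The key estimate is then $\langle T_1\otimes T_2, R'_m\wedge R'_n\rangle\le c$ uniformly in $m,n$. I plan to prove it exactly as Lemma \ref{L:R_m-R_n}. Lemma \ref{L:identity} reduces it to a bounded cohomological term plus $L^2$ terms $\langle f_1\otimes f_2\,\alpha, R'_m\wedge R'_n\rangle$. The latter are controlled via the kernel bounds of Appendix \ref{a:Young} (Lemma \ref{L:Young}), since $R'_m\wedge R'_n\lesssim(\widetilde\omega+\Pi^0_*\widehat\omega^0)^2+\ind_{W'_m}\|(z,w)\|^{-2}\widetilde\omega^2$, and this kernel is even milder than in Lemma \ref{L:R_m-R_n}. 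Alternatively and more simply, the needed bound follows directly from Lemma \ref{L:T_1T_2-testforms} and Proposition \ref{p:local_comput-0}. The density $r^{-4}\|T_1\otimes T_2\|_{r\B}$ is dominated by the mass of $\widehat T_\lambda$ on a fixed compact subset of $\widehat\E^0$ meeting the exceptional divisor, together with the contribution of $T_1\otimes T_2$ on $\Delta$, which is zero. The $\|z\|\ll\|w\|$ direction is handled by Lemma \ref{l:dilates-g}. I expect this mass bound to be the main obstacle; I would first try the direct reduction to Lemma \ref{L:T_1T_2-testforms}, because it is cheaper than redoing the annular estimates.

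For the extension across the hyperplane at infinity, the bound above actually gives $\|T^0_\lambda\|_{R\B}\lesssim R^4$ uniformly in $\lambda$. With respect to the Fubini--Study form $\omega_{\FS}$ of $\P^4$, which near infinity is comparable to $\|y\|^{-2}\beta$, this gives uniformly bounded mass of $T^0_\lambda$ on $\C^4$ in the metric of $\P^4$. Hence limits have finite mass near infinity, and by the Skoda--El Mir type extension of \cite{Sibony} (used as in Theorem \ref{t:current-cut}) the trivial extension $\overline S$ is positive with $\ddc\overline S\le 0$ supported on the hyperplane. Pairing with $\omega_{\FS}$ shows $\ddc\overline S$ has zero mass, since $\omega_\FS$ is closed. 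So $\overline S$ is $\ddc$-closed.

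For (2), if $T^0_{\lambda_n}\to S$ then for fixed $\mu\in\C^*$ we get $(A^0_\mu)_*S=\lim T^0_{\mu\lambda_n}$. To get conicity I plan to argue as in \cite[Lemma 2.14]{DinhSibony18}. The function $r\mapsto r^{-4}\langle T_1\otimes T_2,\ind_{r\B}\beta^2\rangle$, or more precisely its version with $g_0^2$ as in \eqref{e:Jensen}, is monotone up to errors controlled by Lemma \ref{L:identity}. Therefore $\langle T^0_\lambda, \ind_{\{a<\|y\|<b\}} g_0^2\rangle\to 0$ for fixed $a<b$. This forces $S\wedge g_0^2$-type radial components to vanish, giving invariance under $|\lambda|$-dilations; invariance under rotations $e^{i\theta}$ then follows from $\ddc$-closedness and the support theorem.
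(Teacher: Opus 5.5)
\textbf{The main gap.} Your first paragraph asserts that $T^0_\lambda$ is $\ddc$-closed because $A^0_\lambda$ is linear and holomorphic. This is false, because $T_1\otimes T_2$ itself is not $\ddc$-closed on $X\times X$. Since $\ddc T_1=\ddc T_2=0$, one computes $\ddc(T_1\otimes T_2)=\frac{i}{\pi}\big(\partial T_1\otimes\dbar T_2-\dbar T_1\otimes\partial T_2\big)$. The two terms have different bidegrees in $(x,y)$, so this vanishes only when $T_1$ or $T_2$ is closed. This is exactly why the paper only tests $T_1\otimes T_2$ against \emph{closed} forms, via Lemma \ref{L:identity}.

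Consequently, $\ddc$-closedness of the limits is not automatic; it is the actual content of the paper's proof of (1). Take $\Phi=(A^0_\lambda)^*(\ddc\phi)$, which is $d$-exact. Lemma \ref{L:identity} removes the $\Omega_1\otimes\Omega_2$ term and leaves $-\langle\dbar S_1\otimes\partial\overline S_2,\Phi\rangle-\langle\partial\overline S_1\otimes\dbar S_2,\Phi\rangle$. These are pairings of $L^2\otimes L^2$ forms with a form whose coefficients are $O(|\lambda|^4)$ and supported in a ball of radius $r\simeq|\lambda|^{-1}$ about $(x_0,x_0)$. They tend to $0$; this is the role of Lemma \ref{L:easy}, and in the point case Cauchy--Schwarz already bounds them by a constant times $\|\dbar S_1\|_{L^2(\B(x_0,r))}\|\partial\overline S_2\|_{L^2(\B(x_0,r))}$. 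Without this argument your part (1) is unproved. Your treatment of the extension to $\overline\E_0$ and your conicity argument in (2) both invoke $\ddc$-closedness, so they inherit the gap.

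\textbf{Secondary points.} First, the mass bound you expect to be the main obstacle is elementary. For bidegree reasons, $(T_1\otimes T_2)\wedge(\ddc\|x\|^2+\ddc\|y\|^2)^2=2\,(T_1\wedge\ddc\|x\|^2)\otimes(T_2\wedge\ddc\|y\|^2)$. Hence the mass of $T_1\otimes T_2$ on $\B(x_0,r)\times\B(x_0,r)$ is $2\pi^2r^4\,\nu(T_1,x_0,r)\,\nu(T_2,x_0,r)\lesssim r^4$ by Lemma \ref{l:lelong}, and no $R'_m$ or blow-up is needed.

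Your ``simpler'' route through Lemma \ref{L:T_1T_2-testforms} does not work as stated. The current $\widehat T_\lambda$ comes from the diagonal dilation $A_\lambda$, which does not rescale $w$. So its masses on fixed compact sets do not control the point density at scale $|\lambda|^{-1}$.

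Second, your extension estimate is too crude. Writing $\zeta$ for the coordinate on $\E_0\simeq\C^4$, near infinity $\omega_\FS$ is only bounded \emph{above} by $\|\zeta\|^{-2}\ddc\|\zeta\|^2$; in the radial direction it is of order $\|\zeta\|^{-4}$. Combined with $\|T^0_\lambda\|_{R\B}\lesssim R^4$, that upper bound gives $O(1)$ per dyadic shell, hence only $O(\log|\lambda|)$, not a uniform bound. To get finite mass for a limit $S$, you must write $\omega_\FS=\ddc\log\|\zeta\|^2+O(\|\zeta\|^{-4})\,\ddc\|\zeta\|^2$ near infinity. Then use the Lelong--Jensen formula \eqref{e:Jensen} for the $\ddc$-closed current $S$, or its conicity, to control $S\wedge(\ddc\log\|\zeta\|^2)^2$. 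Both of these again rely on the missing step.
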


\proof
By  Lemma \ref{L:identity} applied  to  $\Phi=(A^0_\lambda)^*(\ddc\phi),$ where $\phi$ is  a  $(1,1)$-test form compactly supported  on $\E_0,$
we  have
 \begin{equation*}\big\langle T_1\otimes T_2,\Phi\big\rangle=
   - \langle\dbar S_1\otimes \partial \overline{S}_2, \Phi \rangle
  -\langle\partial  \overline{S}_1\otimes \dbar S_2, \Phi \rangle .
\end{equation*}
   Applying  Lemma \ref{L:easy} (2)
 and equality $
\int_{z\in \B} \Phi=0
$ for $\lambda\in\C$ large  enough,  we  deduce  that  each term on the RHS  tends to $0$ as $\lambda$ tends to infinity.
This  proves assertion (1).

Assertion (2) is  easy.
\endproof

The  following elementary lemma is needed in the proof of Theorem \ref{t:tangent-0}.

 \begin{lemma}\label{L:easy} Let $H$ be   smooth  form of bidegree $(2,2)$ compactely supported in $\B.$ For $\lambda\gg 1$ consider  the function  $\chi_\lambda$ defined  on $\B$  by
\begin{equation}\label{e:chi}\chi_\lambda(z) \Leb(z)= a_\lambda^*(H(z)).
 \end{equation}
\begin{enumerate}
\item
 There is a constant $A>0$ such that $\chi_\lambda(z)$ vanishes when  $\|z\|\geq A|\lambda|^{-1}$
 and   $\|\chi_\lambda\|_\infty =O(|\lambda|^4).$
\item  If moreover  $\int H (z) \Leb(z)=0,$ then  we have
$$\lim_{\lambda\to\infty} \int_{w\in\B} \big(\int_{z\in\B}\chi_\lambda(z) \phi(w+z)\Leb(z)\big)\psi(w)\Leb(w)=0.
$$
\end{enumerate}
\end{lemma}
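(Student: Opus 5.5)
Both parts come from the substitution $z\mapsto \lambda z$. Part (2) then follows because $\chi_\lambda$ has mean zero and a uniformly bounded $L^1$ norm, so convolution against it tends to $0$ on $L^2$. I read the statement with $\phi$ and $\psi$ in $L^2$, extended by $0$ outside their domains; this is the case needed when the lemma is applied to the $L^2$ coefficients of $\dbar S_j$ and $\partial\overline S_j$ in the proof of Theorem \ref{t:tangent-0}.

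For part (1), write $H=h\,\Leb$ with $h$ smooth and supported in $A\B$ for some $A>0$. Since $a_\lambda(z)=\lambda z$ on $\C^2$ and $\Leb$ has real degree $4$, we get $a_\lambda^*(\Leb)=|\lambda|^4\Leb$. Hence
$$\chi_\lambda(z)=|\lambda|^4 h(\lambda z).$$
This vanishes when $\|z\|\geq A|\lambda|^{-1}$ and satisfies $\|\chi_\lambda\|_\infty\leq |\lambda|^4\|h\|_\infty$. The same change of variables gives two further facts:
$$\|\chi_\lambda\|_{L^1}=\|h\|_{L^1}\quad\text{for all }\lambda, \qquad \int\chi_\lambda\,\Leb=\int h\,\Leb.$$
Both are used in part (2).

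For part (2), define the operator
$$P_\lambda\phi(w):=\int_{z\in\B}\chi_\lambda(z)\phi(w+z)\Leb(z).$$
It is a convolution with the reflection of $\chi_\lambda$. By Young's inequality, $\|P_\lambda\phi\|_{L^2}\leq \|h\|_{L^1}\|\phi\|_{L^2}$, uniformly in $\lambda$. The quantity to estimate is $\langle P_\lambda\phi,\psi\rangle$. By Cauchy--Schwarz it suffices to prove $\|P_\lambda\phi\|_{L^2}\to 0$.

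First take $\phi$ continuous with compact support. Using $\int\chi_\lambda=0$, we can write
$$P_\lambda\phi(w)=\int\chi_\lambda(z)\big(\phi(w+z)-\phi(w)\big)\Leb(z).$$
This is bounded by $\|h\|_{L^1}\sup_{\|z\|\leq A|\lambda|^{-1}}|\phi(w+z)-\phi(w)|$. That bound tends to $0$ uniformly in $w$ by uniform continuity, and the supports of $P_\lambda\phi$ stay in a fixed compact set, so $\|P_\lambda\phi\|_{L^2}\to0$.

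For general $\phi\in L^2$, approximate $\phi$ in $L^2$ by a continuous compactly supported $\phi'$. Then
$$\|P_\lambda\phi\|_{L^2}\leq \|h\|_{L^1}\|\phi-\phi'\|_{L^2}+\|P_\lambda\phi'\|_{L^2}.$$
Taking $\limsup_{\lambda\to\infty}$ and then letting $\phi'\to\phi$ gives the limit $0$.

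The only real point is the uniform $L^1$ bound on $\chi_\lambda$, which makes $P_\lambda$ uniformly bounded on $L^2$ so that the density argument goes through. This bound comes from the exact scaling $|\lambda|^4$ matching the real dimension $4$, so I expect no real obstacle.
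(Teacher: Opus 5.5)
Your proof is correct and follows the same route as the paper: the scaling $\chi_\lambda(z)=|\lambda|^4h(\lambda z)$ gives part (1) together with a uniform $L^1$ bound, the mean-zero condition lets you replace $\phi(w+z)$ by $\phi(w+z)-\phi(w)$, and density of continuous compactly supported functions finishes the argument. The difference is only in packaging. The paper phrases the density step as $L^p$-continuity of translations combined with H\"older's inequality for general conjugate exponents $p,q$. You instead run it through the uniformly bounded convolution operators $P_\lambda$ on $L^2$; that is exactly the case needed later, and your argument extends verbatim to $1\le p<\infty$.
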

\proof
Assertion (1) follows  from   \eqref{e:chi}.

Consider functions $\phi\in L^p(\B)$ and $\psi\in L^q(\B)$  with  $p,q>0,$ $p^{-1}+q^{-1}=1,$
Since $\Cc_0(\B)$ is dense in $L^p(\B),$ we see that  $\int_{w\in\B}|\phi(w +z)-\phi(w)|^p \Leb(w) \to 0$  as $z\to 0.$
Using  this and assertion (1) and by an application of H\"older's inequality,
 we infer that
$$\lim_{\lambda\to\infty}\int_{w\in\B}  \big(\int_{z\in\B} \chi_\lambda(z) (\phi(w+z)-\phi(w))\Leb(z)\big)\psi(w)\Leb(w)=0.
$$
Since $\int H (z) \Leb(z)=0,$ it follows that  $\int  \chi_\lambda(z)\Leb(z)=0.$ Hence, assertion (2) follows from  the above  limit.
\endproof

 Let $x_0\in X$ be as  above.
Consider the blow-up $\Pi^0:  \widehat\E_0\to\E_0$  of $\E_0$ at  $(x_0,x_0,0).$  Let $\widehat V^0:=(\Pi^0)^{-1}(x_0,x_0,0)
$ be the exceptional  hypersurface.
For a current $S$ on $\E_0,$ denote by  $\Pi^\bullet S$  the current which is  the trivial extension of $\widetilde S$ through $\widehat V^0,$  where   $\widetilde S$ is  the current $(\Pi^0)^*S$ on $\widehat \E_0\setminus \widehat V^0.$

Let $\tau$ be any biholomorphic map from an open neighborhood of  $(x_0,x_0)$ in $X\times X$  onto  an open neighborhood of  $(x_0,x_0)$ in $\E_0.$ Define
\begin{equation*}
\widehat T^0_\lambda:=  \Pi^\bullet (T^0_\lambda),\quad \text{where}\quad    T^0_\lambda:=  (A^0_\lambda )_* [\tau_*  (T_1\otimes T_2 )].
 \end{equation*}
This is a current of degree $4$ on some some open subset of
$\widehat\E_0$ containing $\widehat V^0$. This open set increases to $\widehat\E_0$ when $|\lambda|$ increases to infinity.

The  second main result of this  section is the  following  theorem which  improves  somehow  Theorem  \ref{t:tangent-0}.

\begin{theorem} \label{t:tangent-0-bis}
Let $T_1$ and $T_2$ be  two positive $\ddc$-closed $(1,1)$-currents on a compact K\"ahler surface $X$ as above. Then, with the above notations, we have  the following properties.
\begin{enumerate}
\item The mass of $\widehat T^0_\lambda$ on any given compact subset of $\widehat\E_0$ is bounded uniformly on $\lambda$ for $|\lambda|$ large enough.
\item If $\widehat \T^0$ is a cluster value of $\widehat T^0_\lambda $ when $\lambda\to\infty ,$ then it
is a  positive $\ddc$-closed $(2, 2)$-current on $\widehat\E_0.$
\item If $(\lambda_n )$ is a
sequence tending to infinity such that $\widehat T^0_{\lambda_n}$ converges to some current $\widehat\T^0,$ then $\widehat\T^0$ may depend on $(\lambda_n )$
but it does not depend on the choice of the map $\tau .$
\end{enumerate}
\end{theorem}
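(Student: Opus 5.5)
The plan is to run the same scheme as for Theorem \ref{t:tangent-blow-up}, in the simpler setting where the fibre direction is trivial. Here $\tau$ is holomorphic and the dilation $A^0_\lambda$ acts on all four coordinates $\zeta=(z,w)$ of $\E_0\simeq\C^4$. Fix local coordinates $x$ near $x_0$ with $x_0=0$. Let $\tau_0(x,y):=(x,y)$ be the standard chart and write $\tau=\theta\circ\tau_0$, where $\theta$ is a local biholomorphism of $\C^4$ fixing $0$. I will first prove (1) and (2) for $\tau_0$, and then show that replacing $\tau_0$ by $\tau$ does not change the limits. That gives (3).

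\emph{Mass bound.} Take a continuous $4$-form $\widehat\Phi$ supported in a compact subset of $\widehat\E_0$ with $\|\widehat\Phi\|_\infty\le 1$, and set $\Phi:=(\Pi^0)_*\widehat\Phi$. We need a uniform bound for $\langle T_1\otimes T_2,(A^0_\lambda)^*\Phi\rangle$. Near $\widehat V^0$, $\Phi$ is dominated by combinations of $g^2$, $g\wedge\gamma$ and $\gamma\wedge\gamma'$, with $g=\ddc\log\|\zeta\|^2$ as in \eqref{e:form-g} and $\gamma,\gamma'$ bounded constant-coefficient $2$-forms. Since $A^0_\lambda$ is homogeneous and $\ddc\log\|\lambda\zeta\|^2=\ddc\log\|\zeta\|^2$, we get $(A^0_\lambda)^*g=g$. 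This is simpler than Lemma \ref{l:dilates-g}.
\begin{itemize}
\item The $g^2$ term is bounded by Lemma \ref{L:T-widehat-omega}, because $g\lesssim\widetilde\omega+\Pi^0_*\widehat\omega^0$ on a fixed neighbourhood. Away from a neighbourhood the form vanishes.
\item For $\gamma\wedge\gamma'$, the pullback $(A^0_\lambda)^*(\gamma\wedge\gamma')$ has coefficients $|\lambda|^4$ and support in $\|\zeta\|\lesssim|\lambda|^{-1}$, hence in $\|z\|\lesssim|\lambda|^{-1}$. Its pairing with $T_1\otimes T_2$ is bounded by $|\lambda|^4\langle T_1\otimes T_2,\ind_{\|(x,y)\|\lesssim|\lambda|^{-1}}\widetilde\omega^2\rangle$. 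Proposition \ref{p:T_1T_2-test} only gives $|\lambda|^{4-k}$ here, which is not enough. So I use the point-ball estimate instead: $\langle T_1\otimes T_2,\ind_{\B(0,r)^2}\widetilde\omega^2\rangle\lesssim\nu(T_1,0,r)\nu(T_2,0,r)\,r^4$ by \eqref{e:nu(T,a,r)}, and this is $\lesssim r^4$ by Lemma \ref{l:lelong}. This bounded-density argument is the key estimate.
\item The mixed term $g\wedge\gamma$ then follows from the previous two by Cauchy--Schwarz, exactly as in Lemma \ref{L:T_1T_2-testforms}.
\end{itemize}

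\emph{Positivity, bidegree and $\ddc$-closedness.} For $\tau_0$, the current $(A^0_\lambda)_*(T_1\otimes T_2)$ is a positive $(2,2)$-current, and $\Pi^\bullet$ preserves positivity. So every cluster value is positive of bidegree $(2,2)$.

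For $\ddc$-closedness, take $\widehat\Phi=\ddc\hat\phi$ and argue as in Proposition \ref{P:tangent_limits}(4). First, $\Pi^0_*\ddc\hat\phi=\ddc\Pi^0_*\hat\phi$ holds as currents on $\E_0$. Here $\Pi^0_*\hat\phi$ has coefficients bounded by $g$-type singular forms, and these are integrable against $T_1\otimes T_2$ by the mass estimate. Then Lemma \ref{L:identity} reduces the pairing to the two $L^2$ terms $\dbar S_1\otimes\partial\overline S_2$ and $\partial\overline S_1\otimes\dbar S_2$. The test forms $(A^0_\lambda)^*\ddc\Pi^0_*\hat\phi$ are $d$-exact, concentrate at the single point $(x_0,x_0)$, and have bounded mass. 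Any weak limit is therefore a $d$-exact normal $4$-current supported at a point, and such a current is zero. Combining this with Lemma \ref{L:easy}(2), as in Theorem \ref{t:tangent-0}, the $L^2$ pairings tend to $0$.

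\emph{Independence of $\tau$.} This is the main obstacle, because $(A^0_\lambda)^{-1}\circ\theta\circ A^0_\lambda$ does not tend to the identity. Indeed, $\theta(\zeta)=L\zeta+O(\|\zeta\|^2)$ with $L=d\theta(0)$, so the conjugated maps converge to $L$ and not to the identity. I expect the limits for $\tau$ and $\tau_0$ to differ only by pushforward under the linear map $L$, and hence to be canonically identified under $d\tau(x_0,x_0)$, which is how (3) should be read.

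To prove this, I will show $\langle T_1\otimes T_2,(A^0_\lambda)^*\Phi\circ\theta-(A^0_\lambda)^*\Phi\circ L\rangle\to0$. The difference is a weakly negligible family in the sense of Definition \ref{D:negligible_forms-bis}: the coefficients gain a factor $O(|\lambda|^{-1})$, by the same estimate as \eqref{e:two-points-close-in-blow-up}, now with $\theta$ holomorphic. Lemma \ref{L:negligible_test_forms-bis} then makes this difference tend to $0$, using the point-mass bound above in place of Lemma \ref{L:T_1T_2-testforms}.

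If a strict ``independent of $\tau$'' statement is intended, I would normalise by requiring $d\tau(x_0,x_0)=\id$, under the identification $\E_0=\Tan_{(x_0,x_0)}(X\times X)$. In that case $L=\id$ and the above gives equality of the limits directly.
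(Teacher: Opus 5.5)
Your proposal is correct and follows essentially the paper's route. The paper transplants the proof of Proposition~\ref{p:local_comput-0} using Lemma~\ref{L:T_1T_2-testforms-bis}, which rests on the invariance $(A^0_\lambda)^*g=g$ together with Lemma~\ref{L:T-widehat-omega}; it then uses positivity of the holomorphic dilates, Lemma~\ref{L:identity} for $\ddc$-closedness, and negligible families to handle the role of $\tau$. Your two additions are needed and correct: the bound $\|T_1\wedge\omega\|_{\B(x_0,r)}\|T_2\wedge\omega\|_{\B(x_0,r)}\lesssim r^4$ controls the radial and smooth components of $(\Pi^0)_*\hat f$, which acquire factors $|\lambda|^2$ or $|\lambda|^4$ under $(A^0_\lambda)^*$ and are not covered by the paper's one-line Cauchy--Schwarz reduction to $g^2$ (since $g$ degenerates radially), and your observation that for an arbitrary biholomorphic $\tau$ the limit changes by the lift of $L=d\tau(x_0,x_0)$ is right, so (3) holds only under the normalization $d\tau(x_0,x_0)=\id$ that the paper's local-coordinate formula tacitly assumes.
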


The proof is based on the  following  result  which is  similar to Lemma \ref{L:T_1T_2-testforms}

\begin{lemma}\label{L:T_1T_2-testforms-bis}
Let $T_1$ and $T_2$ be two positive $\ddc$-closed  $(1,1)$-currents of mass $1$ on $X.$   Then there is a constant $c_6 > 0$, independent of $T_1,T_2$  such that    the following
estimate holds. For any continuous 4-form  $\hat f$ with compact support in
$\widehat\U^0$ and  any $\lambda\in\C$ with $|\lambda|\geq 1,$  we have
$$\big|\big\langle T_1\otimes T_2, (A^0_\lambda)^*\big((\Pi^0)_*\hat f \big)\rangle\big| \leq  \|\hat f\|_\infty .$$
\end{lemma}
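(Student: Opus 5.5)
Lemma \ref{L:T_1T_2-testforms-bis} is the pointwise analogue of Lemma \ref{L:T_1T_2-testforms}, so I will follow that proof line by line. The dilation $A^0_\lambda$ now acts on all four coordinates $(z,w)$ centred at $(x_0,x_0)$, instead of only on the normal variable $z$. Set $f:=(\Pi^0)_*\hat f$, and replace $g$ by $g_0:=\ddc\log(\|z\|^2+\|w\|^2)$, the pushforward of the pull-back of the Fubini--Study form on the exceptional divisor. Here $(z,w)$ are the coordinates centred at $(x_0,x_0)$. The key simplification is that $g_0$ is invariant under the full dilation: $(A^0_\lambda)^*g_0=g_0$.

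\emph{Step 1: reduction.} Using the chart \eqref{e:Pi^0} and Cauchy--Schwarz, as before, $f$ is dominated by combinations of three types: $\gamma\wedge\gamma'$ with $\gamma,\gamma'$ products of two coordinate $1$-forms; $\gamma\wedge g_0$; and $g_0^2$. All types are supported in $\{\|(z,w)\|\le r_0\}$ and have coefficients of size $\|\hat f\|_\infty$. (Strictly, each $d\zeta_j$ for $j\ge2$ is bounded by $\hat g$, and $d\zeta_1=dz_1$ is smooth.)

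\emph{Step 2: the invariant part.} For the type $g_0^2$ the pull-back is unchanged, so
\[
\big\langle T_1\otimes T_2,(A^0_\lambda)^*(\chi g_0^2)\big\rangle\le\langle T_1\otimes T_2, g_0^2\rangle .
\]
Here $\chi$ is the cutoff. By Lemma \ref{L:T-widehat-omega}, since $g_0\lesssim \widetilde\omega+\Pi^0_*\widehat\omega^0$, this is bounded by $c_5$. This step is easier than Case 3 of Lemma \ref{L:T_1T_2-testforms}, because there is no anisotropic error term from Lemma \ref{l:dilates-g}.

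\emph{Step 3: the smooth part.} For $\gamma\wedge\gamma'$ we have $(A^0_\lambda)^*(\gamma\wedge\gamma')=|\lambda|^{4}\,h(\lambda z,\lambda w)\,\gamma\wedge\gamma'$, which is supported in the ball of radius $r_0|\lambda|^{-1}$ around $(x_0,x_0)$. I would bound its pairing by the mass of $T_1\otimes T_2\wedge (\ddc\|(z,w)\|^2)^2$ on $B((x_0,x_0),r)$ divided by $r^4$, with $r\sim|\lambda|^{-1}$. Write $\ddc\|(z,w)\|^2\lesssim\ddc\|x\|^2+\ddc\|y\|^2$, where the coordinates are again centred at $x_0$. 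Then Fubini splits the mass into a product of $r^{-2}\int_{B(x_0,r)}T_1\wedge\ddc\|x\|^2=\nu(T_1,x_0,r)$ and the analogous quantity for $T_2$. (The mixed terms, such as $T_1\wedge\ddc\|y\|^2$, vanish by bidegree, exactly as in Case 1 of Proposition \ref{p:T_1T_2-test}.) By Lemma \ref{l:lelong} the result is bounded by $c^2$, uniformly in $\lambda$. Alternatively, it suffices to note that $\gamma\wedge\gamma'\lesssim g_0^2\cdot\|(z,w)\|^4$ away from the origin, so everything reduces to Step 2. I would use this second route, since it avoids bidegree bookkeeping. The mixed type $\gamma\wedge g_0$ then follows by Cauchy--Schwarz from the other two types, as in Case 2 of Lemma \ref{L:T_1T_2-testforms}.

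The main obstacle I expect is making the bound independent of the constant in the reduction of Step 1. The statement claims $\le\|\hat f\|_\infty$ with no constant, which presumably is meant as $\lesssim$ with constant $c_6$. A second point is that $T_1\otimes T_2$ has no mass at the point $(x_0,x_0)$, which follows from the no-mass-on-$\Delta$ fact used earlier; this is needed so that the pushforward $(\Pi^0)_*$ and the trivial extension through $\widehat V^0$ cause no issue.
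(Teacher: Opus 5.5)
Your Step 2 (the invariance $(A^0_\lambda)^*g_0=g_0$ followed by Lemma~\ref{L:T-widehat-omega}) is exactly the paper's proof. That proof consists only of this step and the assertion that Cauchy--Schwarz reduces everything to $g_0^2$. The gap is in how you dispose of the other terms, and it is the same reduction the paper makes without justification.

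Put $v=(z,w)$ and $\beta:=\ddc\|v\|^2$. Because $g_0=\ddc\log\|v\|^2$ vanishes in the radial direction, $g_0^2$ restricted to any complex $2$-plane $P$ through $(x_0,x_0)$ is zero off that point. By contrast, $\beta^2|_P$ and $\beta\wedge g_0|_P=\beta^2|_P/(2\|v\|^2)$ are not zero. This really occurs: on $X=\P^2$ with $T_1,T_2$ lines through $x_0$, near $(x_0,x_0)$ in affine coordinates $T_1\otimes T_2=[P]$ for such a plane. Two of your steps therefore fail:
\begin{itemize}
\item Your preferred second route, $\gamma\wedge\gamma'\lesssim\|v\|^4g_0^2$, is false. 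It would bound $\int_P\chi\,\beta^2>0$ by $0$.
\item The Cauchy--Schwarz you borrow for the mixed type, $\langle S,\alpha\wedge g_0\rangle^2\le\langle S,\alpha^2\rangle\langle S,g_0^2\rangle$, is false for positive $(1,1)$-forms. For $\alpha=i\,dz_1\wedge d\bar z_1$ its right-hand side is even identically $0$.
\end{itemize}
This mixed term is not marginal. In the chart \eqref{e:Pi^0}, $i\,d\zeta_1\wedge d\bar\zeta_1\wedge\hat g$ is exactly the part of the square of a K\"ahler form that $\hat g^2$ does not control.

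No Cauchy--Schwarz is needed. Since $(\Pi^0)^*\beta+\hat g$ is K\"ahler near $\widehat V$, the $(2,2)$-part of $\hat f$ is dominated by $C\|\hat f\|_\infty\hat\chi\,((\Pi^0)^*\beta+\hat g)^2$. This holds in the strongly positive sense, after splitting into real and imaginary parts. So with $r=r_0/|\lambda|$, the pairing is at most $C\|\hat f\|_\infty$ times the pairing of $T_1\otimes T_2$ with $\ind_{B(r)}\big(|\lambda|^4\beta^2+2|\lambda|^2\beta\wedge g_0+g_0^2\big)$. Treat the three terms as follows:
\begin{itemize}
\item The first term is handled by your first route. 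Using $\beta\lesssim\ddc\|x\|^2+\ddc\|y\|^2$, bidegree, Fubini and Lemma~\ref{l:lelong}, you get $\int_{B(s)}T_1\otimes T_2\wedge\beta^2\lesssim s^4$ uniformly in $s$.
\item For the second, use $g_0\le\|v\|^{-2}\beta$ and a dyadic decomposition of $B(r)$ with the same $s^4$ bound. This gives $|\lambda|^2\int_{B(r)}\|v\|^{-2}\,T_1\otimes T_2\wedge\beta^2\lesssim|\lambda|^2r^2=r_0^2$.
\item The third term is your Step 2. The dyadic argument applied to $g_0^2\le\|v\|^{-4}\beta^2$ diverges logarithmically, which is precisely why invariance and Lemma~\ref{L:T-widehat-omega} are needed for this term alone.
\end{itemize}
You are also right that the statement should read $\le c_6\|\hat f\|_\infty$.
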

\proof
Since $(A^0_\lambda)_g=g,$ where $g$ is the positive closed $(1,1)$-current  on $\widehat \U^0$ defined in \eqref{e:form-g}, Cauchy-Schwarz inequality  allows us to reduce the lemma  to showing that
$\big| \big\langle T_1\otimes T_2, g^2 \big\rangle\big| <\infty.$
But this is  a  consequence of  Lemma \ref{L:T-widehat-omega}
as $g\leq c_4( \widetilde \omega+ \Pi^0_*\widehat\omega^0 ). $
\endproof

We continue to use the notations introduced earlier. In particular,
over $\Delta\cap (5\B\times 5\B)$, with the coordinates $(z,w)$, $\E$ is identified with $\C^2\times 5\B$, $\pi$ is the projection $(z,w)\mapsto w$ and $A^0_\lambda$
is equal to the map $a^0_\lambda(z,w):=(\lambda z,\lambda w)$.
We have the following result.

\begin{proposition} \label{p:local_comput-0-bis}
\begin{enumerate} \item The mass of $\widehat T^0_\lambda$ on any given compact subset of $\widehat \E_0$ is bounded uniformly on $\lambda$ with $|\lambda|\geq 1$.  Moreover, if $(\lambda_n )$ is a
sequence tending to infinity such that $\widehat T^0_{\lambda_n}$ converges to a current $\widehat \T^0$,
then in the above local coordinates $(z,w)$, we have
$$\widehat \T = \lim\limits_{n\to\infty} (\Pi^\bullet)[(a^0_{\lambda_n })_* (T_1\otimes T_2)] \quad\text{on}\quad (\Pi^0)^{-1}(\C^2  \times \C^2).$$
In particular, $\widehat\T^0$ does not depend on the choice of $\tau$ and $\widehat\T^0$ is  a positive  $(2,2)$-current.
\item  $\widehat\T^0$ is $\ddc$-closed.
\end{enumerate}
\end{proposition}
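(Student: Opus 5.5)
The plan is to mirror the proof of Proposition \ref{p:local_comput-0}, with the anisotropic dilation $a_\lambda$ replaced by the isotropic one $a^0_\lambda(z,w)=(\lambda z,\lambda w)$. The point $(x_0,x_0)$ is placed at the origin of the chart, so $w$ is now also a local coordinate centered at $x_0$. The situation is in fact simpler than before, because $\tau$ is biholomorphic and $A^0_\lambda$ commutes with the blow-up $\Pi^0$ at the origin. Concretely, in the chart \eqref{e:Pi^0} the lift of $a^0_\lambda$ is $\zeta\mapsto(\lambda\zeta_1,\zeta_2,\zeta_3,\zeta_4)$, which is smooth on $\widehat\E_0$.

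\textbf{Step 1: mass bound.} Let $\widehat\Phi$ be a continuous $4$-form with compact support in $\widehat\E_0$ and $\|\widehat\Phi\|_\infty\le 1$, and set $\Phi:=(\Pi^0)_*\widehat\Phi$. For $\tau=\id$ we have $\langle \widehat T^0_\lambda,\widehat\Phi\rangle=\langle T_1\otimes T_2,(a^0_\lambda)^*\Phi\rangle$. After a partition of unity on $\widehat\E_0$, its modulus is bounded by Lemma \ref{L:T_1T_2-testforms-bis}. Behind that lemma is the bound $\widehat\Phi\lesssim \hat g^2$ in the sense of Cauchy--Schwarz, together with $(a^0_\lambda)^*g=g$ and $\langle T_1\otimes T_2,g^2\rangle<\infty$ from Lemma \ref{L:T-widehat-omega}. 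This bound is uniform in $\lambda$ and gives the first assertion for $\tau=\id$.

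\textbf{Step 2: removing $\tau$.} For general $\tau$, write $\tau=\ell+O(\|(z,w)\|^2)$, where $\ell$ is the differential at the origin. Since $\ell$ is linear, it commutes with $a^0_\lambda$ and lifts to a biholomorphism of $\widehat\E_0$, so it only changes coordinates. We may therefore assume $d\tau(0)=\id$. Then $a^0_\lambda\circ\tau\circ(a^0_\lambda)^{-1}\to\id$ in $\Cc^\infty$ on compact sets, with error $O(|\lambda|^{-1})$, and the same holds for the lifts to $\widehat\E_0$. This is the analogue of \eqref{e:two-points-close-in-blow-up}, and it is easier here because everything is holomorphic.

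It follows that
\[
\tau^*(a^0_\lambda)^*\Phi-(a^0_\lambda)^*\Phi=(a^0_\lambda)^*(\Pi^0)_*(\widehat\Phi_\lambda-\widehat\Phi),
\]
where $\widehat\Phi_\lambda:=\widehat\sigma_\lambda^*\widehat\Phi$ and $\widehat\sigma_\lambda$ is the lift of $a^0_\lambda\circ\tau\circ(a^0_\lambda)^{-1}$. For smooth $\widehat\Phi$ we get $\|\widehat\Phi_\lambda-\widehat\Phi\|_\infty\to0$. The estimate of Step 1 then shows that the difference tends to $0$, and a density argument extends this to continuous $\widehat\Phi$. Hence every limit equals $\lim_n\Pi^\bullet[(a^0_{\lambda_n})_*(T_1\otimes T_2)]$, which is independent of $\tau$. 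Since $T_1\otimes T_2$ is positive of bidegree $(2,2)$ and $a^0_\lambda$ is holomorphic, the limit is a positive $(2,2)$-current.

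\textbf{Step 3: $\ddc$-closedness.} This is the step I expect to be the main obstacle. Take $\widehat\Phi=\ddc\hat\phi$ with $\hat\phi$ a smooth $(1,1)$-form with compact support in $\widehat\E_0$, and set $\Phi=(\Pi^0)_*\ddc\hat\phi$. The first point to check is that $\ddc(\Pi^0)_*\hat\phi=\Phi$. This holds because $(\Pi^0)_*$ commutes with $d$, but $(\Pi^0)_*\hat\phi$ is singular at the origin, so some care is needed there.

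By Lemma \ref{L:identity}, $\langle T_1\otimes T_2,(a^0_\lambda)^*\Phi\rangle$ reduces to the two $L^2$ terms involving $\dbar S_j$ and $\partial\overline S_j$. I would then argue as in Theorem \ref{t:tangent-0} and Lemma \ref{L:easy}. The family $(a^0_\lambda)^*\Phi$ is $d$-exact, has bounded mass and shrinks to the point $(x_0,x_0)$. Pairing it against $L^2\otimes L^2$ coefficients therefore tends to $0$, provided an $L^2$ kernel bound holds uniformly in $\lambda$. That bound would come from the Young-type estimate (Lemma \ref{L:Young}) with the kernels bounded by $g^2$, since $g$ is dilation invariant. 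Any limit is then a $d$-exact normal current of top degree concentrated at a point, and such a current vanishes. This yields $\langle\widehat\T^0,\ddc\hat\phi\rangle=0$.
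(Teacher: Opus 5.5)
Your proposal is correct at the paper's level of rigor and is essentially the paper's own proof, which just reruns the argument of Proposition~\ref{p:local_comput-0} with Lemma~\ref{L:T_1T_2-testforms-bis} in place of Lemma~\ref{L:T_1T_2-testforms}; your only real departure is Step 2, where the smooth convergence to the identity of the lifts of $a^0_\lambda\circ\tau\circ(a^0_\lambda)^{-1}$ on $\widehat\E_0$ replaces the weakly fine/negligible bookkeeping more cleanly, which works because $\tau$ is biholomorphic and the dilation is isotropic (note this gives independence of $\tau$ only among maps with the same differential at $(x_0,x_0)$, which is the intended normalization). One correction in Step 3: kernels of size $\|(x,y)\|^{-4}$ do not satisfy the hypothesis of Lemma~\ref{L:Young} with $\delta=0$, since the row integrals diverge logarithmically at the point. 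The fix is that the relevant coefficients of $(a^0_\lambda)^*\Phi$ are $O(\|(x,y)\|^{-4})$ uniformly in $\lambda$ and vanish for $\|(x,y)\|\gtrsim|\lambda|^{-1}$, so they are all dominated by one fixed kernel that is $L^2$-bounded by Schur's test with weight $\|y\|^{-2}$, and dominated convergence then kills the two $L^2$ terms directly.
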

\proof
We argue as in the proof of  Proposition \ref{p:local_comput-0}
using  Lemma \ref{L:T_1T_2-testforms-bis} instead of  Lemma  \ref{L:T_1T_2-testforms}.
\endproof
\proof[End of the proof of Theorem \ref{t:tangent-0-bis}]
Using   Proposition \ref{p:local_comput-0-bis} instead of  Proposition \ref{p:local_comput-0},  we  argue as in the proof of Theorem \ref{t:tangent-0}.
\endproof

 \section{Lelong numbers of tangent currents}\label{s:lelong}

 The following inequality of Lelong  numbers is needed.
 \begin{theorem}
  \label{T:Lelong}
  Let $T_1,T_2$ be  two positive $\ddc$-closed  $(1,1)$ currents  on a compact K\"ahler surface $X.$
  Let $\T$ be a tangent current to $T_1\otimes T_2$ along the diagonal $\Delta$ obtained in  Theorem  \ref{t:tangent} .
  Then $\T$ is a  positive closed $(2,2)$-current in $\E,$ and
 \begin{equation}\label{e:ineq_Lelong_numbers}
\nu(\T,(x,x,0))\geq \nu(T_1,x)\nu(T_2,x)\qquad\text{for all points}\qquad x\in X  ,
\end{equation}
where on the LHS, $(x,x,0)\in\E$ is the point $0_\E(x,x),$  where $0_\E$ is  the  zero section of $\E\to \Delta.$
 \end{theorem}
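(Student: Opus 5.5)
Two things have to be shown: that $\T$ is closed, not merely $\ddc$-closed, and the lower bound \eqref{e:ineq_Lelong_numbers}.

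\textbf{Closedness.} Theorem \ref{t:tangent} already gives that $\T$ is a positive $\ddc$-closed $(2,2)$-current on $\E$, of $h$-dimension at most $1$, and conic. I will show $d\T=0$ by pairing $\T$ with $d\phi$ for a compactly supported $3$-form $\phi$, and I will work locally in the coordinates $(z,w)$ via Proposition \ref{p:local_comput}, where $\T=\lim (a_{\lambda_n})_*(T_1\otimes T_2)$. By bidegree it is enough to treat the $\partial$ and $\dbar$ parts. For $\langle T_1\otimes T_2,\dbar(a_\lambda^*\phi)\rangle$ I use the Forn\ae ss--Sibony decomposition \eqref{e:decompo_posi_har_T12}. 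The terms coming from $\Omega_j$ and $i\ddbar u_j$ are closed, so only the contributions of $\dbar S_j,\partial\overline S_j$ remain. These are tensor products of $L^2$ forms paired against a family of forms concentrating on $\Delta$ with the scaling of Definition \ref{d:negligible}. Exactly as in the proof of Theorem \ref{t:tangent-0} via Lemma \ref{L:easy}(2), such pairings tend to $0$ because the pulled-back form is $d$-exact and so has zero integral in the $z$-variable. Hence $\T$ is closed. The point of doing this is that Lelong numbers of $\T$ are then classical: they are upper semicontinuous and behave well under weak limits.

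\textbf{Lelong number inequality.} Fix $x$ and put $a=(x,x,0)$. Write $\nu_j=\nu(T_j,x)$.
\begin{enumerate}
\item I compare masses on small balls, $\|z\|^2+\|w-x\|^2<r^2$ in $\E$. By weak convergence and the fact that the Lelong density $\nu(\T,a,r)$ is increasing in $r$, it suffices to bound $\liminf_n \nu((a_{\lambda_n})_*(T_1\otimes T_2),a,r)$ from below by $\nu_1\nu_2$. A small enlargement of the ball is needed to handle boundary mass.
\item Pulling back by $a_\lambda$, the region becomes $\{|\lambda|^2\|x'-y\|^2+\|y-x\|^2<r^2\}$. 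It contains the product of balls $\B(x,r/(2|\lambda|))\times\B(x,r/2)$ only in a skewed sense, so I will instead use the product $\{\|y-x\|<r/2,\ \|x'-x\|<r/(2|\lambda|)\}$ shifted appropriately.
\item I test against $(\ddc(|\lambda|^2\|z\|^2+\|w-x\|^2))^2$. This form contains the term $|\lambda|^2\ddc\|z\|^2\wedge \ddc\|w\|^2$. On the product region this is at least $|\lambda|^2\ddc_{x'}\|x'-x\|^2\wedge\ddc_y\|y-x\|^2$, up to cross terms that are controlled by Case 3a/4a of Proposition \ref{p:T_1T_2-test}.
\item By Fubini, the mass of $T_1\otimes T_2$ against this product form is $\nu(T_1,x,r/2|\lambda|)\,(r/2|\lambda|)^2\cdot \nu(T_2,x,r/2)(r/2)^2$, up to normalising constants. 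Since Lelong densities are at least the Lelong numbers (Skoda, \eqref{e:Jensen}), after dividing by $r^4$ this gives a lower bound $c\,\nu_1\nu_2$.
\end{enumerate}

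To get the sharp constant $1$ rather than $c$, I will instead use the Lelong--Jensen form with the weight $\log(|\lambda|^2\|z\|^2+\|w\|^2)$. I will write this weight as $\log$ of a product-type quantity and use $\ddc\log\max(\cdot,\cdot)$, comparing it with $\ddc\log\|x'-x\|\otimes\ddc\log\|y-x\|$ in the spirit of Demailly's comparison theorem.

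I expect this sharp-constant comparison to be the main obstacle. The currents are only $\ddc$-closed, so the comparison theorem has to be replaced by Skoda's identity \eqref{e:Jensen} applied fibrewise to each factor, with Lemma \ref{l:lelong} supplying dominated convergence.
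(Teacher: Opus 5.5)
\paragraph{Verdict.} Your proposal has a genuine gap in the Lelong number inequality, and your closedness argument also fails.

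\paragraph{The inequality.} In your notation, a point lies in $a_\lambda^{-1}(\B(a,r))=\{|\lambda|^2\|x'-y\|^2+\|y-x\|^2<r^2\}$ only if $\|x'-y\|<r/|\lambda|$. So every product of balls centred at $(x,x)$ inside this set has both radii $O(r/|\lambda|)$, and no shift changes that.
\begin{itemize}
\item On such a small product, the term $2|\lambda|^2\ddc\|z\|^2\wedge\ddc\|w\|^2$ contributes only $O(|\lambda|^{-2})$ after dividing by $r^4$.
\item The term $|\lambda|^4(\ddc\|z\|^2)^2$ can vanish identically. For $T_1=T_2=[L]$, with $L$ a line of $\P^2$ through $x$ in affine coordinates, $(\ddc\|x'-y\|^2)^2$ restricts to $0$ on $L\times L$. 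Yet $\nu(\T,a)=1$, and all of this mass lies in the tube $\{\|x'-y\|<r/|\lambda|,\ \|y-x\|<r\}$.
\item If you use that tube instead, Fubini on the $|\lambda|^2$-term gives, up to a constant, $r^{-2}\int_{\B(x,r/2)}\nu(T_1,y,\frac{r}{2|\lambda|})\,T_2\wedge\ddc\|y\|^2$. These are densities at points $y\neq x$; they decrease to $\nu(T_1,y)$, and $\nu(T_1,\cdot)$ is only upper semicontinuous.
\item Take $T_1=T_2$ equal near $x=0$ to $\ddc\log(|x_1|^2+|x_2|^{2k})^{1/2}$. Then both bounds tend to $0$ while $\nu_1\nu_2=1$; the relevant mass sits at distances $r/|\lambda|\ll\|y-x\|\ll1$.
\end{itemize}
So Steps 2--4 do not give even $c\,\nu_1\nu_2$.

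\paragraph{The sharp constant.} This step is only announced, and it meets the real obstruction. In general $\ddc(T_1\otimes T_2)=\frac{i}{\pi}(\partial T_1\otimes\dbar T_2-\dbar T_1\otimes\partial T_2)\neq0$. Hence neither Skoda monotonicity nor a Demailly-type comparison is available for $T_1\otimes T_2$ or for $(a_\lambda)_*(T_1\otimes T_2)$. Applying \eqref{e:Jensen} to each factor separately cannot handle the non-split weight $\log(|\lambda|^2\|x'-y\|^2+\|y-x\|^2)$.

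\paragraph{What the paper does instead.} This is the idea you are missing.
\begin{itemize}
\item Meo's computation (Lemma \ref{L:prod-Lelong}) identifies the density of $T_1\otimes T_2$ at the single point $(x,x)$, for the split weight $\|x'\|^2+\|y\|^2$, with $\nu_1\nu_2$.
\item This point quantity is carried to $\T$ through the linear maps $A_\lambda$ on the blow-up of $(x,x)$ (Theorem \ref{t:tangent-blow-up}), using tangent currents at the point (Theorems \ref{t:tangent-0} and \ref{t:tangent-0-bis}).
\item The restriction $R^0$ of those tangent currents to the exceptional divisor is $\ddc$-closed. Since $(A_\lambda)^*\widehat\alpha-\widehat\alpha=\ddc f_\lambda$ there, $\int R^0\wedge(A_\lambda)^*\widehat\alpha^2$ does not depend on $\lambda$.
\end{itemize}

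\paragraph{Closedness.} This step does not go through. It is also not needed: the paper neither proves nor uses $d\T=0$, only positivity and $\ddc$-closedness from Theorem \ref{t:tangent}, which is all Skoda's formula requires.

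Lemma \ref{L:identity} reduces a pairing with $T_1\otimes T_2$ to $L^2\otimes L^2$ terms only for test forms that are both $\partial$- and $\dbar$-closed. In Theorem \ref{t:tangent-0} the test form is $(A^0_\lambda)^*\ddc\phi$, which is of that kind. The $(2,2)$-part $\partial(a_\lambda^*\phi)^{1,2}+\dbar(a_\lambda^*\phi)^{2,1}$ of $d(a_\lambda^*\phi)$ is not. So after expanding the $T_j$, the pieces $\partial S_j$ and $\overline{\partial S_j}$ leave terms in which $\dbar S_1$ or $\partial\overline S_1$ is tensored with the current $T_2$ itself rather than with an $L^2$ form. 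Lemma \ref{L:easy} says nothing about such terms.
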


Let $U$ be an open neighborhood of $0\in\C^2.$   Consider $\U:= U\times U \subset (\C^2)^2,$ where   $(\C^2)^2$ is identified with $\C^{4}.$  Let $(0,0)\in(\C^{2})^2=\C^{4}.$

Let $\Delta$ be  the diagonal of $\U,$ that is,  $\Delta:=\{(x,x):\  x\in U\}.$
A neighborhood of $\Delta$ in  $\U$ is identified with  a neighborhood of the  zero section of the  trivial vector bundle
$\pi:\ \C^2\times U\to U$  via  the change of coordinates
$
\rho(x, y):=(x-y, y)=(z,w).
$
This trivial bundle is canonically identified to  the  normal  bundle $\E$  to $\Delta$ in $\U$  via  the  identification
$
x\in U\mapsto  (x,x)\in\Delta
$
which identifies $U$ to $\Delta.$

\noindent{\bf Two dilates $A_\lambda$ and $A^0_\lambda$:}

For $\lambda\in\C^*,$ consider the (diagonal) dilate $A_\lambda:\ \E\to \E$ defined by $A_\lambda(v):=\lambda v,$ $v\in\E.$
In the  $(z,w)$-coordinates, we have for $\lambda\in\C^*,$
\begin{equation}\label{e:A}
A_\lambda(z,w)=(\lambda z, w).
\end{equation}
Equivalently,  in the $(x,y)$-coordinates, we have for $\lambda\in\C^*,$
\begin{equation}\label{e:A-bis}
A_\lambda(x,y):= (y+\lambda(x-y), y).
\end{equation}

 For $\lambda\in\C^*,$ consider the  dilate of the origin
\begin{equation}\label{e:A0}
A^0_\lambda(x,y):= (\lambda x,\lambda y),\qquad  (x,y)\in (\C^{2})^2.
\end{equation}
\begin{definition}\label{D:Tangent-currents-origin}\rm
Let  $T$ be a  positive current defined on $\U.$
 A {\it  tangent current}  to $T$ along $(0,0)$ in $\U$  is  a  positive  current $S$ on $\C^{4}$
 such that $(A^0_{\lambda_n})_*T,$ converge   weakly  to $S$ in $\C^{4}$ as  $n\to\infty,$ where $(\lambda_n)_{n=0}^\infty\subset\C^*$  is  a sequence  such that $|\lambda_n|\to\infty.$
\end{definition}
Consider  the blow-up $\widehat\U$ (resp. $\widehat {\C^{4}}$) of $\U$  (resp. of $\C^{4}$) at $(0,0),$ with the canonical projection  $\Pi^0: \widehat \U\to \U$  (resp. $\Pi^0: \widehat {\C^{4}}\to \C^{4}$).
Let $\widehat V:= (\Pi^0)^{-1}((0,0))$ be the exceptional  hypersurface of this blow-up, so $\widehat V=\P^{3}.$ Via  $\Pi^0,$
$A^0_\lambda$ induces a  biholomorphic map also denoted by $A^0_\lambda$  on $\widehat  {\C^{4}}$ such that
\begin{equation}\label{e:commute_A0_lambda}
A^0_\lambda\circ\Pi^0=\Pi^0\circ A^0_\lambda\qquad\text{on}\qquad \widehat  {\C^{4}}.
\end{equation}
Similarly, via $\Pi,$ $A_\lambda$ induces a  biholomorphic map also denoted by $A_\lambda$  on $\widehat  {\C^{4}}$ such that
\begin{equation}\label{e:commute_A_lambda}
A_\lambda\circ\Pi^0=\Pi^0\circ A_\lambda\qquad\text{on}\qquad \widehat  {\C^{4}}.
\end{equation}

Note that $\Pi^0$ is  biholomorphic  from   $\widehat  {\C^{4}}\setminus \widehat V$ onto  $\C^{4}\setminus \{(0,0)\},$
and from  $\widehat\U\setminus \widehat V$ onto  $\U\setminus \{(0,0)\}.$
For a  positive  current $S$ on $\U,$ we define $\Pi^\bullet S$ to be the positive current
which is  the trivial  extension of $\widetilde S$  through $\widehat V,$  where $\widetilde S$ is  the positive  current $(\Pi^0)^* S$ on $\widehat \U\setminus \widehat V.$

For $r>0,$ let $\B(r)$ denote the open ball centered at the origin $(0,0)\in(\C^2)^2=\C^4$  with radius $r,$ and set $\widehat\B(r):=(\Pi^0)^{-1}(\B(r))$
The following result is needed.
\begin{lemma}\label{L:LelongJensen}
Let  $\widehat T$ be  a  positive $\ddc$-closed  $(p,p)$-current on $\widehat\B(1)\subset \C^4$ with $0\leq p<4.$
 Assume that on $\widehat\B(1),$  $ \widehat T=\widehat T^+- \widehat T^-,$   where $\widehat T^\pm$  is  the weak limit of a sequence of smooth positive  $\ddc$-closed forms on $\widehat\B(1).$
Then, for any $0<r<1.$
\begin{equation*}
\nu((\Pi^0)_*\widehat T,(0,0),r)= 2^{4-p}\int_{\widehat \B(r)} \widehat T\wedge (\Pi^0)^* (\ddc\log(\|x\|^2+\|y\|^2))^{4-p},
\end{equation*}
where $\nu(T,(0,0),r)$ is  defined in \eqref{e:nu(T,a,r)}.
\end{lemma}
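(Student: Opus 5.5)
We prove the identity first for smooth positive $\ddc$-closed forms and then pass to weak limits using the decomposition $\widehat T=\widehat T^+-\widehat T^-$. Both sides are linear in $\widehat T$, so it is enough to prove the formula for each of $\widehat T^\pm$ separately, and therefore for a smooth positive $\ddc$-closed form $\widehat S$ on $\widehat\B(1)$ together with a limiting argument.

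\emph{Smooth case.} Set $S:=(\Pi^0)_*\widehat S$; this is a positive $\ddc$-closed current on $\B(1)$. Put $\varphi:=\log(\|x\|^2+\|y\|^2)$ and $\beta:=\ddc(\|x\|^2+\|y\|^2)$.

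\begin{enumerate}
\item Apply Lelong--Jensen (Skoda's identity \eqref{e:Jensen}) to $S$ on the shell $\{s<\|(x,y)\|<r\}$. This gives
$$\nu(S,(0,0),r)-\nu(S,(0,0),s)=2^{4-p}\int_{s<\|(x,y)\|<r} S\wedge(\ddc\log\|(x,y)\|)^{4-p}.$$
\item Pull this back through $\Pi^0$, which is biholomorphic off $\widehat V$. The integrand becomes $\widehat S\wedge(\Pi^0)^*(\ddc\varphi)^{4-p}$, up to the normalising constant. Here $(\Pi^0)^*\ddc\varphi$ is a smooth semipositive form on $\widehat\B(1)$: it is the pullback of the Fubini--Study form of $\widehat V=\P^3$.
\item Let $s\to0$. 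The left-hand side becomes $\nu(S,(0,0),r)-\nu(S,(0,0))$. The right-hand side becomes the integral over $\widehat\B(r)\setminus\widehat V$.
\item It remains to compare this with the formula to be proved. Write $\nu(S,(0,0),s)=c\,s^{-2(4-p)}\int_{\widehat\B(s)}\widehat S\wedge(\Pi^0)^*\beta^{4-p}$. On $\widehat\B(s)$ we have the bound $(\Pi^0)^*\beta\lesssim s^2(\Pi^0)^*\ddc\varphi+\ddc\|\cdot\|^2$ on the blown-up coordinates. Also, $\widehat S$ is smooth across $\widehat V$. Together these show that $\nu(S,(0,0),s)$ converges to $2^{4-p}\int_{\widehat V}\widehat S\wedge(\Pi^0)^*(\ddc\varphi)^{4-p}$, which is exactly the mass of $\widehat S\wedge(\Pi^0)^*(\ddc\varphi)^{4-p}$ carried by $\widehat V$.
\end{enumerate}

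Adding the contribution of $\widehat V$ to the integral over $\widehat\B(r)\setminus\widehat V$ yields the formula, with the integral taken over all of $\widehat\B(r)$.

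\emph{Main obstacle.} Step 4, the identification of the limit $\nu(S,(0,0),s)$ as $s\to0$, is where I expect the real work. I would do it by a direct computation in the chart \eqref{e:Pi^0}. Write $(\Pi^0)^*\beta=|\zeta_1|^2\hat g+O(|\zeta_1|)\,d\zeta\wedge d\bar\zeta$. Expanding the wedge powers, every term other than $|\zeta_1|^{2(4-p)}\hat g^{4-p}$ has an extra factor of $|\zeta_1|$. For smooth $\widehat S$ this extra factor makes these terms vanish in the limit. An alternative route is to use that $\nu(\cdot,(0,0),s)$ is monotone in $s$ by \eqref{e:Jensen}, so the limit exists, and then to identify it via the cohomological mass of $\widehat S|_{\widehat V}$.

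\emph{Passage to the limit.} Choose smooth positive $\ddc$-closed forms $\widehat S_k\to\widehat T^+$. We need both sides to converge for all but countably many $r$; by monotonicity in $r$ this extends to every $r$ for which $\widehat T$ puts no mass on $\partial\widehat\B(r)$. The right-hand side converges because $(\Pi^0)^*(\ddc\varphi)^{4-p}$ is smooth on $\widehat\B(1)$. The left-hand side converges because $(\Pi^0)_*$ is continuous and $\beta^{4-p}$ is smooth. The remaining radii follow by right-continuity in $r$ of both sides. Subtracting the resulting formulas for $\widehat T^+$ and $\widehat T^-$ gives the statement for $\widehat T$.
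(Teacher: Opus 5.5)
Your Step 4 is where the argument breaks, and it cannot be repaired: the identity as stated fails for general smooth forms on the blow-up.

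For smooth $\widehat S$, the top-degree form $\widehat S\wedge(\Pi^0)^*(\ddc\varphi)^{4-p}$ is smooth on $\widehat\B(1)$. It therefore gives no mass to the hypersurface $\widehat V$, so the ``contribution of $\widehat V$'' you add at the end is $0$. What Steps 1--3 actually prove is
\[
\nu(S,(0,0),r)=\nu(S,(0,0))+2^{4-p}\int_{\widehat\B(r)}\widehat S\wedge(\Pi^0)^*(\ddc\varphi)^{4-p}
\]
(up to the normalising constants). So Step 4 in effect asserts $\nu(S,(0,0))=0$, and that is false in general.

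Your expansion of $(\Pi^0)^*\beta$ hides the problem. In the chart \eqref{e:Pi^0}, the coefficient of ${i\over\pi}d\zeta_1\wedge d\bar\zeta_1$ is $1+|\zeta_2|^2+|\zeta_3|^2+|\zeta_4|^2$, not $O(|\zeta_1|)$, and the terms containing it survive the factor $s^{-2(4-p)}$. Rescaling by the dilations (which lift to $\widehat{\C^4}$ and contract the fibres over $\widehat V\simeq\P^3$) gives
\[
\nu(S,(0,0))=c_p\int_{\widehat V}\widehat S|_{\widehat V}\wedge\omega_{\FS}^{3-p}, \qquad c_p>0.
\]
This is the quantity your ``alternative route'' points to. But it has one fewer power of $\ddc\varphi$, and it is not part of $\int_{\widehat\B(r)}\widehat S\wedge(\Pi^0)^*(\ddc\varphi)^{4-p}$.

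A concrete counterexample: take $p=3$ and $\widehat S:=((\Pi^0)^*\ddc\varphi)^3$, which is smooth, positive and closed on $\widehat\B(1)$. Then $(\Pi^0)_*\widehat S$ is the trivial extension of $(\ddc\varphi)^3$, an average of currents of integration on complex lines through $(0,0)$. So the left-hand side is a positive constant. The right-hand side is $2\int_{\widehat\B(r)}((\Pi^0)^*\ddc\varphi)^4=0$, because this form is pulled back from $\P^3$. For $p=0$, $\widehat S\equiv 1$ already fails.

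The paper's proof takes the same route as yours: reduce to smooth $\widehat T$, then apply \eqref{e:Jensen} to $(\Pi^0)_*\widehat T$. It closes with the one-line claim that smoothness of $\widehat T$ forces $\nu((\Pi^0)_*\widehat T,(0,0))=0$. Your Step 4 is an attempt to justify exactly that claim, and that claim is what fails.

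Your Steps 1--3, together with your limiting argument, do give a correct statement if the hypothesis is strengthened so that the approximants have zero Lelong number at the origin. For example, take $\widehat T^\pm$ to be weak limits of $(\Pi^0)^*S_k$ with $S_k$ smooth on $\B(1)$ and $p\geq 1$. Then $\nu(S_k,(0,0))=0$, and any mass the limit puts on $\widehat V$ is automatically counted in $\int_{\widehat\B(r)}$.

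A minor point: both sides are masses of open balls, so in your limiting step you need left-continuity in $r$, not right-continuity.
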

\proof By a  continuity argument,  we may assume  without loss of generality that $\widehat T$ is a smooth positive  $\ddc$-closed form. By \eqref{e:Jensen} applied to  $(\Pi^0)_*\widehat T,$ we have
\begin{equation*}
\nu((\Pi^0)_*\widehat T,(0,0),r)-\nu((\Pi^0)_*\widehat T,(0,0)) = 2^{4-p}\int_{\B(r)\setminus\{(0,0)\}} (\Pi^0)_*\widehat T\wedge (\ddc\log(\|x\|^2+\|y\|^2))^{4-p}.
\end{equation*}
The smoothness of $\widehat T$ implies that $\nu((\Pi^0)_*\widehat T,(0,0))=0.$ This, combined with the previous equality, gives the result.
\endproof
\begin{remark}\rm
 Lemma \ref{L:LelongJensen} gives a geometric meaning  of the  Lelong number  of $T:=(\Pi^0)_*\widehat T$ at $(0,0)$ be means of the blow-up. More general results in this  direction  can be found   in  \cite{Nguyen21, Nguyen25}.
\end{remark}

 \proof[End of the proof of Theorem \ref{T:Lelong}]
Write $(x,y)\in (\C^2)^2=\C^{4}.$
Consider the positive closed $(1,1)$-form  $\alpha(x,y):= 2\ddc \log\|(x,y)\|^2$ for $(x,y)\in\C^{4}\setminus \{(0,0)\}.$
Consider also the positive closed  smooth $(1,1)$-form  $\beta(x,y):=\ddc(\|x\|^2+\|y\|^2)$  on  $\C^{4}.$
Consider the positive closed  smooth $(1,1)$-form  $\widehat \alpha:=\Pi^*(\alpha)$  on  $\widehat{\C^{4}}.$
Note that for $\lambda\in\C^*,$
\begin{equation}\label{e:invariance}
 \begin{split}
  (A^0_\lambda)^*\alpha&=\alpha\qquad\text{and}\qquad (A^0_\lambda)^*\beta=|\lambda|^2\beta\qquad\text{on}\qquad\C^{4};\\
  (A^0_\lambda)^*\widehat\alpha&=\widehat\alpha\qquad\text{on}\qquad\widehat{\C^{4}}.
 \end{split}
\end{equation}
For  $\lambda\in\C,$  consider the currents
$$T_\lambda:=(A_\lambda)_*(T_1\otimes T_2)\quad \text{and}\quad \widehat T_\lambda:=\Pi^\bullet (T_\lambda) .$$
Let $(\lambda_n)_{n=0}^\infty\subset \C^*$ be  a sequence  such that $|\lambda_n|\nearrow\infty$ as $n\nearrow\infty$  and $\T=\lim\limits_{n\to\infty } T_{\lambda_n}.$
By  Theorem \ref{t:tangent-blow-up}, by passing to a subsequence if necessary, we may assume that  $\lim\limits_{n\to\infty}\widehat T_{\lambda_n}=\widehat\T$ and  $\widehat \T$ is  positive $\ddc$-closed current.
 By Lemma \ref{L:LelongJensen},
\begin{equation*}
\nu(\T, (0,0),r)=\int_{\B(r)}  \T\wedge \alpha^{2}= \int_{\widehat \B(r)} \widehat \T\wedge  \widehat\alpha^{2}.
\end{equation*}
and   that  $ \nu(\T,(0,0),r) \searrow \nu(\T,(0,0))$ as  $r\searrow 0+.$

Fix an arbitrary $\epsilon_0>0.$  The above discussion  yields an $r_0>0$ such that
\begin{equation*}
 \nu(\T, (0,0),r_0)= \int_{\widehat \B(r_0)} \widehat \T\wedge  \widehat\alpha^{2}<\nu(\T, (0,0))+\epsilon_0/2.
\end{equation*}
Since  $\widehat T_{\lambda_n}$ converge  weakly to  $\widehat \T$ on $\widehat \U$ as $n\to\infty$ and $\hat\alpha$ is positive closed  smooth $(1,1)$-form    on  $\widehat{\C^{4}},$ there is  $n_0\in\N$ such that for all $n\geq n_0,$
\begin{equation*}
  \int_{\widehat \B(r_0)} \widehat T_{\lambda_n}\wedge  \widehat\alpha^{2}< \int_{\widehat \B(r_0)} \widehat \T\wedge  \widehat\alpha^{2}+\epsilon_0/2 <\nu(\T, (0,0))+\epsilon_0.
\end{equation*}
Since  $T_{\lambda_n}:=  (A_{\lambda_n})_*(T_1\otimes T_2),$ we  rewrite the expression on the LHS  using \eqref{e:commute_A_lambda} as
\begin{equation*}
  \int_{\widehat \B(r_0)} \Pi^\bullet( (A_{\lambda_n})_*(T_1\otimes T_2))\wedge  \widehat\alpha^{2} = \int_{(A_{\lambda_n})^{-1}(\widehat \B(r_0))} \Pi^\bullet( T_1\otimes T_2 )  \wedge(A_{\lambda_n})^* \widehat\alpha^{2}.
\end{equation*}
Observe that when $(x,y)\in \B({r_0\over  3|\lambda_n|}),$  we have  $\|x\|<{r_0\over  3|\lambda_n|}, \|y\|<{r_0\over  3|\lambda_n|},$ and hence  $\|x-y\|<{2r_0\over  3|\lambda_n|},$ and hence   $|A_{\lambda_n}(x,y)|<
r_0.$  So $$\widehat \B\big({r_0\over  3|\lambda_n|}\big) \subset (A_{\lambda_n})^{-1}(\widehat \B(r_0)).$$
This, combined with the previous  equality and  inequalities, implies that
\begin{equation}
   \int_{\widehat \B\big({r_0\over  3|\lambda_{n_0}|}\big) } \Pi^\bullet(T_1\otimes T_2)\wedge  (A_{\lambda_{n_0}})^* \widehat\alpha^{2}<\nu(\T, (0,0))+\epsilon_0.
\end{equation}
Since  $\widehat\B(r)\searrow \widehat V$ as $r\searrow 0+,$ it  follows that
\begin{equation}\label{e:bound-for-n-geq-n0}
  \lim\limits_{r\to 0+} \int_{\widehat \B(r) } \Pi^\bullet(T_1\otimes T_2)\wedge  (A_{\lambda_{n_0}})^* \widehat\alpha^{2}<\nu(\T, (0,0))+\epsilon_0.
\end{equation}

On the other hand,
for  $\lambda\in\C,$  consider the currents
$$T^0_\lambda:=(A^0_\lambda)_*(T_1\otimes T_2)\quad \text{and}\quad \widehat T^0_\lambda:=\Pi^\bullet (T^0_\lambda) .$$
Let $(\lambda^0_n)_{n=0}^\infty\subset \C^*$ be  a sequence  such that $|\lambda^0_n|\nearrow\infty$ as $n\nearrow\infty$  and $\T^0=\lim\limits_{n\to\infty } T^0_{\lambda_n}.$
By  Theorems \ref{t:tangent-0} and \ref{t:tangent-0-bis}, by passing to a subsequence if necessary, we may assume that  $\lim\limits_{n\to\infty}\widehat T^0_{\lambda_n}=\widehat\T^0$ and  $\widehat \T^0$ is  positive $\ddc$-closed current.
We  obtain for every fixed  $\lambda\in\C^*$ that
\begin{equation}\label{e:T0_infty-vs-T}
\int_{\widehat \B\big({r_0\over  3|\lambda|}\big) } \widehat \T^0\wedge  (A_{\lambda})^* \widehat\alpha^{2}
=\lim\limits_{n\to\infty} \int_{\widehat \B\big({r_0\over  3|\lambda^0_n\lambda|}\big) } \Pi^\bullet(T_1\otimes T_2)\wedge  (A_{\lambda})^* \widehat\alpha^{2}.
\end{equation}
By Theorem \ref{t:tangent-0},  $\widehat \T^0$ is  positive  $\ddc$-closed $(2,2)$-current on  $\widehat \U$ and $\widehat V:=\Pi^{-1}((0,0))$ is compact, the restriction $R$ of  $\widehat \T^0$ to $\widehat V$ is also a positive $\ddc$-closed current,  see \cite{Bassanelli,AlessandriniBassanelli96}.  Moreover, $R=\iota_* R^0,$ where $\iota:\   \widehat V\to  \widehat {\C^{4}}$ is  the canonical  injection and  $R_0$ is a positive $\ddc$-closed  $(1,1)$-current  on $\widehat V.$

We have
\begin{equation}\label{e:trace-current}
\lim\limits_{n\to\infty} \int_{\widehat \B\big({r_0\over  3|\lambda^0_n\lambda|}\big) } \Pi^\bullet(T_1\otimes T_2)\wedge  (A_{\lambda})^* \widehat\alpha^{2}=  \int_{\widehat V } R^0\wedge  (A_{\lambda})^* \widehat\alpha^{2}.
\end{equation}
Consider the smooth function $f_\lambda:\ \widehat V\to\R$ defined by
$$
f_\lambda([x,y]):=\log{\| A_\lambda(x,y)  \|  \over  \|(x,y)\| },\qquad (x,y)\in \C^{4}\setminus \{(0,0)\}.
$$
We  can check that
$ (A_{\lambda})^* \widehat\alpha-\widehat \alpha=\ddc f_\lambda$ on $\widehat V.$ Consequently,
\begin{equation*}    \int_{\widehat V } R^0\wedge  (A_{\lambda})^* \widehat\alpha^{2}= \int_{\widehat V } R^0\wedge   \widehat\alpha^{2}.
\end{equation*}
 Using  \eqref{e:trace-current} and  \eqref{e:T0_infty-vs-T} for $\lambda:=\lambda_{n_0}$ and  \eqref{e:bound-for-n-geq-n0}, we deduce that
\begin{equation*}
 \int_{\widehat V } R^0\wedge  (A_{\lambda_{n_0}})^* \widehat\alpha^{2}<\nu(\T, (0,0))+\epsilon_0.
\end{equation*}
This,  combined  with the previous  equality, implies that
\begin{equation}\label{e:bound-R_0}
 \int_{\widehat V } R^0\wedge   \widehat\alpha^{2}<\nu(\T, (0,0))+\epsilon_0.
\end{equation}
On the other hand,  applying  Lemma \ref{L:LelongJensen} to  the  positive $\ddc$-closed current $\widehat \T^0$ on $\widehat \U,$
  we get for $r>0$ small enough  that
\begin{equation*}
\begin{split}
   \int_{\widehat \B(r) } \widehat \T^0\wedge   \widehat\alpha^{2}
  & = {1\over  r^{4}  }\int_{ \B(r) }  \T^0\wedge   \beta^{2}\\
  {1\over  r^{4}  }\int_{ \B(r) }  \T^0\wedge   \beta^{2}&=  \lim\limits_{n\to\infty}     {1\over  ({r\over  |\lambda^0_n|})^{4}}  \int_{ \B\big({r\over  |\lambda^0_n|}\big) }  (T_1\otimes T_2)\wedge  \beta^{2}               .
\end{split}
\end{equation*}
Since  $\widehat\B(r)\searrow \widehat V$ as $r\searrow 0+,$ it  follows that
  $$\lim_{r\to 0+}\int_{\widehat \B(r) } \widehat \T^0\wedge   \widehat\alpha^{2}=\int_{\widehat V } R^0\wedge   \widehat\alpha^{2}.$$
Next, letting
$r\to 0+$ in the   two previous equalities and using   inequality \eqref{e:bound-R_0}, we infer that
\begin{equation*}
\lim_{r\to 0+}\big(  \lim\limits_{n\to\infty}     {1\over  ({r\over  |\lambda^0_n|})^{4}  }\int_{ \B\big({r\over  |\lambda^0_n|}\big) }  (T_1\otimes T_2)\wedge  \beta^{2}\big)<\nu(T_\infty, (0,0))+\epsilon_0.
\end{equation*}
The following result is needed.
\begin{lemma}
\label{L:prod-Lelong} It holds that
 $$\lim_{s\to 0+}{1\over  s^{4}  }\int_{ \B(s) }  (T_1\otimes T_2)\wedge  \beta^{2}= \nu(T_1,0)\nu(T_2,0).$$
\end{lemma}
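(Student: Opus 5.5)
The natural route is to expand $\beta^2$ in product coordinates and apply Fubini. Write $\beta=\beta_x+\beta_y$ with $\beta_x:=\ddc\|x\|^2$ and $\beta_y:=\ddc\|y\|^2$. Then
$$\beta^2=\beta_x^2+2\beta_x\wedge\beta_y+\beta_y^2 .$$
Since $T_1$ and $T_2$ are $(1,1)$-currents on a surface, $T_1\otimes T_2\wedge\beta_x^2$ contains a $(3,3)$-form in the $x$-variables, which vanishes. The same holds for $\beta_y^2$. Hence only the mixed term survives:
$$\frac1{s^4}\int_{\B(s)}(T_1\otimes T_2)\wedge\beta^2=\frac{2}{s^4}\int_{\{\|x\|^2+\|y\|^2<s^2\}}\big(T_1\wedge\beta_x\big)(x)\,\big(T_2\wedge\beta_y\big)(y).$$
Both factors are positive measures, so Fubini applies. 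Set $\mu_j:=T_j\wedge\ddc\|\cdot\|^2$ and $F_j(t):=\mu_j(\B_2(0,t))$. By definition \eqref{e:nu(T,a,r)} with $n=2$ and $p=1$, we have $F_j(t)=\pi t^2\nu(T_j,0,t)$.

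Next, write the integral as
$$\int_{\|x\|<s}F_2\big(\sqrt{s^2-\|x\|^2}\big)\,d\mu_1(x).$$
Substitute $t^2=\|x\|^2/s^2$, i.e. use the distribution function of $\|x\|^2$ under $\mu_1$. The monotonicity of $t\mapsto\nu(T_j,0,t)$ and its limit $\nu(T_j,0)$, given by Skoda's formula \eqref{e:Jensen}, allow a squeeze. Given $\epsilon>0$, for $s$ small we have
$$\nu(T_j,0)\le\nu(T_j,0,t)\le\nu(T_j,0)+\epsilon\qquad\text{for all } t\le s.$$
Hence $\pi t^2\nu(T_j,0)\le F_j(t)\le\pi t^2(\nu(T_j,0)+\epsilon)$ for all $t\le s$.

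For the upper bound, use $F_2(\sqrt{s^2-r^2})\le\pi(s^2-r^2)(\nu_2+\epsilon)$, where $\nu_j:=\nu(T_j,0)$. Then integrate against $d\mu_1$ using the layer-cake formula:
$$\int_{\|x\|<s}(s^2-\|x\|^2)\,d\mu_1(x)=\int_0^{s^2}F_1(\sqrt u)\,du .$$
This is bounded above by $\pi(\nu_1+\epsilon)s^4/2$. The lower bound follows in the same way with $\epsilon$ removed.

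Multiplying these, the normalized quantity is squeezed between $c\,\nu_1\nu_2$ and $c(\nu_1+\epsilon)(\nu_2+\epsilon)$ for an explicit constant $c$ built from $2$, $\pi^2/2$ and the normalization $\pi^{n-p}$. The main point to check is that, with the paper's conventions (the factor $\pi^{n-p}$ in \eqref{e:nu(T,a,r)}, and the $2^p$-type normalization implicit in the definition of $\nu(\T,\cdot)$ for $\beta$ on $\C^4$), this constant equals $1$. If it does not, the statement must be read with the matching normalization of $\nu$ on $\C^4$, namely $1/(\pi^2 s^4)$, and I would adjust accordingly.

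The only genuine subtlety is justifying Fubini and the layer-cake formula for the positive measures $\mu_j$. This is immediate since they are positive Radon measures. Also, $T_1\otimes T_2$ has no mass at the origin, because $\mu_j(\{0\})=0$, which prevents boundary effects.
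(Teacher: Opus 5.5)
Your argument is correct and is, in substance, the elementary computation behind Meo's Lemma 2.4, which the paper cites instead of reproving: bidegree kills the $\beta_x^2$ and $\beta_y^2$ terms, Fubini reduces everything to $\mu_1\otimes\mu_2$, and the squeeze only uses $\nu(T_j,0,t)\to\nu(T_j,0)$, which is exactly why the paper can remark that closedness of $T_1,T_2$ is not needed. On the point you left open: with the paper's normalization \eqref{e:nu(T,a,r)} your constant is $2\cdot\pi\cdot\frac{\pi}{2}=\pi^2$, which is exactly the factor $\pi^{n-p}$ for $(n,p)=(4,2)$, so what you prove is $\nu(T_1\otimes T_2,0)=\nu(T_1,0)\nu(T_2,0)$, the intended content; read literally, the displayed limit equals $\pi^2\nu(T_1,0)\nu(T_2,0)$, so the statement as written is missing this normalizing factor.
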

\proof It follows  from  \cite[Lemma 2.4]{Meo}. Although $T_1,T_2$ are  assumed to be  closed in  the cited lemma,
Meo's proof  still goes through if the Lelong numbers $\nu(T_1,0), $ $\nu(T_2,0)$ exist in the sense of   \eqref{e:Lelong}.
\endproof
Applying this  lemma to the last inequality,   we deduce that $\nu(T_1,0)\nu(T_2,0)<\nu(\T, (0,0))+\epsilon_0.$ Since $\epsilon_0>0$ is  arbitrarily chosen, the desired  inequality follows.
 \endproof

 \section{Siu's theorem for compact K\"ahler surfaces}\label{s:surfaces}

The main purpose of this  section is to  prove Theorem \ref{t:main_1} for every  compact K\"ahler surface $X$. We have the following theorem.
 
\begin{theorem} \label{t:main_1-bis}
Let $T$ be a positive $\ddc$-closed current of bidimension $(1,1)$ on a compact  K\"ahler surface $X.$
Then for every constant $c > 0$ the set $E_c:=\{x\in X:\ \nu(T',x)\geq c\}$ is a subvariety of dimension $\leq 1$  in $X$. Moreover, we have the following Siu decomposition
$$T=\sum_{i\in I} \lambda_i[V_i] +T',$$
where $\{V_i\}_{i\in I}$ is a (possibly empty) finite or countable family of analytic curves in $X,$ $\lambda_i\in\R^+,$ and $T'$ is a positive $\ddc$-closed current
of bidimension $(1,1)$ on $X$  such that the set $\{x\in X:\ \nu(T',x)>0\}$ is finite or countable.
\end{theorem}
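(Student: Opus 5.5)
First apply Corollary \ref{c:current-dec} to $T$. In dimension $2$ with $p=1$, every $Y_i$ there has dimension $1$, so each $T_i$ equals $\lambda_i[V_i]$ with $V_i$ an irreducible compact curve. Hence
$$T=\sum_{i\in I}\lambda_i[V_i]+T',$$
where $T'$ is positive $\ddc$-closed and gives no mass to any proper analytic subset. The analytic part is a closed current, so Siu's classical theorem applies to it. Its Lelong number equals $\sum_i \lambda_i\nu([V_i],x)$; since $\nu([V_i],x)\ge 1$ on $V_i$ and vanishes off $V_i$, its super-level sets are finite unions of the $V_i$. It therefore remains to prove that $\{\nu(T',\cdot)>0\}$ is finite or countable. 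Granting this, Proposition \ref{p:current-dec-Lelong} gives the first assertion. Indeed, for a given $c$ the set $E_c$ is contained in a finite union of the $V_i$ together with a countable subset $F$ of $\{\nu(T',\cdot)\ge c'\}$. That set $\{\nu(T',\cdot)\ge c'\}$ must in fact be finite: by Lemma \ref{l:lelong} and a mass count, a positive $\ddc$-closed $(1,1)$-current on a surface has only finitely many points of Lelong number $\ge c'$ outside a curve. Using upper semicontinuity of $\nu(T,\cdot)$, $E_c$ is then a finite union of curves $V_i$ (on which $\nu(T,\cdot)$ is generically at least $\lambda_i$) and finitely many points, hence analytic.

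The heart of the argument is this: if $T'$ gives no mass to curves, then $N_c:=\{\nu(T',\cdot)\ge c\}$ is finite for every $c>0$. I would argue by contradiction using the self-intersection $T'\otimes T'$ along $\Delta$. Let $\T$ be a tangent current of $T'\otimes T'$ (Theorem \ref{t:tangent}). By Theorem \ref{T:Lelong},
$$\nu(\T,(x,x,0))\ge \nu(T',x)^2\ge c^2 \quad\text{for } x\in N_c.$$
By Theorem \ref{t:tangent}, $\T$ is a conic positive $\ddc$-closed $(2,2)$-current of $h$-dimension at most $1$. Conic invariance together with Lelong number $\ge c^2$ at the zero-section points over $N_c$ should force $\T\ge c^2\,\pi^*(\vartheta)$, where $\vartheta$ is a measure with an atom at each such point; equivalently, $\T$ contains the fibre $\pi^{-1}(x)$ with weight $\ge c^2$. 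To see this, restrict $\T$ to the fibres, use the Siu theorem on the vector-space fibre of a conic current, and use the $h$-dimension bound.

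Next I would bound the total mass of $\T\wedge\pi^*\omega^0$ on a unit-disc bundle by $\langle T'\otimes T',\text{closed test form}\rangle$ through Lemma \ref{L:identity}. This yields a uniform bound $C(\{T'\})$ depending only on the cohomology class (and on the energy $E(T')$). Then $c^2\,\#N_c\le C$, so $N_c$ is finite. The set $\{\nu(T',\cdot)>0\}=\bigcup_k N_{1/k}$ is therefore countable.

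The main obstacle is this mass step: extracting point masses of $\T$ over each $x\in N_c$ and showing that they add up against a fixed global bound, rather than merely localizing. The hypothesis that $T'$ charges no curve is what I expect to use to rule out $\T$ concentrating along a positive-dimensional part of $\Delta$ (cf. Theorem \ref{t:tangent-DNS}). If the direct counting fails, I would instead iterate the construction: subtract from $T'$ a $\ddc$-closed piece built from $\T$ and argue by induction on the mass. This is the approach the outline suggests ("construct by induction a sequence of positive $\ddc$-closed currents").
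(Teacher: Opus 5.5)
Your reduction to a current $T'$ with no mass on curves, and your use of Theorem \ref{T:Lelong} to get $\nu(\T,(x,x,0))\ge c^2$ for $x\in N_c$, agree with the paper. The gap is the next step, where you conclude that $\T$ contains the fibre $\pi^{-1}(x)$ with weight at least $c^2$. This holds only when the $h$-dimension of $\T$ is $0$. In that case $\widetilde\T\wedge\pi_0^*(\omega)=0$, so $\T=\pi^*(\vartheta)$ and $\nu(\T,(x,x,0))=\vartheta(\{x\})$; this is exactly the paper's Proposition \ref{p:main_1}.

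When $h=1$ the claim is false. Take a curve $C\subset X$ and a line subbundle $L\subset\E|_{C}$. Then $[L]$ is a conic positive closed $(2,2)$-current of $h$-dimension $1$, with locally finite mass and Lelong number $1$ at $(x,x,0)$ for every smooth point $x$ of $C$, but it contains no fibre. This is precisely the tangent current of $[C]\otimes[C]$ along $\Delta$. For currents of this kind your count $c^2\,\#N_c\le C$ gives nothing, and restricting $\T$ to the fibres of $\pi$ does not see such horizontal pieces.

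Your appeal to Theorem \ref{t:tangent-DNS} to exclude them is circular. Its hypothesis is that $T'$ has no mass on $\{\nu(T',\cdot)>0\}$. ``No mass on curves'' gives this only if you already know that this set lies in a countable union of curves and points, which is what you are trying to prove.

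What is missing is a proof that $h=1$ cannot occur when $T'$ charges no curve. This is Proposition \ref{p:main_2}, the real core of the paper, and it goes as follows.
\begin{enumerate}
\item Push the horizontal part down: $T^\sharp:=(\pi_0)_*(\widetilde\T)$. A local computation of $\T$ in the coordinates $(z,w)$ gives the trace identity $T^\sharp\wedge\omega=\nu(T',\cdot)\,T'\wedge\omega$ (Lemma \ref{l:mu-mu'}).
\item Consequently, when $h=1$ the current $T'$ does charge $\{\nu(T',\cdot)>0\}$. The paper obtains this from Theorem \ref{t:tangent-DNS}, used in the non-circular direction. One also gets $\nu(T^\sharp,\cdot)=\nu(T',\cdot)^2$ almost everywhere for $T'\wedge\omega$.
\item Normalize so that $\nu(T',\cdot)\le 1$ almost everywhere for $T'\wedge\omega$, while $T'$ charges $\{\nu(T',\cdot)\ge c\}$ for every $c<1$. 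Iterating $T'\mapsto T^\sharp$ then produces a nonzero positive $\ddc$-closed limit current. Its trace measure is $T'\wedge\omega$ restricted to the closed set $\{\nu(T',\cdot)\ge1\}$, which has finite $2$-dimensional Hausdorff measure.
\item By the Dinh--Lawrence theorem \cite[Theorem 3.1]{DinhLawrence}, the support of this limit current is an analytic curve, and $T'$ must have mass on it. This is a contradiction.
\end{enumerate}
Your fallback (``subtract a piece built from $\T$ and induct on the mass'') contains neither the trace identity nor any mechanism that produces an analytic set. The Dinh--Lawrence step is indispensable.

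Separately, your ``mass count'' cannot show that $\{\nu(T',\cdot)\ge c'\}$ is finite even once it is known to be countable. In the paper, finiteness comes only from the atoms of $\vartheta$ in the case $h=0$.
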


Let $\T$ be  a  tangent current to $T\otimes T$ along $\Delta.$ By  Theorem \ref{t:tangent} (3), $\T$ is conic and its h-dimension is $\leq 1$. 
Let $\P(\E)$ denote the projectivization of the vector bundle $\E$ and let $\pi_\infty: \E\setminus\Delta\to \P(\E)$ be the canonical projection. Recall that we identify $\Delta$ with the zero section of $\E$. Since $\T$ is conic, we can prove as in \cite[Prop. 3.10]{DinhSibony18b} that there is a positive $\ddc$-closed current $\widetilde\T$ of bi-dimension $(1,1)$ on $\P(\E)$ such that $\T=\pi_\infty^*(\widetilde\T)$ on $\E\setminus\Delta$. It follows that the h-dimension of $\T$ is the maximal integer $h$ such that $\widetilde \T\wedge \pi_0^*(\omega^h)\not =0$, where $\pi_0:\P(\E)\to X$ is the canonical projection.

\begin{proposition}\label{p:main_1}
Assume that $h=0$. Then for $c > 0$ the set $E_c$  is finite.
\end{proposition}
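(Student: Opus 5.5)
Assume $h=0$, so $\widetilde\T\wedge\pi_0^*(\omega)=0$ on $\P(\E)$ for every tangent current $\T$ to $T\otimes T$ along $\Delta$. Then $\widetilde\T$ is a positive $\ddc$-closed current of bidimension $(1,1)$ killed by the pull-back of a K\"ahler form of the base. We will show that $\widetilde\T$ is supported on finitely many fibres of $\pi_0$, and that every point of $E_c$ must carry such a fibre with mass bounded below by a fixed constant.

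\emph{Step 1: support of $\widetilde\T$.} Since $\widetilde\T\wedge\pi_0^*\omega=0$, the trace measure $\widetilde\T\wedge\omega_{\P(\E)}$ is concentrated on directions tangent to the fibres. Hence the pushforward $(\pi_0)_*(\widetilde\T\wedge\omega_{\P(\E)})$ is a positive measure $\vartheta$ on $X$, and $\widetilde\T$ is ``vertical''. Locally we write $\widetilde\T=\int \widetilde\T_x\,d\vartheta(x)$, where $\widetilde\T_x$ is a positive $\ddc$-closed current of bidimension $(1,1)$ on the fibre $\P(\E_x)\simeq\P^1$. Such a current is a multiple of $[\P(\E_x)]$, since pluriharmonic functions on $\P^1$ are constant. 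So $\widetilde\T=\int[\P(\E_x)]\,d\vartheta(x)$. Equivalently, $\T=\pi^*(\vartheta)$ on $\E\setminus\Delta$, and $\T=\pi^*\vartheta$ globally because $\T$ has no mass on $\Delta$ (the set $\Delta$ has too small dimension and $\T$ is conic).

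\emph{Step 2: Lelong numbers give atoms.} By Theorem \ref{T:Lelong}, $\nu(\T,(x,x,0))\ge\nu(T,x)^2\ge c^2$ for every $x\in E_c$. For $\T=\pi^*\vartheta$ we compute $\nu(\pi^*\vartheta,(x,x,0))$ directly in the local coordinates $(z,w)$: the current $\pi^*\vartheta$ is $\vartheta(w)$ times the $(2,2)$-form $[\C^2_z]$, and its Lelong number at $(0,x)$ equals, up to a normalising constant, $\vartheta(\{x\})$. Thus $\vartheta(\{x\})\gtrsim c^2$ for all $x\in E_c$.

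\emph{Step 3: conclusion.} The total mass of $\vartheta$ is finite, because by Theorem \ref{t:tangent}(1) the mass of $\T$ on compact sets is bounded in terms of $\|T\|^2$. Therefore $E_c$ has at most $\|\vartheta\|/(\mathrm{const}\cdot c^2)<\infty$ points.

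The main obstacle is Step 1. We must justify the disintegration of $\widetilde\T$ along the fibres and the fibrewise rigidity, since $\ddc$-closedness does not pass naively to slices. We first try the slicing theory for the submersion $\pi_0$: the condition $\widetilde\T\wedge\pi_0^*\omega=0$ forces every component of $\widetilde\T$ with a horizontal direction to vanish, by a Cauchy--Schwarz argument on the coefficients. Then $\widetilde\T=\pi_0^*(\vartheta)\wedge S$ locally, with $S$ a positive form in the fibre directions, and the condition $\ddc\widetilde\T=0$ makes $S$ fibrewise pluriharmonic, hence a constant multiple of the Fubini--Study form of the fibre. An alternative is to use the $h$-dimension argument of \cite{DinhSibony18b}, which only needs positivity and $\ddc$-closedness.
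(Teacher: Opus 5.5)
Your proposal follows the paper's proof essentially step for step. You disintegrate $\widetilde\T$ over $X$ into fibre currents, which are multiples of $[\P(\E_x)]$, identify $\nu(\T,(x,x,0))$ with the atom $\vartheta(\{x\})$, and combine Theorem~\ref{T:Lelong} with the finiteness of the mass of $\vartheta$. Two statements need correcting, though neither changes the structure of the argument.

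\emph{The opening claim.} $\widetilde\T$ is not supported on finitely many fibres in general. For instance, if $T$ is smooth then $\vartheta=T\wedge T$ is diffuse. What your Steps 2--3 actually prove, and all that is needed, is that $\vartheta$ has an atom of mass $\gtrsim c^2$ at each point of $E_c$.

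\emph{No mass on $\Delta$.} Your reason for $\T$ having no mass on $\Delta$ is wrong. $\Delta$ has dimension $2$, which equals the bidimension of $\T$. Moreover, $[\Delta]$ is itself a positive closed conic current of bidimension $(2,2)$ on $\E$ with Lelong number $1$ at every point of $\Delta$. A component of this kind would break Step 2, so neither dimension nor conicity alone excludes it.

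The correct argument uses Theorem~\ref{t:tangent}(4). Its proof gives $\T\wedge\pi^*(\omega^2)=0$ on all of $\E$, since the test functions used there may meet $\Delta$. Now set $R:=\ind_\Delta\T$. It is positive, supported on $\{z=0\}$, and $(A_\lambda)_*$-invariant. Hence the pairings $\langle R,\phi(\ddc\|z\|^2)^2\rangle$ and $\langle R,\phi\,\ddc\|z\|^2\wedge\pi^*\omega\rangle$ get multiplied by $|\lambda|^4$ and $|\lambda|^2$ respectively under $A_\lambda$, because $\phi\circ a_\lambda=\phi$ on $\{z=0\}$. So both pairings vanish, the trace measure of $R$ is zero, and $R=0$.

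The paper's proof uses this point implicitly when it passes from $\widetilde\T=\int_X\widetilde\T_x\,d\mu(x)$ to $\T=\int_X\pi_\infty^*(\widetilde\T_x)\,d\mu(x)$ on all of $\E$.
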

\proof
In this case, we have $\widetilde \T\wedge \pi_0^*(\omega)=0$. We can show as in \cite{DinhNguyenSibony22} that there are a positive measure $\mu$ on $X$ and a positive $\ddc$-closed current $\widetilde \T_x$ of bi-dimension $(1,1)$ and of mass 1 on $\pi_0^{-1}(x)$ for $\mu$-almost every $x\in X$ such that
$$\widetilde \T =\int_X \widetilde \T_x d\mu(x) \quad \text{and hence} \quad \T =\int_X \pi_\infty^*(\widetilde \T_x) d\mu(x).$$
Since the mass of $\widetilde \T_x$ is 1, the current $\pi_\infty^*(\widetilde \T_x)$ can be extended to a positive $\ddc$-closed current on $\E$, supported by $\pi^{-1}(x)$ with Lelong number 1 at the point $(x,x)\in\Delta$. We deduce that $\nu(\T,(x,x))=\mu(\{x\})$. 

On the other hand, by Theorem  \ref{T:Lelong}, we have for $x\in E_c(T)$
$$\nu(\T,(x,x))\geq (\nu(T,x))^2\geq c^2.$$
Thus, $E_c$ is contained in $\{x\in X:\ \mu(\{x\}) \geq c^2 \}$
which is a finite set.
\endproof

\begin{proposition}\label{p:main_2}
Assume that $h=1$. Then there are a constant $c>0$ and an analytic set 
$E$ of dimension 1 such that $T=c[E]+S$ for some positive  $\ddc$-closed current $S$ on $X$. 
\end{proposition}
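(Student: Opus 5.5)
The plan is to show that when $h=1$ the tangent current $\T$ has a part that lies over a curve in $\Delta\simeq X$, and that this forces $T$ to charge that curve. We then extract the curve with the cut-off of Theorem \ref{t:current-cut} and the decomposition of Corollary \ref{c:current-dec}.

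\textbf{Step 1: locate the horizontal part of $\T$.} Since $h=1$, the current $\widetilde\T\wedge\pi_0^*(\omega)$ is a non-zero positive $\ddc$-closed current of bidimension $(0,0)$ on $\P(\E)$, i.e.\ a positive measure $\sigma$. Its push-forward $(\pi_0)_*\sigma$ is a non-zero measure on $X$. I would also use the pushforward $(\pi_0)_*\widetilde\T$, which is a positive $\ddc$-closed $(1,1)$-current $\Theta$ on $X$; it is non-zero precisely because $h=1$. The first goal is to show that $\Theta$ is carried by the set $\{\nu(T,\cdot)>0\}$.

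The tool for this is Theorem \ref{t:tangent-DNS}, applied with $T_1=T_2=T$ after decomposing $T$ by Corollary \ref{c:current-dec}. Write $T=T_0+\sum_i T_i$, where each $T_i$ is supported on a curve $Y_i$ (and is then a multiple of $[Y_i]$) and $T_0$ charges no analytic set. If $T$ charges no curve, then $T=T_0$. In that case I would argue that $T_0$ gives no mass to $\{\nu(T_0,\cdot)>0\}=\bigcup_c E_c(T_0)$. That set is a countable union of closed sets $E_c$, and $T_0\otimes T_0$ restricted to $E_c\times E_c$ produces a tangent-current contribution whose $\pi$-image has positive dimension. The intended conclusion is that $\T=\pi^*\vartheta$ would have $h=0$, contradicting $h=1$. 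The main obstacle is making this dichotomy rigorous: we must show that if $T$ charges $E_c$ with positive mass, then $E_c$ is contained in an analytic curve charged by $T$.

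\textbf{Step 2: bootstrap along the super-level sets.} Let $F:=\{\nu(T,\cdot)\geq c\}$, which is closed by upper semicontinuity. By Theorem \ref{T:Lelong}, $\nu(\T,(x,x,0))\geq c^2$ on $F$. Since $\T=\pi_\infty^*\widetilde\T$ is conic, the Lelong number of $\T$ at $(x,x,0)$ equals the mass of the slice of $\widetilde\T$ over $x$. I would decompose $\widetilde\T$, as in Proposition \ref{p:main_1}, into a vertical part (fibers $\pi_0^{-1}(x)$ weighted by a measure $\mu$) and a horizontal part $\widetilde\T^h$ with $(\pi_0)_*\widetilde\T^h=\Theta\neq0$. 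At points of $F$ that are not atoms of $\mu$, the contribution must come from $\Theta$, so $\nu(\Theta,x)\gtrsim c^2$ there. If $F$ is uncountable, then $\Theta$ is a positive $\ddc$-closed $(1,1)$-current on the surface with Lelong number $\geq c^2$ on an uncountable set. Now Siu's theorem is available in bidegree $(1,1)$ on surfaces via the Fornaess–Sibony decomposition: $\Theta-\Omega-\partial S-\overline{\partial S}$ is $\ddc$ of an $L^1$ function with $L^2$ correction. Together with the mass bound coming from the finiteness of $E$, this shows that $F$ is contained in a finite union of curves $E$. (Here I am less sure: one alternative is to apply the same machinery to $\Theta\otimes T$, using its finite energy.)

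\textbf{Step 3: extract the analytic part.} Once $\{\nu(T,\cdot)\geq c\}$, minus a countable set, is contained in an analytic curve $E$, the Lelong number of $T$ is $\geq c$ at uncountably many points of $E$. A positive $\ddc$-closed current of bidegree $(1,1)$ that gives no mass to $E$ has Lelong number zero at all but countably many points of $E$. This can be seen by slicing with a transversal holomorphic disc family together with the estimate of Proposition \ref{p:T_1T_2-test}, Case 3a, which gives the bound $\int\nu(T,y)\,T(y)\wedge\omega<\infty$. It follows that $T$ charges some irreducible component $E$. By Theorem \ref{t:current-cut} and Corollary \ref{c:current-dec}, the restriction of $T$ to $E$ is $c[E]$ with $c>0$, and $S:=T-c[E]$ is positive and $\ddc$-closed, which gives the claimed decomposition.
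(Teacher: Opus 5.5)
There is a genuine gap. Every decisive step uses a Siu-type statement for positive $\ddc$-closed $(1,1)$-currents on a surface, and that is exactly what Proposition \ref{p:main_2} is meant to establish: it is the key input for Theorem \ref{t:main_1-bis}.

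\begin{enumerate}
\item In Step 2 you apply ``Siu's theorem'' to $\Theta=(\pi_0)_*\widetilde\T$, but $\Theta$ is itself such a current. The Forn\ae ss--Sibony decomposition does not give Siu's theorem. The function $u$ is not quasi-psh, since $i\ddbar u$ is only bounded below by $-\Omega-\partial S-\overline{\partial S}$ with $\partial S$ merely $L^2$. If this argument worked, Vigny's reduction would essentially settle the open problem recalled in the Introduction.
\item In Step 3, the claim ``a current not charging $E$ has vanishing Lelong number off a countable subset of $E$'' is again Siu's theorem. The bound $\int\nu(T,y)\,T(y)\wedge\omega<\infty$ holds for every current, because Lelong numbers are bounded (Lemma \ref{l:lelong}). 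So it cannot exclude, for instance, $\nu(T,\cdot)\geq c$ on an uncountable $\mathcal{H}^2$-null subset of $E$.
\item Step 1 is circular in the same way. Applying Theorem \ref{t:tangent-DNS} to $T_0$ requires knowing that $\{\nu(T_0,\cdot)>0\}$ lies in analytic sets not charged by $T_0$. The ``main obstacle'' you leave open is the whole content of the proposition.
\item The bound $\nu(\Theta,x)\gtrsim c^2$ at non-atoms is never derived. Theorem \ref{T:Lelong} only bounds $\nu(\T,\cdot)$ from below. Transferring this to $\Theta$ would need an upper bound on the Lelong number of the horizontal part of $\T$ in terms of $\nu(\Theta,\cdot)$, which you do not prove.
\end{enumerate}

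What is missing is a way to turn measure-theoretic information into analyticity without Siu's theorem. The paper does it as follows.
\begin{enumerate}
\item Let $c_0$ be maximal such that $T$ charges no $E_c$ with $c>c_0$. Since $h=1$, Theorem \ref{t:tangent-DNS} forces $c_0>0$, and one normalizes $c_0=1$, so $\nu(T,\cdot)\leq 1$ holds $\mu$-a.e. for $\mu=T\wedge\omega$.
\item Set $T':=(\pi_0)_*\widetilde\T$, which is your $\Theta$. A direct computation with the local formula $\T=\lim(a_{\lambda_n})_*(T\otimes T)$ gives $T'\wedge\omega=\nu(T,\cdot)\,\mu$ (Lemma \ref{l:mu-mu'}). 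This, and not Theorem \ref{t:tangent-DNS}, is what shows $\Theta$ is carried by $\{\nu(T,\cdot)>0\}$.
\item A differentiation argument then gives $\nu(T',\cdot)=\nu(T,\cdot)^2$ $\mu$-a.e. (Lemma \ref{l:mu-mu'-bis}).
\item Iterate $T_{n+1}:=T_n'$. The measures $T_n\wedge\omega$ decrease to the restriction of $\mu$ to $\{\nu(T,\cdot)\geq 1\}$. Meanwhile, non-emptiness of $\{\nu(T_n,\cdot)\geq1\}$ keeps the masses bounded below.
\item Hence a limit $T_\infty$ is a non-zero positive $\ddc$-closed current supported on the closed set $\{\nu(T,\cdot)\geq1\}$, which has finite $2$-dimensional Hausdorff measure.
\item The Dinh--Lawrence theorem then says this support is an analytic curve $E$. $T$ charges $E$, and Theorem \ref{t:current-cut} gives $T=c[E]+S$.
\end{enumerate}
Only your final extraction step matches the paper.
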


Assuming this result, we first finish the proof of Theorem \ref{t:main_1-bis}.

\proof[Proof of Theorem \ref{t:main_1-bis}]
Denote by $V_j$, $j\in J$, the family of all irreducible analytic curves in $X$ such that $T$ has positive mass on each $V_j$. The restriction of $T$ to each $V_j$ is a positive $\ddc$-closed current of bi-dimension $(1,1)$ which should be a constant times $[V_j]$. We deduce that there are a positive $\ddc$-closed current $T'$ having no mass on analytic curves and positive numbers $\alpha_j$ such that 
$$T=\sum_{j\in J} \alpha_j [V_j] +T'.$$
It is not difficult to see that it is enough to prove the theorem for $T'$ instead of $T$. So for simplicity, we assume that $T$ has no mass on analytic curves.
By Proposition \ref{p:main_2}, if $\T$ is as above, then its h-dimension is 0. Proposition \ref{p:main_1} implies the result.
\endproof

In the rest of this section, we prove Proposition \ref{p:main_2} and we assume that $h=1$. Let $c_0\geq 0$ be the maximal constant such that $T$ has no mass on the set $E_c$ for every $c>c_0$. By Theorem \ref{t:tangent-DNS}, we have $c_0>0$. Multiplying $T$ by a constant allows us to assume that $c_0=1$. Since the function $\nu(T,\cdot)$ is upper semi-continuous, we deduce that the set $\{\nu(T,\cdot)\geq 1\}$ is non-empty.

Define 
$$\mu:=T\wedge\omega,\qquad T':=(\pi_0)_*(\widetilde \T) \qquad\text{and} \qquad \mu':=T'\wedge \omega.$$ 
Note that $T'$ is a positive $\ddc$-closed current of bi-dimension $(1,1)$ on $X$. The construction $T\mapsto T'$ will play a central role in the sequence.

\begin{lemma} \label{l:mu-mu'}
We have $\mu'=\nu(T,\cdot)\mu$. In particular, the current $T'$ and the measure $\mu'$ have no mass outside the set $\Sigma:=\{x\in X:\ \nu(T,x)>0\}$.
\end{lemma}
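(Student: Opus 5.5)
We prove $\mu'=\nu(T,\cdot)\mu$ by testing both sides against a continuous function $\varphi$ on $X$ and computing $\langle\mu',\varphi\rangle$ via the tangent current. Since $\T=\pi_\infty^*(\widetilde\T)$ on $\E\setminus\Delta$, we have
$$\langle \mu',\varphi\rangle=\langle \widetilde\T,\pi_0^*(\varphi\,\omega)\rangle .$$
This pairing can be realized on $\E$ by wedging $\T$ with a fixed positive closed form that has mass one on each fiber of $\pi$. Concretely, we choose a smooth function $\chi\ge 0$ with compact support on $\E$, radial in the fibers, for instance $\chi=\chi_0(\|z\|)$ in the local coordinates $(z,w)$. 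Since $\T$ is conic, the pairing $\langle \T,\chi\,\pi^*(\varphi\omega)\wedge \ddc\log\|z\|\rangle$, suitably normalized, equals $\langle\widetilde\T,\pi_0^*(\varphi\omega)\rangle$. A cleaner route is to use the pointwise formula: the slice of $\T$ over $x$ is the cone over $\widetilde\T_x$, and its mass near $0$ in the fiber is read off against $\ddc\|z\|^2$.

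The plan is to compute $\langle\T,\Phi\rangle$ for $\Phi:=\chi(z)\varphi(w)\,\ddc\|z\|^2\wedge\omega(w)$ in local charts and compare it with $\langle\mu',\varphi\rangle$. First, by conicity, $\langle \T,\Phi\rangle=c_\chi\langle\mu',\varphi\rangle$, where $c_\chi=\int_{\C}\chi\,\ddc|t|^2$-type constant depends only on $\chi$. This is a computation on $\pi_\infty^*$ of a bidimension $(1,1)$ current whose fiber direction is one-dimensional. Second, by Proposition \ref{p:local_comput}, $\langle\T,\Phi\rangle=\lim_n\langle T\otimes T,(a_{\lambda_n})^*\Phi\rangle$, and
$$(a_\lambda)^*\Phi=|\lambda|^2\chi(\lambda z)\varphi(w)\,\ddc\|z\|^2\wedge\omega(w).$$
This is exactly the Case 3a form in the proof of Proposition \ref{p:T_1T_2-test}. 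Writing $z=x-y$, $w=y$, and using Fubini, we get
$$\langle T\otimes T,(a_\lambda)^*\Phi\rangle=\int_y\Big(|\lambda|^2\int_x \chi(\lambda(x-y))\,T(x)\wedge\ddcx\|x-y\|^2\Big)\varphi(y)\,T(y)\wedge\omega(y)+o(1).$$
The error term comes from the $dw$-components of $\omega(w)$ mixing with $dz$. Those terms have $k=3$ or are of lower order and vanish by bidegree, as in the cases of that proof. We then express the inner integral through $\nu(T,y,r)$ by the layer-cake formula in $r$: it equals $\int \nu(T,y,s/|\lambda|)\,d\rho(s)$ for a measure $\rho$ determined by $\chi$, with total weight $c_\chi$.

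The third step is to pass to the limit. By Lemma \ref{l:lelong}, $\nu(T,y,r)$ is uniformly bounded, and it decreases to $\nu(T,y)$ as $r\to0$. Dominated convergence then gives the limit $c_\chi\int\nu(T,y)\varphi(y)\,T\wedge\omega$, which is exactly the computation at the end of Case 3a. Hence $\mu'=\nu(T,\cdot)\mu$.

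For the second assertion, $\mu'$ vanishes outside $\Sigma$ by the formula just proved. Since $T'$ is positive, its trace measure is comparable to $T'\wedge\omega=\mu'$, so $T'$ also has no mass outside $\Sigma$.

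The main obstacle is the first step, identifying $\langle\T,\Phi\rangle$ with $c_\chi\langle\mu',\varphi\rangle$. It requires care with the extension of $\pi_\infty^*\widetilde\T$ across $\Delta$ (no mass on the zero section, since the h-dimension is $\le1$ and Case 2 gives an $O(r^4)$ bound) and with the normalization of the fiberwise $\ddc\|z\|^2$ against the $\P^1$-fibers of $\P(\E)$.
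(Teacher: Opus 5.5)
Your proposal is correct and follows essentially the same route as the paper. You identify $\langle\mu',\varphi\rangle$ with the pairing of $\T$ against a fiberwise cutoff of $\ddc\|z\|^2$ times $\pi^*(\varphi\omega)$ using conicity (checked on Dirac masses on $\P(\E)$), and then compute it via $\T=\lim_n (a_{\lambda_n})_*(T\otimes T)$, Fubini, and dominated convergence with Lemma~\ref{l:lelong}. The only real difference is that you use a smooth radial cutoff with a layer-cake formula where the paper uses the indicator of $\B(0,1)$; your version is slightly cleaner because the weak limit can be tested on it directly. Two small points: the mixed terms in your Fubini step vanish identically by bidegree, so there is no $o(1)$ error term. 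And the estimate showing that $\T\wedge\ddc\|z\|^2\wedge\pi^*\omega$ does not charge $\Delta$ is the $O(r^2)$ bound of Case 3a, not the Case 2 bound.
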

\proof
Let $\phi$ be a smooth function on $X$. We need to prove that $\langle\mu',\phi\rangle = \langle \nu(T,\cdot)\mu, \phi\rangle$. Using a partition of unity, we can assume that $\phi$ is supported by a small open subset $U$ of $X$ that we can identify to the unit ball in $\C^2$. We will use the coordinates $x$ for $U$ and $(x,y)$ for $U\times U$ so that the diagonal is given by the equation $x=y$. We will also use the coordinates $z:=x-y$, $w=y$ and the map $a_\lambda(z,w):=(\lambda z,w)$ introduced in Section \ref{s:tangent}. Thus, over $U$ we identify $\E$ and $\P(\E)$  to $\C^2\times U$ and $\P^1\times U$.
We also use the coordinates $(z, w)$ for $\C^2\times U$ and $(x,[z])$ for $\P^1\times U$. Therefore, the projection $\pi_\infty$ is given by $(x,z)\mapsto (x,[z])$. 

Denote by $\beta$ the product of ${1\over \pi}\ddc\|z\|^2$ and the characteristic function of $U\times\B(0,1)$. 
It is not difficult to see that if $m$ is a positive measure on $\P(\E)$, then $\pi_\infty^*(m)$ extends to a positive closed current of bi-dimension $(1,1)$ on $\E$ such that 
$$m=(\pi_\infty)_*(\pi_\infty^*(m)\wedge\beta).$$
Indeed, by linearity, it is enough to check the identity for $m$ equal to a Dirac mass. Applying this identity to the measure $\widetilde\T\wedge \pi_0^*(\phi\omega)$, we have
$$\langle \mu',\phi\rangle =  \langle \widetilde \T, \pi_0^*(\phi\omega)\rangle =\langle \T, \pi^*(\phi\omega)\wedge \beta\rangle
=\int_{w\in X} \Big[{1\over \pi} \int_{z\in \B(0,1)} \T(z,w)\wedge \ddc\|z\|^2 \Big] \phi(w)\omega(w).$$

Recall that $\T$ is obtained by Theorem \ref{t:tangent} for $T_1=T_2=T$. We use the sequence $(\lambda_n)$ given by this theorem. On $\C^2\times U$, we have
$$\T= \int_{n\to\infty} (a_{\lambda_n})_*(T\otimes T).$$
Therefore, we deduce from the last computation that
\begin{eqnarray*}
\langle \mu',\phi\rangle
&=&\int_{w\in X}   \Big[\lim_{n\to \infty}  {1\over \pi }\int_{z\in \B(0,1)}  (a_{\lambda_n})_*(T\otimes T) (z,w)\wedge \ddc\|z\|^2 \Big]  \phi(w) \omega(w) \\
&=&\int_{w\in X}   \Big[\lim_{n\to \infty}  {1\over \pi \lambda_n^{-2}}\int_{z\in \B(0,\lambda_n^{-1})} T(w+z)\wedge T(w)\wedge \ddc\|z\|^2 \Big]  \phi(w) \omega(w) \\
&=&\int_{w\in X}   \Big[\lim_{n\to \infty}  {1\over \pi \lambda_n^{-2}}\int_{z\in \B(0,\lambda_n^{-1})} T(w+z)\wedge \ddc\|z\|^2 \Big]  T(w)\wedge \phi(w) \omega(w) \\
&=& \int_{w\in X}  \nu(T,w)\phi(w)  T(w)\wedge \omega(w).
\end{eqnarray*}
This implies the lemma.
\endproof

\begin{lemma}\label{l:mu-mu'-bis}
We have $\nu(T',x)=\nu(T,x)^2$ for $\mu$-almost every point $x\in X$. In particular, $T'$ has a positive mass on the set $\{x\in X:\ \nu(T',x)\geq c\}$ for any constant $0<c<1$ and 
$T'$ has no mass on the set $\{x\in X:\ \nu(T',x)\geq c\}$ for any constant $c>1$. Moreover, the set $\{x\in X:\ \nu(T',x)\geq 1\}$ is non-empty.
\end{lemma}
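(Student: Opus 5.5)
The key claim is $\nu(T',x)=\nu(T,x)^2$ for $\mu$-almost every $x$. Everything else will follow from it, Lemma \ref{l:mu-mu'} and the normalization $c_0=1$. I would prove the equality as two inequalities, both localized by using that $T'=(\pi_0)_*(\widetilde\T)$, where $\T=\pi_\infty^*\widetilde\T$ is a tangent current to $T\otimes T$.

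\textbf{Lower bound.} I would first show $\nu(T',x)\geq \nu(\T,(x,x,0))$ for every $x$. Since $\T$ is conic and of h-dimension $1$, the slice of $\widetilde\T$ over a small ball $\B(x,r)$ controls the mass of $\T$ over $\pi^{-1}(\B(x,r))$ near the zero section. More precisely, $\int_{\B(x,r)} T'\wedge\ddc\|w-x\|^2$ should dominate, up to the normalization used in Lemma \ref{l:mu-mu'} with $\beta={1\over\pi}\ddc\|z\|^2$, the quantity $r^{-2}\int_{\B((x,x,0),r)}\T\wedge(\ddc(\|z\|^2+\|w\|^2))^2$. Letting $r\to0$ gives the inequality. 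Combining it with Theorem \ref{T:Lelong} gives $\nu(T',x)\geq\nu(T,x)^2$ everywhere.

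\textbf{Upper bound.} Here I would use the disintegration idea from Proposition \ref{p:main_1}. By Lemma \ref{l:mu-mu'}, $\mu'=\nu(T,\cdot)\mu$. Write $\nu(T',x)=\lim_{r\to0} r^{-2}\mu'(\B(x,r))$ up to constants. This equals $\lim_{r\to0} r^{-2}\int_{\B(x,r)}\nu(T,y)\,d\mu(y)$. The upper semicontinuity of $\nu(T,\cdot)$ bounds this by $\limsup_{y\to x}\nu(T,y)\cdot\nu(T,x)=\nu(T,x)^2$. Concretely, for any $\epsilon>0$ we have $\nu(T,y)\le\nu(T,x)+\epsilon$ on $\B(x,r)$ for small $r$, and $r^{-2}\mu(\B(x,r))\to\nu(T,x)$ by the definition \eqref{e:Lelong}. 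This argument actually works at every point. The $\mu$-a.e.\ qualifier is only needed if the lower bound requires a density (Lebesgue differentiation) argument on the $w$-variable. I therefore expect the lower bound, namely comparing the Lelong number of the pushforward $T'$ with the Lelong number of $\T$ at the zero section, to be the main obstacle. If the direct comparison fails, I would fall back on differentiating $\mu'$ with respect to $\mu$ via Besicovitch and recomputing the slice masses $\langle \T,\pi^*(\ind_{\B(x,r)}\omega)\wedge\beta\rangle$.

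\textbf{Consequences.} Since $T$ has positive mass on $\{\nu(T,\cdot)\geq c\}$ for every $c<1$, we have $\mu(\{\nu(T,\cdot)\ge c\})>0$. Then $\mu'=\nu(T,\cdot)\mu$ gives mass there as well. On that set $\nu(T',\cdot)=\nu(T,\cdot)^2\geq c^2$ $\mu'$-a.e., because $\mu'\ll\mu$. Replacing $c$ by $\sqrt c$ gives positive mass of $T'$ on $\{\nu(T',\cdot)\ge c\}$ for $0<c<1$. For $c>1$, suppose $T'$ charged $\{\nu(T',\cdot)\ge c\}$. By Lemma \ref{l:mu-mu'} this set meets the support of $\mu'$ only inside $\Sigma$, and $\mu'$-a.e.\ there $\nu(T,\cdot)\geq\sqrt c>1$. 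Hence $\mu$ would charge $\{\nu(T,\cdot)\ge\sqrt c\}$, contradicting $c_0=1$. Finally, $\{\nu(T',\cdot)\ge c\}$ is nonempty for every $c<1$. These sets are closed and nested, since $\nu(T',\cdot)$ is upper semicontinuous on the compact $X$. Their intersection $\{\nu(T',\cdot)\ge1\}$ is therefore nonempty.
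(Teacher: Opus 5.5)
\textbf{The lower bound rests on a false inequality.} Your upper bound is correct, and it even holds at every point. The claim $\nu(T',x)\geq\nu(\T,(x,x,0))$ for every $x$ fails, and so does its consequence $\nu(T',x)\geq\nu(T,x)^2$ everywhere. So the $\mu$-a.e.\ qualifier in the lemma is essential, not an artifact of a differentiation argument.

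Having h-dimension $1$ only means $\widetilde\T\wedge\pi_0^*(\omega)\neq0$. It does not exclude a vertical part of $\widetilde\T$, i.e.\ a multiple of $[\pi_0^{-1}(x)]$ over some point $x$. Such a part is annihilated by $(\pi_0)_*$, because a bidimension $(1,1)$ current carried by a fiber pushes forward to $0$. Yet it contributes fully to $\nu(\T,(x,x,0))$ through the term $\T\wedge(\ddc\|z\|^2)^2$.

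Here is a concrete example. On $X=\P^2$ take a line $L_0$ and a point $p\notin L_0$. Let $S=\int[L]\,d\sigma(L)$, averaged over the lines through $p$ with a smooth probability density, and set $T=[L_0]+S$.
\begin{itemize}
\item $h=1$, since the tangent current dominates that of $[L_0]\otimes[L_0]$.
\item $c_0=1$ and $\nu(T,p)=1$, so $\nu(\T,(p,p,0))\geq 1$ by Theorem \ref{T:Lelong}.
\item For small $r$, $\nu(T,\cdot)=0$ on $\B(p,r)\setminus\{p\}$, because there $T=S$ is smooth. Also $\mu(\{p\})=0$.
\item Lemma \ref{l:mu-mu'} then gives $\mu'(\B(p,r))=0$, hence $\nu(T',p)=0<\nu(T,p)^2$.
\end{itemize}

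\textbf{The fix is the argument you list only as a fallback.} It needs nothing about $\T$ beyond Lemma \ref{l:mu-mu'}. The function $\nu(T,\cdot)$ is Borel and bounded: it is upper semicontinuous and bounded by Lemma \ref{l:lelong}. The Lebesgue--Besicovitch differentiation theorem for the Radon measure $\mu$ then gives, for $\mu$-a.e.\ $x$,
$\mu(\B(x,r))^{-1}\int_{\B(x,r)}\nu(T,\cdot)\,d\mu\to\nu(T,x)$.
Multiplying by $\mu(\B(x,r))/(\pi r^2)\to\nu(T,x)$ yields $\nu(T',x)=\nu(T,x)^2$ for $\mu$-almost every $x$, giving both inequalities at once. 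This is exactly the paper's proof.

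Your derivation of the remaining assertions from the $\mu$-a.e.\ identity is correct. Your everywhere upper bound alone already handles the case $c>1$. It gives $\{\nu(T',\cdot)\geq c\}\subset\{\nu(T,\cdot)\geq\sqrt c\}$, which is a $\mu$-null set, and $\mu'\ll\mu$.
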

\proof
By Lemma \ref{l:mu-mu'}, we have $\mu'=\nu(T,\cdot)\mu$. It follows that for $\mu$-almost every point $x\in X$
\begin{eqnarray*}
 \nu(T',x)&=&\lim\limits_{r\to 0}{\mu'(\B(x,r))\over \pi r^2}= \lim\limits_{r\to 0}{1\over \pi r^2} \int_{\B(x,r)}\nu(T,\cdot)\mu
 =\nu(T,x)\lim\limits_{r\to 0}{\mu(\B(x,r))\over \pi r^2}=\nu(T,x)^2,
\end{eqnarray*}
where the third equality holds by an application of \cite[Lemma 4.1.2]{LedrappierYoung}. This proves the first assertion.

Consider $0<c<1$. Since $\mu$ has a positive mass on the set $\{\nu(T,\cdot)\geq \sqrt{c}\}$, the second assertion follows from the first one and Lemma \ref{l:mu-mu'}. 

Assume now that  $c>1$. By Lemma \ref{l:mu-mu'}, it is enough to show that $\mu$ has no mass on the set $\{x\in X:\ \nu(T',x)\geq c\}$. By the first assertion, we only need to check that $\mu$ has no mass on the set $\{\nu(T,\cdot)\geq \sqrt{c}\}$. This is true by our choice of $c_0$ and gives the third assertion.

Finally, the last assertion follows from the second assertion because the function $\nu(T',\cdot)$ is upper semi-continuous. 
\endproof

\proof[Proof of Proposition  \ref{p:main_2}]
We use the above construction $T\mapsto T'$. 
Using Lemma \ref{l:mu-mu'-bis}, we  construct by induction a sequence of positive $\ddc$-closed currents $(T_n)_{n\in\N}$  with $T_0=T$ and $T_{n+1}:=T_n',$ where $T_n'$ is obtained as above using $T_n$ instead of $T$. Observe that 
$\mu_n$ has no mass on $\{\nu(T_n,\cdot)>1\}$. Therefore, the first assertion of Lemma \ref{l:mu-mu'-bis} implies that the sequence
$\mu_n$ is decreasing. The last assertion of that lemma implies that the mass of $T_n$ is bounded from below by a positive constant.

Let $T_\infty$ be a limit of $T_n$ when $n$ goes to infinity.
We deduce from Lemma \ref{l:mu-mu'-bis} that the measure $\mu_\infty:=T_\infty\wedge\omega$ is  equal to the restriction of $\mu$ to  the set $\{ x\in X:\  \nu(T,x)\geq 1\}$ which is a closed set of finite $2$-dimensional Hausdorff measure. Moreover, $T_\infty$ is not zero because the mass of $T_n$ is bounded from below by a positive constant. 

Now, as $T_\infty$ is  a positive $\ddc$-closed current of bidimension $(1,1)$ whose support has finite a $2$-dimensional Hausdorff measure, 
by \cite[Theorem 3.1]{DinhLawrence}, its support is an analytic set of dimension 1. We deduce that  the set $\{ x\in X:\  \nu(T,x)\geq 1\}$ contains an analytic subset $E$ of dimension 1 on which $T$ has a positive mass. We conclude that the restriction of $T$ to $E$ is equal to a positive constant times
$[E]$. This ends the proof of the proposition.
\endproof

\section{Proofs of the main results} \label{s:proofs}

In this section, we will deduce Theorem \ref{t:main_1} and Corollary \ref{c:main_1} from Theorem \ref{t:main_1-bis} and other results.

\medskip

\proof[Proof of Theorem \ref{t:main_1}]
Since $X$ is a projective manifold, we can embed it into a projective space. For simplicity, assume that $X=\P^n$. By Corollary \ref{c:current-dec}, we can remove from $T$ the currents of integration on analytic curves and assume that $T$ has no mass on analytic curves. Our goal is to show that the set $\{\nu(T,\cdot)\geq c\}$ is finite.
For this purpose, by Corollary \ref{c:current-dec} and Proposition \ref{p:current-dec-Lelong}, we can assume that $T$ is supported by some irreducible analytic subset $Y$ of $\P^n$ and has no mass on any proper analytic subset of $Y$. We have that $\dim Y\geq 2$ and $Y$ may be equal to $\P^n$.

Assume by contradiction that the set $\{\nu(T,\cdot)\geq c\}$ is not finite. 
Choose a sequence of distinct points $(a_k)$ in $\{\nu(T,\cdot)\geq c\}$ converging to some point
$a\in \P^n$. 
Choose a finite family of central projection $\pi_j:\P^n\to \P^2$, $1\leq j\leq N$, satisfying the following conditions for some small ball $W$ of $\P^n$ centered at $a$
\begin{enumerate}
\item all $\pi_j$ are holomorphic in a neighbourhood of $\overline W$ and injective on the sequence $(a_k)$; 
\item the restriction of each $\pi_j$ to $Y$ is a dominant map;
\item if $\omega_\FS$ denotes the Fubini-Study form on $\P^2$ then $\sum_{j=1}^N \pi_j^*(\omega_\FS)\geq \lambda \omega$ on a neighbourhood of $\overline W$ for some constant $\lambda>0$.
\end{enumerate}
Note that Properties (1) and (2) hold for generic central projections and $W$ small enough while Property (3) is true for large enough family of such maps.

By taking a subsequence, we can assume for simplicity that $a_k\in W$ for every $k$.

\smallskip\noindent
{\bf Claim.} There is a constant $\gamma>0$ such that for every $k$
$$\sum_{j=1}^N \nu((\pi_j)_\bullet (T),\pi_j(a_k)) \geq \gamma\,  \nu(T,a_k).$$
\proof[Proof of the claim]
By Property (1) above, if $r>0$ is small enough, the image of the ball $\BB(a_k,r)$ by $\pi_j$ is contained in the ball $\BB(\pi_j(a_k),Ar)$ for some fixed constant $A>0$ large enough. Here, we use the balls with respect to the Fubini-Study metrics on $\P^n$ and $\P^2$. We deduce from Property (3) above the following mass comparison 
$$\sum_j \| (\pi_j)_\bullet(T) \wedge \omega_\FS\|_{\BB(\pi_j(a_k),Ar)} \geq \lambda\, \|T\|_{\BB(\pi_j(a_k),r)}.$$ 
Recall that we need local charts with euclidean metrics to define Lelong number.  However, Fubini-Study metrics we use here are 
comparable with euclidean metrics on charts of $\P^n$ and $\P^2$. Therefore,
when $r$ tends to 0, the last estimate implies the claim.
\endproof

We deduce from Claim that for some index $j$ the set $\{\nu((\pi_j)_\bullet (T),\cdot)\geq \gamma c/N\}$ contains infinitely many $a_k$. It follows from Theorem \ref{t:main_1-bis}, the current $(\pi_j)_\bullet (T)$ has a positive mass on some analytic curve $Z$. It follows that $T$ has positive mass on $\pi_j^{-1}(Z)$ and hence on $\pi_j^{-1}(Z)\cap Y$ which is a proper analytic subset of $Y$, thanks to Property (2) above. 
This contradicts the property of $Y$ mentioned at the beginning of the proof.
\endproof

\proof[Proof of Corollary \ref{c:main_1}]
Set  $A:=\bigcup_{k\in\N}A_k.$  We assume without loss of generality that the analytic part of $T$ is trivial.
Consider the family of central projection $\pi_j:\ \P^n\to\P^2,$ $1\leq j\leq N,$  as in the  proof of Theorem \ref{t:main_1}.
Since $\pi_j$ is  locally Lipschitz, observe that for  $k\in\N$ and $1\leq j\leq N,$
\begin{itemize}
\item[$\bullet$] $\pi_j(A_k)$ is  of finite $2$-dimensional Hausdorff dimension;
\item[$\bullet$]  for every subset $\mathcal N$ of zero   $2$-dimensional Hausdorff dimension,  $\pi_j(\mathcal N)$ is also of zero   $2$-dimensional Hausdorff dimension.
\end{itemize}
Since $T$ of bidimension $(1,1)$ does not give mass outside the set $A$ and each $A_k$ is of finite $2$-dimensional Hausdorff dimension, we deduce that
for every $x\in A_k$  outside a subset of zero  $2$-dimensional Hausdorff dimension, $\nu(T,x)\geq 1.$
This, combined with  the previous  observation and the Claim in the proof of Theorem \ref{t:main_1}, implies that there is  an index $1\leq j\leq N$  such that
$\nu((\pi_j)_\bullet T,x)\geq  {\gamma\over N}$ for $x\in B_j,$ where $B_j\subset\pi_j(A)$  is  of positive   $2$-dimensional Hausdorff dimension.

Next, we  argue  as  in the proof of Theorem \ref{t:main_1-bis} by considering a tangent current $\T$ to $(\pi_j)_\bullet T\otimes (\pi_j)_\bullet T$ along the diagonal $\Delta$ in $\P^2\times\P^2.$ Let $h$ be  the h-dimension of $\T.$  Arguing as in the proof of Proposition
\ref{p:main_1}
$h$ cannot be  0 because the set $B_j$ is  not finite.
So $h=1.$ Therefore,
arguing as  in the proof of Proposition
\ref{p:main_2}  we obtain the decomposition  $(\pi_j)_\bullet T=c[E]+S$ for some constant $c>0$ and some curve $E\subset \P^2$ and some
positive $\ddc$-closed $(1,1)$-current $S$ on $\P^2.$
This implies that the analytic part of $T$ is  nontrivial. We arrive at  a contradiction.
\endproof

\proof[Proof of Theorem \ref{t:main_2}]


Let $n$ be  the dimension $X.$ We  prove  the  first  assertion. By Theorem \ref{T:FuXiao-WN} (1),
we need  to show that $\{T\}\smile\{S\}\geq 0$ for every positive  closed current $S$ of bidegree $(1,1)$ on $X.$
Let $\Delta$ be the diagonal of $X\times X$ and $\E$ be the normal bundle to $\Delta$ in $X\times X.$
Since $T$ is  positive $\ddc$-closed current of bidimension $(1,1)$ and  $S$ is  positive  closed current of bidegree $(1,1)$ on $X,$   the current $T\otimes S$ is  positive  $\ddc$-closed of bidegree $(n,n)$  on $X\times X.$

Let $\T$ be  a  tangent current to $T\otimes S$ along $\Delta.$ By  \cite{Nguyen21,Nguyen25}, $\T$ is conic. Moreover, since $T$ is  of bidimension $(1,1)$ on $X,$ by  \cite{Nguyen21,Nguyen25}, we can use  local coordinates in order to compute  the tangent currents to $T\otimes S$   and  we see  easily that  the  h-dimension of $\T$  is $\leq 1$.
Let $\P(\E)$ denote the projectivization of the vector bundle $\E$ and let $\pi_\infty: \E\setminus\Delta\to \P(\E)$ be the canonical projection. Recall that we identify $\Delta$ with the zero section of $\E$. Since $\T$ is conic,  by \cite{Nguyen25} there is a positive $\ddc$-closed current $\widetilde\T$ of bi-dimension $(1,1)$ on $\P(\E)$ such that $\T=\pi_\infty^*(\widetilde\T)$ on $\E\setminus\Delta$ and that the h-dimension of $\T$ is the maximal integer $h$ such that $\widetilde \T\wedge \pi_0^*(\omega^h)\not =0$, where $\pi_0:\P(\E)\to X$ is the canonical projection.
Consider  two cases.

\noindent {\bf  Case $h=0:$}

In this case by  \cite{Nguyen25} there is a  unique   positive measure $\mu$ on $X$  such that  $\pi_0^*\mu=\widetilde\T$
and  $\{\mu\}=   \{T\}\smile\{S\}.$ Since  $\{\mu\}=\int_Xd\mu\geq 0,$ it follows that   $\{T\}\smile\{S\},$ which completes the proof.

\noindent {\bf  Case $h=1:$}

Define
$$\mu:=T\wedge\omega,\qquad T':=(\pi_0)_*(\widetilde \T) \qquad\text{and} \qquad \mu':=T'\wedge \omega.$$
Note that $T'$ is a positive $\ddc$-closed current of bi-dimension $(1,1)$ on $X$.
\begin{lemma}\label{l:mu-mu'-bisbis}
 We have $\mu'=\nu(S,\cdot)\mu$.
 \end{lemma}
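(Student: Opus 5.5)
The plan is to mirror the proof of Lemma \ref{l:mu-mu'}, with $T\otimes T$ replaced by $T\otimes S$; the Lelong number now comes from the closed factor $S$. The roles of the factors must be set correctly. Write $\T=\lim (a_{\lambda_n})_*(T\otimes S)$ in local coordinates $(z,w)=(x-y,y)$, and place $S$ on the first factor, so that $S(w+z)$ is the factor that gets dilated while $T(w)$ remains attached to the base point.

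Fix a smooth function $\phi$ supported in a small chart $U\cong\B_n$, and identify $\E$ with $\C^n\times U$ and $\P(\E)$ with $\P^{n-1}\times U$ over $U$. For a positive measure $m$ on $\P(\E)$ the fiberwise identity
$$m=(\pi_\infty)_*\big(\pi_\infty^*(m)\wedge\beta\big)$$
should hold. Here $\beta$ is the characteristic function of $U\times\B(0,1)$ times a suitable normalization of $(\ddc\|z\|^2)^{n-1}$, chosen so that $\pi_\infty^*$ of a Dirac mass has mass one on the unit ball of the fibre. It suffices to check this for Dirac masses, where it is the statement that a line through the origin has Lelong number one.

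Applying the identity to $\widetilde\T\wedge\pi_0^*(\phi\omega)$ gives
$$\langle\mu',\phi\rangle=\langle\T,\pi^*(\phi\omega)\wedge\beta\rangle.$$
By Proposition \ref{p:local_comput}, in its version from \cite{Nguyen21,Nguyen25} for $T\otimes S$, we may compute this through $(a_{\lambda_n})_*(T\otimes S)$. After the change of variables the inner integral becomes
$$\frac{c_n}{\lambda_n^{-2(n-1)}}\int_{\|z\|<|\lambda_n|^{-1}}S(w+z)\wedge(\ddc\|z\|^2)^{n-1},$$
which is $\nu(S,w,|\lambda_n|^{-1})$. Bidegree counting forces $T(w)\wedge\phi(w)\omega(w)$ to carry all the $dw$-directions: $T$ has bidimension $(1,1)$, so $T\wedge\omega$ is a measure. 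The first step is therefore to check that only the components of $\pi^*(\phi\omega)\wedge\beta$ of pure type survive, in the sense of Proposition \ref{p:T_1T_2-test}.

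Next we pass to the limit. Since $S$ is closed, $r\mapsto\nu(S,w,r)$ decreases to $\nu(S,w)$ and is uniformly bounded by Lemma \ref{l:lelong}. Dominated convergence with respect to the finite measure $T\wedge\omega$ then gives
$$\langle\mu',\phi\rangle=\int\nu(S,w)\phi(w)\,T(w)\wedge\omega(w),$$
which is exactly $\mu'=\nu(S,\cdot)\mu$.

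The main obstacle is justifying the Fubini-type identity that writes the limit of $\langle(a_{\lambda_n})_*(T\otimes S),\cdot\rangle$ as an integral over $w$ of slice integrals. The tensor product is only a current, and the test form $\beta$ is discontinuous, being a characteristic function. I would first approximate the characteristic function of the unit ball by continuous radial cutoffs. Since the boundary sphere carries no mass for the conic current $\T$ except at countably many radii, the limit can be exchanged. I would then use the positivity of $T\otimes S$ to disintegrate over $w$, exactly as was done in Lemma \ref{l:mu-mu'}.
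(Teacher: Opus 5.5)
Your proposal is essentially the paper's proof. As there, you mirror Lemma~\ref{l:mu-mu'} with $S$ in the dilated slot (the paper's displayed computation likewise pairs $S(w+z)$ with $T(w)$), write $\langle\mu',\phi\rangle=\langle\T,\pi^*(\phi\omega)\wedge\beta\rangle$ with $\beta$ built from $(\ddc\|z\|^2)^{n-1}$, rescale to get $\nu(S,w,|\lambda_n|^{-1})$ integrated against $T\wedge\omega$, and let $|\lambda_n|\to\infty$.

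Your extra care only makes explicit what the paper leaves implicit: dominated convergence via Lemma~\ref{l:lelong} and monotonicity of $r\mapsto\nu(S,w,r)$, radial cutoffs for the characteristic function, and the bidegree bookkeeping. That bookkeeping is in fact exact, because $T(w)\wedge\omega(w)$ has full degree in $w$ and kills every $dw$-component of $S(w+z)$. The one caveat, which the paper's own setup shares, is that the Dirac-mass/line justification is literally the $n=2$ picture. For $n\geq 3$ the current $\widetilde\T\wedge\pi_0^*(\phi\omega)$ is not a measure, and one must also wedge with $\omega_\FS^{n-2}$ along the fibres; the fibrewise pieces are then cones over $(1,1)$-currents on $\P^{n-1}$.
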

 \proof
We argue  as in the proof  of Lemma  \ref{l:mu-mu'}.
Let $\phi$ be a smooth function on $X$. We need to prove that $\langle\mu',\phi\rangle = \langle \nu(T,\cdot)\mu, \phi\rangle$. Using a partition of unity, we can assume that $\phi$ is supported by a small open subset $U$ of $X$ that we can identify to the unit ball in $\C^n$. We will use the coordinates $x$ for $U$ and $(x,y)$ for $U\times U$ so that the diagonal is g We will use the coordinates $x$ for $U$ and $(x,y)$ for $U\times U$ so that the diagonal is given by the equation $x=y$. We will also use the coordinates $z:=x-y$, $w=y$ and the map $a_\lambda(z,w):=(\lambda z,w)$ introduced in Section \ref{s:tangent}. Thus, over $U$ we identify $\E$ and $\P(\E)$  to $\C^n\times U$ and $\P^{n-1}\times U$.
We also use the coordinates $(z, w)$ for $\C^n\times U$ and $(x,[z])$ for $\P^{n-1}\times U$. Therefore, the projection $\pi_\infty$ is given by $(x,z)\mapsto (x,[z])$.

We use the sequence $(\lambda_n)$ given by this theorem. On $\C^n\times U$, we have by \cite{Nguyen21,Nguyen25},
$$\T= \int_{m\to\infty} (a_{\lambda_m})_*(T\otimes S).$$
Therefore, we deduce from the last computation that
\begin{eqnarray*}
\langle \mu',\phi\rangle
&=&\int_{w\in X}   \Big[\lim_{m\to \infty}  {1\over \pi }\int_{z\in \B(0,1)}  (a_{\lambda_m})_*(T\otimes S) (z,w)\wedge (\ddc\|z\|^2)^{n-1} \Big]  \phi(w) \omega(w) \\
&=&\int_{w\in X}   \Big[\lim_{m\to \infty}  {1\over \pi \lambda_m^{-2(n-1)}}\int_{z\in \B(0,\lambda_m^{-1})} S(w+z)\wedge T(w)\wedge (\ddc\|z\|^2)^{n-1} \Big]  \phi(w) \omega(w) \\
&=&\int_{w\in X}   \Big[\lim_{m\to \infty}  {1\over \pi \lambda_m^{-2(n-1)}}\int_{z\in \B(0,\lambda_m^{-1})} S(w+z)\wedge (\ddc\|z\|^2)^{n-1} \Big]  T(w)\wedge \phi(w) \omega(w) \\
&=& \int_{w\in X}  \nu(S,w)\phi(w)  T(w)\wedge \omega(w).
\end{eqnarray*}
This implies the lemma.
\endproof
 Since    $T'\not=0$ and hence $\mu'\not=0,$ by Lemma \ref{l:mu-mu'-bisbis} the current $T$ and the measure $\mu$ should give  mass to  the set $\Sigma:=\{x\in X:\ \nu(S,x)>0\},$ which is    at most a countable union of  analytic  sets by Siu's decomposition theorem \cite{Siu}. By hypothesis,  $T$  does not give mass  to such a set. Therefore, we reach  a  contradiction, and this case cannot happen.

The proof of assertion (1) is  thereby completed.

We  turn to  the proof of   assertion (2). In this  case $X$ is a compact K\"ahler surface.  By Theorem \ref{t:main_1-bis}, the set $\{x\in X:\ \nu(T,x)>0\}$
is a countable union of  analytic sets of dimension $\leq 1.$ Hence, $T$ does  not give mass to this set. Therefore, the assertion follows  from  \cite[Corollary 2.4]{DinhNguyenSibony22}.

It remains to prove  assertion (3).
Since  $X$ in this   case  is a complex projective manifold, any  analytic  set must be contained in a complex  hypersurface. Therefore,
the assumption  implies that $T$  does not give mass to any  analytic sets. By assertion (1),  $\{T\}$ belongs to the dual of the cone $\mathcal E.$  Hence,  by
Theorem \ref{T:FuXiao-WN} (3), $\{T\}$ is movable.
 \endproof


\begin{appendix}

\section{Young's inequality and applications} \label{a:Young}

In this appendix, we recall the classical Young's inequality for integral operators.
We apply this inequality in the charts of $X\times X$ which cover the diagonal $\Delta$.

Let $k(x, y)$ be a function on  $\B \times \B$,  smooth in $(\B \times \B) \setminus \Delta$.
Assume that there is a constant $c>0$ and a number $\delta\geq 0$  such that for every $(x,y)\in \B\times \B,$
\begin{equation} \label{e:Young}
 \|k(x,\cdot)\|_{L^{1+\delta}}\leq c\quad\text{and}\quad \|k(\cdot, y)\|_{L^{1+\delta}}\leq c.
\end{equation}
Here, we use  the norm $L^p$ with respect to the normalized Lebesgue measure on $\B$.

Define a linear operator
$P$ on the  space of  measures $\mu$ of bounded mass on $\B$ by
$$(P \mu)(x) := \int_{y\in \B}k(x, y) d\mu(y).$$
We are also interested in the case where $\mu$ is given by an $L^p$ function.

\begin{lemma}[Young's inequality, {\cite[Th.\,0.3.1]{Sogge}}] \label{L:Young}
The operator $P$ maps continuously measures of bounded mass into  $L^{1+\delta}(\B)$ and $L^p(\B)$ into $L^q(\B)$;  all with norm bounded by $c,$
where $q =\infty $ if $p^{-1} + (1 + \delta)^{-1}\leq 1$ and $p^{-1} + (1 + \delta)^{-1} = 1 + q^{ -1}$ otherwise.
\end{lemma}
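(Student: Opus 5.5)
This is Young's inequality for integral operators with kernels satisfying the Schur-type bounds \eqref{e:Young}; it is cited from \cite{Sogge}, so I would follow the standard argument. Write $r:=1+\delta$.

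\textbf{Measures.} Let $\mu$ be a positive measure of bounded mass; the signed case follows by splitting into positive and negative parts. I would apply Minkowski's integral inequality to $x\mapsto\int k(x,y)\,d\mu(y)$, which gives
$$\|P\mu\|_{L^r}\leq \int_{y\in\B}\|k(\cdot,y)\|_{L^r}\,d\mu(y)\leq c\,\|\mu\|.$$
Here only the column bound $\|k(\cdot,y)\|_{L^r}\leq c$ is used. The same computation with $\mu=f\,\Leb$ shows that $P$ maps $L^1$ into $L^r$ with norm at most $c$.

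\textbf{$L^\infty$ endpoint.} By H\"older in $y$, $|Pf(x)|\leq\|k(x,\cdot)\|_{L^r}\|f\|_{L^{r'}}\leq c\|f\|_{L^{r'}}$, where $r'$ is the conjugate exponent of $r$. Since the measure on $\B$ is normalized, $L^p\subset L^{r'}$ with norm at most $1$ whenever $p\geq r'$. This is exactly the condition $p^{-1}+r^{-1}\leq 1$, so in that range $P$ maps $L^p$ into $L^\infty$ with norm at most $c$.

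\textbf{Intermediate range.} For $1\leq p<r'$, define $q$ by $p^{-1}+r^{-1}=1+q^{-1}$. I would use the classical three-factor H\"older splitting
$$|k(x,y)f(y)|=\big(|k|^{r}|f|^{p}\big)^{1/q}\,|k|^{r(1/r-1/q)}\,|f|^{p(1/p-1/q)},$$
with H\"older exponents $q$, $(1/r-1/q)^{-1}$ and $(1/p-1/q)^{-1}$. These three reciprocals sum to $1$ precisely because $1/r+1/p-1/q=1$.

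Integrating in $y$, the second factor is controlled by the row bound $\|k(x,\cdot)\|_{L^r}\leq c$, giving a power $c^{\,r(1/r-1/q)}$. The third factor is bounded by $\|f\|_{L^p}^{\,p(1/p-1/q)}$. The first factor leaves $\big(\int|k(x,y)|^r|f(y)|^p\,dy\big)^{1/q}$.

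Raising to the power $q$ and integrating in $x$ by Fubini, the column bound $\int_x|k(x,y)|^r\,dx\leq c^r$ gives
$$\|Pf\|_{L^q}\leq c^{\,r/q+r(1/r-1/q)}\,\|f\|_{L^p}^{\,p/q+p(1/p-1/q)}=c\,\|f\|_{L^p}.$$
Alternatively, one can obtain this range by Riesz--Thorin interpolation between the $L^1\to L^r$ and $L^{r'}\to L^\infty$ bounds.

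The only delicate point is checking that the exponents in the H\"older splitting balance. The measurability of $k$ and the fact that it may be singular on $\Delta$ play no role, because only the integral bounds \eqref{e:Young} are used.
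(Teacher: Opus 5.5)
Your proof is correct. The paper gives no argument of its own: it simply cites Sogge's Theorem 0.3.1, whose standard proof is the same Minkowski / H\"older / three-factor H\"older scheme you use. You also correctly supply the two points the paper's formulation needs on the normalized ball $\B$: the bound for measures, via Minkowski's inequality, and the $L^\infty$ bound in the whole range $p^{-1}+(1+\delta)^{-1}\leq 1$, which comes from the normalization of the measure.
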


We list here two examples of kernels needed in our study.

\begin{example}\rm \label{Ex:kernel_2}
  Consider the    kernel $k(x,y)$ associated to the  form
$\Omega(x,y)=\ddc\log(\|x - y\|^{2})\wedge \ddc\log(\|x\|^2+\|y\|^{2})\wedge (\ddc \|x\|^2+\ddc \|y\|^2)^2$. In this  case,  we can choose  $\delta=0.$
\end{example}
\proof Use  the  change of variable $w:=x$ and  $z:=y-x.$   Write
$$ \ddc\log(\|x\|^2+\|y\|^{2})=\ddc \log  (\|x\|^2+\|x+z\|^{2})=\ddc  \log  (\|z\|^{2})+ O(x) (\|x\|+\|x+z\|)^{-3}.$$
We infer that
\begin{eqnarray*}
 \ddc\log(\|x - y\|^{2})\wedge \ddc\log(\|x\|^2+\|y\|^{2})&=&\ddc  \log  (\|z\|^{2})\wedge \big(\ddc  \log  (\|z\|^{2})+ O(x) (\|x\|+\|x+z\|)^{-3}\big)\\
 &=& \ddc  \log  (\|z\|^{2}) \wedge  O(x) (\|x\|+\|x+z\|)^{-3}\\
 &=&   O(x) (\|x\|+\|x+z\|)^{-5}.
\end{eqnarray*}
Using this  estimate we can show that there is a constant $c>0$ independent of $x\in \B$ such that
$
\int_{1\geq \|y\|\geq  \|x\|/2} k(x,y) dy< c.
$
On the other hand,  since
$k(x,y) \leq  \|x\|^{-4}$ for $\|y\|\leq  \|x\|/2,$ it follows that
there is a constant $c>0$ independent of $x\in \B$ such that
$
\int_{ \|y\|\leq  \|x\|/2} k(x,y) dy< c.
$
The result  follows.
\endproof

\begin{example}\rm \label{Ex:kernel_1}
   Consider the    kernel $k(x,y)$ associated to the  form
$\big(\ddc \log(\|x\|^2+\|y\|^{2})\big)^{2}\wedge (\ddc \|x\|^2+\ddc \|y\|^2)^2.$ In this  case,  we can choose  $ \delta=0.$

\end{example}
\proof
Arguing as in the proof  of Example \ref{Ex:kernel_2} we  see that
\begin{eqnarray*} \big(\ddc\log(\|x - y\|^{2})\big)^2&=& \big(\ddc  \log  (\|z\|^{2})+ O(x) (\|x\|+\|x+z\|)^{-3}\big)^2\\
&=& O(\|x\|^2) (\|x\|+\|x+z\|)^{-6} +O(\|x\|) (\|x\|+\|x+z\|)^{-5}.
\end{eqnarray*}
The rest of the proof is  essentially similar to that of  Example \ref{Ex:kernel_2}.
\endproof

\begin{example} \rm \label{Ex:kernel_3}
Consider  a family of convolution kernels with parameter $\lambda\in\C^*:$
$$k_\lambda (x,y)= |\lambda|^4g_\lambda (x,y) \textbf{1}_{\{\|x-y\|<|\lambda|^{-1} \|x\|\}},$$
where $\textbf{1}_{\{\|x-y\|<|\lambda|^{-1} \|x\|\}}$ is the  characteristic  function of the set $\{\|x-y\|< |\lambda|^{-1} \|x\|\}\cap (\B\times \B)$
and  $(g_\lambda)$ is a uniformly bounded family of functions. Consider $\delta=0$ and the operator
$P_\lambda$ with kernel $k_\lambda$. It maps $L^p(\B)$ to itself with norm bounded by a constant independent of $\lambda.$
 \end{example}
 \proof
 Since $\{(x,y)\in\B^2:\ \|x-y\|<|\lambda|^{-1} \|x\|\}\subset \{ (x,y)\in\B^2:\ |x-y\|<|\lambda|^{-1} \},$
 it follows that $k_\lambda\leq  \tilde k_\lambda,$   where $$\tilde k_\lambda (x,y)= |\lambda|^4g_\lambda (x,y) \textbf{1}_{\{\|x-y\|<|\lambda|^{-1} \}}.$$

 A straightforward calculation shows that for every $x\in \B,$  $\|\tilde k_\lambda(x,\cdot)\|_{L^1}\leq c$
 and for every $y\in \B,$  $\|\tilde k_\lambda(\cdot,y)\|_{L^1}\leq c$  for some constant $c>0.$
The result follows.
 \endproof

Consider now a family $(K_\lambda)$ of smooth $4$-forms on $X\times X$ depending on a parameter $\lambda\in\C$ with $|\lambda|$ larger than a positive constant. Assume that there is a constant $A>0$ such that $K_\lambda(x,y)$ vanishes when the distance between $x$ and $y$ is larger than $A|\lambda|^{-1}\|x\|$.

\begin{lemma} \label{l:kernel-Delta}
Assume that $\|K_\lambda\|_\infty =O(|\lambda|^4)$ and that $K_\lambda$ converges weakly to $c[\Delta]$ as $\lambda$ tends to infinity, where $c$ is a constant. Then, for all $2$-forms $f_1$ and $f_2$ of class $L^2$, we have
$$\lim_{\lambda\to\infty} \langle f_1\otimes f_2, K_\lambda\rangle = c\langle f_1,f_2\rangle.$$
\end{lemma}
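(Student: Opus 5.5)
The idea is to treat $\langle f_1\otimes f_2, K_\lambda\rangle$ as an approximate-identity convolution and compare it with the diagonal pairing. By a partition of unity on $X\times X$, I would reduce to the case where $K_\lambda$ is supported in a chart $\B\times\B$ near $\Delta$; pieces away from $\Delta$ vanish for $|\lambda|$ large because of the support condition. In the coordinates $(z,w)=(x-y,y)$, write $f_1,f_2$ in terms of their coefficients, which are $L^2$ functions. Only the components of $K_\lambda$ of the complementary bidegree pair nontrivially with $f_1\otimes f_2$. Each such pairing has the form
$$\int_{w}\int_{z} k_\lambda(z,w)\, g_1(w+z)\,g_2(w)\,\Leb(z)\Leb(w),$$
where $g_1,g_2\in L^2(\B)$ are coefficient functions and $k_\lambda$ is a coefficient of $K_\lambda$. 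By hypothesis, $|k_\lambda|=O(|\lambda|^4)$ and $k_\lambda$ vanishes unless $\|z\|\le A|\lambda|^{-1}\|x\|\le A|\lambda|^{-1}$.

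\textbf{Step 1 (uniform bound).} Example \ref{Ex:kernel_3} and Lemma \ref{L:Young} with $\delta=0$ show that the operator with kernel $k_\lambda$ is bounded on $L^2$, uniformly in $\lambda$. Hence
$$|\langle f_1\otimes f_2,K_\lambda\rangle|\lesssim \|f_1\|_{L^2}\|f_2\|_{L^2}$$
uniformly in $\lambda$. It therefore suffices to prove the limit for $f_1,f_2$ in a dense class, for instance smooth forms; the general case then follows by an $\epsilon/3$ argument.

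\textbf{Step 2 (smooth case).} For smooth $f_1,f_2$, the form $f_1\otimes f_2$ is a smooth test form on $X\times X$. The weak convergence $K_\lambda\to c[\Delta]$ then gives directly
$$\langle f_1\otimes f_2, K_\lambda\rangle \to c\langle [\Delta], f_1\otimes f_2\rangle = c\int_X f_1\wedge f_2 = c\langle f_1,f_2\rangle.$$

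\textbf{Alternative and main obstacle.} One could instead argue in the spirit of Lemma \ref{L:easy}: replace $g_1(w+z)$ by $g_1(w)$ using $L^2$-continuity of translations together with the bound $\int |k_\lambda(z,w)|\,\Leb(z)=O(1)$, and then identify $\int k_\lambda(z,w)\,\Leb(z)$ with the weak limit. However, the density approach of Steps 1 and 2 is cleaner. The only delicate point is Step 1, namely checking the Schur-type bounds. The $x$-integral bound, $\sup_y\int|k_\lambda|\,dx\lesssim |\lambda|^4|\lambda|^{-4}$, follows because the support has diameter at most $A|\lambda|^{-1}$ in $z$; the same holds with the roles of $x$ and $y$ exchanged. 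This is exactly Example \ref{Ex:kernel_3}.
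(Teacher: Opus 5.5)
Your proposal is correct and follows essentially the same route as the paper. Both arguments run as follows:
\begin{itemize}
\item a uniform $L^2\to L^2$ bound for the integral operator with kernel $K_\lambda$, obtained from Young's inequality with $\delta=0$, where the row and column $L^1$ bounds come from the support condition and the $O(|\lambda|^4)$ sup-norm;
\item a density reduction to smooth $f_1,f_2$;
\item the weak convergence $K_\lambda\to c[\Delta]$ tested against the smooth form $f_1\otimes f_2$.
\end{itemize}
The paper phrases this last step as the weak convergence of $P_\lambda(f_1)$ to $cf_1$, which is the same statement.
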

\proof
Define the integral operator $P_\lambda$ associated to $K_\lambda$ by
$$P_\lambda(f)(y):=\int_x K_\lambda(x,y) f(x)$$
for all $2$-forms $f$ on $X$. Observe that $P_\lambda(f)$ is also a $2$-form and we have
$$ \langle f_1\otimes f_2, K_\lambda\rangle = \langle f_2, P_\lambda(f_1)\rangle.$$

By hypothesis on the support of $K_\lambda$ and its sup-norm, in local coordinates, the coefficients of $K_\lambda$ satisfy estimates in  \eqref{e:Young} for $\delta=0$.
By Lemma \ref{L:Young} for $\delta=0$, the operator $P_\lambda$ from $L^2$ to $L^2$ has a norm bounded independently of $\lambda$. Therefore, in order to obtain the result, we can assume that $f_1$ is smooth because smooth forms are dense in the space of $L^2$ forms.
Similarly, we can also assume that $f_2$ is smooth. Now,
by hypothesis, $P_\lambda(f_1)$ converges weakly to $cf_1$ and the result follows easily.
\endproof

\end{appendix}


\end{document}